\def\1{{\mathchoice {\rm 1\mskip-4mu l} {\rm 1\mskip-4mu l}
{\rm 1\mskip-4.5mu l} {\rm 1\mskip-5mu l}}}
\theoremstyle{plain}
\newtheorem{theorem}{Theorem}[section]
\theoremstyle{remark}
\theoremstyle{plain}
\newtheorem{corollary}[theorem]{Corollary}
\newtheorem{lemma}[theorem]{Lemma}
\newtheorem{proposition}[theorem]{Proposition}
\newtheorem{definition}[theorem]{Definition}
\numberwithin{equation}{section}
\def\1{{\mathchoice {\rm 1\mskip-4mu l} {\rm 1\mskip-4mu l}
{\rm 1\mskip-4.5mu l} {\rm 1\mskip-5mu l}}}
\def\N{{\mathbb N}}
\def\Z{{\mathbb Z}}
\def\Q{{\mathbb Q}}
\def\R{{\mathbb R}}
\newcommand{\E}{{\mathbb E}}
\newcommand{\eps}{\varepsilon}
\newcommand{\textd}{\textrm{d}}
\newcommand{\beq}{\begin{equation}}
\newcommand{\eeq}{\end{equation}}
\author[N. Crawford]{\small Nicholas Crawford \textsuperscript{1}} \thanks{\noindent \textsuperscript{1} N.C. supported by Israel Science Foundation Grant No. 1557/21}
\address{Department of Mathematics, The Technion, Haifa, Israel}
\email{nickc@tx.technion.ac.il}
\author[W. M. Ruszel]{\small Wioletta M. Ruszel}
\address{Utrecht University, Budapestlaan 6, 3584 CD Utrecht, The Netherlands}
\email{w.m.ruszel@uu.nl}
\date{\today}
\begin{document}

\title{Random field induced order in two dimensions}

\dedicatory{In memory of Dima Ioffe (1963--2020), friend and mentor.}

\begin{abstract}
In this article we prove that a classical $XY$ model subjected to weak i.i.d. random field pointing in a fixed direction  exhibits residual magnetic order in  $\mathbb{Z}^2$ and aligns perpendicular to the random field direction.  The paper is a sequel to \cite{NC} where the three-dimensional case was treated.  
Our approach is based on a multi-scale Peierls contour argument developed in \cite{NC}. On the microscopic scale we extract energetic costs from the occurrence of contours, which themselves are defined on a macroscopic scale.  The technical challenges in $\mathbb{Z}^2$ stem from difficulties controlling the size and roughness of the fluctuation fields which model the short length-scale oscillations of near-optimizers of the random field  Hamiltonian. 
 \end{abstract}

\keywords{XY model, random field,  Mermin-Wagner theorem, residual magnetic ordering, spin-flop transition}

\subjclass[2010]{82D40}

\maketitle

\tableofcontents

\makeatletter
\providecommand\@dotsep{5}
\makeatother

\section{Introduction}
This paper is a sequel to \cite{Craw_order} on the subject of random field induced ordering.  In it we shall prove that a classical $O(2)$ model subjected to a weak i.i.d. Gaussian field pointing in a fixed direction exhibits residual magnetic order on the square lattice $\Z^2$ and moreover aligns \textit{perpendicular} to the random field direction, whereas \cite{Craw_order} proves the same result on $\Z^3$.  This type of transition is also referred to as of \textit{spin-flop type}, see e.g. \cite{vE1, collet}.  

To facilitate our introductory discussion, let us set up the model and basic notation.  For each $x \in \Z^d$ let $\sigma_x \in \mathbb S^1$.   A vector of spins $\sigma=(\sigma_x)_{x \in \Z^d}$ will be called a spin configuration.  Let $(\alpha_x(\omega))_{x \in \Z^d}$ be an auxiliary i.i.d. family of standard normal random variables, with $\omega$ representing an element of an auxiliary sample space $\Omega$ on which the $\alpha_x$'s are defined.  For any fixed  spin configuration $\sigma^0$ and any bounded region $\Lambda \subset \Z^d$, we can then define a (random) Hamiltonian $-\mathcal{H}_{\Lambda}^{\omega}(\cdot |\sigma^0)$ by
\begin{equation*}
-\mathcal{H}_{\Lambda}^{\omega}(\sigma|\sigma^0)= -\frac 12\sum_{\substack{ x, y: x\sim y, \\ \langle x, y\rangle \cap \Lambda \neq \varnothing} } [\sigma_x- \sigma_y]^2 + \eps \sum_{x \in \Lambda} \alpha_x(\omega) e_2 \cdot \sigma_x
\end{equation*}
where we set $\sigma_x=\sigma_x^0$ for $x\in \Lambda^c$ and allow $\sigma_x \in \mathbb S^1$ to be arbitrary within $\Lambda$.
Here $\langle x, y \rangle$ indicates that $x, y$ are nearest neighbors with respect to the usual graph structure on $\Z^d$ and $e_1, e_2$ denote the standard orthonormal basis of $\R^2$.
Defining finite volume Gibbs measures by
\[
\mu_{\Lambda}^{\sigma^0}(A) =Z_{\Lambda}^{-1}\int_{A} \prod_{x \in \Lambda} \textd \nu(\sigma_x) \exp\{ - \beta \mathcal{H}_{\Lambda}^{\omega}(\sigma|\sigma^0)\}
\]
and denoting the corresponding Gibbs state by $\langle \cdot \rangle= \langle \cdot \rangle_{\Lambda}^{\sigma_0}$,
the question is whether and when, in terms of $\beta, \eps$ and $\sigma^0$, residual magnetic ordering occurs in the limit as $\Lambda \uparrow \Z^d$.  We will refer to this model as the RFO(2) model below.

To understand the effect of the random field 
in the RFO(2) model when $\eps$ is small and $\beta$ is large, we proceed under the hypothesis that there is a separation of scales:  Let $\sigma_x$ be given in polar coordinates by an angle $\theta_x$.  We suppose that $\theta_x = \psi_x+ \hat{\theta}_x$ with $\hat{\theta}_x$ small and $\nabla \psi_x$ small (the scale of the latter will be determined by a more detailed calculation later).  For a first approximation we take $\psi_x$ to be constant.   If $Q_{\ell}$ is a square of sidelength $\ell$, we expand $-\mathcal{H}_{Q_\ell}$ (with free boundary conditions) in $\hat{\theta}_x$ variables, keeping only terms up to second order in $\hat{\theta}_x$.  We find
\begin{equation}
\label{E:ObD}
\sup_{\stackrel{(\theta_x)_{x \in Q_{\ell}}}{\theta_x \approx \psi}} -\mathcal{H}_{Q_\ell}(\theta) = -\frac{\eps^2}{2}\cos^2(\psi)\sum_{x \in Q_{\ell}} \hat{\alpha}_x \Delta^{-1} \cdot \hat{\alpha}_x + \underbrace{\mathcal{O}(\eps|\sum_{z \in Q_{\ell}} \alpha_z|)}_{\textrm{I}}.
\end{equation}
where $\Delta$ is the Neumann Laplacian for $Q_{\ell}$ and $\hat{\alpha}_x= \alpha_x - |Q_{\ell}|^{-1} \sum_{z\in Q_{\ell}} \alpha_z$.
The directions of presumed ordering are obtained by optimizing the first term in $\psi$ and ignoring Term \textrm{I}. We find $\psi\in \{0, \pi\}$ are the preferred angles.  Our goal in this paper is to turn this back-of-the-envelope computation into rigorous mathematics.

Our interest in this question, especially when $d=2$, stems from the fact that the Hamiltonian given above combines a number of competing features in one model.  First, the spins themselves have an \textit{ a priori} continuous symmetry which calls to mind the Mermin-Wagner theorem and Kosterlitz-Thouless phase transition present in the pure $O(2)$ model.  Of course, the random field term in the Hamiltonian breaks the $O(2)$ symmetry, but it is not completely clear what its effect will be.  Here the reader may recall the behavior of the random field Ising model (RFIM) and the possibility that randomness can ``round" phase transitions.  The RFIM would arise if we constrain spins to $\pm e_2$.
When $d=2$ it was proved in \cite{AW}, see also \cite{IM}, that there is a unique infinite volume Gibbs state at arbitrary strength $\eps$ of the disorder.  Recent work, \cite{AP,Ding}, concerns the probability that the choice of constant $\pm1 $ spins at the boundary of a box of sidelength $L$ centered at $0$ influences the value of the value of the spin $\sigma^0$ in the corresponding $\pm$-ground state.  The paper \cite{Ding} confirms that this probability decays exponentially with $L$.  Hence in the RFIM in two dimensions, there is a strong ``screening" of information propagation from boundary to bulk.  

Another interesting connection is to the subject of ``Order-by-Disorder" in the physics literature.  First observed in \cite{Hen}, ``Order-by-Disorder" refers to an effect in which ground state degeneracy is lifted by lattice impurities as modeled by i.i.d. site dilution.  On $\Z^2$,  \cite{Hen} considers a model Hamiltonian  of the form
\[
-\mathcal{H}(\sigma )= \sum_{\|x-y\|_2=1} J_1 [\sigma_x- \sigma_y]^2 + \sum_{\|x-y\|_2=2} J_2 [\sigma_x- \sigma_y]^2
\]
with $|J_1|< 2J_2$.  The  ground-states for this (frustrated) system are obtained by choosing, on each of the even and odd sub-lattices of $\Z^2$,  purely anti-aligned configurations of spins and are thus parameterized by two angles. Here, the term frustration refers to the fact that a system  cannot minimize its total energy purely by minimizing the interaction energy between  pairs of spins.
Vertices of $\Z^2$ are then deleted from the system independently with probability $p\ll 1$.  If, as above,  we consider trial configurations which in polar coordinates consist of a pure ground state with a small amplitude fluctuation superimposed on top, the variational calculation shows that preferred ground states have a \textit{relative} angle of $\pm \frac \pi2$ between spins at $(0, 0)$ and $(0, 1)$.  

In the 1980's, random field ordering was addressed in the physics literature with varying degrees of reliability: the paper \cite{Aharony} presents a Ginzburg-Landau analysis applicable when $d\geq 4$.  For technical reasons we do not have a rigorous mathematical proof verifying the claims therein although many of the obstacles we face in this paper mollify as dimension increases. On the other hand multiple authors \cite{DF1, DF2, MP} attempted to determine the behavior of the RFO(2) and random field clock models when $d=2$.  The papers \cite{DF1, DF2} are clearly incorrect as they claimed there was no low temperature ordered phase, contrary to our main theorem.  The paper \cite{MP} also focused on the $d=2$ case and provides a picture broadly consistent with our work.  However we do not find its analysis convincing: the methods  do not apply when $\beta$ is large and the paper seems to  suggest the incorrect conclusion that the critical inverse temperature, $\beta_c$ is independent of the random field strength $\eps$.   An interesting prediction present in these papers is that the random field clock models have an intermediate phase, where there is no long range order but there is slow (algebraic) decay of correlations.  

From a more mathematical perspective,  van Enter and coauthors \cite{vE1, vE2, vE3} came to this question during investigations of whether the spatial Markov property which characterizes Gibbs measures is preserved under various coarse-graining procedures.  
Ground state behavior and the mean field theory were analyzed in  \cite{vE3, Wehr-et-al-1, Wehr-et-al-2}.

The first work on a microscopically defined model on $\Z^d$ is \cite{NC}, where the RFO(2) model with a Kac potential is treated. With the insights gained from \cite{NC}, the prequel paper \cite{Craw_order} demonstrated the existence of an ordered phase for the nearest neighbor RFO(2) model on $\Z^3$.
 The fundamental limitation of the Kac model work is that the results are only valid if the range of the interaction potential is taken to be or order $\eps^{-1}$.  Nevertheless, this limitation is not entirely technical: as we later discuss, even the nearest neighbor RFO(2) model has a fundamental length scale which is set by energetic considerations:  Taking \eqref{E:ObD} seriously, low energy excitations cost $O(\eps^2 \ell^3)$ for cubes $Q_{\ell}$ in $\Z^3$ but $O(\eps^2  \ell^2 \log\ell)$ for squares in $\Z^2$. On the other hand, to have such an excitation relax back to a ground state,  the Dirichlet energetic cost of the relaxation should not exceed the cost of the low energy excitation.  This translates into spin gradients of $O(\eps)$ in $\Z^3$, respectively $O(\eps |\log \eps|)$ in $\Z^2$.  Thus to see the average angle of a spin configuration change an appreciable amount, we must look at regions with thickness $\eps^{-1}$ in $\Z^3$ and  $\eps^{-1}|\log (\eps)|^{-1/2}$ in $\Z^2$.  The appearance of the extra $|\log (\eps)|$ in these considerations heralds a number of technical challenges which we overcome in adapting the broad proof strategy of \cite{Craw_order} to the present setting.

\subsection{Main Theorem}
Let us now turn to a precise formulation of our main theorem.
Let $\eps$ be fixed and choose $L$ so that
\[
L \sim
\eps^{-1} |\log (\eps)|^{-1/2+1/64}
\]
and $L=2^{k}$ for some $k \in \N$. Let
\[
Q_{L}(z)=
 z + \{0, 1 \dotsc, L-1\}^d.
\]
For $L_0\in \N$, a subset $\Lambda$ of
$\Z^d$ will be said to be $L_0$-measurable if $\Lambda$ is a
union of blocks $Q_{L_0}(r)$ so that $r \in L_0\Z^d$.
As an order parameter, we
define block average magnetizations via 
\begin{align*} &
M_z =
\frac{1}{|Q_{L}|}\sum_{ x \in Q_L(z)}
\sigma_x.
\end{align*}

\begin{theorem}
\label{T:Main2} 
Let $\xi\in (0, 1)$ be fixed and sufficiently small.  There is an $\eps_0(\xi)>0$ and $\Omega_0\subseteq \Omega$ with $\Bbb{P}(\Omega_0)=1$ so that the following properties hold for all $\omega \in \Omega_0$:
First, for every $\eps\in (0, \eps_0)$ there exists an
$L$-measurable subset $\mathbb D_{\omega} \subset \Z^2$,
an $N_0(\omega)\in \N$ and $\delta>0$ such that
\beq 
|\mathbb D_{\omega}
\cap \Lambda_N| \leq
Ce^{-c|\log (\eps)|^{\delta}} |\Lambda_N| \text{ for all }N \geq N_0(\omega).
\eeq
Second, given $\eps\in (0, \eps_0)$, there exists 
$\beta_0(\eps)$, depending on $\eps$ but not $\omega$, such that if $\beta> \beta_0$  then for each $z\in \Lambda_N$ with $Q_{L}(z) \cap \mathbb D_\omega=
\varnothing$,
\beq
\|\langle M_z
\rangle_N^{\omega, e_1} - e_1\|_2 \leq \xi.
\eeq
Here $C, c$ are universal constants, i.e. not dependant upon $\eps, \delta, N, \xi$.
\end{theorem}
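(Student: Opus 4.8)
We outline the argument, which has two largely independent halves corresponding to the two assertions of the theorem.

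\emph{The sparse bad set $\mathbb D_\omega$.} This is a statement about the disorder alone, with no reference to $\beta$. For each $L$-block $Q_L(z)$ we record a finite list of \emph{regularity events} for $\alpha$ in a bounded neighbourhood of $Q_L(z)$: control of $|\sum_{x\in Q_L(z)}\alpha_x|$ (the error term $\mathrm I$ in \eqref{E:ObD}), control of the $L^2$-average and of the extreme values of the recentred field $\hat\alpha$, and, most importantly, control of the size and the discrete modulus of continuity of the fluctuation field $\Delta^{-1}\hat\alpha$ and of its gradients, built from the two-dimensional Neumann Green's function on scale $L$. Each such event holds off a set of probability at most $e^{-c|\log(\eps)|^{\delta}}$ for a suitable $\delta=\delta(\xi)>0$, by Gaussian concentration (Borell--TIS), Hanson--Wright tail bounds for the relevant quadratic forms, and Dudley/Fernique chaining for the moduli of continuity, using the explicit logarithmic behaviour of the Green's function. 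We let $\mathbb D_\omega$ be the union of the $L$-blocks where some regularity event fails. Since the defining events depend on $\alpha$ only through a bounded neighbourhood of the block, the indicator field $\big(\one\{Q_L(z)\subset\mathbb D_\omega\}\big)_z$ has rapidly decaying dependence; hence $|\mathbb D_\omega\cap\Lambda_N|$ concentrates around its mean, which is at most $e^{-c|\log(\eps)|^{\delta}}|\Lambda_N|$, and a Borel--Cantelli argument along $N\in 2^{\N}$ produces $\Omega_0$ with $\P(\Omega_0)=1$ and $N_0(\omega)$ so that the stated density bound holds for all $N\ge N_0(\omega)$.

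\emph{The Peierls argument.} Fix $\omega\in\Omega_0$ and work in the coarse-grained lattice of $L$-blocks. Call $Q_L(z)$ \emph{$\pm$-good} for a spin configuration $\sigma$ if, in a neighbourhood of $Q_L(z)$, $\sigma$ lies within a small distance $\eta$ — in a norm that simultaneously controls the average direction and the fluctuation structure predicted by \eqref{E:ObD} — of the family of near-optimizers of $-\mathcal H$ associated with the preferred angles $0$ and $\pi$; all other blocks are bad. A \emph{contour} of $\sigma$ is a connected cluster of bad blocks together with the $L$-blocks separating a $+$-good from a $-$-good region (contours are permitted to pass through $\mathbb D_\omega$ at no energetic cost, which is harmless because $\mathbb D_\omega$ is sparse). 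The heart of the matter is a contour weight bound
\[
\mu_{\Lambda_N}^{e_1}\big(\gamma\text{ is a contour of }\sigma\big)\ \le\ \exp\big(-\,c\,\beta\,\eps^2 L^2\log L\,|\gamma|\ +\ C\,|\gamma|\big),
\]
with $|\gamma|$ the number of $L$-blocks in $\gamma$. It is proved by constructing, for each $\sigma$ with contour $\gamma$, a ``healing'' map $T$ which replaces $\sigma$ on a neighbourhood of $\gamma$ by the near-optimizer aligned with the exterior region, and by bounding $\mathcal H_{\Lambda_N}(T\sigma)-\mathcal H_{\Lambda_N}(\sigma)$ from below using \eqref{E:ObD}: each misaligned block of $\gamma$ forfeits an amount $\asymp\eps^2 L^2\log L$ of random-field energy relative to the optimal directions $\{0,\pi\}$, while on $\Omega_0$ the disorder fluctuations, the truncation error in \eqref{E:ObD}, and the Dirichlet cost of the interpolation are of lower order and are absorbed into $C|\gamma|$ together with the log-Jacobian of $T$. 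Since $L\sim\eps^{-1}|\log(\eps)|^{-1/2+1/64}$ gives $\eps^2 L^2\log L\asymp|\log(\eps)|^{1/32}\to\infty$, choosing $\beta_0(\eps)$ large (depending on $\eps$ through this deterministic energy scale, hence not on $\omega$) makes the effective per-block exponent $c\beta\eps^2 L^2\log L - C$ as large as we wish. Summing the contour bound over all contours surrounding a fixed $z$ with $Q_L(z)\cap\mathbb D_\omega=\varnothing$ — using the standard $\kappa^{|\gamma|}$ bound on the number of contours of a given size through a point — yields $\mu_{\Lambda_N}^{e_1}(z\text{ is not }+\text{-good})<\xi/2$ uniformly in $N$. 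On the complementary event $\|M_z-e_1\|_2\le\eta$ by definition of $+$-good, so $\|\langle M_z\rangle_N^{\omega,e_1}-e_1\|_2\le\eta+\xi/2\le\xi$ once $\eta<\xi/2$; uniformity in $N$ transfers the estimate to the thermodynamic limit.

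\emph{Main obstacle.} The difficulty is concentrated in the contour weight bound, and within it in the control of the fluctuation fields. In two dimensions the near-optimizer fluctuation $\hat\theta\approx-\eps\cos(\psi)\Delta^{-1}\hat\alpha$ is only \emph{barely} small: its $L^2$-average over an $L$-block is of order $|\log(\eps)|^{-1/2+1/64}$, its gradient is of order $\eps|\log(\eps)|$, and its extreme values over a block even grow like a small power of $|\log(\eps)|$, so the second-order truncation \eqref{E:ObD} must be justified without any uniform smallness of $\hat\theta$, and near-optimizers attached to adjacent blocks must be glued across block boundaries with a Dirichlet penalty that does not swamp the $\asymp\eps^2 L^2\log L$ gain. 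This is exactly the point at which the strategy of \cite{Craw_order} must be genuinely reworked rather than merely transcribed, and the ``multi-scale'' character of the Peierls estimate — a further decomposition of each contour's weight across intermediate length scales between the microscopic lattice and $L$ — is what renders this control uniform over contours; it is also the source of the extra powers of $|\log(\eps)|$ in the definition of $L$ and of the somewhat weak exponent $\delta$.
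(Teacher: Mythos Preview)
Your high-level two-part plan (sparse defect set via Gaussian concentration and Borel--Cantelli; coarse-grained Peierls argument for the magnetization) matches the paper's structure. But the Peierls half contains a real gap, and your energy accounting is off in a way that obscures why $\beta_0(\eps)\to\infty$.

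\textbf{You work at a single scale $L$; the paper needs two.} The paper detects badness on a smaller scale $\ell\sim\eps^{-1}|\log(\eps)|^{-1/2-s}$ and defines contours on the larger scale $L\sim\eps^{-1}|\log(\eps)|^{-1/2+s}$ (with $s=1/64$). An $L$-block lies in a contour as soon as \emph{some} $\ell$-subblock within distance $2L$ is bad---not the whole $L$-block. Consequently the guaranteed energy gain per contour $L$-block is only $\asymp\eps^2\ell^2|\log(\eps)|=|\log(\eps)|^{-2s}$, not your $\eps^2L^2\log L=|\log(\eps)|^{+2s}$; you are off by exactly the factor $(L/\ell)^2=|\log(\eps)|^{4s}$. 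The per-block gain therefore tends to zero, which is precisely why $\beta_0(\eps)$ must diverge (the paper obtains $\beta_0\lesssim\eps^{-2}|\log(\eps)|^{-1+o(1)}$). If instead you try to detect badness directly at scale $L$ to recover the larger gain, you lose the $\ell$-scale Dirichlet control on the collar $\delta(\Gamma)\setminus\Gamma$ that the healing construction below relies on (see Proposition~\ref{prop:defect}). Relatedly, the paper's $\mathbb D_\omega$ is not just the union of blocks failing local events: it is the closed hull of all connected $L$-measurable regions failing the \emph{regularity} conditions (R0)--(R3), so that any contour surrounding a point outside $\mathbb D_\omega$ is automatically regular and the Peierls bound applies to it in full.

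\textbf{The healing map is where the work is.} You write that the Dirichlet cost of the interpolation is ``of lower order and absorbed into $C|\gamma|$'', but establishing this is essentially the content of the paper. The construction (Section~\ref{sec:BoundSur}) proceeds through four modifications of $\sigma$ on the collar: reflection of wrong-sign spins, using a two-dimensional defect argument (Proposition~\ref{prop:defect}) to locate boundary circuits on which $\sigma_x\cdot e_1\ge 9/10$; forcing spins near dirty boxes toward $\pm e_1$; an energetic relaxation via the nonlinear change of variables $\phi_x=\theta_x-\cos(\theta_x)g^{\lambda,D}_{x}$ (Lemma~\ref{lem:cov}) followed by optimization of the transformed functional $\mathcal K_R$, whose maximizer solves a discrete elliptic equation with random mass $m_x\sim\eps^2|\log(\eps)|$ and is controlled by Feynman--Kac (Lemma~\ref{lem:bulk}); and finally a layer-pinning step to force exact $\pm e_1$ on a thin mid-strip so that $\bar\sigma$ can be glued in. Each step costs at most $\eps^2|\log(\eps)|^{5/8}|\Gamma|$, and this is dominated by the gain $\xi^2\eps^2|\log(\eps)|^{1-4s}|\Gamma|$ only because $5/8<1-4s$---a tight exponent balance your sketch does not address. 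The key device enabling the gluing is the replacement of the Neumann field $g^N$ by the massive Dirichlet field $g^{\lambda,D}$ with $\lambda=\eps^2|\log(\eps)|^{1+\eta_\lambda}$: this keeps $\|g^{\lambda,D}\|_\infty$ small at scale $L$ while matching the Dirichlet energy of $g^N$ to leading order (Condition (C6) and Lemma~\ref{lem:DirichletDiff}). Your ``multi-scale'' remark at the end correctly names the obstacle, but it refers specifically to this $\ell$--$L$ interplay and the massive-field change of variables, not a generic hierarchical decomposition.
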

Our bound on the transition temperature is $\beta(\eps)\leq  \eps^{-2}|\log (\eps)|^{-1} |\log (\eps)|^{\alpha}$ for any $\alpha>0$.  The first factor $ \eps^{-2}$ is what one expects from mean field theory, see for example \cite{NC}.  The extra factor   $|\log (\eps)|^{-1} $ is a two dimensional effect which may be traced to the fact that the Dirichlet energy of a two dimensional discrete Gaussian free field in a square of side length $\mathcal{O}(1/\eps)$ has an order one contribution from each dyadic annulus in Fourier space, in contrast with its behavior in dimensions $d\geq 3$.

\subsection{A proof sketch}
\label{S:Intuit}
In this subsection we give the heuristics and method of proof which lead to our result. As mentioned earlier, the basic template is taken from \cite{Craw_order}, but the proof needed substantial technical changes to carry through, so we will recapitulate the method while emphasizing the particular difficulties on $\Z^2$.

We already indicated why one might expect ordering perpendicular to the random field in the variational computation \eqref{E:ObD}. Taking another look at that analysis, we observe that the optimal choice for the deviation variables $\hat{\theta}_x$ given $\psi$ is $\hat{\theta}_x=  \cos(\psi) g^N_x$ where 
\[
g^N_x:=-\eps \Delta^{-1} \hat{\alpha}_x.
\]
The first question is, for which $\ell$ is this computation self consistent?
Typically,
\begin{equation*}
\sum_{x \in Q_{\ell}} \hat{\alpha}_x \Delta^{-1} \cdot \hat{\alpha}_x \sim   \ell^2\log(\ell).
\end{equation*}
Choosing $\gamma>0$ sufficiently small,  the central limit theorem implies that Term \textrm{I} in \eqref{E:ObD} will be of lower order (with high probability) if $\ell \gg \eps^{-1}|\log (\eps)|^{-1+\gamma}$ with probability exponentially small in $-|\log (\eps)|^{\delta}$ for some $\delta(\gamma)>0$.  

On the other hand, the behavior of $\|g^N\|_{\infty}$ provides an \textit{upper bound} on $\ell$. When $d=2$, this field has a typical order of magnitude  of $\eps \ell$.  Thus the calculation cannot be taken too seriously for large boxes as the maximizer is inconsistent with the starting assumption that $\hat{\theta}_x$ is (uniformly) small.  We arrive at the following constraints:
\[
 \eps^{-1}|\log (\eps)|^{-1+\gamma}\ll \ell \ll  \eps^{-1}|\log (\eps)|^{-\gamma}.
\]

Let us recall that the Dirichlet energy of spin configurations in $Q_{\ell}$ is defined as  \[
\mathcal E_{Q_\ell}(\sigma):= \sum_{\langle x, y\rangle \subset Q_{\ell}} \|\sigma_x - \sigma_y\|_2^2.
\]  
From a technical perspective, we can only control gradients of $\sigma$ in this sense: 

 For typical realizations of $\alpha$, it is energetically favorable for
 \begin{equation*}
 \mathcal E_{Q_\ell}(\sigma) \lesssim \eps^2  \ell^2\log (\ell)
 \end{equation*}
 but no better
since from the above heuristic our best guess on the maximizer of $-\mathcal{H}_{Q_\ell}$ is $g_x^N$ (which has Dirichlet energy on this order).

Using the above heuristic, we expect the low energy behavior of the RFO(2) model to mirror that of a ``soft" Ising model in which continuous scalar spins sit in a double well potential on $\mathbb S^1$.  So we aim to setup a coarse-grained Peierls argument using the scale $\ell$ as the microscopic scale and to extract energetic cost from the occurrence of contours.  The technical problem in implementing this idea is that the spin space is continuous and there is no microscopic surface tension.  The basic scheme we use to solve this problem was employed in  \cite{Craw_order}, and is inspired by the earlier paper on Kac models \cite{NC} and the book by Presutti \cite{Pres-Book}.  This method requires us to use two scales;  the scale $\ell$ and a second scale $L\gg\ell$ with contours defined relative to the second scale $L$.
It turns out that an appropriate choice of scales is
\[
\ell\sim \eps^{-1} |\log (\eps)|^{-\frac 12-\frac 1{64}} \quad \text{ and } \quad L\sim \eps^{-1} |\log (\eps)|^{-\frac 12+\frac 1{64}}.
\]

A particularly clear explanation of $\eps^{-1} |\log (\eps)|^{-\frac 12}$ as the fundamental length scale around which to work is obtained by making a clever change of variables.  Fix $L_0\in \N, \lambda\in (0, 1)$ with their precise values are to be determined. It is technically convenient and more faithful to our proof to replace $g^N_x$ with the field
\[
g^{\lambda, D}_{x, Q}  = \eps (-\Delta^D_{Q} + \lambda)^{-1}\cdot \alpha_x.
\]
This is done to make it easier to glue fluctuation fields $g^{\lambda, D}_{x, Q}$  from disjoint squares together to make an approximate ground-state in a larger region while also and to controlling  the magnitude of the approximate ground-state.  Of course this is only permissible if the spin configurations whose angles are given by $g^N_{x, Q}$ and $g^{\lambda, D}_{x, Q}  $ have the same Hamiltonian energy to leading order.

We now define a nonlinear change of variables $\phi_x$ by
\begin{equation}
\label{E:COVintro}
\phi_x = \theta_x-\cos(\theta_x)g^{\lambda, D}_{x, Q} \quad \text{ if $x\in Q$ and $Q$ is $L_0$-measurable.}
\end{equation}
If $R$ is a region which is covered by disjoint squares of sidelength $L_0$, the Hamiltonian $-\mathcal H_{R}$ transforms under this change of variable into a new energy functional
\[
\mathcal K_{R}(\phi_x)= \underbrace{\sum_{\langle x, y\rangle} \cos(\phi_x- \phi_y)-1}_{\textrm{II}} + \frac 14 \sum_{x} m_x \cos^2(\phi_x)
\]
with
\[
m_x= \sum_{y \sim x, y\in Q } [g^{\lambda, D}_{y, Q}-g^{\lambda, D}_{x, Q}]^2 \text{ if $x\in Q$}.
\]
There are errors made in this transformation which we can control using $\mathcal E_{R}(\sigma)$ and the behavior of the $g^{\lambda, D}_Q$'s.  In $\mathcal K_{R}$ the energy cost of a spin configuration which has average angle $\psi$ is $\frac 14 \sum_{x} m_x \cos^2(\psi)$.   For this cost to be the same order as the one square computation, we conclude we must take $ |\log(\lambda)|=\log(L_0)$.  Note here the useful point the modifying $\lambda $ by $\log(L_0)$ factors does not effect this constraint.
For an $L_0$-measurable square $Q$ to contribute an order one cost to $\mathcal K_{R}(\phi_x)$, either $\mathcal E_{Q}(\phi)$ must be large or else the average of $\phi$ on $Q$ must be different from $\{0, \pi\}$.  In the latter case the cost of $Q$ will be order one if $L_0\sim \eps^{-1} |\log (\eps)|^{-\frac 12}$.  
Now, while we expect that typically  $\phi_x\in \{0, \pi\}$, to implement the Peierls argument we need to work at a length scale which allows for a droplet of $\{x: \phi_x\equiv \pi\}$ to relax to the bulk $\{x: \phi_x\equiv 0\}$ or vice versa. Replacing $ \cos(\phi_x- \phi_y)-1$ by $1/2 (\phi_x- \phi_y)^2$ in $\mathcal K_{R}(\phi_x)$, the length scale at which these variations occur is determined by the inverse of the average magnitude of $m_x$ which,  from the above discussion, satisfies $m_x\sim \eps^2 |\log (\eps)|$. 

To define contours, we first classify squares $Q_{\ell}$ as either \textit{good} (indicated by $\psi^0_z \psi^1_z= \pm 1$ for all $z$ in some neighbourhood around $Q$) or \textit{bad} relative to a spin configuration $\sigma$ (indicated by $\psi^0_z \psi^1_z=0$).  Fixing $\sigma$, call $Q_\ell$ \textit{bad} for $\sigma$ if either  the Dirichlet energy $\mathcal E_{Q_\ell}(\sigma)$
is substantially larger than $\eps^2  \ell^2 \log (\ell)$ ($\psi^0_z=0$ in some neighbourhood of $Q$)
or if the average of spins in $Q_\ell$, $\sigma(Q_\ell):=\ell^{-2} \sum_{x \in Q_{\ell}} \sigma_x$, is far from  the set $\{ e_1, -e_1\}$ (i.e. $\psi^1_z=0$); see Section \ref{S:Contours} for further details.  

We tile $\Z^2$ by squares $Q_L$ of sidelength $L$.  Squares in this tiling will be called $L$ measurable. We define a region $\Gamma$ to be a contour for $\sigma$ if it is a maximally connected cluster of $L$ measurable squares $Q_L$ of side length $L$  so that within distance $2L$ of  $Q_L$ there is a cube $Q_\ell$ of side-length $\ell$ which is bad for $\sigma$.

Given a spin configuration $\sigma$ and an associated contour $\Gamma$ let $\delta(\Gamma)$ denote the $L$-measurable neighborhood of squares within distance say $10$ from $\Gamma$.

 The idea is to compare $\sigma$ with a new spin configuration $\tilde{\sigma}$ which agrees with either $\sigma$ or the reflection of $\sigma$ across the $e_2$ axis on each component of $\Lambda \backslash \delta(\Gamma)$. We construct $\tilde \sigma$ to have an angle uniformly close to either $0$ or $\pi$ on the whole of $\Gamma$ and interpolate smoothly to $\sigma$ or its reflection on $\delta(\Gamma)\backslash \Gamma$.  The goal is to show that the energetic advantage obtained from $\tilde \sigma$ being good on $\Gamma$ will be larger than the energetic cost incurred in forcing the interpolation.
 
The key step and bulk of the paper will be spent on the boundary surgery of a spin configuration $\sigma$ on $\delta(\Gamma) \backslash \Gamma$.
In the discussion of this procedure, we will assume the randomness is ``uniformly typical".   The actual analysis requires that, under certain conditions placed on the randomness in a contour $\Gamma$, we can show that the energetic gain from the bulk of a contour outweighs the cost from the boundary surgery. Not all contours satisfy these conditions, but we will show that regions where our requirements fail are sparse and non-percolating.  The probabilistic bounds required to prove these lemmas and the proofs of the lemmas themselves are postponed until Section \ref{S:dirty}.
  
By definition, $\sigma$ is good in $\delta(\Gamma)\backslash \Gamma$.  Thus, for each cube $Q_\ell$ within $L$ of $\delta(\Gamma)\backslash \Gamma$
\[
\mathcal E_{Q_\ell}(\sigma)\leq 4 \eps^2 \ell^2 \log (\ell),
\]
and $\ell^{-2} \sum_{x\in Q_\ell} \sigma_x$ is close to the set $\{-e_1,  e_1\}$.  Combining these to facts, one can show that the sign is constant over connected components of $\delta(\Gamma)\backslash \Gamma$.  

Fix a component $R$ of $\delta(\Gamma)\backslash \Gamma$ and suppose $\ell^{-2} \sum_{x\in Q_\ell} \sigma_x$ is close to $e_1$ throughout $R$ (the other case is similar).  Then because we are working in $\Z^2$, \textit{ a priori } bounds on $\mathcal E_{Q_\ell}(\sigma)$ allow us to find a small neighborhood $R' \supset R$ so that for vertices $x$ in the graph boundary of $R'$, $\theta_x \in (-\frac \pi8, \frac \pi8)$.  This point is addressed in Section \ref{sec:BoundSur}.

Next, if we start with such a boundary condition $\sigma^0$ for $R'$, we can look for maximizers of $-\mathcal{H}_{R'}(\sigma|\sigma^0)$.  Actually, it is better to use  \eqref{E:COVintro} and look for maximizers after a change of variables. 
There is a unique maximizer of $\mathcal K_{R'}$ and that this maximizer also has $\phi_x \in (- \frac \pi5, \frac \pi5)$ throughout $R'$.  Because of this and the fact that typically $m_x \sim \eps^2 |\log (\eps)|$, one should morally regard the maximizer as behaving like a solution to the discrete elliptic PDE with mass 
\begin{equation*}
(-\Delta+ \eps^2 |\log (\eps)|)  \phi_x=0.
\end{equation*}
First we modify $\sigma$ into a new configuration $\sigma^{(1)}$ on $\Lambda_N\backslash \Gamma$.  We keep $\sigma$ fixed on the exterior component $\mathscr E$ of $\Lambda_N\backslash \Gamma$ and note the value $\textrm{sgn}(\sigma\cdot e_1)$ on $\mathscr E \cap \delta(\Gamma) \backslash \Gamma$. We make the following transformation on the interior components $\mathcal I_j$.  If, on $\mathcal I_j \cap \delta(\Gamma) \backslash \Gamma$,   the \textrm{sign} of $\sigma\cdot e_1$ agrees with its value on the boundary of the exterior component, we keep $\sigma$ fixed on the entire interior component.  Otherwise, we reflect $\sigma$ across the $e_2$ axis on all of $\mathcal I_j$.  We call the result, which is defined only in $\Lambda_N\backslash \Gamma$, $\sigma^{(1)}$. 

The second modification $\sigma^{(2)}$ results from forcing spins in $\sigma^{(1)}$ which are near dirty boxes towards the optimizers via gradient descent.

After replacing $\sigma^{(2)}$ by the maximizer of $\mathcal K_{R'}$ inside $R'$ for each component $R$ of $\delta(\Gamma) \backslash \Gamma$, we can invert the change of variables, Lemma \ref{lem:cov}, to produce a new spin configuration $\sigma^{(3)}$, which disagrees with $\sigma$ only on $\delta(\Gamma) \backslash \Gamma$.
Because $\sigma^{(3)}$ is very close to one of $\pm e_1$ deep inside each thickened boundary component of $\Gamma$, at negligible further cost we can force it to be \textit{exactly} $\pm e_1$ in the middle of each of the boundary components, calling the result $\sigma^{(4)}$.

Let $\bar{\delta}(\Gamma)$ be the neighborhood of radius $\frac L2$ of $\Gamma$.  To obtain $\tilde{\sigma}$ from $\sigma^{(4)}$, we construct (almost-) maximizers for $-\mathcal{H}_{\bar{\delta}(\Gamma)}(\cdot)$.  This gives two configurations $\sigma^{\pm}_{\Gamma}$ uniformly close  to $\pm e_1$ throughout $\bar{\delta}(\Gamma)$.  The configuration  $\tilde{\sigma}$ is chosen to agree with $\sigma^{(4)}$ on $\bar{\delta}(\Gamma)^c$ and to agree with an appropriately chosen near maximizer of $-\mathcal{H}_{\bar{\delta}(\Gamma)}(\cdot)$ inside $\bar{\delta}(\Gamma)$.  

To construct the near maximizer, we reason as follows: if $Q\subset \bar{\delta}(\Gamma)$ is $\ell$ measurable, and in the absence of boundary conditions, the maximum \eqref{E:ObD} was well captured by $g_{Q, x}^N$.  It is tempting to glue these fields together in $\bar{\delta}(\Gamma)$ to form a patchwork near maximizer in $\bar{\delta}(\Gamma)$ but the cost from boundary mismatches is prohibitive.  This problem is circumvented by using, instead, $g^{\lambda, D}_{x, Q}$ with $\lambda=\eps^2 |\log (\eps)|^{1+1/16}$ as mentioned above.  The reason this is sufficient is that, because we work in two dimensions, each dyadic annulus $A_j=\{k: 2^j\leq \|k\|_2\leq 2^{j+1}\}$, $2^j\gg \sqrt{\lambda}$ in Fourier space provides roughly the same order one contribution to $ \mathcal E_{Q}(g^{\lambda, D}_{Q}), \mathcal E_{Q}(g^{N}_{Q})$.  As a result, for $\lambda$ not too large, we should have 
$ \mathcal E_{Q}(g^{\lambda, D}_{Q})=\mathcal E_{Q}(g^{N}_{Q})$ to leading order in $\ell$.  On the other hand due to the mass $\lambda$,  $\|g^{\lambda, D}_{Q}\|$ remains under control even if we increase the sidelength of $Q$ to $L$.  Finally, due to the Dirichlet boundary conditions,  we can glue $g^{\lambda, D}_{Q_i}$'s from distinct squares $(Q_i)_{i\in I}$ with minimal energetic cost from the boundary between squares in order to make a spin configuration in $\bar{\delta}(\Gamma)$.

After accounting for all errors and also what we gain in the comparison between $\sigma$ and $\tilde{\sigma}$ we are able to show
\[
-\mathcal{H}_{\Lambda_N}(\tilde{\sigma}|e_1)+\mathcal{H}_{\Lambda_N}(\sigma| e_1) \geq C \eps^2|\log (\eps)|^{1-\delta}|\Gamma|
\]
for some $C, \delta >0$.  
With this result in hand we can construct a Peierls argument and prove Theorem \ref{T:Main2}.

The remainder of this paper is organized as follows.  In Section \ref{S:Prelims}  we collect basic notation used throughout the rest of the article.  Section \ref{S:Contours} defines what are contours in our model in a careful way, while Section \ref{S:Prob} addresses what sort of requirements we need from the randomness $\alpha_x$ in order for a subregion of $\Z^2$ to be well behaved.  Regions satisfying these requirements are called clean regions and in Section \ref{S:dirty} we quantify the likelihood and sparsity of dirty regions.  Proofs related to defects are postponed to Section \ref{S:defects}. In Lemma \ref{lem:Peir} we state the Peierls estimate and prove Theorem \ref{T:Main2} on its basis.  Assuming that a given contour is clean, Section \ref{S:Groundstates} outlines the construction of almost maximizers to $-\mathcal{H}_{\bar{\delta}(\Gamma)}(\cdot)$ while Section \ref{sec:BoundSur} outlines the construction of $\sigma^{(1)}$ above.   In Section \ref{S:Peierls} we construct $\tilde{\sigma}$ and then prove the Peierls estimate.  Proofs of major estimates required in Sections \ref{S:surgery} and \ref{S:Peierls} are postponed until Sections \ref{S:defects} and \ref{S:Rand} respectively.  Section \ref{S:RandSub}, while technical, is key to the whole paper as it contains Proposition \ref{prop:ub_supg}.  Finally Section \ref{S:Tech} contains various technical lemmas needed elsewhere, for example in the analysis of maximizers of  $\mathcal K_{R'}(\phi_x|\phi_{{R'}^c})$.

\section{Notation: Hamiltonians, coarse-graining and contours}\label{S:Prelims}

\subsection{Scales and coarse-graining regions}
Fix $\eps>0$, which represents the random field strength in the model.  As explained above, this defines a length scale $ \eps^{-1} |\log(\eps)|^{-\frac{1}{2}}$ around which we will work.

\begin{comment}
Let us define two length scales which will form the basis for our coarse-graining scheme.

\begin{tcolorbox}
Let  $s, s \in (0,\frac{1}{2})$ and $\eta_{\lambda} \in (0,2)$ be parameters and define
\begin{equation}
\ell \sim \eps^{-1} |\log(\eps)|^{-\frac{1}{2} - s}, \, \, L \sim \eps^{-1} |\log(\eps)|^{-\frac{1}{2} + s},  \, \, \lambda \sim \eps^{2} |\log_2(\eps)|^{\eta_{\lambda}}
\end{equation}
\end{tcolorbox}

\end{comment}

Let $\ell, L$ be defined by
\begin{equation*} \label{scales}
\begin{split}
\log_2(\ell) & = \lfloor \log_2 \left( \eps^{-1}|\log(\eps)|^{-1/2 - s} \right) \rfloor, \\
\log_2(L) & = \lceil \log_2 \left( \eps^{-1}|\log(\eps)|^{-1/2 + s} \right) \rceil
\end{split}
\end{equation*}
where $s\in(0, 1/32)$ is fixed throughout and $\lfloor\cdot \rfloor$ resp. $\lceil \cdot \rceil$ are the floor resp. ceiling functions.  In the proof sketch given in the previous section, we fixed $s=1/64$ and also fixed various other exponents.  However, we feel that in deriving bounds, for clarity it is better to treat the exponents symbolically and will do so from now on.

Let  $L_0 \in \N$ be fixed. For a point $r\in L_0 \mathbb{Z}^2$ let $Q_{L_0}(r)$ denote the box of side-length $L_0$ with $r$ in its lower left corner. We say that in this case  the box $Q_{L_0}(r)$ is $L_0$-measurable. Let
\begin{align}\label{def:setQ}
\mathcal{Q}_{L_0} := &\biggl \{ Q: Q \text{ is } L_0 \text{ measurable }\biggr \}, \nonumber \\
\mathcal Q^*_{L_0}:=&\{\text{Squares $Q$ with side-length $L_0$ so that one of }\\
\nonumber&Q, Q+(0, L_0/2), Q+(L_0/2, 0), Q+(L_0/2, L_0/2)\text{ is $L_0$-measurable}\}.
\end{align}

To coarse-grain $\mathbb{Z}^2$, we first coarse-grain $\mathbb{R}^2$ by squares and then take the intersection of these squares with $\mathbb{Z}^2$.
For some set $A\subset \mathbb{Z}^2$ define $\widehat{A} \subset \mathbb{R}^2$ as
the union of closed boxes of side length 1 centered at elements of $A$ and call $A$ connected if $\widehat{A}$ is connected. The complement of $\widehat{A}$
decomposes into one infinite connected component $\textrm{Ext}(\widehat{A})$ and a union of finitely many finite components $\textrm{Int}(\widehat{A}) =\bigcup_{i=1}^m \textrm{Int}_i(\widehat{A})$.
We thicken the set $\widehat{A}$ by $L$  in the following way
\begin{equation}
\delta_{L_0} (\widehat{A}) := \bigcup_{\substack{r\in L_0 \mathbb{Z}^d: \\ \textrm{dH}(\widehat{Q}_{L_0}(r), \widehat{A}) < \ell }} \widehat{Q}_{L_0} (r)
\end{equation}
where $\textrm{dH}(\cdot, \cdot)$ is the Hausdorff distance in $\mathbb{R}^2$ between two sets w.r.t. the $\ell_{\infty}$-norm. The quantities $\delta_{L_0} (A), \textrm{Ext}(A), \textrm{Int}(A)$ are then just defined by $ \delta_{L_0} (A)=\delta_{L_0} (\widehat{A}) \cap \mathbb{Z}^2, \textrm{Ext}(A):= \textrm{Ext}(\widehat{A})\cap \mathbb{Z}^2$ resp. $\textrm{Int}(A):= \textrm{Int}(\widehat{A})\cap \mathbb{Z}^2$. The closed hull $\textrm{cl}(A)$ is defined as $\textrm{cl}(A) = \delta_{L} (A) \cup \textrm{Int}(A)$.

\subsection{Hamiltonians and Dirichlet energies}
We will use the notation
\[
g \lesssim f
\]
if there is some universal constant $C>0$ such that
\[
g \leq C f.
\]
Let $\{ \alpha_x \}_{x\in \Z^2}$ a collection of i.i.d. $N(0,1)$ random variables and let $\{ e_1,  e_2 \}$ denote the standard  unit basis vectors in $\R^2$. Our  spin variables, denoted by  $\sigma_x, x\in \Z^2$ take values in $\mathbb{S}^1$.
The space of spin configurations restricted to $R$ will be denoted by $\mathcal{S}_{R} = (\mathbb{S}^1)^{R}$, and on the full lattice simply by $\mathcal{S}=(\mathbb{S}^1)^{\Z^2}$. We say that two vertices $x,y$ are nearest neighbours, $x\sim y$ for $x,y\in \Z^2$, if $\textrm{dist}(x,y)=1$ where $\textrm{dist}(\cdot, \cdot)$ denotes the Euclidean distance. We will denote by $\| \cdot \|_p$
the $\ell_p(\R^2)$ norm in $\R^2$, $p>1$. If $R\subset \Z^2$ and $f: R\rightarrow \R^2$ we define
\[
\| f\|_{p, R} := \left ( \sum_{x\in R} \| f_x\|^p_2\right )^{1/p}.
\]
For $\sigma \in \mathcal{S}_{R}$, call
\begin{equation*}
\mathcal{E}_{R}(\sigma)=\sum_{{x\sim y; \atop x, y \in R}} \| \sigma_x - \sigma_y\|_2^2
\end{equation*}
the Dirichlet energy, or just the energy,  of $\sigma$.

We shall occasionally need to distinguish inner and outer boundaries of sets: the inner, resp. outer,  boundaries  of a region $R \subset \Z^2$ are defined as
\[
\begin{split}
\partial^i R  &= \{ x\in R: \textrm{dist}(x,R^c) \leq 1\},\\
\partial^o R  &= \{ x\in R^c: \textrm{dist}(x,R) \leq 1\}.
\end{split}
\]

For $Q$ a square of sidelength $L_0$ in $\Z^2$ let  $\Delta_Q^D$, resp. $\Delta_Q^N$, denote the Dirichlet, resp. Neumann, Laplacian in $Q$ :
\[
\begin{split}
-\Delta^D_{Q} \cdot f_x & = \sum_{{y\sim x, \atop  y\in Q\cup \partial^o Q }} (f_x - f_y), \text{  where $f$ is extended to be $0$ on } \partial^o Q\\
-\Delta^N_{Q} \cdot f_x & = \sum_{{y\sim x, \atop  y \in Q}} (f_x - f_y).
\end{split}
\]
Let $e=\langle x, y \rangle$ be an oriented edge, we sometimes use the notation $f_y-f_x= \nabla_e f$ .

For $\eps>0$ fixed and a mass $\lambda:=\eps^2 |\log (\eps)|^{1+\eta_{\lambda}}$ we define random fields $g$ by
\[
\begin{split}
g^{\lambda, D}_{x, Q}  &= \eps (-\Delta^D_{Q} + \lambda)^{-1}\cdot \alpha_x, \\
g^{\lambda, N}_{x, Q}  &= \eps (-\Delta^N_{Q} + \lambda)^{-1}\cdot \alpha_x.\\
\end{split}
\]
In the proof sketch, Section \ref{S:Intuit}, we fixed $\eta_{\lambda}=1/16$, but in general we allow $\eta_{\lambda}\in (2s, 1/8)$. We take it to be fixed throughout (recall $s\in (0,1/32)$).

\begin{definition}
Given $\tau \in \mathcal{S}$ and $\sigma \in \mathcal{S}_R$ we may extend $\sigma$ to be defined on $\mathcal{S}$ by setting $\sigma|_{R^c} = \tau|_{R^c}$. The Hamiltonian $\mathcal{H}$ in $R$ with boundary condition $\tau$ is defined by

\begin{equation}\label{def:Ham1}
-\mathcal{H}_{R}(\sigma|\tau) = -\frac{1}{2} \mathcal{E}_{R}(\sigma) + \eps \sum_{x\in R} \alpha_x e_2 \cdot \sigma_x + \frac{1}{2}\sum_{{x\sim y; \atop x\in R, y\in R^c}} \|\sigma_x - \tau_y \|^2
\end{equation}
resp. with free boundary condition
\begin{equation}
-\mathcal{H}_{R}(\sigma) = -\frac{1}{2} \mathcal{E}_{R}(\sigma) + \eps \sum_{x\in R} \alpha_x e_2 \cdot \sigma_x
\end{equation}
for some $\eps >0$.
\end{definition}

\subsection{Bad behaviour from the spins: an auxiliary Ising-type spin system}

In this subsection we introduce phase variables $\Psi$ which will differentiate, with respect to $\sigma$, well behaved versus poorly behaved boxes. 

Typically, we expect the Dirichlet energy to be of order
\[
\mathcal{E}_Q(\sigma) \leq \eps^2 |\log (\eps)| |Q|
\]
and we also expect $\sigma$ to have a significant alignment with $\pm e_1$.
A box $Q$ will be \textit{bad} if $\sigma$ violates one of these two conditions on $Q$.

More precisely, let $\chi\in (0, 1/16)$ be fixed. On the scale $\ell$ and  fixing 
 $\sigma \in \mathcal{S}$, $z\in \mathbb{Z}^2$ set

\begin{equation*}\label{def:Psi0}
\psi^0_z (\sigma) = \begin{cases}
1 & \text{ if } \forall r\in \frac{\ell}{2} \mathbb Z^2 \text{ s.t. } \| z- r \|_2 \leq 2 \ell, \, \, \mathcal{E}_{Q_{\ell}(r)} (\sigma) \leq \eps^2 |\log(\eps)|^{1+\chi} \ell^{2},  \\
0. & \text{otherwise.}
\end{cases}
\end{equation*}

Next, fixing $\xi\in (0, 1)$, let $\sigma(Q):= \frac{1}{|Q|} \sum_{x\in Q} \sigma_x$ and define
\begin{equation*}
\psi^{1,\xi}_z (\sigma) = \begin{cases}
1 & \text{ if } \forall r \in \frac{\ell}{2} \mathbb Z^2 \text{ s.t. } \|z-r \|_2 \leq 2 \ell: \, \, \sigma(Q_{\ell}(r)) \cdot e_1 \in [1-\xi, 1], \\
-1 & \text{ if } \forall r \in \frac{\ell}{2} \mathbb Z^2 \text{ s.t. } \|z-r \|_2 \leq 2 \ell: \, \, \sigma(Q_{\ell}(r)) \cdot e_1 \in [-1, -1+\xi], \\
0 & \text{ otherwise. }
\end{cases}
\end{equation*}
Finally we set $\psi_y=\psi^0_y \psi^{1, \xi}_y$.

We use these functions coarse-grain on the scale $L$ as follows.  Let $Q_L(r)$ be an $L$-measurable box.  Then for all $z\in Q_L(r)$ we set
\begin{equation} \label{def: Psi}
\Psi^{\xi}_z (\sigma) = \begin{cases}
1 & \text{ if } \forall r' \text{ s.t. } \| r'-r \|_2 \leq 2L \text{ and } y\in Q_L(r') \text{ we have } \psi^0_y \psi^{1, \xi}_y =1, \\
-1 & \text{ if } \forall r' \text{ s.t. } \| r'-r \|_2 \leq 2L\text{ and } y\in Q_L(r') \text{ we have } \psi^0_y \psi^{1, \xi}_y =-1, \\
0. & \text{ otherwise }
\end{cases}
\end{equation}
 We illustrate this in the following Figure \ref{fig:good_bad}.

\begin{figure}[htb]
\begin{minipage}{0.45\textwidth}
\centering
\includegraphics[scale=0.6]{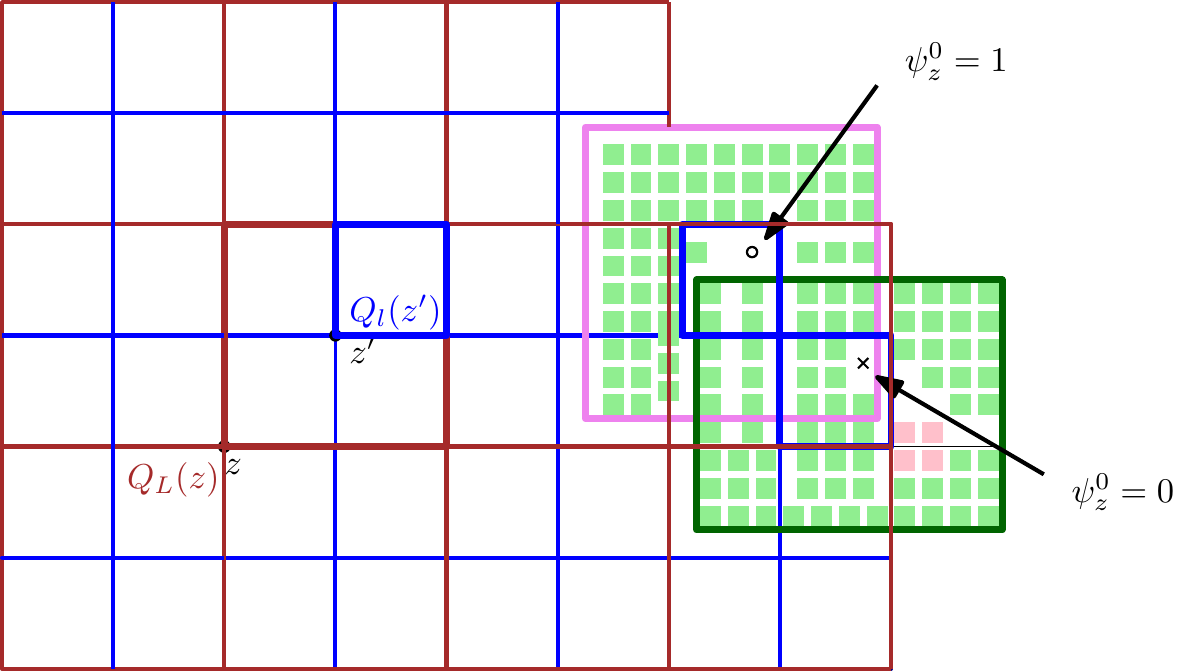}
\end{minipage}
\hspace{0.05\textwidth}
\begin{minipage}{0.45\textwidth}
\centering
\includegraphics[scale=0.6]{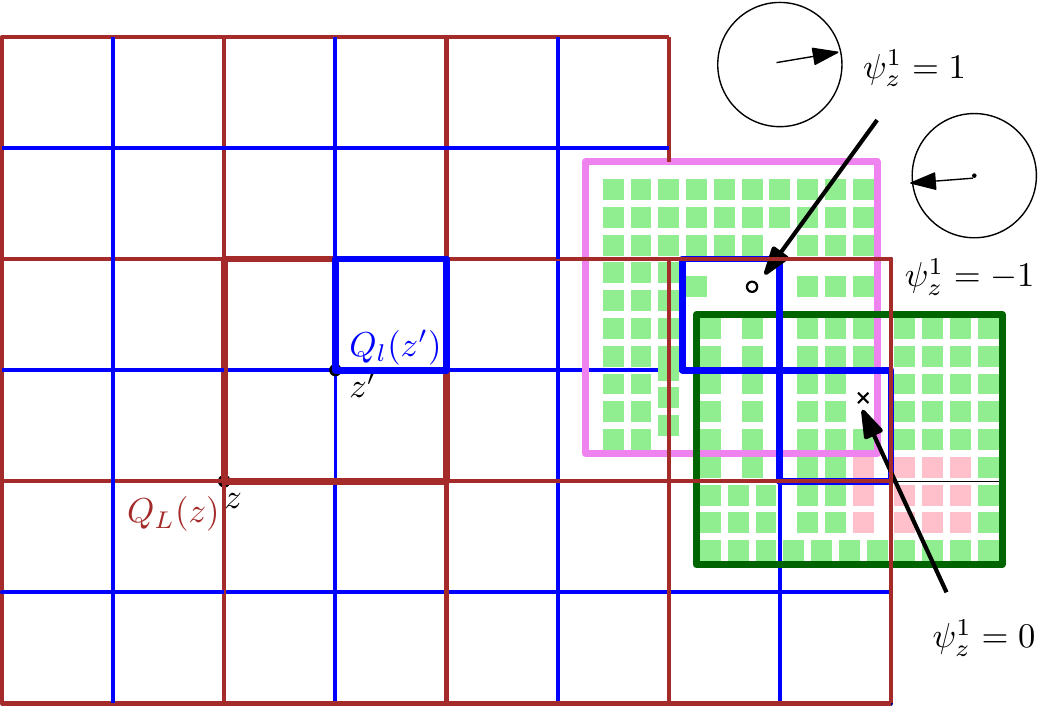}
\end{minipage}

\caption{Example of tiling the lattice and definition of good and bad boxes}
\label{fig:good_bad}

\end{figure}

\subsection{Contours}\label{S:Contours}

Given a configuration $\sigma \in \mathcal{S}$ and fixing an error $\xi >0$, with $\psi, \Psi$ defined above, we partition the lattice $\mathbb{Z}^2$ in three  regions: $R^+:= \{ z\in \mathbb{Z}^2: \Psi^{\xi}_z =1 \}, R^-:=\{ z\in \mathbb{Z}^2: \Psi^{\xi}_z =-1\} $ and $R^0:=\{ z\in \mathbb{Z}^2: \Psi^{\xi}_z =0\}$.  The  region $R^0$ separates $\Psi^{\xi}=1$ from $\Psi^{\xi}=-1$ and will play the role of a contour.  Note that by definition, all $3$ regions are $L$-measurable.

\begin{definition}\label{def:contoursPsi}
An abstract contour $\Gamma$ is a pair $(\textrm{sp}(\Gamma), \psi^{\xi}_{\Gamma})$, where $\textrm{sp}(\Gamma) \subset \Z^2$ is connected and $L$-measurable and $\psi_z(\Gamma): \delta_L(\Gamma) \rightarrow \{-1,0,+1\}$ is a map specifying the values of $\psi_z(\sigma)$ on $\delta_L(\Gamma)$ on the scale $\ell$. 
\end{definition}

We set $|\Gamma|= |\textrm{sp}(\Gamma)|$ and introduce  the notation $\delta_{\textrm{ext}}(\Gamma)$ resp. $\delta^{\pm}_{\textrm{int}}(\Gamma)$
\[
\delta_{\textrm{ext}}(\Gamma) :=  \delta_L(\Gamma) \cap \textrm{Ext}(\Gamma)
\]
resp.
\[
\delta^{\pm}_{\textrm{int}}(\Gamma) :=  \delta_L(\Gamma) \cap \textrm{Int}(\Gamma)\cap \{z: \psi_z(\Gamma)=\pm1\}.
\]

Now we define a \textit{concrete contour} or just a \textit{contour}. 
\begin{definition}
Given an abstract contour  $\Gamma$ we call it a concrete contour for the configuration $\sigma$, or just a contour for $\sigma$ if $sp(\Gamma)$ is a maximal connected component of the set $R^0(\sigma)$ and $\psi^{\chi}_z(\sigma)\equiv\psi_z(\Gamma)$ for every $z \in sp(\Gamma)$. Let further
\[
\mathbb{X}(\Gamma) = \{ \sigma: \Gamma \text{ is a contour for  } \sigma\}.
\]
\end{definition}

\begin{definition}
The contours $\Gamma_1$, $\Gamma_2$ are said to be compatible if 
$\delta(\Gamma_1) \cap \textrm{sp}(\Gamma_2) =\varnothing$
and
\[
\psi_z(\Gamma_1) = \psi_z(\Gamma_2), \, \, \forall z\in \delta(\Gamma_1) \cap \delta(\Gamma_2). 
\]
\end{definition}

An example of a contour can be found in Figure \ref{fig:cont}.

\begin{figure}[htb]
\centering
\includegraphics[scale=0.5]{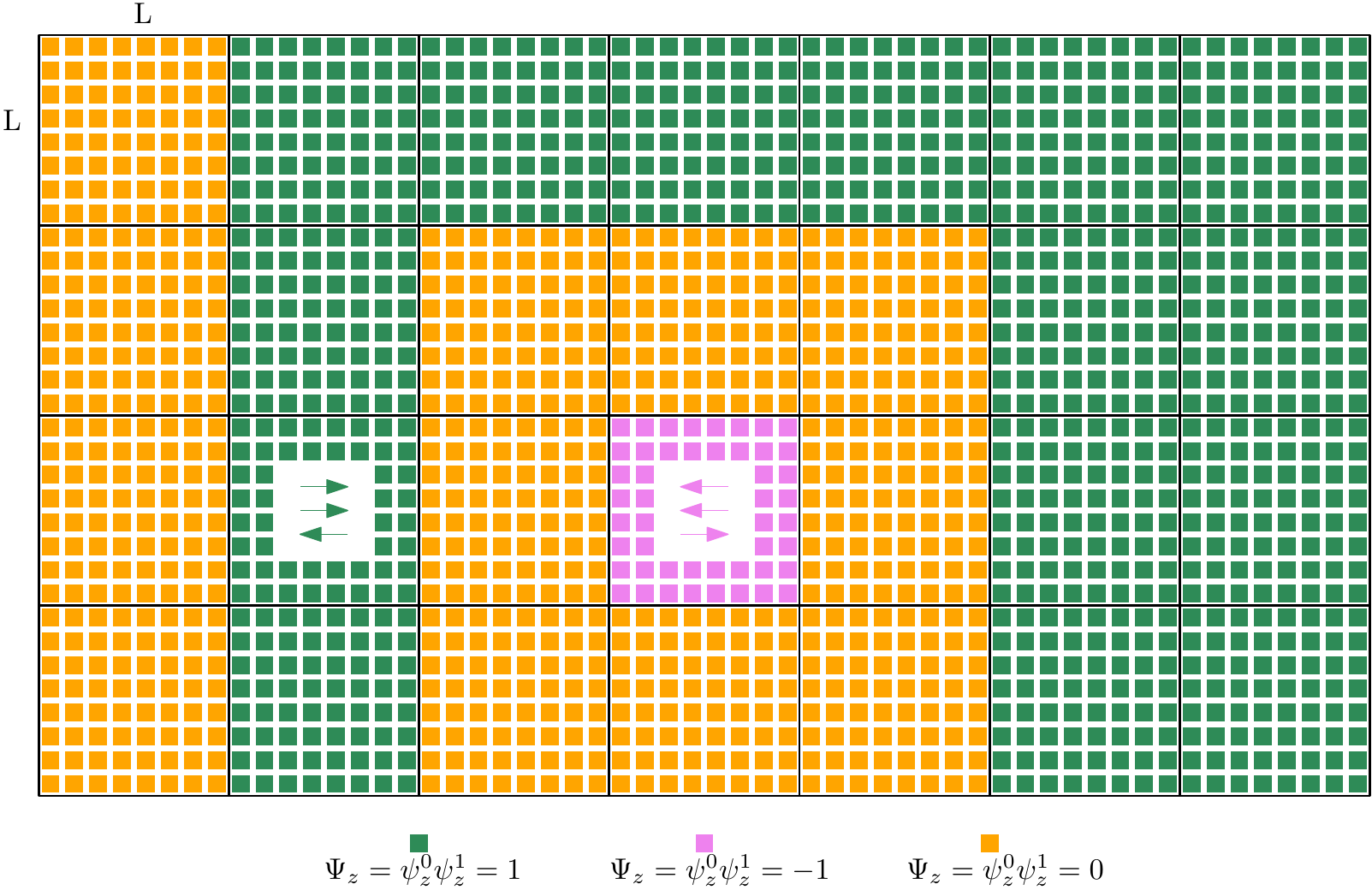}
\caption{Example of a contour (orange) separating configurations which are mostly oriented towards $e_1$ resp. $-e_1$}
\label{fig:cont}
\end{figure}

\subsection{Bad behaviour from the random field: clean and dirty boxes and regular regions}
\label{S:Prob}
Given a box  $Q$ of sidelength $L_0$  and $x\in Q$, let 
\[
m_{x} (\omega) = \sum_{y\sim x: y \in Q} (g^{\lambda, D}_{x, Q}(\omega) - g^{\lambda, D}_{y, Q}(\omega))^2.
\]
Given $A>0$ we define the event
\begin{multline}
\mathcal{A}_{Q} : = \biggl \{\omega: \forall x\in Q: \textrm{dist}(x, \partial^o Q) \geq L_0^{3/4}  \text{ and for }   r\in[L_0^{1/2},L_0^{3/4}],\\ \; \frac{1}{\eps^2 r^2} \sum_{\| y-x\|_2 \leq r} m_{y}(\omega) \geq A |\log(r)|\biggr \}.
\end{multline}
\begin{definition}\label{def:nice}
Fixing $C>c>0$,  we will call a square $Q \in \mathcal{Q}_{L_0}$ \textit{clean} if the following assumptions are satisfied for all squares $Q'\in \mathcal Q^*_{L_0}$ intersecting $Q$:
\begin{itemize}
\item[(C1): ] $\mathcal{A}_R$ occurs.
\item[(C2): ] $\| g^{\lambda, i}_{R} \|_{\infty} \leq  \eps \lambda^{-1/2} |\log(\eps)|^{\eta}$ for $i\in \{N, D\}$ 
\item[(C3): ] $\| \nabla g^{\lambda, i}_{Q'} \|_{\infty} \leq C \eps |\log(\eps)|$ for $i\in \{N, D\}$
\item[(C4): ] $c \eps^2 |\log(\eps)| \leq \frac{\|\nabla g^{\lambda, i}_{Q'} \|^2_2}{|Q'|} \leq C \eps^2 |\log(\eps)|$, $i\in \{N,D\}$
\item[(C5): ] $\|\alpha \|_{\infty} \leq C |\log(\eps)|$
\item[(C6): ] $|\mathcal E_{Q'}(g^{\lambda, N}_{Q'})- \mathcal E_{Q'}(g^{\lambda, D}_{Q'})|\leq  \frac{\|\nabla g^{\lambda, N}_{Q'}\|^2_2}{(\log (L_0))^{1/4}}$.
\end{itemize}
where we choose $\eta\in (0,\frac{\eta_{\lambda}}{2})$ and recall that $\lambda=\eps^2|\log(\eps)|^{1+\eta_{\lambda}}$,  $\eta_{\lambda} \in (2s,\frac{1}{8})$ and $s\in (0,\frac{1}{32})$. 
\end{definition}
The above conditions quantify the bounds we need to be able to perform surgery at the boundary of contours.  We will later show that, provided we choose $A, c$ sufficiently small and $C$ sufficiently large, there is $\delta, \eps_0>0$ so that the probability that one of these conditions fails in a given box is exponentially unlikely in $|\log (\eps)|^{\delta}$ if $\eps\in (0, \eps_0)$, see Section \ref{S:RandSub}.

We will call $Q\in \mathcal Q_{L_0}$ \textit{dirty} if it is not clean.  We introduce the following indicator function:
\begin{equation*}
\Xi_{L_0} (Q) = \begin{cases}
1 & \text{ if } Q \text{ is clean} \\
0 & \text{ if } Q \text{ is dirty}.
\end{cases}
\end{equation*}

Since the $\alpha, g^{\lambda, i}_{Q_{\eta}}$ are unbounded, specifying whether a box is clean or dirty is insufficient to control their contribution to the energy of a spin configuration in a given region.  Thus our next task is to define \textit{regular regions}.  Regular regions  may contain dirty boxes, but their overall influence on the energy of the region will be mild. 
Let $Y\subset \mathbb{Z}^2$ be a $L_0$-measurable bounded and connected set. If $f, g$ are functions on $\mathcal Q_{L_0}$,
let
\[
\langle f,g \rangle_Y = \sum_{Q\in \mathcal Q_{L_0}: Q \subseteq Y} f(Q) g(Q).
\]
Let  $N^Y_{L_0} = |Y|/L_0^2$ be the number of boxes in an $L_0$ measurable cover of $Y$. For $Q\in \mathcal Q_{L_0}$ consider the functions
\begin{equation*}\label{def:F}
F_{\lambda, L_0}(Q):=  \max_{\substack{i\in \{D, N\},\\ Q'\in \mathcal Q^*_{L_0}:\\ Q'\cap Q\neq \varnothing}} \frac{\| g^{\lambda, i}_{Q'}\|^2_2}{L_0^2}, \, \, F^{\nabla}_{\lambda, L_0}( Q):= \max_{\substack{i\in \{D, N\}, \\Q'\in \mathcal Q^*_{L_0}:\\ Q'\cap Q\neq \varnothing}} \frac{\| \nabla g^{\lambda, i}_{Q'}\|^2_2}{L_0^2}, \, \, \, F_{\infty, L_0}(Q) :=  \max_{\substack{Q'\in \mathcal Q^*_{L_0}:\\ Q'\cap Q\neq \varnothing}} \| \alpha \|_{\infty, Q'}.
\end{equation*}

For the next definition, we set $\rho= 5/4$ and choose $\zeta, \chi \in (0, 1/16)$.  Again these parameters will be fixed throughout.
\begin{definition}\label{def:regularY}
We call a bounded and connected set $Y$ controlled if for some $\lambda$  the following four conditions are satisfied:
\begin{itemize}

\item[(R0): ]  $\langle 1-\Xi_{L_0}, 1 \rangle_Y  \leq |\log(\eps)|^{-\rho} N^Y_{L_0}$
\item[(R1): ] $ \langle F^{\nabla}_{\lambda, L_0}, \1_{F^{\nabla}_{\lambda, L_0} \geq \eps^2 |\log(\eps)|^{1+\chi}} \rangle_Y  \leq \eps^2 |\log(\eps)|^{\zeta} N^Y_{L_0} $
\item[(R2): ] $\langle F_{\lambda, L_0},\1_{F_{\lambda, L_0} \geq \eps^2 \lambda^{-1}  |\log(\eps)|^{\chi}}\rangle_Y  \leq  \eps^2 \lambda^{-1} |\log(\eps)|^{\zeta} N^Y_{L_0} $
\item[(R3): ] $\langle F_{\infty, L_0}, \1_{F_{\infty, L_0} \geq |\log(\eps)|^{\chi}} \rangle_Y  \leq  |\log(\eps)|^{\zeta} N^Y_{L_0}$.
\end{itemize}
\end{definition}

\begin{definition}[Regular region]\label{def:clean}
We call an $L_0$-measurable, connected and bounded set $Y$ regular if  the thickened set $\delta(Y)$ is controlled at scales $L_0 \in \{ \ell/2, \ell, L/16\} $. Otherwise $Y$ is called bad.
\end{definition}
Given $\omega \in \Omega$, let us define 
\[
\mathcal{D}_{\omega} = \{ \mathcal{Y} \subset \Z^2: \mathcal{Y} \text{ connected }, \text{ $L$-measurable and  one of $\ell/2$-, $\ell$- or $L/16$-\text{bad}}\}
\]
and
\begin{equation}\label{eq:dirtyregion}
\mathbb{D}_{\omega} = \bigcup_{\mathcal{Y} \in \mathcal{D}_{\omega}} \textrm{cl}(\mathcal{Y}), \, \, \, \,  \mathbb{D}_{\Lambda} = \mathbb{D}_{\omega} \cap \Lambda
\end{equation}
where $\Lambda \subset \Z^2$.

\begin{lemma}\label{lem:dirty}
There exists $\eps_0>0$, so that for the parameters $\rho, \eta, \chi, \zeta$ fixed as above, there is $\delta \in (0,1)$ so that
 we have
\[
Var(|\mathbb{D}_{\Lambda}|) \lesssim e^{-c |\log (\eps)|^{\delta}} |\Lambda|.
\]
Consequently,
$\mathbb{P}$-a.s. there exists $N_0$ such that for all $N\geq N_0$ and  any $\Lambda_N \uparrow \mathbb{Z}^2$
\[
|\mathbb{D}_{\Lambda_N}| \lesssim e^{-c |\log (\eps)|^{\delta}} |\Lambda_N|.
\]
\end{lemma}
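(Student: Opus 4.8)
The plan is to prove the variance bound by a direct first-and-second-moment estimate on the indicator that a given vertex lies in $\mathbb{D}_{\omega}$, exploiting the finite range of dependence of the fields $g^{\lambda,i}_{Q}$ and $\alpha$. First I would reduce the statement to a pointwise exponential bound: it suffices to show that there is $\delta \in (0,1)$ with
\[
\mathbb{P}\bigl( x \in \mathbb{D}_{\omega}\bigr) \lesssim e^{-c|\log(\eps)|^{\delta}}
\]
uniformly in $x$, together with a uniform bound on the correlation $\mathbb{P}(x,y\in \mathbb{D}_{\omega})$ that decays once $\|x-y\|$ exceeds a fixed multiple of $L$. Summing the first bound over $x \in \Lambda$ controls $\E|\mathbb{D}_{\Lambda}|$; the second controls $\mathrm{Var}(|\mathbb{D}_{\Lambda}|) = \sum_{x,y}\bigl[\mathbb{P}(x,y\in\mathbb{D}_\omega) - \mathbb{P}(x\in\mathbb{D}_\omega)\mathbb{P}(y\in\mathbb{D}_\omega)\bigr]$, since the event $\{x \in \mathbb{D}_\omega\}$ is, up to the cluster-enlargement encoded in $\mathrm{cl}(\cdot)$, measurable with respect to the random field in a bounded neighborhood of the bad cluster containing $x$.

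The main work is the pointwise estimate. By definition $x\in\mathbb{D}_\omega$ forces $x$ to lie in $\mathrm{cl}(\mathcal{Y})$ for some connected $L$-measurable $\mathcal{Y}$ that is $\ell/2$-, $\ell$- or $L/16$-bad, i.e.\ whose thickening $\delta(\mathcal Y)$ fails one of (R0)--(R3) at one of the three scales. I would organize the argument as a contour-style sum over the possible bad clusters $\mathcal{Y}$: for a cluster of $n$ boxes (at scale $L$), there are at most $C^n$ choices of shape rooted near $x$, and the corresponding thickened region contains $\gtrsim n (L/L_0)^2$ boxes at scale $L_0$. The failure of (R0) at $\delta(\mathcal Y)$ means at least $|\log(\eps)|^{-\rho}$-fraction of these boxes are dirty, and by Section \ref{S:RandSub} each box is dirty with probability at most $e^{-c|\log(\eps)|^{\delta'}}$ independently of boxes at distance $\gtrsim L_0^{3/4}$; a standard large-deviation / entropy-versus-energy computation then shows the probability of such a cluster is at most $\exp(-c n \,|\log(\eps)|^{-\rho}|\log(\eps)|^{\delta'}(L/L_0)^2 + Cn)$, which beats the $C^n$ entropy once $\eps$ is small, because $(L/L_0)^2 \sim |\log(\eps)|^{4s}$ grows polynomially in $|\log(\eps)|$ while $C^n$ only costs a constant per box. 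Summing over $n\ge 1$ gives the claimed bound with some $\delta<\min(\delta', 4s - \rho + \dots)$; the analogous estimates for failure of (R1)--(R3) follow the same pattern, using that $F^\nabla_{\lambda,L_0}$, $F_{\lambda,L_0}$, $F_{\infty,L_0}$ have stretched-exponential (in fact Gaussian-type, after truncating at the stated thresholds) tails, so that the truncated sums exceed $|\log(\eps)|^{\zeta} N^Y_{L_0}$ only with probability exponentially small in a power of $|\log(\eps)|$ times the number of boxes.

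Finally, the almost-sure consequence is a routine Borel--Cantelli argument: taking $\Lambda_N \uparrow \Z^2$, Chebyshev together with the variance bound gives
\[
\mathbb{P}\Bigl( \bigl| |\mathbb{D}_{\Lambda_N}| - \E|\mathbb{D}_{\Lambda_N}| \bigr| > \tfrac12 \E|\mathbb{D}_{\Lambda_N}| \Bigr) \lesssim \frac{e^{-c|\log(\eps)|^\delta}|\Lambda_N|}{\bigl(\E|\mathbb{D}_{\Lambda_N}|\bigr)^2},
\]
and since $\E|\mathbb{D}_{\Lambda_N}| \lesssim e^{-c|\log(\eps)|^\delta}|\Lambda_N|$ one passes to a subsequence $N_k$ along which $|\Lambda_{N_k}|$ grows geometrically, applies Borel--Cantelli, and then interpolates to all $N$ using monotonicity of $N \mapsto |\mathbb{D}_{\Lambda_N}|$ in $\Lambda_N$. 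I expect the main obstacle to be bookkeeping the entropy of bad clusters against the (stretched-exponentially small but $\eps$-dependent) per-box failure probabilities across the three different scales $\ell/2, \ell, L/16$ simultaneously, and in particular making sure the polynomial-in-$|\log\eps|$ gain from the number of subboxes in a thickened $L$-box genuinely dominates the $O(1)$-per-box entropy; this is exactly where the precise choice of exponents $\rho=5/4$, $\zeta,\chi<1/16$ and the relation $\lambda = \eps^2|\log\eps|^{1+\eta_\lambda}$ enter, and where I would reference Proposition \ref{prop:ub_supg} and the per-box estimates of Section \ref{S:RandSub}.
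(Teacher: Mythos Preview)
Your overall architecture matches the paper's: bound $\mathbb{P}(x\in\mathbb{D}_\omega)$ by a contour sum over bad connected $\mathcal{Y}$ (this is Proposition~\ref{prop:dirty}, which you effectively re-derive inline), then control the variance via a covariance estimate, then Borel--Cantelli along a dyadic subsequence $\Lambda_{2^k}$ with monotone interpolation. The one place your sketch needs correction is the decorrelation step. The event $\{x\in\mathbb{D}_\omega\}$ is \emph{not} measurable with respect to the field in any fixed bounded neighborhood of $x$: a bad cluster $\mathcal{Y}$ with $Q(x)\subset\mathrm{cl}(\mathcal{Y})$ can be arbitrarily large, so there is no finite-range dependence to invoke directly. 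What the paper does instead is split on cluster size: for $x_1,x_2$ with $\mathrm{dist}_L(x_1,x_2)\ge 6$, either every bad $\mathcal{Y}_1$ with $Q(x_1)\subset\mathrm{cl}(\mathcal{Y}_1)$ stays inside the ball $B_1=\{y:|x_1-y|\le|x_1-x_2|/3\}$---in which case the events at $x_1$ and $x_2$ depend on disjoint collections of $\alpha$'s and are genuinely independent---or some $\mathcal{Y}_1$ escapes $B_1$, an event whose probability the \emph{tail} of the same contour sum bounds by $\lesssim e^{-c|\log\eps|^\delta\,\mathrm{dist}_L(x_1,x_2)}$. This yields exponential decay of the covariance in block distance, which then sums to the variance bound.

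A smaller point: the factor $(L/L_0)^2$ you introduce in the entropy--energy balance is unnecessary. Proposition~\ref{prop:dirty} already gives $\mathbb{P}(\mathcal{Y}\text{ bad})\le e^{-c|\log\eps|^\delta N^{\mathcal{Y}}_{L_0}}\le e^{-c|\log\eps|^\delta N^{\mathcal{Y}}_L}$ since $N^{\mathcal{Y}}_{L_0}\ge N^{\mathcal{Y}}_L$ for $L_0\le L$, and $e^{-c|\log\eps|^\delta}$ beats any fixed shape-entropy constant $C$ once $\eps$ is small; no comparison of the two scales is required here.
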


We would like to illustrate dirty and clean boxes in the following figure.

\begin{figure}[htb]
\centering
\includegraphics[scale=0.5]{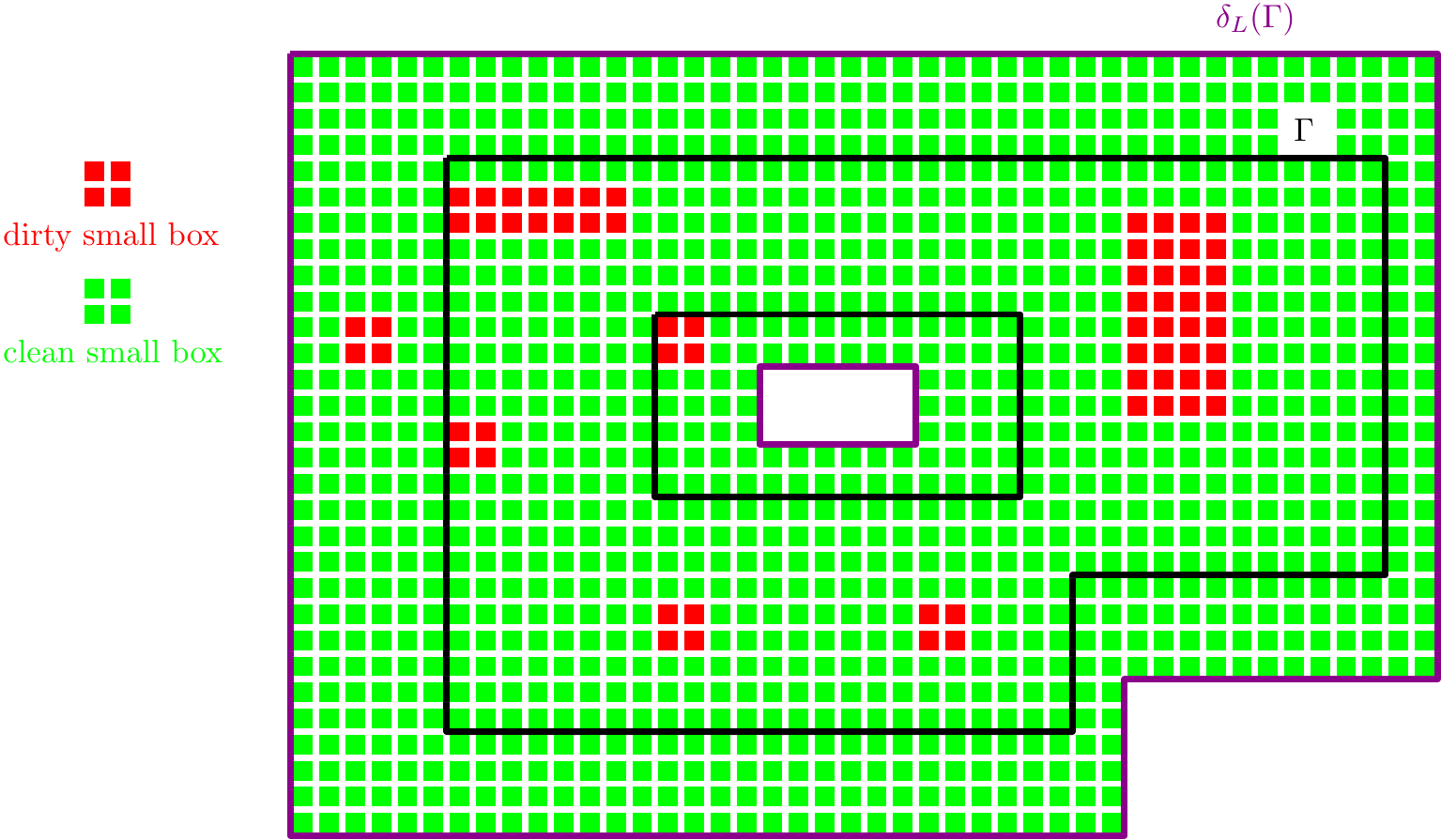}
\caption{Example dirty and clean boxes in $\delta(\Gamma)\setminus \Gamma$}
\end{figure}

\section{Surgery on a Contour}\label{S:surgery}
\subsection{Bulk of a regular contour}\label{sec:bulkmod}

Let  $\overline{\delta}(\Gamma) = \delta_{L/2}(\Gamma)\cap \Lambda_N$  and denote
\[
\mathcal{Q}_{L_0}(\Gamma) = \{ Q \in \mathcal{Q}_{L_0}: Q \cap \overline{\delta}(\Gamma) \neq \varnothing \}.
\]
Given a region $R\subset \Lambda_N$, define the following boundary condition $\textrm{ext}$:
\begin{equation}\label{def:ext}
\textrm{ext}_x = \begin{cases}  e_1 & \text{ if } x \in \partial^o R \cap \Lambda_N^c  \\
 \varnothing & \text{  otherwise }
\end{cases}
\end{equation}
which is equal to the free boundary condition if the outer boundary of a region $R$ does not touch the complement of $\Lambda_N$ and $e_1$ otherwise. 

We want to stitch together approximate ground states defined separately on the microscopic blocks $Q \in \mathcal{Q}_{\ell/2}(\Gamma)$ to make a reference configuration $\bar{\sigma}$ defined on $\bar{\delta}({\Gamma})$.

If a box $Q$  is dirty, we cannot control $g^{\lambda, i}_Q$ well enough to exhibit a gain from the random field.  We therefore set $\overline{\sigma}_{y}:=e_1$ for all $y \in Q$.  Below we will verify that the possible energetic cost from this will be small since we are working on regular contours. 
On the other hand if a box is clean, then we can take advantage of the random field fluctuations.  We set
$\theta_y := g^{\lambda, D}_{Q, y}$
and let
\begin{equation}\label{def:sigma-g}
\overline{\sigma}_{y}: = (\cos(\theta_y), \sin(\theta_y)) \text{ for all $y\in Q$.}
\end{equation}

Let us set $$Max_{Q}(\mathcal{H}, \textrm{ext})=\max_{\sigma \in \mathcal S_Q} \{- \mathcal{H}_Q(\sigma|\textrm{ext}) \}$$ where $\textrm{ext}$ is the boundary condition defined in \eqref{def:ext}.
\begin{lemma}\label{lem:5.1}
Let $\Gamma$ be a clean contour and $\overline{\delta}(\Gamma)$ defined above. There exists $\eps_0 >0$ and constants $C, c >0$ such that if $0<\eps < \eps_0$,  and any $S\subset \bar{\delta}(\Gamma)$ which is $\ell/2$-measurable,
\begin{equation}
\label{E:5.1}
\sum_{Q \in \mathcal{Q}_{\ell/2}(\Gamma):Q\subseteq S} Max_{Q}(\mathcal{H}, \textrm{ext}) + \mathcal{H}_{S}(\overline{\sigma})  \lesssim \,  \eps^2|\log(\eps)|^{ 5/8} |\Gamma|.
\end{equation}
In addition, if $\Gamma$ is a contour for $\sigma$,  then
\begin{equation}
\label{E:5.12}
- \mathcal{H}_{\overline{\delta}(\Gamma)}(\overline{\sigma}| ext) + \mathcal{H}_{\overline{\delta}(\Gamma)}(\sigma|ext)\gtrsim \xi^2 \eps^2 |\log (\eps)|^{1-4s} |\Gamma|,
\end{equation}
where $s\in (0, 1/32)$.
\end{lemma}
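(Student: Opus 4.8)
\emph{The gluing bound \eqref{E:5.1}.} The plan is to decompose, using that the $\ell/2$-measurable blocks tile $\Z^2$,
\[
\sum_{Q\in\mathcal Q_{\ell/2}(\Gamma):\,Q\subseteq S}Max_Q(\mathcal H,\textrm{ext})+\mathcal H_S(\overline\sigma)=\sum_{Q}\bigl(Max_Q(\mathcal H,\textrm{ext})+\mathcal H_Q(\overline\sigma|_Q)\bigr)+\tfrac12\mathcal E^{\mathrm{cr}}_S(\overline\sigma),
\]
where $\mathcal E^{\mathrm{cr}}_S$ carries the edges of $S$ joining two distinct blocks and each summand is nonnegative. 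For the cross term we would use that on a clean block $g^{\lambda,D}_{x,Q}=0$ on $\partial^oQ$, so $|g^{\lambda,D}_{x,Q}|\le\|\nabla g^{\lambda,D}_Q\|_\infty\le C\eps|\log(\eps)|$ on $\partial^iQ$ by (C3); hence each cross edge costs $\lesssim\eps^2|\log(\eps)|^2$, and as $S$ has $\lesssim|\Gamma|/\ell$ such edges this term is $\lesssim\eps^3|\log(\eps)|^{5/2}|\Gamma|$, negligible. On a clean block $Q$, away from $\partial\Lambda_N$ the condition $\textrm{ext}$ agrees with the free one (the boundary blocks being a negligible surface correction); the maximiser of $-\mathcal H_Q$ has gradients $O(\eps|\log(\eps)|)$ by the maximiser analysis of Section \ref{S:Tech}, so a second-order expansion gives $Max_Q(\mathcal H,\textrm{free})\le\tfrac{\eps^2}2\langle\widehat\alpha,(-\Delta^N_Q)^{-1}\widehat\alpha\rangle+(\text{quartic remainder})$, while directly $-\mathcal H_Q(\overline\sigma|_Q)=\tfrac12\mathcal E_Q(g^{\lambda,D}_Q)+\lambda\|g^{\lambda,D}_Q\|_2^2+(\text{remainder})$; subtracting, the dominant discrepancy is the Neumann--Dirichlet gap $\mathcal E_Q(g^{\lambda,N}_Q)-\mathcal E_Q(g^{\lambda,D}_Q)$ bounded by (C6), together with the massless--massive Neumann gap (only $\lesssim\eps^2\ell^2$, by the dyadic-scale picture of the introduction) and $\lambda\|g^{\lambda,D}_Q\|_2^2\lesssim\eps^2|\log(\eps)|^{2\eta}|Q|$ from (C2); tracked against the admissible ranges of $s,\eta,\eta_\lambda$ these give $\lesssim\eps^2|\log(\eps)|^{5/8}|Q|$ per clean block. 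On dirty blocks $-\mathcal H_Q(\overline\sigma|_Q)=0$ and $Max_Q(\mathcal H,\textrm{ext})$ is controlled by $F^{\nabla}_{\lambda,L_0},F_{\lambda,L_0},F_{\infty,L_0}$; since $\Gamma$ is a clean contour, $\delta(\Gamma)$ is controlled at scale $\ell/2$ and (R0)--(R3) bound the sum over dirty blocks of $S$ by $\lesssim\eps^2|\log(\eps)|^{\zeta}|\Gamma|$. Summing, and using $|\overline\delta(\Gamma)|\lesssim|\Gamma|$, yields \eqref{E:5.1}.

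\emph{Reduction of \eqref{E:5.12}.} Next I would note that $-\mathcal H_{\overline\delta(\Gamma)}(\sigma|\textrm{ext})\le\sum_{Q_\ell}(-\mathcal H_{Q_\ell}(\sigma|_{Q_\ell}))$ over an $\ell$-tiling of $\overline\delta(\Gamma)$ (the cross and boundary contributions being nonnegative), that $\sum_{Q_\ell}Max_{Q_\ell}(\mathcal H,\textrm{free})\le\sum_{Q\in\mathcal Q_{\ell/2}(\Gamma)}Max_Q(\mathcal H,\textrm{ext})$ by subadditivity of $Max$ across scales, and that \eqref{E:5.1} gives $-\mathcal H_{\overline\delta(\Gamma)}(\overline\sigma|\textrm{ext})\ge\sum_{Q}Max_Q(\mathcal H,\textrm{ext})-C\eps^2|\log(\eps)|^{5/8}|\Gamma|$. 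Subtracting reduces \eqref{E:5.12} to producing $\ge c|\Gamma|/L^2$ pairwise disjoint $\ell$-blocks, bad for $\sigma$, inside $\overline\delta(\Gamma)$, each with $Max_{Q_\ell}(\mathcal H,\textrm{free})+\mathcal H_{Q_\ell}(\sigma|_{Q_\ell})\gtrsim\xi^2\eps^2|\log(\eps)|\,\ell^2$; since $\ell^2/L^2\sim|\log(\eps)|^{-4s}$ and $1-4s>5/8$, this closes the bound. The blocks come from the contour structure: on $\mathrm{sp}(\Gamma)$ one has $\Psi^{\xi}\equiv 0$, which forces near every $L$-block of $\mathrm{sp}(\Gamma)$ an $\ell$-lattice point $y_0$ with $\psi_{y_0}=0$; the $O(L)$-neighbourhood of $y_0$ then lies in $R^0(\sigma)$, hence in $\mathrm{sp}(\Gamma)$, and $\psi_{y_0}=0$ produces a bad $\ell$-block within $2\ell$ of $\mathrm{sp}(\Gamma)$, so inside $\overline\delta(\Gamma)$; a Vitali-type selection extracts a disjoint subfamily of the required size.

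\emph{The gain on a bad $\ell$-block.} Fix a bad $Q_\ell\subseteq\overline\delta(\Gamma)$. If $\mathcal E_{Q_\ell}(\sigma)>\eps^2|\log(\eps)|^{1+\chi}\ell^2$ (the $\psi^0=0$ alternative): from the ``soft'' bound $\eps\langle\alpha,e_2\cdot\sigma\rangle_{Q_\ell}\le\tfrac14\mathcal E_{Q_\ell}(\sigma)+C\eps^2|\log(\eps)|\ell^2$ (comparison with the supremum of a rescaled Hamiltonian, plus (C4)) one gets $-\mathcal H_{Q_\ell}(\sigma|_{Q_\ell})\le-\tfrac14\mathcal E_{Q_\ell}(\sigma)+C\eps^2|\log(\eps)|\ell^2<0$ for small $\eps$, while $Max_{Q_\ell}(\mathcal H,\textrm{free})\ge-\mathcal H_{Q_\ell}\bigl((\cos g^{\lambda,D}_{\cdot,Q_\ell},\sin g^{\lambda,D}_{\cdot,Q_\ell})\bigr)\gtrsim\eps^2|\log(\eps)|\ell^2$, so the gain is $\gtrsim\eps^2|\log(\eps)|^{1+\chi}\ell^2$, more than enough. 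Otherwise $\mathcal E_{Q_\ell}(\sigma)\le\eps^2|\log(\eps)|^{1+\chi}\ell^2$ and $|\sigma(Q_\ell)\cdot e_1|<1-\xi$ (the $\psi^{1,\xi}=0$ alternative); here I would invoke the change of variables, Lemma \ref{lem:cov}, at scale $\ell/2$: $-\mathcal H_{Q_\ell}(\sigma|_{Q_\ell})=\mathcal K_{Q_\ell}(\phi)+\mathcal R$ with $|\mathcal R|$ controlled by $\mathcal E_{Q_\ell}(\sigma)$ and the $g$-bounds, hence $o(\xi^2\eps^2|\log(\eps)|\ell^2)$, and $Max_{Q_\ell}(\mathcal H,\textrm{free})\ge\mathcal K_{Q_\ell}(0)+\mathcal R'=\tfrac14\sum_x m_x+\mathcal R'$ with $\mathcal R'$ of the same order, so
\[
Max_{Q_\ell}(\mathcal H,\textrm{free})+\mathcal H_{Q_\ell}(\sigma|_{Q_\ell})\ \ge\ \tfrac14\sum_x m_x\sin^2\phi_x+\sum_{\langle x,y\rangle\subseteq Q_\ell}\bigl(1-\cos\nabla_{xy}\phi\bigr)-o\bigl(\xi^2\eps^2|\log(\eps)|\ell^2\bigr).
\]
Since $\|g^{\lambda,D}_{Q_\ell}\|_\infty=o(1)$, the condition $|\sigma(Q_\ell)\cdot e_1|<1-\xi$ gives $\bigl|\tfrac{1}{|Q_\ell|}\sum_x\cos\phi_x\bigr|<1-\tfrac{\xi}{2}$. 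If $\sum_{\langle x,y\rangle}(1-\cos\nabla_{xy}\phi)\ge\xi^2\eps^2|\log(\eps)|\ell^2$ we are done; otherwise that term is tiny, Poincar\'e makes $\phi$ $L^2$-close to its mean $\overline\phi$, so $\sin^2\overline\phi\gtrsim\xi$ and $\sin^2\phi_x\gtrsim\xi$ on a subset of $Q_\ell$ of density $1-o(1)$, and distributing the ball-average lower bounds on $m_{\cdot}$ coming from the event $\mathcal A_{Q}$ in (C1) over that subset — a covering argument costing one further power of $\xi$ — gives $\tfrac14\sum_x m_x\sin^2\phi_x\gtrsim\xi^2\eps^2|\log(\eps)|\ell^2$, as required.

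\emph{Main obstacle.} The delicate point is the misalignment alternative: turning ``the $\phi$-average stays away from $\{0,\pi\}$'' into a genuine lower bound for $\tfrac14\sum_x m_x\sin^2\phi_x$ when $m_{\cdot}$ is only controlled through ball-averages, and in particular ruling out the cheap ``kink'' in which $\phi$ splits $Q_\ell$ into $\{\phi\approx0\}$ and $\{\phi\approx\pi\}$ pieces joined by a diffuse wall, by showing any such wall still costs $\gtrsim\eps|\log(\eps)|^{1/2}\ell$ in $\sum_{\langle x,y\rangle}(1-\cos\nabla_{xy}\phi)$. Also non-trivial: controlling the change-of-variables remainder $\mathcal R$ uniformly over all $\sigma$ with $\mathcal E_{Q_\ell}(\sigma)$ as large as $\eps^2|\log(\eps)|^{1+\chi}\ell^2$; the geometric bookkeeping guaranteeing that the bad $\ell$-blocks lie inside $\overline\delta(\Gamma)$ and can be chosen disjoint; and, for \eqref{E:5.1}, matching the error exponent $5/8$ against the combined losses in (C2)--(C6).
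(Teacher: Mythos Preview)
Your approach is essentially the paper's, reorganised. For \eqref{E:5.1} the paper avoids your ``second-order expansion of the maximiser'' by writing down exact completion-of-squares identities: for any $\sigma$,
\[
-\mathcal H_Q(\sigma)=\tfrac12\mathcal E_Q(g^{\lambda,N}_Q)-\tfrac12\mathcal E_Q(e_1\cdot\sigma)-\tfrac12\mathcal E_Q(e_2\cdot\sigma-g^{\lambda,N}_Q)+\lambda\textstyle\sum_x e_2\cdot\sigma_x\,g^{\lambda,N}_{Q,x},
\]
and the analogous Dirichlet version with a boundary term. This immediately gives $Max_Q(\mathcal H,\varnothing)\le\tfrac12\mathcal E_Q(g^{\lambda,N}_Q)+\lambda|Q|\,\|g^{\lambda,N}_Q\|_\infty$ with no appeal to properties of the maximiser, and makes the telescoping $A+B+C$ (your Neumann--Dirichlet gap plus mass corrections) transparent. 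Your path gets to the same place but the algebraic identity is cleaner and removes the need to control the maximiser's gradients.

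For \eqref{E:5.12} the paper partitions $\overline\delta(\Gamma)$ into regions $R_1$ (energy-bad), $R_2$ (average-bad, disjoint from $R_1$), $R_3$ (rest), and treats each wholesale rather than via a Vitali selection of disjoint blocks; this absorbs the geometric bookkeeping you flag (the bad $\ell$-blocks sitting inside $\overline\delta(\Gamma)$, and disjointness) into the single volume estimate $|R_1\cup R_2|/|\overline\delta(\Gamma)|\gtrsim\ell^2/L^2$. On $R_1$ the completion-of-squares identity plus regularity gives the energy gain directly, and on $R_3$ one just quotes \eqref{E:5.1}.

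On your ``main obstacle'': you are right that this is the crux, and the paper is in fact quite terse here---it asserts the bound on $R_2$ from the change of variables and the occurrence of $\mathcal A_Q$ without spelling out the Poincar\'e dichotomy. Your outline (either the $\phi$-Dirichlet energy is already large, or $\phi$ is $L^2$-close to its mean and then the ball-average lower bounds from $\mathcal A_Q$ bite) is the correct way to fill this in. One simplification: the ``cheap kink'' you worry about cannot occur in the $\psi^{1,\xi}=0$ alternative, because any wall across $Q_\ell$ costs $\gtrsim 1$ in Dirichlet energy, which already exceeds $\eps^2|\log(\eps)|^{1+\chi}\ell^2\ll 1$ and so would have been caught by the $\psi^0=0$ alternative; once $\mathcal E_{Q_\ell}(\sigma)$ is small the Poincar\'e step is genuinely available.
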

\begin{proof}
To prove inequality \eqref{E:5.1}, let 
\[
Max_{Q}(\mathcal{H}, \varnothing)=\max_{\sigma \in Q} \{- \mathcal{H}_Q(\sigma)\}
\]
be the maximum of the Hamiltonian w.r.t. free boundary conditions on $Q$.  
Then
\begin{equation}\label{eq:max-sup}
\begin{split}
&\sum_{\substack{Q \in \mathcal{Q}_{\ell/2}(\Gamma)\\ \textrm{s.t. }Q\subseteq S}} Max_{Q}(\mathcal{H}, \textrm{ext}) + \mathcal{H}_{S}(\overline{\sigma}) \\
& \leq \sum_{\substack{ Q \in \mathcal{Q}_{\ell/2}(\Gamma)\\\textrm{s.t. } Q\subseteq S}} [Max_{Q}(\mathcal{H}, \varnothing) + \mathcal{H}_{Q}(\overline{\sigma}) ]  +  \sum_{Q\neq Q' \in  \mathcal{Q}_{\ell/2}(\Gamma)Q,Q'\subseteq S} \sum_{{e=\langle x,y\rangle; \atop x\in Q, y \in Q'}} \| \nabla_e \overline{\sigma} \|_2^2.
\end{split}
\end{equation}

Let us deal with the second term on the RHS first.  Since $\bar{\sigma}\equiv e_1$ in dirty boxes and since boundary values of $g^{\lambda, D}_{Q} $ may be interpreted as gradients,
\[
\sum_{\substack{Q\neq Q' \in  \mathcal{Q}_{\ell/2}(\Gamma),\\Q, Q'\subseteq S}} \sum_{{e=\langle x,y\rangle; \atop x\in Q, y \in Q'}} \| \nabla_e \overline{\sigma} \|_2^2\lesssim \sum_{\substack{Q \in \mathcal{Q}_{\ell/2}(\Gamma)\\ \textrm{s.t. }Q\subseteq S}}\Xi_{\ell/2}({Q})\ell \| \nabla g^{\lambda, D}_{Q} \|_{\infty}^2\leq (\eps |\log(\eps)|)^3 |S|.
\]

For clean contributions to the first sum on the RHS of \eqref{eq:max-sup}, by  telescoping we have
\begin{equation*}
\begin{split}
& Max_{Q}(\mathcal{H}, \varnothing) +\mathcal{H}_Q(\overline{\sigma}) \\
& = \underbrace{ \mathcal{H}_Q(\overline{\sigma})+\frac{1}{2} \mathcal{E}_Q(g^{\lambda, D}_Q) }_A  +\underbrace{  -\frac{1}{2}[\mathcal{E}_Q(g^{\lambda, D}_Q) +  \mathcal{E}_Q(g^{\lambda, N}_Q) ]}_C+ \underbrace{- \frac{1}{2} \mathcal{E}_Q(g^{\lambda, N}_Q) + Max_{Q}(\mathcal{H}, \varnothing)}_B . 
\end{split}
\end{equation*}
To bound these terms, let us observe that by making the substitutions $\eps \alpha=(-\Delta^i_Q+\lambda) \cdot g^{\lambda, i}_Q$ and summing by parts we obtain
\begin{align}
\label{E:SubN}
- \mathcal{H}_Q(\sigma)&= \frac{1}{2} \mathcal{E}_Q(g^{\lambda, N}_Q) -\frac{1}{2} \mathcal{E}_Q(e_1\cdot \sigma) -\frac{1}{2} \mathcal{E}_Q(e_2\cdot \sigma-g^{\lambda, N}_Q)+ \lambda \sum_{x\in Q} e_2\cdot {\sigma}_x  g^{\lambda, N}_{Q,x},\\
&= \frac{1}{2} \mathcal{E}_Q(g^{\lambda, D}_Q) -\frac{1}{2} \mathcal{E}_Q(e_1\cdot \sigma) -\frac{1}{2} \mathcal{E}_Q(e_2\cdot \sigma-g^{\lambda, D}_Q)+ \lambda \sum_{x\in Q} e_2\cdot {\sigma}_x  g^{\lambda, D}_{Q,x}+\sum_{x\in \partial^i Q} n_x g^{\lambda, D}_{Q, x} e_2 \cdot \sigma_x
\label{E:SubD}
\end{align}
where in the last term on the second line, $n_x$ is the number of external neighbors of $x$.

For Term  $A$, we use \eqref{E:SubD} with $\sigma$ replaced by $\bar{\sigma}$ to obtain
\[
|A|\lesssim  \lambda \left|\sum_{x\in Q} e_2\cdot \overline{\sigma}_x  g^{\lambda, D}_{Q,x}+ +\sum_{x\in \partial^i Q} n_x g^{\lambda, D}_{Q, x} e_2 \cdot \sigma_x\right|+\|g^{\lambda, D}_Q\|_{\infty}\|g^{\lambda, D}_Q\|_2^2.\]
Since $Q$ is clean, we may bound the RHS as
\begin{equation*}
\label{termA}
|A| \lesssim \eps^{2} |\log(\eps)|^{a} |Q|,
\end{equation*}
where $a=1/2 +\eta- \eta_{\lambda}$.

To bound Term $B$ we use \eqref{E:SubN} to obtain that for ANY $\sigma \in \mathcal S_Q$, 
\begin{equation}
\label{E:A1}
- \mathcal{H}_Q(\sigma)- \frac{1}{2} \mathcal{E}_Q(g^{\lambda, N}_Q) =-\frac{1}{2} \mathcal{E}_Q(e_1\cdot \sigma) -\frac{1}{2} \mathcal{E}_Q(e_2\cdot \sigma-g^{\lambda, N}_Q)+ \lambda \sum_{x\in Q} e_2\cdot {\sigma}_x  g^{\lambda, N}_{Q,x}.
\end{equation}
Hence
\[
0\leq B\leq \lambda |Q| \|  g^{\lambda, N}_{Q}\|_{\infty} \leq \eps^{2} |\log(\eps)|^{b} |Q|.
\]
where $b=1/2+ \eta+\eta_{\lambda}/2$.
Finally, to bound Term $C$ we use (C6) from Definition  \ref{def:nice}.

Now we plug these estimates into the sums in equation \eqref{eq:max-sup} by splitting according to clean and dirty boxes.  For the first sum we have
\beq
\label{e:sum1}
\begin{split}
& \sum_{Q \in \mathcal{Q}_{\ell/2}(\Gamma):Q\subseteq S } \left ( Max_{Q}(\mathcal{H}, \varnothing) +\mathcal{H}_Q(\overline{\sigma}) \right ) \\
&= \sum_{Q \in \mathcal{Q}_{\ell/2}(\Gamma):Q\subseteq S } \left ( Max_{Q}(\mathcal{H}, \varnothing) +\mathcal{H}_Q(\overline{\sigma}) \right )\Xi_{\ell/2}(Q) + \sum_{Q \in \mathcal{Q}_{\ell/2}(\Gamma) } Max_{Q}(\mathcal{H}, \varnothing)  [1-\Xi_{\ell/2}({Q})].
\end{split}
\eeq
The largest power of the log-term from our estimates is $b=\eta + \eta_{\lambda}/2+1/2$. We obtain for the first sum on the RHS of \eqref{e:sum1}
\[
\sum_{Q \in \mathcal{Q}_{\ell/2}(\Gamma):Q\subseteq S } \left ( Max_{Q}(\mathcal{H}, \varnothing) +\mathcal{H}_Q(\overline{\sigma}) \right ) \Xi(Q ) \lesssim \eps^2 |\log(\eps)|^{b} |\Gamma|.
\]
For the second sum, over $Q$ such that $\Xi_{\ell/2}(Q) = 0$, we use \eqref{E:SubN}. This shows that, for ANY $\sigma$, 
\[
0\leq Max_{Q}(\mathcal{H}, \varnothing)\lesssim \frac{1}{2} \mathcal{E}_Q(g^{\lambda, N}_Q)+ \lambda\ell \|g^{\lambda, N}_{Q}\|_2.
\]
Using Definition \ref{def:regularY},
\[
 \sum_{Q \in \mathcal{Q}_{\ell/2}(\Gamma):Q\subseteq S }  Max_{Q}(\mathcal{H}, \varnothing)  [1-\Xi_{\ell/2}(Q)] \lesssim \eps^2 |\log(\eps)|^{c}  |\Gamma|
\]
where $c=\max\{1+\chi-\rho, \zeta, (1+\zeta+\eta_{\lambda})/2, (1+\chi+\eta_{\lambda})/2 - \rho\}$.
 Putting everything together we obtain:
\[
\sum_{Q \in \mathcal{Q}_{\ell/2}(\Gamma):Q\subseteq S } \left ( Max_{Q}(\mathcal{H}, \varnothing) + \mathcal{H}_Q(\overline{\sigma}) \right) \lesssim \eps^2 |\log(\eps)|^{d} |\Gamma|
\]
where $d=\max\{a, b, c\}$. Our choice of parameters implies $d\leq 5/8$.  

To prove inequality \eqref{E:5.12},
let
\begin{align*}
\mathcal{Q}_{\mathcal{E}}(\Gamma) & = \left \{ Q\in \mathcal{Q}_L:\: Q\subset sp(\Gamma); \exists z\in Q: \, \psi^{0}_z(\sigma)=0 \right \}, \\
\mathcal{Q}_{Ave}(\Gamma) & = \left \{  Q\in \mathcal{Q}_L: Q\subset sp(\Gamma); \exists z\in Q: \, \psi^{1, \xi}_z(\sigma)=0 \right \} .
\end{align*}
By definition, if $Q \in \mathcal{Q}_{\mathcal{E}}(\Gamma)$ there is a square of side-length $\ell$, $Q_{\ell}$, such that $\textrm{dist}(Q, Q_{\ell}) \leq 2(L + \ell)$ and
\[
\mathcal{E}_{Q_{\ell}} (\sigma) \geq\eps^2 |\log(\eps)|^{1+\chi} \ell^2.
\]
The square $Q_{\ell}$ need not be in $\mathcal Q_{\ell}$ or $\mathcal Q_{\ell}^*$, but 
its existence means we can find $Q'\in \mathcal{Q}_{\ell/2} \cup \mathcal{Q}_{\ell/2}^*$ so that $Q\subset sp(\Gamma)$,
\[
\mathcal{E}_{Q'} (\sigma) \geq \frac{\eps^2}{5} |\log(\eps)|^{1+\chi} \ell^2,
\]
 and  $\textrm{dist}(Q', Q) \leq 3(L+ \ell)$.
Similarly, if $Q \in \mathcal{Q}_{Ave}(\Gamma)$ there is $Q''\in  \mathcal{Q}_{\ell/2}$ so that $Q\subset sp(\Gamma)$,
\[
|\sigma (Q'') \cdot e_1| < 1-\xi/4
\]
and $\textrm{dist}(Q'', Q) \leq 3(L+ \ell)$.

Let us define
\begin{align*}
A_e & = \left \{ Q\in \mathcal Q_{\ell}:  Q \subset \bar{\delta}(\Gamma),  \mathcal{E}_{Q} (\sigma) \geq \frac{\eps^2}{5} |\log(\eps)|^{1+\chi} \ell^2
 \right \}, \\
A_* & = \left \{ Q\in \mathcal Q_{\ell}^*: Q \subset \bar{\delta}(\Gamma), \mathcal{E}_{Q} (\sigma) \geq \frac{\eps^2}{5} |\log(\eps)|^{1+\chi} \ell^2
 \right \}, \\
A_{av} & = \left \{ Q\in \mathcal Q_{\ell}: Q \subset \bar{\delta}(\Gamma),  |\sigma (Q_{\ell}) \cdot e_1| \leq 1-{\xi}/4\right \}.
\end{align*}
If we set $R_1=\cup_{Q\in A_e\cup A_*} Q, R_2=\cup_{\substack{Q\in A_{ave}:\\ Q\cap R_1=\varnothing}} Q$ and $R_3:=\bar{\delta}(\Gamma)\backslash (R_1\cup R_2) $.  Then the previous remarks and the fact that $\Gamma$ is a contour for $\sigma$ give
\begin{equation}
\label{E:s}
\frac{|R_1\cup R_2|}{|\bar{\delta}(\Gamma)|}\gtrsim \frac{\ell^{2} }{L^{2}}=|\log (\eps)|^{-4s}.
\end{equation}
This estimate will be relevant momentarily.

Now, turning to the claimed estimate \eqref{E:5.12}, we start with the bound
\begin{multline}
-\mathcal{H}_{\bar{\delta}(\Gamma)} (\overline{\sigma}|ext) + \mathcal{H}_{\bar{\delta}(\Gamma)} ({\sigma}|ext) \geq [-\mathcal{H}_{R_1} (\overline{\sigma}|ext) 
+ \mathcal{H}_{R_1} ({\sigma}|ext)]+ [-\mathcal{H}_{R_2} (\overline{\sigma}|ext) + \mathcal{H}_{R_2} ({\sigma}|ext)]\\
 +[-\mathcal{H}_{R_3} (\overline{\sigma}|ext) + \mathcal{H}_{R_3} ({\sigma}|ext)] - C\eps^{2} |\log(\eps)|^{5/8}|\Gamma|.\nonumber
\end{multline}
In this inequality,  to control the interaction energy of $\overline{\sigma}$ between the disjoint regions $R_1, R_2, R_3$,  we used in clean squares that $\|\nabla g^{\lambda, N}_Q\|_{\infty} \leq \eps |\log(\eps)|$, that $\frac{|\partial R_i|}{|R_i|}\leq C/\ell$ and that $\overline{\sigma}\equiv e_1$ in dirty squares.  In addition we dropped the corresponding terms for $\sigma$ as they are nonnegative. 
Next, since 
$$-\mathcal{H}_{R_3} (\overline{\sigma}|ext) \leq \max_{\sigma\in \mathcal S_{R_3}} -\mathcal{H}_{R_3} (\overline{\sigma}|ext)\leq \sum_{Q \in \mathcal{Q}_{\ell/2}(\Gamma):Q\subseteq R_3 } Max_{Q}(\mathcal{H}, \varnothing),$$ 
Inequality \eqref{E:5.1} implies 
\[
[-\mathcal{H}_{R_3} (\overline{\sigma}|ext) + \mathcal{H}_{R_3} ({\sigma}|ext)] \gtrsim - C\eps^{2} |\log(\eps)|^{5/8}|\Gamma|.
\]

Now $R_1, R_2$ are $\ell/2$-measurable.  If we tile $R_1$ by squares $Q\in \mathcal Q_{\ell/2}$ and introduce $g^{\lambda, N}_Q$ to complete squares as in \eqref{E:SubN} we have for, any $\sigma$,
\[
\mathcal{H}_{R_1} (\sigma|ext)\geq \mathcal E_{R_1}(\sigma)-C (\eps^{2} |\log(\eps)| |R_1|+ \eps^{2} |\log(\eps)|^{5/8} |\Gamma|),
\]
where regularity was used to control terms involving $g^{\lambda, N}_Q$'s.
In particular
\[
|\mathcal{H}_{R_1} (\overline{\sigma}|ext)|\leq C\eps^{2} |\log(\eps)| |R_1|
\]
and 
\[
-\mathcal{H}_{R_1} (\overline{\sigma}|ext) + \mathcal{H}_{R_1} ({\sigma}|ext)\gtrsim  \eps^2 |\log(\eps)|^{1+\chi} |R_1|- \eps^{2} |\log(\eps)| |R_1|- \eps^{2} |\log(\eps)|^{5/8} |\Gamma|.
\]

To estimate the contribution from $R_2$ we need to refine the device of "completing squares" to control $-\mathcal{H}_Q$.  Let $\mathscr R_{a}$ denote the rotation by angle $a$ on $\mathbb S^{1}$.  For each $Q$ in the sum on the RHS consider the map $\sigma_x\mapsto \sigma_x'=\mathscr R_{-(e_1\cdot \sigma_x) g^{\lambda, D}_{Q, x}} \cdot \sigma_x $.  In terms of $\sigma_x'$ let
\[
-\mathcal{K}_Q(\sigma'|ext)=-\frac 12 \mathcal E_Q(\sigma') +\sum_{x \in Q} \frac{1}{4}(e_1\cdot \sigma'_x)^{2} m_x-\frac 12 \sum_{x\in Q, y\in \Lambda_N^{c}, x\sim y}\|\sigma'_x-e_1\|_2^{2}
\]
where $m_x = \sum_{y\sim x, y \in Q} (g^{\lambda, D}_{Q, y} - g^{\lambda, D}_{Q, x})^2$.  In Lemma \ref{lem:cov} we show  that
\beq
\label{E:cov1}
| \mathcal{H}_Q(\sigma|ext)-\mathcal{K}_Q(\sigma'|ext)|\lesssim \|g^{\lambda, D}_{Q}\|_{\infty}[ \mathcal E_Q(\sigma')+ \mathcal E_Q(g^{\lambda, D}_{Q})+ \lambda \ell^2].
\eeq
Using that $Q$ is clean and that $Q\notin A_e\cup A_s$, the RHS of \eqref{E:cov1} is bounded by 
$$\eps^2\ell^2[|\log (\eps)|^{1/2+\chi +\eta -\eta_{\lambda}/2} + |\log(\eps)|^{1/2+\eta_{\lambda}/2+\eta}].$$

Now Inequality \eqref{E:5.1} implies
\[
-\mathcal{H}_{R_2} (\overline{\sigma}|ext) + \mathcal{H}_{R_2} ({\sigma}|ext)\geq \sum_{Q\in A_{av}} \Xi_{\ell}(Q) [Max_Q(\mathcal{H},ext) + \mathcal{H}_Q(\sigma|ext)]-C\eps^{2} |\log(\eps)|^{5/8}|\Gamma|.
\]
Since $Q\in A_{ave}$,  $\max_{\sigma''}\{- \mathcal{K}_Q(\sigma''|ext)\} =\sum_{x \in Q} \frac{1}{4} m_x$ (and the maximizer is $e_1$) and the event $\mathcal A_Q$ occurs (cf. Definition \ref{def:nice}) we have that
\[
\max_{\sigma''} \{- \mathcal{K}_Q(\sigma''|ext) \} +\mathcal{K}_Q(\sigma'|ext)\gtrsim \xi \eps^2|\log (\eps)| \ell^2.
\]
Using \eqref{E:cov1}, if $Q$ is clean we have
\[
\begin{split}
& Max_Q(\mathcal{H},ext) + \mathcal{H}_Q(\sigma|ext) \gtrsim \\
& \xi \eps^2 |\log (\eps)|\ell^2 -\eps^2[|\log (\eps)|^{1/2+\chi +\eta -\eta_{\lambda}/2} + |\log(\eps)|^{1/2+\eta_{\lambda}/2 +\eta}]\ell^2
\end{split}
\]
so that
\[
\sum_{Q\in A_{av}} \Xi(Q) [Max_Q(\mathcal{H},ext) + \mathcal{H}_Q(\sigma|ext)] \gtrsim \xi \eps^2 |\log \eps| \ell^2 |\{Q\in A_{av}: \Xi(Q) = 1\}|
\]
since $\max\{1/2+\chi +\eta -\eta_{\lambda}/2,  1/2+\eta_{\lambda}/2 +\eta \} \leq \frac{5}{8} $.
$\Gamma$ is regular, therefore
\[
\ell^2 |\{Q\in A_{av}: \Xi(Q) = 1\}|\geq  |R_2|-  |\log(\eps)|^{-\rho}|\Gamma|.
\]
Combining these estimates, recalling that $s\in (0, 1/32)$, $\rho=5/4$ and using \eqref{E:s} we obtain
\[
-\mathcal{H}_{sp(\Gamma)} (\overline{\sigma}|ext) + \mathcal{H}_{sp(\Gamma)} ({\sigma}|ext) \gtrsim   \xi \eps^2 |\log \eps|^{1-4s} |\Gamma|.
\]
\end{proof}

\subsection{Boundary Surgery}\label{sec:BoundSur}

In the previous section we constructed an idealized spin configuration, $\bar{\sigma}$, on the bulk of a given contour $\Gamma$.  If $\sigma$ is a spin configuration having $\Gamma$ as a concrete contour, we want to replace $\sigma$ by $\pm \bar{\sigma}$.  In order to take advantage of the potential savings in energy cost from this replacement, we need to modify $\sigma$ on the boundary of a contour in order to match it with $ \bar{\sigma}$ on the boundary of $\bar{\delta}(\Gamma)$ and show that the energy cost of the modification does not exceed the gain from $\bar{\sigma}$.

To do this surgery, we need to introduce more notation for various regions on the boundary of the contour.
For a given contour $\Gamma$, $\sigma \in \mathbb{X}(\Gamma)$ let
\begin{equation*}
\mathfrak{C}(\Gamma) = \delta_{L} (\Gamma) \cap \textrm{sp}(\Gamma)^c \cap \Lambda_N, \, \, \mathfrak{C}^{\pm} (\Gamma)=\{ z\in \mathfrak{C}(\Gamma): \Psi^{\xi}_z(\sigma)=\pm 1 \},
\end{equation*}
where $\Psi$ was defined in \eqref{def: Psi}. 
Further we isolate a middle portion of the thickened boundary:
\begin{equation}\label{eq:Mpm}
\begin{split}
\mathfrak{M}(\Gamma) & = \left\{x\in \mathfrak{C}(\Gamma): \textrm{dist}(x, \partial^i \textrm{sp}(\Gamma))\in \left[\frac{L}{8}, \frac{3L}{8}\right] \right\}, \, \, \, \mathcal{M} =\{ Q\in \mathcal{Q}_{L/16}:  \textrm{dist}(Q, \mathfrak{M}(\Gamma)) \leq 3\}\\
M(\Gamma) & = \bigcup_{Q \in \mathcal{M}} Q, \, \, \, M^{\pm}(\Gamma) = M(\Gamma) \cap \mathfrak{C}^{\pm}(\Gamma),
\end{split}
\end{equation}
and resp. $\mathfrak{M}^{\pm}(\Gamma) = \mathfrak{M}(\Gamma) \cap  \mathfrak{C}^{\pm}(\Gamma)$.

An example illustrating the collar and middle strip can be found in Figure \ref{fig:collar}.

\begin{figure}[htb]
\centering
\includegraphics[scale=0.6]{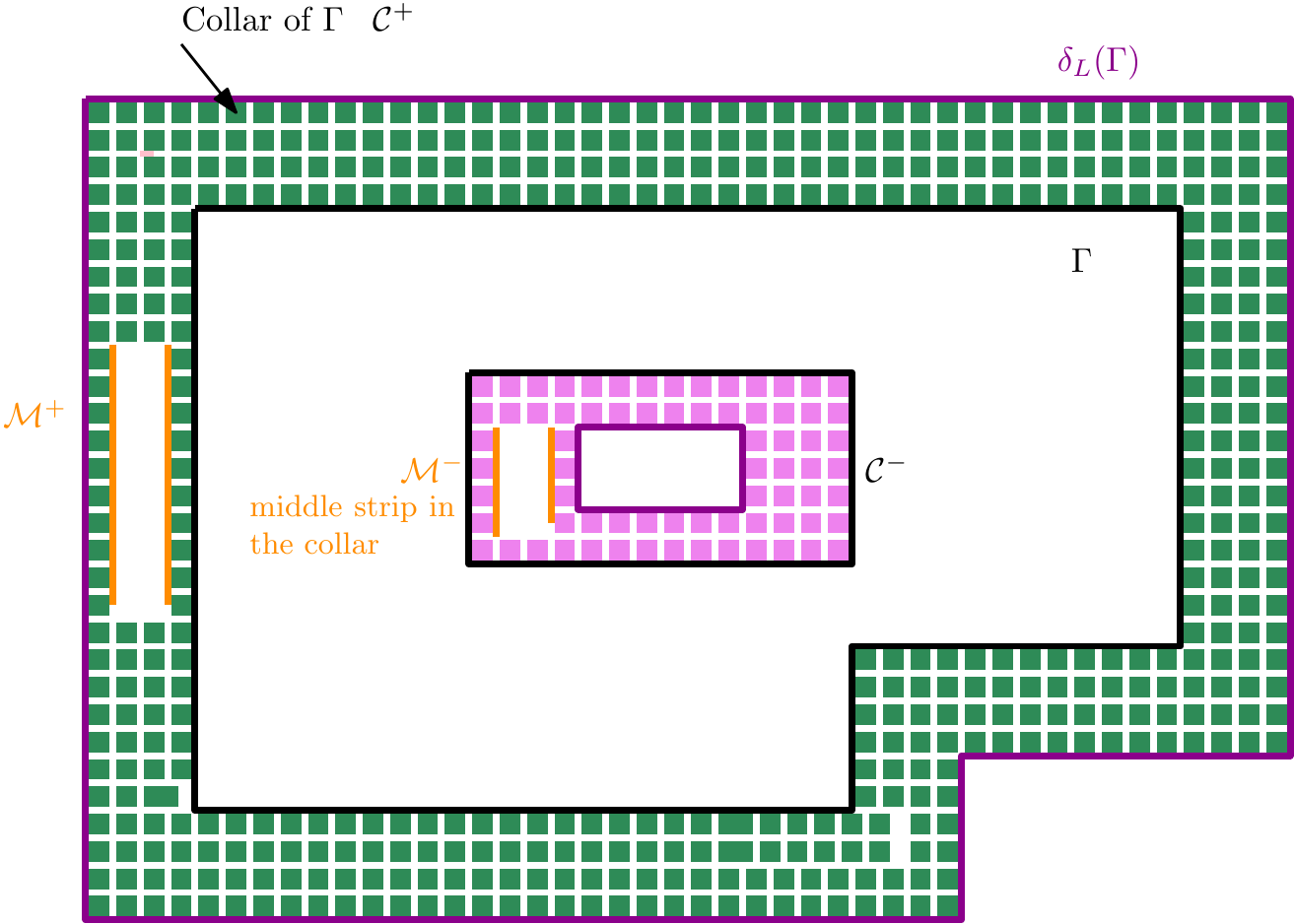}
\caption{Example of the collar and middle strip}
\label{fig:collar}
\end{figure}

The next lemma summarizes the contents of this subsection into one statement.  It combines Lemma \ref{lem:mod3}, whose lengthy proof is postponed to Section \ref{S:Tech},  and  Lemma \ref{lem:mod67}.
\begin{lemma}\label{lem:boundcollar}
Fix $\sigma$ and let $\Gamma$ be a  contour for $\sigma$. There exists $\xi_0, \eps_0(\xi_0), C >0$ such that for $\eps \in (0, \eps_0), \xi \in (0, \xi_0)$ we can construct a configuration $\sigma_x^{\mathfrak{C}}$ such that
\begin{enumerate}
\item If $x\notin  \mathfrak{C}(\Gamma) $ then $\sigma_x =\sigma_x^{\mathfrak{C}}$. 
\item If $x \in \partial^o \mathfrak{M}^{\pm}(\Gamma)$ then $\| \sigma^{\mathfrak{C}}_x \mp e_1 \|_2 \leq  |\log(\eps)|^{-1/2+\eta}$, where $\eta \in (0,\eta_{\lambda}/2), \eta_{\lambda} \in (2s,1/8)$ and $s\in(0,1/32)$.
\item Whenever the boundary condition $e_1$ is compatible with $\Gamma$ and $\textrm{sp}(\Gamma) \subset \Lambda_N$,
\[
- \mathcal{H}_{\Lambda_N}(\sigma^{\mathfrak{C}} | e_1) \geq - \mathcal{H}_{\Lambda_N}(\sigma | e_1)- C \eps^2 |\log(\eps)|^{ 5/8} |\Gamma|.
\]
\end{enumerate}
\end{lemma}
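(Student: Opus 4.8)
The plan is to build $\sigma^{\mathfrak C}$ from $\sigma$ by two modifications, both confined to the collar $\mathfrak C(\Gamma)$ and carried out separately on each connected component $\mathscr C$ of $\mathfrak C(\Gamma)$. Since $\Gamma$ is a contour for $\sigma$ and $e_1$ is compatible with $\Gamma$, the phase $\Psi^{\xi}_z(\sigma)$ is constant on $\mathscr C$, as recorded after the definition of $\mathfrak C^{\pm}$; composing if necessary with the reflection $\sigma\mapsto(-\,\sigma\cdot e_1,\;\sigma\cdot e_2)$, which leaves each $\mathcal H_R$ invariant, we may assume $\Psi^{\xi}\equiv 1$ on $\mathscr C$. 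Then every $\ell$-box $Q$ within distance $2L$ of $\mathscr C$ satisfies $\mathcal E_{Q}(\sigma)\le\eps^2|\log(\eps)|^{1+\chi}\ell^2$ and $\sigma(Q)\cdot e_1\ge 1-\xi$, so by Markov's inequality a $(1-\sqrt\xi)$-fraction of the spins in each such $Q$ lie within angle $O(\xi^{1/4})$ of $e_1$.

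The first modification is the content of Lemma \ref{lem:mod3}; this is where the two-dimensional geometry enters and is the main obstacle. Writing $B=\{x\in\mathscr C:\sigma_x\cdot e_1<1-\xi^{1/4}\}$, the energy bound controls the edge boundary of $B$ inside each $\ell$-box while the average bound controls $|B\cap Q|$; a two-dimensional isoperimetric and counting argument then shows that $B$, together with the dirty $\ell/2$-boxes of $\mathscr C$ (a $|\log(\eps)|^{-\rho}$-fraction by (R0)), cannot separate the inner boundary of the width-$\sim L$ annular region $\mathscr C$ from its outer boundary except through $O(|\log(\eps)|^{-\rho})$ exceptional boxes. On a neighbourhood of $B$ and of these boxes one gradient-ascends $-\mathcal H$ and resets the dirty boxes to $e_1$, producing $\sigma^{(2)}$; the cost is governed by (R0)--(R3) and the a priori bound on $\mathcal E_Q(\sigma)$ and is $\lesssim\eps^2|\log(\eps)|^{1+\chi-\rho}|\Gamma|\ll\eps^2|\log(\eps)|^{5/8}|\Gamma|$ since $\rho=5/4$. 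Through the resulting clean part of $\mathscr C$ one finds an $(\ell/2)$-measurable sub-collar $R'\subset\mathscr C$ containing $\mathfrak M(\Gamma)\cap\mathscr C$ with a margin of order $L$, whose boundary spins satisfy $\theta_x\in(-\pi/8,\pi/8)$. Having room for this while keeping a margin of order $L$ around $\mathfrak M$ is exactly what forces $L/\ell\sim|\log(\eps)|^{2s}$. Since the energy bound controls only \emph{averaged} squared gradients of $\sigma$, individual collar spins may be far from $e_1$ and a priori $B$ could percolate across $\mathscr C$; ruling this out, and extracting such an $R'$ at a cost $\lesssim\eps^2|\log(\eps)|^{5/8}|\Gamma|$, is the technically heavy step, postponed to Section \ref{S:Tech}.

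The second modification is Lemma \ref{lem:mod67}: inside $R'$ replace $\sigma^{(2)}$ by the configuration $\sigma^{\ast}$ obtained by inverting the change of variables \eqref{E:COVintro} from the maximizer $\phi^{\ast}$ of $\mathcal K_{R'}(\,\cdot\,|\phi^{(2)})$, where $\phi^{(2)}$ is the image of $\sigma^{(2)}$ under \eqref{E:COVintro}; keep $\sigma^{(2)}$ elsewhere, so property (1) is immediate. By Lemma \ref{lem:cov}, applied box by box on $\mathcal Q_{\ell/2}$, the functionals $-\mathcal H_{R'}(\,\cdot\,|\sigma^{(2)})$ and $\mathcal K_{R'}(\,\cdot\,|\phi^{(2)})$ agree up to $\lesssim\|g^{\lambda,D}\|_{\infty}\bigl[\mathcal E_{R'}(\,\cdot\,)+\mathcal E_{R'}(g^{\lambda,D})+\lambda|R'|\bigr]$, which on clean boxes is $\lesssim\eps^2|\log(\eps)|^{1/2+\eta+\eta_{\lambda}/2}|\Gamma|\le\eps^2|\log(\eps)|^{5/8}|\Gamma|$ because $\eta<\eta_{\lambda}/2<1/16$. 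On $\{\|\phi\|_{\infty}<\pi/4\}$ both $\cos(\phi_x-\phi_y)-1$ and $\tfrac14 m_x\cos^2(\phi_x)$ are concave, so $\mathcal K_{R'}$ is strictly concave there; since the boundary data lies in $(-\pi/8,\pi/8)$, the lemmas of Section \ref{S:Tech} give a unique maximizer $\phi^{\ast}$ with $\|\phi^{\ast}\|_{\infty,R'}<\pi/5$. Linearising the Euler--Lagrange equation at $0$ gives $(-\Delta+\tfrac12 m_x)\phi^{\ast}=0$, and on clean boxes the event $\mathcal A_Q$ forces $m_x\sim\eps^2|\log(\eps)|$, so $\phi^{\ast}$ decays away from $\partial R'$ on the scale $(\eps^2|\log(\eps)|)^{-1/2}=\eps^{-1}|\log(\eps)|^{-1/2}\sim L|\log(\eps)|^{-s}$. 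A point $x\in\partial^o\mathfrak M^{\pm}(\Gamma)$ lies $\gtrsim L/8$, i.e. $\gtrsim|\log(\eps)|^{s}$ relaxation lengths, from $\partial R'$, whence $|\phi^{\ast}_x|\le e^{-c|\log(\eps)|^{s}}$; inverting \eqref{E:COVintro} gives $|\theta^{\ast}_x|\le|\phi^{\ast}_x|+\|g^{\lambda,D}\|_{\infty}\le e^{-c|\log(\eps)|^{s}}+|\log(\eps)|^{-1/2+\eta-\eta_{\lambda}/2}\le|\log(\eps)|^{-1/2+\eta}$ and $\|\sigma^{\ast}_x-e_1\|_2\le|\theta^{\ast}_x|$, which is property (2) (if $x$ lies in a reset dirty box, $\sigma^{\mathfrak C}_x=\pm e_1$ and (2) is trivial).

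Finally, (3) is bookkeeping. As $\sigma^{\mathfrak C}$ agrees with $\sigma$ off $\mathfrak C(\Gamma)$, the difference $-\mathcal H_{\Lambda_N}(\sigma^{\mathfrak C}|e_1)+\mathcal H_{\Lambda_N}(\sigma|e_1)$ splits over the components $\mathscr C$. On the step $\sigma\to\sigma^{(2)}$ the gradient-ascent part does not decrease $-\mathcal H$, and the reset part costs $\lesssim\eps^2|\log(\eps)|^{1+\chi-\rho}|\Gamma|$. On the step $\sigma^{(2)}\to\sigma^{\mathfrak C}$ the change equals $-\mathcal H_{R'}(\sigma^{\ast}|\sigma^{(2)})+\mathcal H_{R'}(\sigma^{(2)}|\sigma^{(2)})$; by Lemma \ref{lem:cov} this differs by at most $2C\eps^2|\log(\eps)|^{5/8}|\Gamma|$ from $-\mathcal K_{R'}(\phi^{\ast}|\phi^{(2)})+\mathcal K_{R'}(\phi^{(2)}|\phi^{(2)})$, which is $\ge 0$ since $\phi^{\ast}$ maximizes $\mathcal K_{R'}(\,\cdot\,|\phi^{(2)})$; hence this step costs at most $2C\eps^2|\log(\eps)|^{5/8}|\Gamma|$. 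Summing over $\mathscr C$ and using $|\mathfrak C(\Gamma)|\lesssim|\Gamma|$ yields (3).
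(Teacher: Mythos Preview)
Your sketch has the right intuition (make a change of variables, optimize the surrogate functional $\mathcal K$, exploit elliptic decay), but it does not match the paper's construction, mislabels the lemmas, and leaves a genuine gap.

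First, the labeling is inverted. In the paper, Lemma~\ref{lem:mod3} is \emph{not} a percolation/isoperimetric argument producing a good sub-collar; it is the \emph{iterative} energetic relaxation (Modification~3), carried out square by square over the overlapping family $\mathcal Q^*_{L/16}$. Lemma~\ref{lem:mod67} is \emph{not} the optimization of $\mathcal K_{R'}$; it is Modification~4, an explicit interpolation forcing spins to exactly $\pm e_1$ on a thin ``good layer'' $\mathcal L_0\subset\mathfrak M$. The paper actually performs four modifications, not two: (1) a \emph{per-vertex} reflection on $\mathfrak A^\pm$ so that every spin (not just the average) lies in the correct half-plane, using Proposition~\ref{prop:defect} to control the energy cost on $\partial\mathfrak A^\pm$; (2) an explicit interpolation forcing $\sigma^{(2)}\equiv\pm e_1$ on dirty $L/16$-boxes $D^\pm$ and their $L/16$-neighbourhood (Lemma~\ref{lem:mod2}); (3) the iterative relaxation; (4) the good-layer interpolation. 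Your ``global reflection on the component'' does not accomplish step~(1): even when the block average is near $e_1$, individual boundary spins can sit in the wrong half-plane, and without the per-vertex flip you cannot guarantee the hypothesis $|\phi|\le\pi/5$ on $\partial^o R$ needed for concavity of $\mathcal K$ (Lemma~\ref{lem:unique}).

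Second, the substantive gap is your single global optimization of $\mathcal K_{R'}$ with the change of variables ``applied box by box on $\mathcal Q_{\ell/2}$''. The paper never does this. It enumerates the clean squares $Q_i\in\mathcal Q^*_{L/16}$, and in each step applies Lemma~\ref{lem:cov} and Lemma~\ref{lem:bulk} to a \emph{single} square, optimizes $\mathcal K_{R_i}$ relative to the boundary values produced by the previous steps, and then undoes the change of variables. The inductive statement (Lemma~\ref{lem:mod3}) tracks how the Dirichlet-energy and sup-norm bounds propagate through overlaps; because a point can be updated at most four times, the final bound is only $|e_2\cdot\sigma^{(3)}_x|\le 32|\log(\eps)|^{-1/2}$, not the exponential $e^{-c|\log(\eps)|^s}$ you claim. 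Your global optimization on $R'$ would require extending Lemma~\ref{lem:bulk} (proved via Feynman--Kac for one square on the event $\mathcal A_Q$) to a region pieced together from many boxes with patched $m_x$ and with dirty boxes excised; this is not addressed, and using $\ell/2$-boxes for the change of variables is mismatched with the relaxation length $\eps^{-1}|\log(\eps)|^{-1/2}\gg\ell$. Finally, the existence of a sub-collar $R'$ with $\theta\in(-\pi/8,\pi/8)$ on $\partial R'$ is obtained in the paper not by an isoperimetric/percolation argument but by the direct line-segment construction of Proposition~\ref{prop:defect}, applied inside each $Q_i$ to produce $R_i$.
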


\noindent
\textbf{Modification $1$: flipping spins in boundary component to same halfspace.}

Let $R\subset \bar{\delta}(\Gamma)\backslash \Gamma$ be a  set so that $ \textrm{dist}(R, \Gamma) \geq 3\ell$. We denote by $\mathfrak{A}(R)$ to be the smallest set containing $R$ having the property that for each connected component $\mathfrak c$ of $\mathfrak{A}(R)$
\[
\forall x \in \partial^o \mathfrak c, \, \, \textrm{sgn}(\sigma_x \cdot e_1) \text{ is constant and }|\sigma_x \cdot e_1|\geq 9/10.
\]
\begin{proposition}
\label{prop:defect}
There exist $\xi_0>0$, $\eps_0(\xi_0)>0$ so that if $\eps \in (0, \eps_0)$ and $\xi \in (0, \xi_0)$ we have that
\[
\mathfrak{A}(R) \cup \partial^o \mathfrak{A}(R) \subset \{ x \in \Z^2: \textrm{dist}(x, R) \leq 3\ell \}\subset\delta_{L/2}(\Gamma)\backslash \textrm{sp}(\Gamma).
\]
\end{proposition}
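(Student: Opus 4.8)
The plan is to show that $\mathfrak{A}(R)$ is produced from $R$ by a single engulfing step of spatial size $\ll\ell$, so that the $3\ell$-neighbourhood in the statement is generously wide. First I would reduce to $R$ connected with $\Psi^{\xi}_{z}(\sigma)=+1$ for $z\in R$ (a disconnected $R$ is treated componentwise; two components within distance $\ell$ of each other necessarily carry the same value of $\Psi^{\xi}$; and the value $-1$ is symmetric under reflection across the $e_{2}$-axis). This reduction is legitimate because $\Gamma$ is a contour for $\sigma$: on $\bar{\delta}(\Gamma)\setminus\mathrm{sp}(\Gamma)\supseteq R$ the field $\Psi^{\xi}(\sigma)$ takes values in $\{\pm1\}$ and, being constant on connected clusters of $\{\Psi^{\xi}\neq 0\}$, is constant on $R$. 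Unwinding \eqref{def: Psi} together with $\Psi^{\xi}_{z}(\sigma)=+1$ for $z\in R$ then yields $\mathcal{E}_{Q}(\sigma)\le\eps^{2}|\log(\eps)|^{1+\chi}\ell^{2}$ and $\sigma(Q)\cdot e_{1}\ge 1-\xi$ for every $\ell$-box $Q$ anchored on $\tfrac{\ell}{2}\mathbb{Z}^{2}$ that meets $U:=\{x:\mathrm{dist}(x,R)\le 4\ell\}$ (the definition propagates the ``cleanness'' encoded by $\Psi^{\xi}=1$ over a neighbourhood of $R$ of radius $\gg L\gg\ell$). Write $u:=\sigma\cdot e_{1}$.

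The key step — and the one I expect to be the main obstacle — is a confinement estimate: \emph{for $\xi,\eps$ small, every connected component $K$ of $\{u<9/10\}$ that meets the $\ell$-neighbourhood of $R$ satisfies $\mathrm{diam}(K)<\ell/50$.} I would establish it by one-dimensional slicing. If $\mathrm{diam}(K)\ge\ell/50$, following $K$ out of a site near $R$ produces a connected subpath of $K$ inside $U$ of horizontal (say) extent $\ge\ell/(50\sqrt 2)$, hence an $\ell$-box $Q\subseteq U$ many of whose columns each carry a site with $u<9/10$. On a column $C$, telescoping and Cauchy--Schwarz give $\min_{C}u\ge\bar u_{C}-\sqrt{\ell\,\mathcal{E}_{C}}$, where $\bar u_{C}$ is the column average and $\mathcal{E}_{C}:=\sum_{e\subseteq C}\|\nabla_{e}\sigma\|_{2}^{2}$, so such a column has $1-\bar u_{C}>1/20$ or $\mathcal{E}_{C}>1/(400\ell)$. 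Markov's inequality, using $\sum_{C}(1-\bar u_{C})=\ell\,(1-\sigma(Q)\cdot e_{1})\le\xi\ell$ and $\sum_{C}\mathcal{E}_{C}\le\mathcal{E}_{Q}(\sigma)$, bounds the number of such columns by $O(\xi\ell)+O(\eps^{2}|\log(\eps)|^{1+\chi}\ell^{3})$; since the scales are fixed so that $\eps^{2}\ell^{2}\le|\log(\eps)|^{-1-2s}$ and (as we impose on our fixed parameters) $\chi<2s$, the second term is $o(\ell)$, so for $\xi$ and $\eps$ small this contradicts the lower bound $\ell/(50\sqrt 2)$; the row case is identical. This is exactly where the two-dimensional accounting bites: the Dirichlet budget per $\ell$-box, $\eps^{2}|\log(\eps)|^{1+\chi}\ell^{2}=|\log(\eps)|^{\chi-2s}$, must vanish for a usable line-oscillation bound to follow.

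Granting the confinement estimate, I would set $\mathfrak{A}(R):=R\cup\bigcup\{K:K\text{ a connected component of }\{u<9/10\}\text{ adjacent to }R\}$. Since distinct components of $\{u<9/10\}$ are never neighbours, no second layer is adjoined, and by the estimate each adjoined $K$ has diameter $<\ell/50$ and touches $R$; hence $\mathfrak{A}(R)\cup\partial^{o}\mathfrak{A}(R)\subseteq\{x:\mathrm{dist}(x,R)\le\ell/50+2\}\subseteq\{x:\mathrm{dist}(x,R)\le 3\ell\}$, which also puts $\mathfrak{A}(R)$ inside $U$. That $\mathfrak{A}(R)$ has the required boundary property is then immediate: if $y\in\partial^{o}\mathfrak{c}$ for a component $\mathfrak{c}$ of $\mathfrak{A}(R)$, then $y\notin\{u<9/10\}$ — otherwise the $\{u<9/10\}$-component through $y$ would be adjacent to $R$, hence contained in $\mathfrak{A}(R)$ and in $\mathfrak{c}$, a contradiction — so $u(y)\ge 9/10$, giving $\mathrm{sgn}(u(y))=+1$ and $|u(y)|\ge 9/10$ for all $y\in\partial^{o}\mathfrak{c}$ (in particular, inner boundary components facing holes of $\mathfrak{c}$ require no filling). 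Minimality is the same observation applied to an arbitrary competitor $S\supseteq R$ with the property, which is forced to contain every $\{u<9/10\}$-component adjacent to $R$; and since the sign is constant on $U$, no component of $\mathfrak{A}(R)$ merges two sign-classes of $R$, so the constant-sign requirement holds automatically. Finally, the second inclusion $\{x:\mathrm{dist}(x,R)\le 3\ell\}\subseteq\delta_{L/2}(\Gamma)\setminus\mathrm{sp}(\Gamma)$ I would obtain directly from $R\subseteq\bar{\delta}(\Gamma)\setminus\mathrm{sp}(\Gamma)$, the separation hypothesis $\mathrm{dist}(R,\Gamma)\ge 3\ell$, and $3\ell\ll L$.
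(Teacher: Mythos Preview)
Your argument is correct and rests on the same one-dimensional oscillation estimate as the paper (telescoping plus Cauchy--Schwarz along lines, combined with Markov on the column averages and column energies), and both require the same implicit parameter constraint $\chi<2s$ for the per-line oscillation $\sqrt{\ell\,\mathcal E_C}\sim|\log\eps|^{(\chi-2s)/2}$ to vanish.

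The packaging, however, is genuinely different. The paper works \emph{constructively}: it tiles an annulus around $R$ by overlapping $\ell\times 2\ell$ rectangles, and in each rectangle uses the averaging argument to find explicit horizontal and vertical segments on which $\sigma\cdot e_1\ge 9/10$; chaining these produces a circuit enclosing $R$ inside the $3\ell$-neighbourhood, and the region bounded by this circuit is $\mathfrak A(R)$. You instead work \emph{structurally}: you prove a confinement estimate showing every connected component of $\{u<9/10\}$ near $R$ has diameter $<\ell/50$, and then identify $\mathfrak A(R)$ directly as $R$ together with the adjacent such components, checking minimality and the boundary property via the observation that distinct components of $\{u<9/10\}$ cannot be neighbours. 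Your route gives a sharper containment ($\ell/50$ rather than $3\ell$) and a cleaner description of $\mathfrak A(R)$; the paper's route is more explicit about the boundary circuit, which may be convenient elsewhere. One small point to tighten in your write-up: the passage from ``$\mathrm{diam}(K)\ge\ell/50$'' to ``an $\ell$-box $Q$ with $\gtrsim\ell$ bad columns'' needs a line explaining how to localize (e.g.\ pick any $\ell$-box from the $\tfrac{\ell}{2}$-grid containing a point of $K$; either $K\subset Q$ and has large horizontal or vertical extent there, or $K$ reaches $\partial Q$ and hence crosses $\ge\ell/4$ rows or columns).
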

This is a straightforward consequence of the relatively large energetic cost of point defects in low dimension.  We will use this lemma several times in the boundary surgery.  

As a first application,  modifying $\sigma$ we define $\sigma^{(1)}$ in the following way. Let $\mathfrak{A}^{\pm}$ denote $\mathfrak{A}(\mathfrak M^{\pm}(\Gamma))$.  If $x \in \mathfrak{A}^{\pm}$ but $\textrm{sgn}(\sigma_x \cdot e_1)=\mp 1$, we reflect it across the $e_2$ axis and otherwise leave it as is. So, for example, $x \in \mathfrak{A}^+$ and
\begin{equation}\label{mod1}
\sigma^{(1)}_x  = \begin{cases}
 -(e_1 \cdot  \sigma_x) \: e_1+ (e_2 \cdot \sigma_x) \: e_2& \text{ if } \textrm{sgn}(\sigma_x\cdot e_1) =-1 \\
  \sigma_x  & \text{ if } \textrm{sgn}(\sigma_x\cdot e_1) = +1.
\end{cases}
\end{equation}

We record the main properties of this modification used below.  The proof is straightforward and omitted.  
\begin{lemma}\label{lem:flip}
Given $\sigma \in \mathbb{X}(\Gamma)$ and $\sigma^{(1)}$ defined in \eqref{mod1} :
\begin{enumerate}
\item $\sigma^{(1)} \in \mathbb{X}(\Gamma)$
\item If $x \in M^{\pm}(\Gamma)$ and $\sigma_x \cdot e_1 \neq 0$ then $\textrm{sgn}(\sigma^{(1)}_x\cdot e_1)=\pm 1$.
\item For any $Q \subseteq M$ such that $Q\in \mathcal Q_{\ell}$, we have
\[
|\sigma^{(1)} (Q) \cdot e_1| \geq |\sigma (Q) \cdot e_1|.
\] 
\item If $\Gamma$ is compatible with the boundary condition $e_1$, $R\subset \Lambda_N$ we have
\[
- \mathcal{H}_{\Lambda_N}(\sigma^{(1)} | e_1) \geq - \mathcal{H}_{\Lambda_N}(\sigma|e_1)
\]
and $\mathcal{E}_R(\sigma^{(1)}) \leq \mathcal{E}_R(\sigma)$.
\end{enumerate}
\end{lemma}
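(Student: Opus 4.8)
\emph{Plan of proof.} The plan is to reduce all four items to the elementary fact that $\sigma\mapsto\sigma^{(1)}$ of \eqref{mod1} acts on each spin either trivially or by the reflection $v\mapsto v-2(v\cdot e_1)e_1$ across the $e_2$-axis — an isometry of $\mathbb S^1$ fixing $e_2$ and negating the $e_1$-component — applied only on $\mathfrak A^+\cup\mathfrak A^-$, which by Proposition~\ref{prop:defect} and the definition of $\mathfrak A(\cdot)$ has the following structure: $\mathfrak A^\pm$ lies within $3\ell$ of $\mathfrak M^\pm(\Gamma)$ — hence, since $\mathfrak M(\Gamma)$ lies at distance $\ge L/8$ from $\textrm{sp}(\Gamma)$ and $\ell/L\to 0$ as $\eps\to 0$, at distance $>4\ell$ from $\textrm{sp}(\Gamma)$ and well inside the corresponding $\pm$-region — while each component of $\mathfrak A^+$ (resp.\ $\mathfrak A^-$) has $\sigma_w\cdot e_1\ge 9/10$ (resp.\ $\le -9/10$) on its outer boundary, the $+$ sign on $\mathfrak A^+$ arising because $\mathfrak M^+(\Gamma)\subset\mathfrak C^+(\Gamma)$, on which $\Psi^\xi\equiv1$ forces $\sigma(Q_\ell(r))\cdot e_1\ge 1-\xi$ for every nearby block. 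The first step is the edgewise monotonicity $\|\nabla_e\sigma^{(1)}\|_2\le\|\nabla_e\sigma\|_2$ for each $e=\langle x,y\rangle$: equality holds if neither or both endpoints are reflected, while if exactly one, say $x\in\mathfrak A^+$ with $\sigma_x\cdot e_1<0$, is reflected, then $y$ is either a non-reflected cell of $\mathfrak A^+$ (so $\sigma_y\cdot e_1\ge 0$ by the rule in \eqref{mod1}) or a cell of $\partial^o\mathfrak A^+$ (so $\sigma_y\cdot e_1\ge 9/10$); in either case $\sigma_x\cdot e_1\le 0\le\sigma_y\cdot e_1$ and, the $e_2$-component at $x$ being unchanged,
\[
\|\nabla_e\sigma^{(1)}\|_2^2-\|\nabla_e\sigma\|_2^2=(|\sigma_x\cdot e_1|-\sigma_y\cdot e_1)^2-(\sigma_x\cdot e_1-\sigma_y\cdot e_1)^2\le 0 ;
\]
the case $x\in\mathfrak A^-$ is symmetric.

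From this I would deduce (2), (3), (4). Summing the inequality over the edges of $R$ gives $\mathcal E_R(\sigma^{(1)})\le\mathcal E_R(\sigma)$, the second half of (4); for the first half, the field term of \eqref{def:Ham1} is unchanged because reflection preserves $e_2$-components, and the boundary term against $e_1$ does not increase because a reflected cell of $\mathfrak A^+$ adjacent to $\Lambda_N^c$ is moved toward $e_1$ while compatibility of $\Gamma$ with $e_1$ keeps $\mathfrak A^-$ off $\partial^o\Lambda_N$. For (3): an $\ell$-block $Q\subseteq M(\Gamma)$ meets at most one of $\mathfrak A^+,\mathfrak A^-$ (which lie in well-separated $\pm$-regions), so every reflection inside $Q$ moves $\sigma_x\cdot e_1$ from $-|\sigma_x\cdot e_1|$ to $+|\sigma_x\cdot e_1|$ (or uniformly the other way), whence $|\sigma^{(1)}(Q)\cdot e_1|\ge|\sigma(Q)\cdot e_1|$. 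Item (2) is a case check: if $x\in M^+(\Gamma)$ and $\sigma_x\cdot e_1>0$ then $x$ is not reflected (it could be reflected only if $x\in\mathfrak A^-$, impossible since $x\in\mathfrak C^+(\Gamma)$ is far from the $-$-region), so $\textrm{sgn}(\sigma^{(1)}_x\cdot e_1)=+1$; and if $\sigma_x\cdot e_1<0$ then $x$ is a defect inside the $+$-region, which by Proposition~\ref{prop:defect} and the constraints defining $\psi^0$ and $\psi^{1,\xi}$ on $\mathfrak C^+(\Gamma)$ is captured by the minimal clean hull $\mathfrak A^+$ of $\mathfrak M^+(\Gamma)$, so $x$ is reflected and again $\textrm{sgn}(\sigma^{(1)}_x\cdot e_1)=+1$; the case $M^-(\Gamma)$ is symmetric.

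Item (1), that $\sigma^{(1)}\in\mathbb X(\Gamma)$, is where I expect the genuine work to lie. Since the modification touches no cell within $4\ell$ of $\textrm{sp}(\Gamma)$, the block data entering $\psi_z$ for $z\in\textrm{sp}(\Gamma)$ are unchanged, so $\psi_z(\sigma^{(1)})=\psi_z(\Gamma)$ on $\textrm{sp}(\Gamma)$; the substance is to show $R^0(\sigma^{(1)})\subseteq R^0(\sigma)$. For this, the edgewise monotonicity makes the Dirichlet test defining $\psi^0$ succeed at least as often, and the argument for (3) — applied in the relevant $\pm$-region, where the pertinent blocks meet only $\mathfrak A^\pm$ — shows that a block average already inside the $\psi^{1,\xi}=\pm1$ window is only pushed further toward $\pm e_1$; hence $\psi_z(\sigma^{(1)})=\pm1$ wherever $\psi_z(\sigma)=\pm1$, so $\Psi^\xi_z(\sigma^{(1)})=\pm1$ wherever $\Psi^\xi_z(\sigma)=\pm1$, and $R^0$ can only shrink. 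Consequently the component of $R^0(\sigma^{(1)})$ containing $\textrm{sp}(\Gamma)$ equals $\textrm{sp}(\Gamma)$, which is thus still a maximal connected component of $R^0(\sigma^{(1)})$; together with $\psi_z(\sigma^{(1)})=\psi_z(\Gamma)$ on $\textrm{sp}(\Gamma)$ this gives $\sigma^{(1)}\in\mathbb X(\Gamma)$. The main obstacle is precisely this stability claim: one must be sure that lowering Dirichlet energies and nudging $\ell$-block averages toward $\pm e_1$ — both confined to $\mathfrak A^\pm$, which sit strictly inside the $\pm$-regions and far from $\textrm{sp}(\Gamma)$ — can neither create a new one-sign region of $\Psi^\xi$ nor eat into $\textrm{sp}(\Gamma)$, which comes down to carefully tracking the radii $3\ell$ and $L/8$ against the $2L$-robustness built into the definition of $\Psi^\xi$.
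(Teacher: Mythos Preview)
The paper omits this proof entirely, declaring it ``straightforward''. Your proposal supplies precisely the elementary verification intended, organized around the correct key observation: the partial reflection in \eqref{mod1} never increases $\|\nabla_e\sigma\|_2$ on any edge, because at an edge with exactly one reflected endpoint the unreflected neighbour already carries the correct sign (either by the reflection rule inside $\mathfrak A^\pm$ or by the defining property of $\partial^o\mathfrak A^\pm$). Items (3) and (4) follow as you say. For (1) your plan is right and can in fact be sharpened: every $y'$ whose $\psi_{y'}$ could conceivably change lies within a few $\ell$ of $\mathfrak A^\pm\subset\mathfrak C^\pm$, hence within the $2L$-halo of a point with $\Psi=\pm1$, so already $\psi_{y'}(\sigma)=\pm1$; combined with your implication $\psi_{y'}(\sigma)=\pm1\Rightarrow\psi_{y'}(\sigma^{(1)})=\pm1$ this yields $\psi(\sigma^{(1)})\equiv\psi(\sigma)$ identically, so $R^0$ is literally unchanged and the ``eating into $\textrm{sp}(\Gamma)$'' worry you flag dissolves.

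The one soft spot is item (2). You assert that a point $x\in M^+(\Gamma)$ with $\sigma_x\cdot e_1<0$ is ``captured by the minimal clean hull $\mathfrak A^+$'' and hence reflected. But $\mathfrak A^+=\mathfrak A(\mathfrak M^+)$ is by definition the \emph{smallest} superset of $\mathfrak M^+$ with a clean outer boundary and, by Proposition~\ref{prop:defect}, is contained in the $3\ell$-neighbourhood of $\mathfrak M^+$; the $L/16$-measurable cover $M^+$ can protrude by order $L/16\gg 3\ell$ beyond $\mathfrak M^+$, and an isolated wrong-sign spin sitting in $M^+\setminus\mathfrak A^+$ would not be touched by \eqref{mod1}. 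This looks like a slight imprecision in the paper's formulation of (2) rather than a flaw in your method---the downstream constructions (Modifications~2--3) in fact only need the sign conclusion on $\mathfrak A^\pm\supset\mathfrak M^\pm$, which your argument does establish.
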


\noindent
\textbf{Modification $2$: Forcing spins near dirty boxes towards ground states.}

Let
\[
D := \bigcup_{{Q \in \mathcal M \atop \Xi_{L/16}(Q)=0 }} Q,
\] 
denote the union of all relevant dirty boxes. We split $D$ into the subsets 
\begin{equation}\label{def:dirtyD}
D^{\pm} := D\cap \mathfrak{C}^{\pm}.
\end{equation}
and define the neighborhoods
\begin{equation*}
\mathfrak{D}^{\pm} := \left \{ x:  \textrm{dist}(x, D^{\pm}) \leq L/16 \right \}.
\end{equation*}
Note that $\mathfrak D^{\pm}\subseteq \bar{\delta}(\Gamma)$

We define the  linear interpolation $\tau: \Z^2 \rightarrow [0,1]$ forcing angles near dirty boxes towards ground states.  Let
\begin{equation*}
\tau(x) = \begin{cases} 
\frac{16 \cdot \textrm{dist}(x, {D}^{\pm}_{L/16})}{L} \wedge 1 & \text{ if } x\in  \mathfrak{D}^{\pm}_{L/16} \setminus{D}^{\pm}, \\
1 & \text{ if } x\notin \mathfrak{D}^{\pm}_{L/16}, \\
0 & \text{ otherwise}.
\end{cases}
\end{equation*}

We define a second modification $\sigma^{(2)}$ by leaving $\sigma^{(1)}$ alone  outside of $\mathfrak{D}^{\pm}_{L/16}$ and applying the transformation $\tau$ inside.  Let us represent $\sigma^{(1)}=(\cos(\theta_x), \sin (\theta_x))$ where $\theta_x\in [- \frac \pi3, \frac \pi3]$ if $x\in  \mathfrak{D}^{+}$ and $\theta_x\in [\frac{2\pi}{3}, \frac{4\pi}{3}]$ if $x\in  \mathfrak{D}^{-}$.  Then we set
\begin{equation*}
\sigma^{(2)}_x = \begin{cases} 
\sigma^{(1)}_x & \text{ if } x \notin \mathfrak{D}^{\pm}, \\
(\cos(\tau(x) \theta_x), \sin(\tau(x) \theta_x)) & \text{ if } x\in \mathfrak{D}^+, \\
(\cos(\tau(x) [\theta_x-\pi] + \pi), \sin(\tau(x) [\theta_x-\pi] + \pi)) & \text{ if } x\in \mathfrak{D}^-.
\end{cases}
\end{equation*}
\begin{lemma}\label{lem:mod2}
Let $\Gamma$ be a clean contour. There exists $\eps_0>0$ such that for $\eps \in (0, \eps_0)$ small enough
\begin{equation*}\label{eq:mod2}
\left |-\mathcal{H}_{\Lambda_N}(\sigma^{(2)}|e_1) + \mathcal{H}_{\Lambda_N}(\sigma^{(1)}|e_1) \right | \leq \eps^2 |\log(\eps)|^{5/8} |\Gamma|.
\end{equation*}
\end{lemma}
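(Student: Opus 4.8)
The plan is to localize the comparison to the set actually altered by the modification and then to bound the Hamiltonian of $\sigma^{(1)}$ and of $\sigma^{(2)}$ on that set \emph{separately}; a direct estimate of the difference $\sigma^{(2)}-\sigma^{(1)}$ is hopeless because it is an $O(1)$ rotation on the whole region. Set $\mathfrak{D}=\mathfrak{D}^{+}\cup\mathfrak{D}^{-}$. The two configurations agree off $\mathfrak{D}$ and $\tau\equiv1$ on the outermost layer of $\mathfrak{D}$, so
\[
-\mathcal{H}_{\Lambda_N}(\sigma^{(2)}|e_1)+\mathcal{H}_{\Lambda_N}(\sigma^{(1)}|e_1)=-\mathcal{H}_{\mathfrak{D}}(\sigma^{(2)}\,|\,\mathrm{bc})+\mathcal{H}_{\mathfrak{D}}(\sigma^{(1)}\,|\,\mathrm{bc}),
\]
where $\mathrm{bc}$ is the common restriction of $\sigma^{(1)}=\sigma^{(2)}$ to $\mathfrak{D}^{c}$ (the $e_1$-boundary term drops out because $\mathfrak{D}\subseteq\bar{\delta}(\Gamma)$ does not reach $\partial^{i}\Lambda_N$). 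I would then reduce to proving $|-\mathcal{H}_{\mathfrak{D}}(\eta\,|\,\mathrm{bc})|\lesssim\eps^{2}|\log(\eps)|^{5/8}|\Gamma|$ for $\eta\in\{\sigma^{(1)},\sigma^{(2)}\}$ and subtract. Two structural inputs are needed: (a) the dirty $L/16$-boxes meeting $\delta(\mathrm{sp}(\Gamma))$ are sparse by (R0), so $|\mathfrak{D}|\lesssim|\log(\eps)|^{-\rho}|\Gamma|$; (b) $\mathfrak{D}^{\pm}\subseteq\bar{\delta}(\Gamma)\setminus\mathrm{sp}(\Gamma)$ lies inside the good collar, where $\psi^{0}(\sigma)\equiv1$, so every $\ell$-box $Q\subseteq\mathfrak{D}$ obeys $\mathcal{E}_{Q}(\sigma)\le\eps^{2}|\log(\eps)|^{1+\chi}\ell^{2}$, and by Lemma~\ref{lem:flip}(4) the same holds for $\sigma^{(1)}$.

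For the Dirichlet part, summing the good-box bound over an $\ell$-cover of $\mathfrak{D}$ gives $\mathcal{E}_{\mathfrak{D}}(\sigma^{(1)})\lesssim\eps^{2}|\log(\eps)|^{1+\chi-\rho}|\Gamma|$. For $\sigma^{(2)}$, on $\mathfrak{D}^{\pm}$ the lifted angle is $\tau(x)\theta_{x}$ (resp.\ $\tau(x)(\theta_{x}-\pi)+\pi$) with $|\theta_{x}|\le\pi/3$ (resp.\ $|\theta_{x}-\pi|\le\pi/3$) and $\|\nabla\tau\|_{\infty}\lesssim1/L$; combining the product rule with $\|\nabla_{e}\sigma\|_{2}^{2}=4\sin^{2}(\tfrac12\nabla_{e}\theta)\le(\nabla_{e}\theta)^{2}$ yields $\mathcal{E}_{\mathfrak{D}}(\sigma^{(2)})\lesssim\mathcal{E}_{\mathfrak{D}}(\sigma^{(1)})+|\mathfrak{D}|/L^{2}$, which is again $\lesssim\eps^{2}|\log(\eps)|^{5/8}|\Gamma|$ since $1/L^{2}\sim\eps^{2}|\log(\eps)|^{1-2s}$ and, with $\rho=5/4$, both $1+\chi-\rho$ and $1-2s-\rho$ lie below $5/8$. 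The edge terms against $\mathrm{bc}$ are bounded the same way, and because $\tau\equiv1$ near $\partial^{i}\mathfrak{D}$ they differ between $\sigma^{(1)}$ and $\sigma^{(2)}$ only by $O(|\mathfrak{D}|/L^{2})$.

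The random-field term is the crux and the step I expect to be the main obstacle. The naive bound $\eps\sum_{x\in\mathfrak{D}}|\alpha_{x}||\theta_{x}|$ is off by a whole power of $\eps$; one must instead observe that the random-field energy of \emph{each} configuration is individually tiny, which is forced by the smallness of the Dirichlet energy on the good collar and is made quantitative through the completion-of-squares identity \eqref{E:SubN}. Concretely, I would tile $\mathfrak{D}$ by $\ell/2$-boxes and, rearranging \eqref{E:SubN} in each box $Q$ (using $\mathcal{E}_{Q}(\eta)=\mathcal{E}_{Q}(e_1\cdot\eta)+\mathcal{E}_{Q}(e_2\cdot\eta)$), obtain
\[
\Bigl|\eps\sum_{x\in Q}\alpha_{x}\,(e_2\cdot\eta_{x})\Bigr|\lesssim\mathcal{E}_{Q}(g^{\lambda,N}_{Q})+\mathcal{E}_{Q}(\eta)+\lambda\sum_{x\in Q}|g^{\lambda,N}_{Q,x}|.
\]
On clean boxes this is $\lesssim\eps^{2}|\log(\eps)|^{1+\chi}\ell^{2}$ by (C2) and (C4) (and the good-box bound, with the $1/L^{2}$ correction for $\eta=\sigma^{(2)}$), summing over the $\lesssim|\mathfrak D|/\ell^2$ clean boxes to $\lesssim\eps^{2}|\log(\eps)|^{1+\chi-\rho}|\Gamma|$; the dirty $\ell/2$-boxes are summed exactly as in the dirty-box bookkeeping of the proof of Lemma~\ref{lem:5.1}, using (R0), (R1), (R2) and Cauchy--Schwarz, which reproduces the same exponent $\le5/8$ found there.

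Assembling the three estimates gives $|-\mathcal{H}_{\mathfrak{D}}(\eta|\mathrm{bc})|\lesssim\eps^{2}|\log(\eps)|^{5/8}|\Gamma|$ for each $\eta\in\{\sigma^{(1)},\sigma^{(2)}\}$, and subtracting proves the lemma once $\eps$ is taken small enough to absorb the implicit constant. Beyond the random-field point, the remaining work is routine if slightly fiddly: the clean/dirty case split, the treatment of the partial $\ell/2$-boxes straddling $\partial\mathfrak{D}$, and the verification (from the definitions of $\mathfrak{C}^{\pm}(\Gamma)$, $\mathfrak{D}^{\pm}$ and of a contour) that $\mathfrak{D}^{\pm}$ indeed sits strictly inside the good collar $\bar{\delta}(\Gamma)\setminus\mathrm{sp}(\Gamma)$, so that all the earlier clean-box and good-box estimates apply.
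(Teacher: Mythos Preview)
Your approach is correct and follows essentially the same strategy as the paper: localize to $\mathfrak{D}$, complete the square via \eqref{E:SubN}, and then control the three resulting quantities (Dirichlet energy of $\sigma$, Dirichlet energy of $g^{\lambda,N}$, and the mass term $\lambda\|g^{\lambda,N}\|$) using goodness of the collar together with the regularity conditions (R0)--(R2), finally exploiting $|\mathfrak{D}|\lesssim|\log(\eps)|^{-\rho}|\Gamma|$.

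The one noteworthy difference is the tiling scale. The paper completes the square on the $L/16$-boxes $Q\in\mathcal{D}^{\pm}=\{Q\in\mathcal{Q}_{L/16}:Q\cap\mathfrak{D}^{\pm}\neq\varnothing\}$, which is the natural scale since $D^{\pm}$ and $\mathfrak{D}^{\pm}$ are themselves built from $L/16$-boxes; this makes the cover exact and eliminates the partial-box issue you flag. The paper also does not split into clean versus dirty $L/16$-boxes but instead applies (R1), (R2) directly, splitting each sum by the indicator $\1_{F\ge\text{threshold}}$: the sub-threshold part gives $\eps^{2}|\log(\eps)|^{1+\chi}|\mathfrak{D}^{\pm}|$, and the super-threshold part is bounded by regularity over all of $\delta(\Gamma)$. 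Your clean/dirty split at scale $\ell/2$ accomplishes the same thing via a slightly longer route, and your choice to bound the two Hamiltonians separately (rather than the difference) is cosmetic since the same term-by-term bounds apply either way.
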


\begin{proof}
Let 
\[
\mathcal D^{\pm} := \{ Q \in \mathcal{Q}_{L/16}: Q \cap \mathfrak D^{\pm} \neq \varnothing \}. 
\]
For $Q\in \mathcal{D}^{\pm}$, we may  complete the square as in \eqref{E:SubN}. Then since $\sigma^{(1)} = \sigma^{(2)}$ on $[\mathfrak D^{\pm}]^{c}$, 
\begin{equation}\label{eq:mod2}
\begin{split}
\left | -\mathcal{H}_{\Lambda_N}(\sigma^{(2)}|e_1) + \mathcal{H}_{\Lambda_N}(\sigma^{(1)}|e_1) \right | & \lesssim  \underbrace{\mathcal{E}_{\mathfrak D^+}(\sigma)}_{I_1} +  \underbrace{\sum_{Q\in \mathcal{D}^+} \mathcal{E}_{Q} \left(g^{\lambda, N}_{Q} \right)}_{I_2} + \underbrace{\sum_{Q\in \mathcal{D}^+} \lambda \| g^{\lambda, N}_{Q} \|_2L}_{I_3} \\&+ \mathcal{E}_{\mathfrak D^-}(\sigma) +  \sum_{Q\in \mathcal{D}^-}[ \mathcal{E}_{Q} \left(g^{\lambda, N}_{Q} \right) + \lambda \| g^{\lambda, N}_{Q} \|_2L],
\end{split}
\end{equation}
where we absorbed the contributions from edges between pairs $Q\neq Q'$ into the first term $I_1$.
Since $\Psi^\xi_z(\sigma) \equiv 1$ on $\mathfrak{C}^+(\Gamma)$ we have
\[
\mathcal{E}_{\mathfrak D^+}(\sigma) \lesssim \eps^2 |\log(\eps)| |\mathfrak D^+|.
\]

Next, using that $\Gamma$ is a regular contour, the term $I_2$ in \eqref{eq:mod2} can be bounded, using (R1) from Definition \ref{def:regularY}, by
\[
\begin{split}
 L^2 \sum_{Q \in \mathcal{D}^+} F_{\lambda, L}^\nabla(Q) \1_{\{F_{\lambda, L}^\nabla(Q)  \leq \eps^2 |\log(\eps)|^{1+\chi} \}} &+ L^2 \sum_{Q \in \mathcal{D}^+} F_{\lambda, L}^\nabla(Q)  \1_{\{F_{\lambda, L}^\nabla(Q)  > \eps^2 |\log(\eps)|^{1+\chi} \}} \\
& \lesssim (|\log(\eps)|^{1+\chi}+ |\log(\eps)|^{\zeta} )\eps^2 |\mathfrak{D}^+|,
\end{split}
\]
where $\chi, \zeta \in (0,\frac{1}{16})$.
Finally  using (R2), we bound
\[
\begin{split}
\| g^{\lambda, N}_{Q} \|^2_{2, \mathcal{D}^+}  & \lesssim L^2 \sum_{Q \in \mathcal{D}^+} F_{\lambda, L}(g) \1_{\{F_{\lambda, L}(g) \leq \eps^2\lambda^{-1} |\log(\eps)|^{\chi} \}} + L^2 \sum_{Q \in \mathcal{D}^+} F_{\lambda, L}( g) \1_{\{F_{\lambda, L}( g) > \eps^2 \lambda^{-1} |\log(\eps)|^{\chi} \}} \\
& \lesssim [ |\log(\eps)|^{\chi} + |\log(\eps)|^{\zeta}]\eps^2 \lambda^{-1}|\mathfrak{D}^+|. 
\end{split}
\]
Using (R0)
\[
|\mathfrak{D}^+| \lesssim |\log(\eps)|^{-\rho} |\Gamma|,
\]
where we recall $\rho =5/4$, so that the  term $I_2+I_3$ in \eqref{eq:mod2} is bounded by $\eps^2 |\log(\eps)|^{\max\{\chi, \zeta\}-\rho} |\Gamma|$.

Analogous estimates hold on $\mathfrak D^-$ and the claim follows from the constraints we imposed on exponents.
\end{proof}

\noindent
\textbf{Modification $3$: Energetic relaxation on the boundary.}
Let \[
\mathfrak{G}^{\pm} = M^{\pm} \setminus D^{\pm}, \quad \mathcal G^{\pm}=\{Q \in \mathcal Q^*_{L/16}: Q\subseteq \mathfrak{G}^{\pm}\},
\]
where $M^{\pm}$ was defined in \eqref{eq:Mpm}, $D^{\pm}$ in \eqref{def:dirtyD} and $\mathcal Q^*_{L_0}$ in \eqref{def:setQ}.
Note that $\sigma^{(2)}= \pm e_1$ on $D^{\pm}$.  This will be important in what follows. 
Let $\mathcal Q^{\pm}=\{Q\in \mathcal Q^*_{L/16}: Q \subseteq  \mathfrak G^{\pm}\}$, 
For simplicity of exposition, let us focus on the regions $\mathfrak{G}^{+}, \mathcal Q^{+}$.
Enumerate $\mathcal Q^+$ in some particular order $(Q_i)_{i=1}^N$.
We are going to perform a sequence of energetic relaxations on the $Q_i$'s in order to pass from the spin configuration $\sigma^{(2)}$ through a sequence $\sigma^{(2)}=\bar{\sigma}^{(0)}, \bar{\sigma}^{(1)}, \dotsc, \bar{\sigma}^{(N)}=:\sigma^{(3)}$ to a new spin configuration $\sigma^{(3)}$ over which we have better control.  

By construction, the squares $Q_i$ are all clean so that the conditions stated in Definition \ref{def:nice} are satisfied.
Moreover, by definition, for any square $Q_{\ell}$ within distance $\ell$ of $Q_i$, $$\mathcal E_{Q_{\ell}}({\sigma}^{(2)})\leq \eps^2 |\log (\eps)|^{1+\chi} \ell^2\text{ and }e_1\cdot \sigma^{(2)} (Q_{\ell})\geq 1-\xi.$$ Therefore using Proposition  \ref{prop:defect}, if $R^0_i=\{x\in Q_i: \textrm{dist}(x, \partial^o Q_i \geq 3\ell\}$ then there is a connected set $R_i$ so that $R_i^0\subseteq R_i \subseteq Q_i$ and $\sigma_x \cdot e_1\geq 9/10$ for all $x\in \partial^o R_i$

Suppose that after going through the boxes $Q_j, j\leq i$, we arrive at the spin configuration $\bar{\sigma}^{(i)}=(\cos(\theta^{(i)}), \sin(\theta^{(i)}))$.
For $x\in Q_{i+1}$, we define the change of variables $\theta^{(i)}\mapsto \phi^{(i+1)}$ by  $\phi^{(i+1)}_x=\theta^{(i)}_x - \cos(\theta^{(i)}_x)g^{\lambda, D}_{x, Q_{i+1}}$.  In the new variables, with the angles on $\partial^o Q_{i+1}$ given by $\phi^{(i+1)}_x$, we set, for $R\subset Q_{i+1}$,

\[
\begin{split}
- \mathcal{K}_{R}( \phi|\phi^{(i+1)}) & = \sum_{\substack{\langle x,y\rangle: \\ x, y\in R}} (\cos( \phi_x- \phi_y)-1) + \frac{1}{4}\sum_{x\in R} m_x \cos^2( \phi_x) \\
&+  \sum_{\substack{\langle x,  y \rangle:\\ x\in R,\\ y\in\partial^o R}} (\cos( \phi_x-\phi^{(i+1)}_y)-1)
\end{split}
\]
where
\[
m_x = \sum_{\substack{\langle x,y\rangle: \\ y\in Q_{i+1}}} (g^{\lambda, D}_{x, Q_{i+1}}  -  g^{\lambda, D}_{y, Q_{i+1}})^2
\]
if $x\in Q_{i+1}$.
We now optimize $- \mathcal{K}_{R_{i+1}}( \cdot |\phi^{(i+1)})$.  Later we show there is a unique  optimizer of $- \mathcal{K}_{R_{i+1}}( \cdot |{\phi}^{(i+1)})$, denoted by  $\nu^{(i+1)}$.  Let
\[
\bar{\phi}^{(i+1)}_x = 
\begin{cases} \nu^{(i+1)}_x  \text{ if } x\in  R_{i+1}\\
\phi^{(i+1)}_x   \text{ if } x\in  Q_{i+1} \backslash R_{i+1}.
\end{cases}
\]
Transforming back to the first set of coordinates, let $\theta^{(i+1)}_x$ be determined through the formula
\[
\bar{\phi}^{(i+1)}_x =\theta^{(i+1)}_x - \cos(\theta^{(i+1)}_x) g^{\lambda, D}_{x, Q_{i+1}}
\]
for $x\in Q_{i+1}$ and $\theta^{(i+1)}_x=\theta^{(i)}_x$ otherwise.  Let $\bar{\sigma}^{(i+1)}$ denote the associated spin configuration.
Finally, we set $\sigma^{(3)}_x$ on $\mathfrak G^+$ to be the spin configuration obtained after exhausting this procedure over $Q\in \mathcal Q^+.$

Given a boundary condition $\tau$ of a region $R$ let
\begin{equation*}\label{eq:DirTau}
\mathcal E_R( \sigma|\tau)=\sum_{\langle x, y\rangle \cap R\neq \varnothing} \|\sigma_x-\sigma_y||_2^2
\end{equation*}
where $\sigma$ is set equal to $\tau$ on $\partial^oR$.  The key technical step of our construction is encapsulated in the following lemma to be proved in Section \ref{S:Tech}.
\begin{lemma}\label{lem:mod3}
There is $C>0$ such that for all $i \in \{1,...,N\}$,
\[
\mathcal E_{Q_{i}}( \bar{\sigma}^{(i)}|\bar{\sigma}^{(i-1)}) \leq 2 [ \mathcal E_{Q_{i}}( \bar{\sigma}^{(i-1)}|\bar{\sigma}^{(i-1)})+ \mathcal E_{Q_{i}}(  g^{\lambda, D}_{Q_{i}}|0)]
\]
and
 \begin{equation*}
\label{E:mod3}
 -\mathcal{H}_{\Lambda_N}(\bar\sigma^{(i)}|e_1) \geq - \mathcal{H}_{\Lambda_N}(\bar\sigma^{(i-1)}|e_1)-  C \eps^2 |\log(\eps)|^{5/8}|Q_i|.
\end{equation*}

Moreover for any $\delta$ sufficiently small the following holds: defining $\mathcal R_i:=\cup_{j\leq i} R_j $ then for all $x\in \mathcal R_i$ such that $\textrm{dist}(x,  \partial^o \mathcal R_i)\geq 6 \eps^{-1} |\log (\eps)|^{-1/2+\delta}$, $|e_2\cdot \bar\sigma^{(i)}_x| \leq 32 |\log (\eps)|^{-1/2}$.
\end{lemma}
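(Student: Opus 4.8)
The plan is to treat the three assertions one box at a time. Since $\bar\sigma^{(i)}$ and $\bar\sigma^{(i-1)}$ agree off $R_i\subseteq Q_i$, everything localizes to $R_i$ and $\partial^o R_i$, and the first two claims reduce to comparing the $\mathcal K$-energy of the maximizer $\nu^{(i)}=\bar\phi^{(i)}$ with that of the competitor $\phi^{(i)}$; the third (pointwise) claim is carried along by induction on $i$, the decay estimate for maximizers of $\mathcal K_{R_i}$ being the crucial input.

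\emph{The energy bound.} From $-\mathcal K_{R_i}(\nu^{(i)}|\phi^{(i)})\ge -\mathcal K_{R_i}(\phi^{(i)}|\phi^{(i)})$, using that each term $\cos(\cdot)-1$ is nonpositive while $\tfrac14 m_x\cos^2(\cdot)\in[0,\tfrac14 m_x]$, one gets
\[
\textstyle\sum_{\langle x,y\rangle\cap R_i\neq\varnothing}\big(1-\cos(\nu^{(i)}_x-\nu^{(i)}_y)\big)\ \le\ \sum_{\langle x,y\rangle\cap R_i\neq\varnothing}\big(1-\cos(\phi^{(i)}_x-\phi^{(i)}_y)\big)+\tfrac14\sum_{x\in R_i} m_x,
\]
and $\tfrac14\sum_x m_x\lesssim\mathcal E_{Q_i}(g^{\lambda,D}_{Q_i}|0)$. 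All angle increments appearing here are small (for $\nu^{(i)}$ by the range bound on the $\mathcal K$-maximizer recalled in Section \ref{S:Intuit}, for $\phi^{(i)}$ because $\bar\sigma^{(i-1)}$ has small gradients near $Q_i$ and $\|g^{\lambda,D}_{Q_i}\|_\infty=o(1)$ by (C2)), so $1-\cos t$ and $\tfrac12 t^2$ are comparable, controlling $\mathcal E_{R_i}(\bar\phi^{(i)}|\phi^{(i)})$ by $\mathcal E_{R_i}(\phi^{(i)}|\phi^{(i)})+\mathcal E_{Q_i}(g^{\lambda,D}_{Q_i}|0)$. Passing from $\phi$- back to $\theta$-variables through $\theta_x-\theta_y=(\phi_x-\phi_y)+\cos\theta_x(g_x-g_y)+g_y(\cos\theta_x-\cos\theta_y)$, squaring with $(a+b)^2\le2a^2+2b^2$, and absorbing the $O(\|g^{\lambda,D}_{Q_i}\|_\infty)$ correction on the left yields the stated inequality with constant $2$ for $\eps$ small.

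\emph{The Hamiltonian comparison.} Since $\bar\sigma^{(i)}=\bar\sigma^{(i-1)}$ off $R_i$,
\[
-\mathcal H_{\Lambda_N}(\bar\sigma^{(i)}|e_1)+\mathcal H_{\Lambda_N}(\bar\sigma^{(i-1)}|e_1)=-\mathcal H_{R_i}(\bar\sigma^{(i)}|\bar\sigma^{(i-1)})+\mathcal H_{R_i}(\bar\sigma^{(i-1)}|\bar\sigma^{(i-1)}).
\]
Applying \eqref{E:cov1} on $R_i$ to both $\bar\sigma^{(i)}\leftrightarrow\nu^{(i)}$ and $\bar\sigma^{(i-1)}\leftrightarrow\phi^{(i)}$, then the maximality of $\nu^{(i)}$, bounds the right side below by $-C\|g^{\lambda,D}_{Q_i}\|_\infty\big[\mathcal E_{Q_i}(\bar\sigma^{(i)})+\mathcal E_{Q_i}(\bar\sigma^{(i-1)})+\mathcal E_{Q_i}(g^{\lambda,D}_{Q_i})+\lambda|Q_i|\big]$. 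The two spin energies are $\lesssim\eps^2|\log\eps|^{1+\chi}|Q_i|$ by the a priori bound $\mathcal E_{Q_\ell}(\sigma^{(2)})\le\eps^2|\log\eps|^{1+\chi}\ell^2$ (summed over the sub-blocks of $Q_i$ and the $O(1)$ overlapping boxes modified at earlier steps), the first inequality already proved, Lemma \ref{lem:flip}(4) and Lemma \ref{lem:mod2}; $\mathcal E_{Q_i}(g^{\lambda,D}_{Q_i})\lesssim\eps^2|\log\eps||Q_i|$ by (C4). With $\|g^{\lambda,D}_{Q_i}\|_\infty\le|\log\eps|^{-(1+\eta_\lambda)/2+\eta}$ from (C2) and $\lambda=\eps^2|\log\eps|^{1+\eta_\lambda}$, the bracket is $\lesssim\eps^2|\log\eps|^{\max\{1/2+\chi+\eta-\eta_\lambda/2,\,(1+\eta_\lambda)/2+\eta\}}|Q_i|\le\eps^2|\log\eps|^{5/8}|Q_i|$ by the standing constraints $\chi,\eta<1/16$, $\eta<\eta_\lambda/2$, $\eta_\lambda<1/8$, which is the claim.

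\emph{The pointwise estimate and the main obstacle.} I would argue by induction on $i$. The $\mathcal K_{R_i}$-problem carries boundary data $\phi^{(i)}_x=\theta^{(i-1)}_x-\cos(\theta^{(i-1)}_x)g_x$ on $\partial^o R_i$; by Proposition \ref{prop:defect} these sites satisfy $|\sigma^{(2)}_x\cdot e_1|\ge 9/10$, and those among them lying at distance $\ge 6\eps^{-1}|\log\eps|^{-1/2+\delta}$ from $\partial^o\mathcal R_{i-1}$ satisfy $|e_2\cdot\bar\sigma^{(i-1)}_x|\le 32|\log\eps|^{-1/2}$ by the inductive hypothesis, so $|\phi^{(i)}_x|<\pi/5$ on $\partial^o R_i$. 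Hence the unique maximizer $\nu^{(i)}$ of $-\mathcal K_{R_i}$ stays in $(-\pi/5,\pi/5)$, where its Euler--Lagrange equation linearizes near $\phi\equiv 0$ to the positive-mass discrete equation $(-\Delta+\tfrac12 m_x(1+O(\nu^{(i)}_x)))\nu^{(i)}=0$; cleanness of $Q_i$ supplies, via the event $\mathcal A_{Q_i}$, the averaged lower bound $\sum_{\|y-x\|_2\le r}m_y\ge A\eps^2 r^2|\log r|\gtrsim A\eps^2 r^2|\log\eps|$ for $r\in[L_0^{1/2},L_0^{3/4}]$, which forces $|\nu^{(i)}_x|$ to contract by a fixed factor over each length scale $\sim\eps^{-1}|\log\eps|^{-1/2}$ and hence $|\nu^{(i)}_x|\lesssim e^{-c|\log\eps|^\delta}$ once $\textrm{dist}(x,\partial^o R_i)\ge 6\eps^{-1}|\log\eps|^{-1/2+\delta}$, provided $\delta$ is small enough; for $x\in\mathcal R_i$ far from $\partial^o\mathcal R_i$ but not interior to the last $R_k\ni x$ one reruns the same estimate in $R_k$, whose boundary data near $x$ already obeys the bound by the inductive hypothesis. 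Finally $\theta^{(i)}_x=\nu^{(i)}_x+\cos(\theta^{(i)}_x)g_x$ gives $|e_2\cdot\bar\sigma^{(i)}_x|=|\sin\theta^{(i)}_x|\le|\nu^{(i)}_x|+\|g^{\lambda,D}_{Q_i}\|_\infty\le e^{-c|\log\eps|^\delta}+|\log\eps|^{-(1+\eta_\lambda)/2+\eta}\le 32|\log\eps|^{-1/2}$, since $(1+\eta_\lambda)/2-\eta>1/2$. The main obstacle is exactly this last part: extracting quantitative, exponential-in-$|\log\eps|^\delta$ decay for the \emph{nonlinear} maximizer of $\mathcal K_{R_i}$ from the merely averaged lower bound on $m_x$, while keeping $\nu^{(i)}$ in the good range, and propagating the bound consistently through the union $\mathcal R_i$ --- this is precisely where the discrete-elliptic estimates of Section \ref{S:Tech} are required.
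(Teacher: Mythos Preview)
Your treatment of the first two assertions is correct and matches the paper's argument: the energy inequality follows from the maximality of $\nu^{(i)}$ for $-\mathcal K_{R_i}$ together with the change of variables, and the Hamiltonian comparison follows from Lemma~\ref{lem:cov} plus the cleanness conditions (C2), (C4) exactly as you outline.

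For the pointwise estimate, you have the right ingredients but the inductive structure you sketch is not the one that works, and this is where your account differs from the paper. Your plan is to induct on $i$: assume the bound on $\mathcal R_{i-1}$, then deduce it on $\mathcal R_i$. The difficulty is that for $x\in\mathcal R_i$ with $\textrm{dist}(x,\partial^o\mathcal R_i)\ge 6r(\eps)$ but $x\notin R_i$, one does \emph{not} automatically have $\textrm{dist}(x,\partial^o\mathcal R_{i-1})\ge 6r(\eps)$ (since $\mathcal R_{i-1}\subseteq\mathcal R_i$ pushes the outer boundary outward, not inward), so the inductive hypothesis may fail to apply; and when you write ``reruns the same estimate in $R_k$, whose boundary data near $x$ already obeys the bound by the inductive hypothesis'', the boundary sites of $R_k$ near $x$ need not lie $6r(\eps)$ inside $\mathcal R_{k-1}$ either. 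Moreover, you invoke only the exponential decay estimate (what in Lemma~\ref{lem:bulk} is \eqref{E:bulk}), but for $x$ close to $\partial^o R_k$ this is unavailable.

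The paper circumvents this by a different bookkeeping. It picks the \emph{largest} $j_0\le i$ with $x$ deep in $R_{j_0}$ (such $j_0$ exists whenever $\textrm{dist}(x,\partial^o\mathcal R_i)\ge 6r(\eps)$), applies the full exponential decay \eqref{E:bulk} at step $j_0$ to get the bound $\lesssim|\log\eps|^{-1/2}$ on a ball $B_{5r(\eps)}(x)$, and then, for each of the at most four subsequent updates $j>j_0$ with $x\in R_j$, uses the \emph{second} estimate \eqref{E:bound} of Lemma~\ref{lem:bulk}: since the ``bad'' boundary set $\mathfrak T_{R_j}$ lies outside this ball, the bound near $x$ at most doubles at each step, producing the factor $2^{4+1}=32$. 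This two-tier use of Lemma~\ref{lem:bulk} --- exponential decay once, then controlled doubling finitely many times --- is the mechanism you are missing; straight induction on $i$ with only the exponential estimate does not close.
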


\noindent
\textbf{Modification $4$: Forcing spins in the middle of a boundary strip towards $\pm e_1$}
Next we force spins in a thin substrip of $\mathfrak M(\Gamma)$ to point exactly along $\pm e_1$ so that we can match $\sigma^{(3)}$ with the configuration $\bar{\sigma}$ defined in Section \ref{sec:bulkmod}, equation \eqref{def:sigma-g}.

Let $(c^{\Gamma}_i)_{i\in [I]}$ be the collection of connected components of $\mathfrak M(\Gamma)$.  By construction these sets may be viewed as  thickened Jordan curves in $\mathbb{Z}^2$ having inner radius $L/8$. Each $c^{\Gamma}_i$ can be split into connected layers $(L_{j, c^{\Gamma}_{i}})_{j\in J}$, chosen so that $|J|\geq \sqrt{|\log (\eps)|}$ and
\[
\frac{|L_{j, c^{\Gamma}_{i}}|}
{|c^{\Gamma}_i|}\in \left (\frac{1}{ 2 |\log (\eps)|^{1/2}}, \frac{2}{ |\log (\eps)|^{1/2}}\right ).
\] 
We also stipulate that each layer is a union of dyadic squares with side length at least $\frac{L}{2^6 |\log(\eps)|^{1/2}}$.
Enumerating the layers in some order,  let $\mathcal L_j = \bigcup_{i\in [I]} L_{j, c^{\Gamma}_{i}}$.

Recall from Lemma \ref{lem:mod3} that there is a universal constant $C>$ such that
\[
\mathcal{E}_{\mathfrak{C}(\Gamma)} (\sigma^{(3)}) \leq C \eps^2 |\log(\eps)|^{1+\chi} |\mathfrak{C}(\Gamma)|.
\]
We call the union of layers $\mathcal L_{j}$ \textit{good} if 
\begin{equation*}\label{goodlayer}
\mathcal{E}_{\mathcal L_{ j}}(\sigma^{(3)}) \leq  4 C \eps^2 |\log(\eps)|^{1+\chi} |\mathcal L_j| .
\end{equation*}
Since 
\[
\sum_{j} \mathcal{E}_{\mathcal L_{ j}}(\sigma^{(3)})\leq \mathcal{E}_{\mathfrak{C}(\Gamma)} (\sigma^{(3)})
\]
there must be at least one good union of layers.

Fix any good union of layers $\mathcal L_0=\mathcal L_{j_0}$, and let
\[
\mathcal L_{mid} = \left\{x\in \mathcal L_{0}: \frac{L}{2^7 |\log(\eps)|^{1/2}}-100\leq  \textrm{dist}(x, \partial^i \mathcal L_{0})\right \}.
\]
We are going to modify $\sigma^{(3)}, \bar{\sigma}$ to obtain configurations which are $\pm e_1$ on $\mathcal L_{mid}\cap \mathfrak C^\pm$. 
Let us define
\begin{equation*}
\tau(x) = \begin{cases} 
\frac{128\, |\log (\eps)|^{1/2} \textrm{dist}(x,\partial^o \mathcal L_{mid} )}{L }\wedge 1, & \text{ if } x\in \mathcal L_{0}, \\
1 & \text{ otherwise. }
\end{cases}
\end{equation*}

Let $\theta_x$, respectively $\theta'_x$, be the angles defined by $\sigma^{(3)}_x = (\cos(\theta_x), \sin(\theta_x))$, respectively $\bar{\sigma}_x$. We set 
\begin{align*}
&\sigma^{(4)}_{x} = \begin{cases} (\cos(\tau(x) \theta_x), \sin(\tau(x) \theta_x)), &\text{ if } x\in \mathcal L_{0} \cap \mathfrak{C}^+(\Gamma), \\
(\cos(\tau(x) (\theta_x-\pi)+\pi), \sin(\tau(x) (\theta_x-\pi)+\pi)), &\text{ if } x\in \mathcal L_{0} \cap \mathfrak{C}^-(\Gamma),
\end{cases}
\\
&\sigma^{(5)}_{x} = \begin{cases} (\cos(\tau(x) \theta_x'), \sin(\tau(x) \theta_x')), &\text{ if } x\in \mathcal L_{0} \cap \mathfrak{C}^+(\Gamma), \\
(\cos(\tau(x) (\theta_x'-\pi)+\pi), \sin(\tau(x) (\theta_x'-\pi)+\pi)), &\text{ if } x\in \mathcal L_{0} \cap \mathfrak{C}^-(\Gamma).
\end{cases}
\end{align*}
We finally set
\begin{align*}
&{\sigma}^{\mathfrak{C}}_x  = \begin{cases}
\sigma^{(4)}_{x} & \text{ if } x\in \mathcal L_{0}\\
\sigma^{(3)}_x & \text{ otherwise }
\end{cases}
\\
&\bar{\sigma}^{\mathfrak{C}}_x = \begin{cases}
\sigma^{(5)}_{x} & \text{ if } x\in \mathcal L_{0}\\
\bar{\sigma}_x & \text{ if } x\in \bar{\delta}(\Gamma)\backslash \mathcal L_0. 
\end{cases}
\end{align*}

\begin{lemma}\label{lem:mod67}
For $\eps$ small enough, if  $\Gamma$ is regular, we have that
\begin{align*}
&| \mathcal{H}_{\Lambda_N} (\sigma^{(3)}| e_1) -  \mathcal{H}_{\Lambda_N} (\sigma^{\mathfrak{C}}| e_1)| \leq \eps^2 |\log(\eps)|^{5/8} |\Gamma|,\\
&| \mathcal{H}_{\bar{\delta}(\Gamma)} (\bar{\sigma}| e_1) -  \mathcal{H}_{\bar{\delta}(\Gamma)} (\bar{\sigma}^{\mathfrak{C}}| e_1)| \leq \eps^2 |\log(\eps)|^{5/8} |\Gamma|.
\end{align*}
\end{lemma}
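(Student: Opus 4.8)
The plan is to prove the two inequalities in parallel; I describe the first in detail, since the second only requires replacing $\sigma^{(3)}$ by $\bar\sigma$, which is easier: $\bar\sigma\equiv e_1$ on dirty boxes and, by (C2), has angular deviation $|g^{\lambda, D}_{Q}\cdot|\le\|g^{\lambda, D}_Q\|_\infty\le|\log(\eps)|^{-1/2-\eta_\lambda/2+\eta}$ from $\{0,\pi\}$ on clean boxes, with $\mathcal E_{\overline{\delta}(\Gamma)}(\bar\sigma)\lesssim\eps^2|\log(\eps)||\overline{\delta}(\Gamma)|$ by the estimates behind \eqref{E:5.1}. These uniform bounds play the role of the good-layer and Lemma~\ref{lem:mod3} estimates used below.

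First I would localize. Since $\sigma^{(3)}$ and $\sigma^{\mathfrak C}$ agree off $\mathcal L_0$, the difference $-\mathcal H_{\Lambda_N}(\sigma^{\mathfrak C}|e_1)+\mathcal H_{\Lambda_N}(\sigma^{(3)}|e_1)$ is a sum of Dirichlet-energy differences over edges meeting $\mathcal L_0$ plus the random-field term $\eps\sum_{x\in\mathcal L_0}\alpha_x\,e_2\cdot(\sigma^{(3)}_x-\sigma^{\mathfrak C}_x)$, and the layering construction gives $|\mathcal L_0|\lesssim|\log(\eps)|^{-1/2}|\mathfrak C(\Gamma)|\lesssim|\log(\eps)|^{-1/2}|\Gamma|$. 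For the random-field term I would substitute $\eps\alpha=(-\Delta^N_Q+\lambda)g^{\lambda, N}_Q$ on each $Q\in\mathcal Q_{L/16}$ meeting $\mathcal L_0$ and sum by parts as in \eqref{E:SubN}: on clean boxes the resulting terms are bounded, using (C2), (C4) and Cauchy--Schwarz, by $\|\nabla g^{\lambda, N}_Q\|_{2}\,\mathcal E_{\mathcal L_0}(\sigma^{(3)})^{1/2}+\lambda\|g^{\lambda, N}_Q\|_\infty|\mathcal L_0|$, while dirty boxes are handled through (R1)--(R3) exactly as in the proof of Lemma~\ref{lem:mod2}. Summing and inserting $\mathcal E_{\mathcal L_0}(\sigma^{(3)})\le 4C\eps^2|\log(\eps)|^{1+\chi}|\mathcal L_0|$ (established in the next step) yields a total $\lesssim\eps^2|\log(\eps)|^{\max\{1/2+\chi/2,\ \eta_\lambda/2+\eta\}}|\Gamma|\le\eps^2|\log(\eps)|^{5/8}|\Gamma|$ by the constraints on $\chi,\eta,\eta_\lambda$.

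The substantive step is the Dirichlet term. Write $\sigma^{(3)}_x=(\cos\theta_x,\sin\theta_x)$, $\sigma^{\mathfrak C}_x=(\cos\theta^{\mathfrak C}_x,\sin\theta^{\mathfrak C}_x)$, and let $\mathrm{dev}_x$ denote the distance from $\theta_x$ to $\{0,\pi\}$; on $\mathcal L_0$ the passage from $\sigma^{(3)}$ to $\sigma^{\mathfrak C}$ multiplies $\mathrm{dev}$ by the cut-off $\tau$, which equals $1$ near $\partial\mathcal L_0$ and has $\|\nabla\tau\|_\infty\lesssim|\log(\eps)|^{1/2}/L=\eps|\log(\eps)|^{1-s}$. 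A direct estimate gives, for each edge $e=\langle x,y\rangle$, $\bigl|\,|\nabla_e\theta^{\mathfrak C}|^2-|\nabla_e\theta|^2\,\bigr|\lesssim\|\nabla\tau\|_\infty\,\mathrm{dev}_e\,|\nabla_e\theta|+\|\nabla\tau\|_\infty^2\,\mathrm{dev}_e^2$ with $\mathrm{dev}_e=\max(\mathrm{dev}_x,\mathrm{dev}_y)$ (edges leaving $\mathcal L_0$ cost nothing, since $\tau\equiv1$ there), so after Cauchy--Schwarz I only need $\mathcal E_{\mathcal L_0}(\sigma^{(3)})\lesssim\eps^2|\log(\eps)|^{1+\chi}|\mathcal L_0|$ and $\sum_{x\in\mathcal L_0}\mathrm{dev}_x^2\lesssim|\log(\eps)|^{-3/2}|\Gamma|$, after which the Dirichlet contribution is $\le\eps^2|\log(\eps)|^{5/8}|\Gamma|$. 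The first bound holds for a \emph{good} union of layers, and since $\sum_j\mathcal E_{\mathcal L_j}(\sigma^{(3)})\le\mathcal E_{\mathfrak C(\Gamma)}(\sigma^{(3)})\lesssim\eps^2|\log(\eps)|^{1+\chi}|\mathfrak C(\Gamma)|$ (the Dirichlet bound of Lemma~\ref{lem:mod3}), at most a quarter of the $\gtrsim|\log(\eps)|^{1/2}$ layers fail it; moreover, by (R0) at scale $L/16$ and averaging, at most a quarter contain more than $O\bigl(|\log(\eps)|^{-\rho}|\mathcal L_0|(L/16)^{-2}\bigr)$ dirty boxes, and I would fix $\mathcal L_0$ to satisfy both. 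For the deviation sum I split $\mathcal L_0$ along the relaxed region $\mathcal R=\bigcup_i R_i$ of Modification~3: because $\mathcal Q^{\ast}_{L/16}$ contains the half-shifted squares, the cores $R^0_i$ overlap and $\mathcal R$ has no interior gap along the $L/16$-grid, so $\partial^o\mathcal R$ lies within $O(L/16)$ of $\partial M(\Gamma)\cup\partial D^{+}$ and, as $\mathcal L_0\subset\mathfrak M(\Gamma)$ sits at distance $\gtrsim L$ from $\partial M(\Gamma)$, only $\partial D^{+}$ matters. On the part of $\mathcal L_0$ at distance $\ge 6\eps^{-1}|\log(\eps)|^{-1/2+\delta}$ from $\partial^o\mathcal R$ the last line of Lemma~\ref{lem:mod3} gives $\mathrm{dev}_x\le 32|\log(\eps)|^{-1/2}$, contributing $\lesssim|\log(\eps)|^{-1}|\mathcal L_0|\lesssim|\log(\eps)|^{-3/2}|\Gamma|$; the complement is contained in an $O(L/16)$-collar of $\partial D^{+}$ inside $\mathcal L_0$, whose area is $\lesssim\bigl(\#\{\text{dirty }L/16\text{-boxes in }\mathcal L_0\}\bigr)(L/16)^2\lesssim|\log(\eps)|^{-\rho-1/2}|\Gamma|$ and on which $\mathrm{dev}_x=O(1)$, contributing $\lesssim|\log(\eps)|^{-\rho-1/2}|\Gamma|$. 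With $\rho=5/4$ the two add up to $\lesssim|\log(\eps)|^{-3/2}|\Gamma|$, as required.

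I expect the hard part to be precisely this last estimate. The Lipschitz constant $\|\nabla\tau\|_\infty\sim|\log(\eps)|^{1/2}/L$ is large enough that all four ingredients must be used together --- the good-layer bound on $\mathcal E_{\mathcal L_0}(\sigma^{(3)})$, the pointwise deviation control of Lemma~\ref{lem:mod3} deep inside $\mathcal R$, the sparsity of dirty boxes in $\mathcal L_0$ provided by (R0), and the fact that the half-shifted grid makes $\partial^o\mathcal R$ hug $\partial D^{+}$ rather than the whole $L/16$-grid --- for the interpolation cost to stay below the $\eps^2|\log(\eps)|^{5/8}|\Gamma|$ threshold; losing a $|\log(\eps)|$-power in any one of them already breaks the bound. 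The same two computations with $\bar\sigma$ replacing $\sigma^{(3)}$, fed by the uniform estimates recorded at the outset, then give the second inequality.
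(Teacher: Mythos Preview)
Your overall strategy matches the paper's: localize to the layer $\mathcal L_0$, split into a Dirichlet-energy difference and a random-field term, handle the latter by substituting $\eps\alpha=(-\Delta_Q^N+\lambda)g^{\lambda,N}_Q$ and summing by parts, and feed in the good-layer bound together with the deviation control from Lemma~\ref{lem:mod3}. The paper, however, makes one observation that collapses most of your extra work: since $\sigma^{(3)}\equiv\pm e_1$ on $D^\pm$ (this was the point of Modification~2), the $\tau$-interpolation of Modification~4 does nothing there, so the set $W=\{x:\exists\, y\sim x,\ \sigma^{\mathfrak C}_y\neq\sigma^{(3)}_y\}$ is contained in $\mathcal L_0\cap D^c$. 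Two consequences follow immediately. First, every $Q\in\mathcal Q_{L/16}$ with $Q\cap W\neq\varnothing$ is clean, so the random-field term needs only (C2)--(C4) and never (R1)--(R3); your appeal to regularity for dirty boxes, and your extra averaging to pick a layer with few dirty boxes, are unnecessary. Second, the deviation splitting you carry out (deep in $\mathcal R$ versus a collar of $\partial D^+$) is not needed: on $W$ one can simply invoke the uniform bound $|e_2\cdot\sigma^{(3)}_x|\lesssim|\log(\eps)|^{-1/2}$ from Lemma~\ref{lem:mod3} directly, and the paper's Dirichlet estimate reads $\mathrm{I}\lesssim\mathcal E_W(\sigma^{(3)})+|\log(\eps)|^{-1}\mathcal E_W(\tau)$ in one line.

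There is also a genuine slip in your geometric argument. You assert that ``$\mathcal L_0\subset\mathfrak M(\Gamma)$ sits at distance $\gtrsim L$ from $\partial M(\Gamma)$'', but this is impossible: $M(\Gamma)$ is only the $L/16$-measurable cover of $\mathfrak M(\Gamma)$ (with a $3$-pixel buffer), so its thickness is $\sim L/4$ and \emph{no} point of $M(\Gamma)$ is at distance $\gtrsim L$ from $\partial M(\Gamma)$. What you actually need for ``only $\partial D^+$ matters'' is that $\mathcal L_0$ lies at distance $\geq L/16+6r(\eps)$ from $\partial M$; since $M\supseteq\mathfrak M$ this follows once $\mathcal L_0$ is at distance $\geq L/16$ from $\partial\mathfrak M$, i.e.\ once you also exclude the $\sim|\log(\eps)|^{1/2}/4$ layers nearest each edge of $\mathfrak M$. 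That still leaves a positive fraction of layers from which to choose a good one, so the fix is easy---but as stated your claim is false, and with it the conclusion that the $\partial M$-part of $\partial^o\mathcal R$ can be ignored.
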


\begin{proof}
Let $W=\{ x: \exists y, y\sim x, \sigma^{\mathfrak{C}}_y \neq \sigma^{(3)}_y \}$ and notice that $W\subset \mathcal L_0\cap D^c$, where $D$ was defined in \eqref{def:dirtyD}.
We can write 

\begin{equation}\label{eq_H3}
| \mathcal{H}_{\Lambda_N} (\sigma^{(3)}| e_1) -  \mathcal{H}_{\Lambda_N} (\sigma^{\mathfrak{C}}| e_1)| \lesssim \underbrace{ \left | \mathcal{E}_W(\sigma^{(3)}) - \mathcal{E}_W(\sigma^{\mathfrak{C}})\right |}_{\textrm{I}} + \underbrace{\left | \sum_{x\in \mathfrak{C}(\Gamma) }  \eps \alpha_x (\sigma^{(3)}_x-\sigma_x^{\mathfrak{C}})\cdot e_2 \right |}_{\textrm{II}}.
\end{equation}
Using the fact that $|e_2\cdot \sigma_x^{(3)}|\leq 12 |\log (\eps)|^{-1/2}$, the first term satisfies the bound
\begin{equation}\label{I}
\textrm{I} \lesssim \mathcal{E}_W(\sigma^{(3)}) + |\log (\eps)|^{-1}\mathcal{E}_W(\tau).
\end{equation}
We have
\[
\mathcal{E}_W(\tau)\lesssim  |\log (\eps)| L^{-2} |W|\lesssim \eps^2|\log (\eps)|^{1/2- 2s} |\Gamma|
\]
 and also, since $W$ is a subset of a good layer
\[
\mathcal{E}_W(\sigma^{(3)}) \lesssim  \eps^2 |\log(\eps)|^{1+\chi} |W|\lesssim  \eps^2 |\log(\eps)|^{1/2+\chi}  |\Gamma|.
\]

To bound Term II in \eqref{eq_H3} we replace $\eps \alpha_x$  by $(\Delta^N_Q +\lambda) g^{\lambda, N}_{x, Q}$ as in \eqref{E:SubN} and use a summation-by-parts to estimate
\[
\textrm{II}    \lesssim  \sum_{\substack{Q\in \mathcal{Q}_{L/16} \\ Q \cap W \neq \varnothing}} \sum_{e\subset W} \left | \nabla_e g^{\lambda, N}_Q  e_2\cdot  \nabla_e(\sigma^{(3)} - \sigma^{\mathfrak{C}}) \right | + |\log (\eps)|^{-1/2} \sum_{\substack{Q\in \mathcal{Q}_{L/16} \\ Q\cap W\neq \varnothing}} \lambda \sum_{x\in Q} |g^{\lambda, N}_{x,Q} |
\]
where the factor $|\log (\eps)|^{-1/2}$ comes from our bound on $|e_2\cdot  \sigma^{(3)}|$ in $\mathfrak M(\Gamma)$.  An important detail here is that if $Q\cap W\neq \varnothing$ then $Q$ is clean.
This leads to the bound 
\[
| \mathcal{H}_{\Lambda_N} (\sigma^{(3)}| e_1) -  \mathcal{H}_{\Lambda_N} (\sigma^{\mathfrak{C}}| e_1)| \lesssim (|\log(\eps)|^{1/2-2s}+ |\log(\eps)|^{1/2+\chi}  + |\log(\eps)|^{\eta_{\lambda}/2+\eta})\eps^2  |\Gamma|.
\]
Moving on to $\bar{\sigma}$, since $\lambda=\eps^{2}|\log (\eps)|^{1+\eta_{\lambda}}$, using \eqref{def:sigma-g} and (C2) for the second term
\[
\begin{split}
&| \mathcal{H}_{\bar{\delta}(\Gamma)} (\bar{\sigma}| e_1) -  \mathcal{H}_{\bar{\delta}(\Gamma)} (\bar{\sigma}^{\mathfrak{C}}| e_1)| \\
& \lesssim  \mathcal{E}_W(\bar{\sigma})+ |\log \eps|^{-1+ 2\eta-\eta_{\lambda}}\mathcal{E}_W(\tau)+ |\log (\eps)|^{-1/2+\eta-\eta_{\lambda}/2} \sum_{\substack{Q\in \mathcal{Q}_{L/16} \\ Q\cap W\neq \varnothing}} \lambda \sum_{x\in W} |g^{\lambda, N}_{x,Q} |.
\end{split}
\]
As in the previous estimate, this leads to the bound
\[
| \mathcal{H}_{\bar{\delta}(\Gamma)} (\bar{\sigma}| e_1) -  \mathcal{H}_{\bar{\delta}(\Gamma)} (\bar{\sigma}^{\mathfrak{C}}| e_1)|\lesssim (|\log(\eps)|^{1/2+\chi}+ |\log(\eps)|^{1/2-2s+2\eta - \eta_{\lambda}/2}  + |\log(\eps)|^{2\eta})\eps^2  |\Gamma|.
\]
Finally, the claim follows from the constraints placed on the various exponents.
\end{proof}

\noindent
\subsection{Gluing configurations between bulk and boundary.}
In the final step of our construction we need to glue  $\bar{\sigma}^{\mathfrak{C}}, \sigma^{\mathfrak{C}}$ together. Recall that a contour comes with a function $\psi_z(\Gamma)$.  We call a contour $\Gamma$ a $+$-contour if $\psi_z(\Gamma) =+1$ for $z\in \delta_{ext}(\Gamma)$ and resp. a $-$ - contour if $\psi_z(\Gamma) =-1$ for $z\in \delta_{ext}(\Gamma)$, recall $\psi$ from Definition \ref{def:contoursPsi}. For definiteness in the subsequent discussion we assume $\Gamma $ is a $+$-contour.  

Let $\mathcal L_0, \mathcal L_{mid}, \sigma^{\mathfrak{C}}, \bar{\sigma}^{\mathfrak{C}}$ be as in the previous section.  Let $\tilde{\Gamma}$ denote the maximal connected set containing $\textrm{sp}(\Gamma)$ in $\delta(\Gamma)\backslash \mathcal L_{mid}$ together with $\mathcal L_{mid} $.
Let  $\sigma^{\mathfrak C,*}$ be obtained from $\sigma^{\mathfrak C}$ by reflecting ALL spins across the $e_2$-axis on each interior component of $\tilde{\Gamma}^c$ such that $\psi_z(\Gamma) =-1$ adjacent to $\tilde{\Gamma}$.   
We define
\begin{equation}\label{def:Smod}
S^{+}_{\Gamma,y}(\sigma) = 
\begin{cases}
\sigma^{\mathfrak C,*}_y & \text{ if } y\in \tilde{\Gamma}^c, \\
\bar{\sigma}_y^{\mathfrak{C}} & \text{ if } y \in \tilde{\Gamma}.
\end{cases}
\end{equation}
An analogous construction/notation is used  for $-$-contours, $S^{-}_{\Gamma,y}$.

\begin{lemma}\label{lem:Peir}
Let $\Gamma$ be a regular $\pm$-contour and $\sigma \in \mathbb{X}(\Gamma)$.
There exists $\eps_0 \in (0,1)$ such that for $\eps \in (0, \eps_0)$ we have that
\[
- \mathcal{H}_{\Lambda_N}(S^{\pm}_{\Gamma}(\sigma)|e_1) + \mathcal{H}_{\Lambda_N}(\sigma |e_1) \gtrsim \xi^2 \eps^2 |\log(\eps)|^{1-4s},
\]
where $s\in (0,1/32)$.
\end{lemma}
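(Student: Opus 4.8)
The plan is to read off $S^{\pm}_{\Gamma}(\sigma)$ as the composition of Modifications $1$--$4$ together with the gluing \eqref{def:Smod}, and to telescope $-\mathcal H_{\Lambda_N}(S^{\pm}_{\Gamma}(\sigma)|e_1)+\mathcal H_{\Lambda_N}(\sigma|e_1)$ into a \emph{bulk gain}, obtained by overwriting $\sigma$ with the idealised near-ground-state $\bar\sigma^{\mathfrak C}$ on $\tilde\Gamma\supseteq\textrm{sp}(\Gamma)$, minus a \emph{collar surgery cost} on $\mathfrak C(\Gamma)$. Everything hinges on two facts: the gain is of order $\eps^2|\log(\eps)|^{1-4s}|\Gamma|$ while all error terms are $O(\eps^2|\log(\eps)|^{5/8}|\Gamma|)$, and $1-4s>7/8>5/8$ because $s\in(0,1/32)$. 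It suffices to treat $+$-contours: the $-$-case is obtained by the obvious reflection, every construction in Sections \ref{sec:bulkmod}--\ref{sec:BoundSur} having the $\pm$-analogue indicated in the text. Note also that the bound to be proved follows from the sharper $-\mathcal H_{\Lambda_N}(S^{\pm}_{\Gamma}(\sigma)|e_1)+\mathcal H_{\Lambda_N}(\sigma|e_1)\gtrsim\xi^2\eps^2|\log(\eps)|^{1-4s}|\Gamma|$, which is what the argument actually produces (and is what is needed for the subsequent Peierls sum).

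\textbf{Collar cost, reflection, gluing.} First I would dispose of the collar: by Lemma \ref{lem:boundcollar}(3), which already encapsulates Modifications $1$--$4$, one has $-\mathcal H_{\Lambda_N}(\sigma^{\mathfrak C}|e_1)\geq-\mathcal H_{\Lambda_N}(\sigma|e_1)-C\eps^2|\log(\eps)|^{5/8}|\Gamma|$, so it remains to bound $-\mathcal H_{\Lambda_N}(S^{\pm}_{\Gamma}(\sigma)|e_1)+\mathcal H_{\Lambda_N}(\sigma^{\mathfrak C}|e_1)$ from below. Next I would pass from $\sigma^{\mathfrak C}$ to $\sigma^{\mathfrak C,*}$: since the latter is obtained by reflecting \emph{whole} interior components of $\tilde\Gamma^c$ across the $e_2$-axis, and $\mathcal E$ and $\sum_x\alpha_x e_2\cdot\sigma_x$ restricted to any such component are invariant under that reflection, $\mathcal H_{\Lambda_N}(\sigma^{\mathfrak C,*}|e_1)$ differs from $\mathcal H_{\Lambda_N}(\sigma^{\mathfrak C}|e_1)$ only through the interaction across $\partial\tilde\Gamma$. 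By construction this boundary lies in $\mathcal L_{mid}$, where Modification $4$ has forced $\sigma^{\mathfrak C}$ to the constants $\pm e_1$ according to the sign of $\Psi^{\xi}$ on the collar, chosen precisely so that after the reflection the collar values match $\bar\sigma^{\mathfrak C}$, which Modification $4$ has forced to the same constants on $\mathcal L_{mid}$; so the gluing interface carries no Dirichlet energy and this step is cost-free. At this point $S^{\pm}_{\Gamma}(\sigma)$ agrees with $\sigma^{\mathfrak C,*}=\sigma^{\mathfrak C}$ off $\tilde\Gamma$ and with $\bar\sigma^{\mathfrak C}$ on $\tilde\Gamma$, with matching (essentially constant $e_1$) data on $\partial\tilde\Gamma$.

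\textbf{Bulk gain.} Decomposing $\mathcal H_{\Lambda_N}$ over $\tilde\Gamma$ and $\tilde\Gamma^c$ and using the exact matching on $\partial\tilde\Gamma$ to cancel the $\tilde\Gamma^c$-contributions reduces the remaining quantity to $-\mathcal H_{\tilde\Gamma}(\bar\sigma^{\mathfrak C}|e_1)+\mathcal H_{\tilde\Gamma}(\sigma^{\mathfrak C}|e_1)$ with constant boundary data $e_1$. Since $\textrm{sp}(\Gamma)\subseteq\tilde\Gamma\subseteq\bar\delta(\Gamma)$, $\bar\sigma^{\mathfrak C}$ differs from the reference configuration $\bar\sigma$ of Section \ref{sec:bulkmod} only on $\mathcal L_0$, and $\sigma^{\mathfrak C}$ differs from $\sigma$ only in $\mathfrak C(\Gamma)$ (a difference whose $\Lambda_N$-cost is the $O(\eps^2|\log(\eps)|^{5/8}|\Gamma|)$ already subtracted), I would feed in the bulk estimate \eqref{E:5.12} of Lemma \ref{lem:5.1}, $-\mathcal H_{\bar\delta(\Gamma)}(\bar\sigma|\textrm{ext})+\mathcal H_{\bar\delta(\Gamma)}(\sigma|\textrm{ext})\gtrsim\xi^2\eps^2|\log(\eps)|^{1-4s}|\Gamma|$, together with Lemma \ref{lem:mod67} controlling $\bar\sigma\to\bar\sigma^{\mathfrak C}$ by $\eps^2|\log(\eps)|^{5/8}|\Gamma|$. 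The residual bookkeeping is (i) to reconcile the region $\tilde\Gamma$ with $\bar\delta(\Gamma)$: on the symmetric difference $\sigma$ is good (Dirichlet energy $\leq\eps^2|\log(\eps)|^{1+\chi}\ell^2$ per $\ell$-box, alignment with $\pm e_1$) and $\bar\sigma$ close to $e_1$, so completing squares via \eqref{E:SubN}--\eqref{E:SubD} together with the regularity bounds of Definition \ref{def:regularY} control the discrepancy by $\eps^2|\log(\eps)|^{c'}|\Gamma|$ with $c'<5/8$; and (ii) to trade the boundary condition $\textrm{ext}$ for the compatible constant $e_1$ at a surface cost of order $\eps^2|\log(\eps)|^{1+\chi}|\partial\tilde\Gamma|=o(\eps^2|\log(\eps)|^{1-4s}|\Gamma|)$. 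Summing the three pieces and using $1-4s>5/8$ for $\eps<\eps_0(\xi_0)$ gives the claim.

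\textbf{Main obstacle.} None of the individual estimates is the difficulty, since the substantive analytic work already resides in Lemmas \ref{lem:5.1}, \ref{lem:boundcollar} and \ref{lem:mod67}; the obstacle is organising the telescoping so that it is exact. Concretely, one must verify that the sign structure of $\Psi^{\xi}$ on the collar, the reflections built into \eqref{def:Smod}, and the forcing to exact $\pm e_1$ on $\mathcal L_{mid}$ in Modification $4$ genuinely conspire so that every glued interface between $\textrm{sp}(\Gamma)$, the collar layers, $\tilde\Gamma$ and the reflected interior components of $\tilde\Gamma^c$ costs nothing, and hence that the $|\Gamma|$-proportional gain of \eqref{E:5.12} is transmitted intact to $-\mathcal H_{\Lambda_N}(S^{\pm}_{\Gamma}(\sigma)|e_1)+\mathcal H_{\Lambda_N}(\sigma|e_1)$ rather than being swamped by an interface term. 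The only quantitative check is the inequality $1-4s>5/8$, immediate from $s<1/32$.
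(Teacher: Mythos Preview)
Your proposal is correct and follows essentially the same route as the paper: invoke Lemma \ref{lem:boundcollar} to pass from $\sigma$ to $\sigma^{\mathfrak C}$, use the exact matching on $\mathcal L_{mid}$ (together with the reflection invariance of $\mathcal H$ on interior components) to localise the comparison to $\tilde\Gamma$, and then feed in the bulk gain \eqref{E:5.12} against the $O(\eps^2|\log(\eps)|^{5/8}|\Gamma|)$ errors from Lemma \ref{lem:mod67}. The one place where the paper is sharper than your ``bookkeeping (i)'' is the reconciliation of $\tilde\Gamma$ with $\textrm{sp}(\Gamma)$: rather than arguing directly via completing squares on the annular strip, the paper extends $\sigma^{\mathfrak C}$ on each $Q\cap(\tilde\Gamma\setminus\textrm{sp}(\Gamma))$ to $\widetilde{\sigma^{\mathfrak C}}$ (equal to $\pm\bar\sigma^{\mathfrak C}$ on $Q\setminus\tilde\Gamma$), observes that $-\mathcal H_{Q\cap\tilde\Gamma}(\bar\sigma^{\mathfrak C}|ext)+\mathcal H_{Q\cap\tilde\Gamma}(\sigma^{\mathfrak C}|ext)\geq -[Max_Q(\mathcal H,ext)+\mathcal H_Q(\bar\sigma^{\mathfrak C}|ext)]$, and then invokes \eqref{E:5.1} of Lemma \ref{lem:5.1} to bound the bracket by $\eps^2|\log(\eps)|^{5/8}|\Gamma|$ --- this is cleaner than ad hoc energy estimates on the strip, but amounts to the same thing.
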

\begin{proof}

 To begin the estimate, Lemma \ref{lem:boundcollar} allows us to replace $\sigma$ with ${\sigma}^{\mathfrak{C}}$ at small cost. We have 
 \[
-\mathcal{H}_{\Lambda_N}(S^{\pm}_{\Gamma}(\sigma)|e_1) + \mathcal{H}_{\Lambda_N}(\sigma|e_1) \geq \left[-\mathcal{H}_{\Lambda_N}(S^{\pm}_{\Gamma}(\sigma)|e_1) + \mathcal{H}_{\Lambda_N}({\sigma}^{\mathfrak{C}} |e_1)\right ]  - C \eps^2 |\log(\eps)|^{5/8}|\Gamma|.
\]
By construction of  $S^{\pm}_{\Gamma}(\sigma)$,
\[
-\mathcal{H}_{\Lambda_N}(S^{\pm}_{\Gamma}(\sigma)|e_1) + \mathcal{H}_{\Lambda_N}({\sigma}^{\mathfrak{C}} |e_1)=-\mathcal{H}_{\tilde{\Gamma}}(\bar{\sigma}^{\mathfrak{C}}|ext) + \mathcal{H}_{\tilde{\Gamma}}({\sigma}^{\mathfrak{C}} |ext)
\]
where the boundary condition $ext$ is $e_1$ on $\tilde{\Gamma}^{c}\cap \Lambda_n^{c}$ and is free otherwise.

If $Q\cap \mathcal L_{mid}\neq \varnothing$ and if $\psi_z(\Gamma)= 1$ on $Q$, let us define $\widetilde{{\sigma}^{\mathfrak{C}}}$ on $Q\backslash \tilde{\Gamma}$ to be $\bar{\sigma}^{\mathfrak{C}}$.  If $\psi_z(\Gamma)=- 1$ we instead  define $\widetilde{{\sigma}^{\mathfrak{C}}}$ to be the reflection of $\bar{\sigma}^{\mathfrak{C}}$ in $Q$ across the $e_2$ axis.
Then we have
\begin{align*}
-\mathcal{H}_{Q\cap \tilde{\Gamma} }(\bar{\sigma}^{\mathfrak{C}}|ext) + \mathcal{H}_{Q \cap \tilde{\Gamma}}({\sigma}^{\mathfrak{C}} |ext)&=  -\mathcal{H}_{Q}(\bar{\sigma}^{\mathfrak{C}}|ext) + \mathcal{H}_{Q}(\widetilde{{\sigma}^{\mathfrak{C}}} |ext)\\&\geq -[Max_{Q}(\mathcal H, ext)+\mathcal{H}_{Q}(\bar{\sigma}^{\mathfrak{C}}|ext)].
\end{align*}
A similar bound applies directly if $Q\subset \tilde{\Gamma} \backslash \textrm{sp}(\Gamma)$.
Therefore we have
\begin{align*}
-\mathcal{H}_{\tilde{\Gamma}}(\bar{\sigma}^{\mathfrak{C}}|ext) + \mathcal{H}_{\tilde{\Gamma}}({\sigma}^{\mathfrak{C}} |ext) &\geq  -\mathcal{H}_{sp(\Gamma)}(\bar{\sigma}|ext) + \mathcal{H}_{sp(\Gamma)}({\sigma} |ext)\\
&-\sum_{Q\in \mathcal Q_{\ell}: Q\cap \tilde{\Gamma}\backslash sp(\Gamma) \neq \varnothing} [Max_{Q}(\mathcal{H}, ext)  +\mathcal{H}_{Q}(\bar{\sigma}^{\mathfrak{C}}|ext)- C \eps^2|\log(\eps)|^{5/8} |\Gamma|.
\end{align*}
where we neglected the interaction between vertices of neighboring regions for ${\sigma}^{\mathfrak{C}}$ and used regularity $\Gamma$ to bound the corresponding terms in $\bar{\sigma}^{\mathfrak{C}}$, see Inequality \eqref{eq:max-sup}.  Using Lemmas \ref{lem:5.1} and \ref{lem:mod67}
\[
\sum_{Q\in \mathcal Q_{\ell}: Q\cap \tilde{\Gamma}\backslash sp(\Gamma) \neq \varnothing} [Max_{Q}(\mathcal{H}, ext)  +\mathcal{H}_{Q}(\bar{\sigma}^{\mathfrak{C}}|ext)]\leq C \eps^2|\log(\eps)|^{5/8} |\Gamma|.
\]
All in all, this gives the bound
\[
-\mathcal{H}_{\Lambda_N}(S^{\pm}_{\Gamma}(\sigma)|e_1) + \mathcal{H}_{\Lambda_N}(\sigma|e_1) \geq -\mathcal{H}_{sp(\Gamma)}(\bar{\sigma}|ext) + \mathcal{H}_{sp(\Gamma)}({\sigma} |ext)-C \eps^2|\log(\eps)|^{5/8} |\Gamma|.
\]
Applying Inequality \eqref{E:5.12}, 
\[
-\mathcal{H}_{\Lambda_N}(S^{\pm}_{\Gamma}(\sigma)|e_1) + \mathcal{H}_{\Lambda_N}(\sigma|e_1) \gtrsim \xi \eps^2|\log(\eps)|^{1-4s} |\Gamma|.
\]
\end{proof}

\section{The Peierls Argument}\label{S:Peierls}

In this section we will develop energy and entropy estimates for the Peierls argument.
Let $\mu_{R}^{\tau}$ denote the finite-volume Gibbs measure on $R$ with boundary condition $\tau$ and Hamiltonian defined in \eqref{def:Ham1}.  
Let $(\Gamma, \psi_z(\Gamma))$ be a $+$-contour.  Let $\delta^{(N)}(\Gamma) = \delta(\Gamma) \cap \Lambda_N$ and consider a configuration $\sigma_{\delta^{(N)}(\Gamma)^c}\in \mathcal{S}_{\delta^{(N)}(\Gamma)^c}$.  
Let us introduce the event $E_{\Gamma}\subset \mathcal{S}_{\delta^{(N)}(\Gamma)}$ given by
\[
E_{\Gamma}=\{\sigma': \forall z\in sp(\Gamma), \, \, \psi_{z}(\sigma)=\psi_z(\Gamma), \Psi_z(\sigma)\neq 0 \text{ on $ \delta(\Gamma) \setminus sp(\Gamma)$}\}
\]
and note that 
$$\mathbb{X}(\Gamma)=E_{\Gamma}\cap \{\Psi_z(\sigma)\neq 0 \text{ on $ \delta(\Gamma) \setminus sp(\Gamma)$}\}.
$$
We call $\sigma_{\delta^{(N)}(\Gamma)^c}$ \textit{compatible with $\Gamma$}, denoted $\sigma_{\delta^{(N)}(\Gamma)^c}\sim \Gamma$, if 
\[
\sigma_{\delta^{(N)}(\Gamma)^c}|_{\Lambda^c_N} \equiv e_1
\]
and if
\[
\mu^{\sigma_{\delta^{(N)}(\Gamma)^c}}_{\delta^{(N)}(\Gamma)} (\mathbb{X}(\Gamma) ) \neq 0.
\]
Note here that  $\mathbb{X}(\Gamma)\notin \mathcal{S}_{\delta^{(N)}}(\Gamma)$ so the previous condition needs to be interpreted using the extended configuration $(\sigma_{\delta^{(N)}(\Gamma)^c}, \sigma_{\delta^{(N)}(\Gamma)})$.

Recall $R^{\pm}=\{z: \Psi_z(\sigma)=\pm 1\}$.  For $\sigma \in \mathbb{X}(\Gamma)$, let  $\mathscr R^+ \cdot \sigma_{\delta^{(N)}(\Gamma)^c}$ be the configuration in $\mathcal S_{\delta^{(N)}(\Gamma)^c}$ obtained by making a global reflection of $\sigma$ on each component of $R^- \cap [c(\Gamma)\backslash \delta^{(N)}(\Gamma)]$.  This transformation is clearly well defined when $\sigma_{\delta^{(N)}(\Gamma)^c}\sim \Gamma$. 
We next consider the function
\begin{equation*}\label{def:W}
W(\Gamma; \sigma_{\delta^{(N)}(\Gamma)^c}) =\frac{Z^{\sigma_{\delta^{(N)}(\Gamma)^c}}_{\delta^{(N)}(\Gamma)}}{Z^{ \mathscr R^+ \cdot (\sigma_{\delta^{(N)}(\Gamma)^c})}_{\delta^{(N)}(\Gamma)}}
 \frac{\mu^{\sigma_{\delta^{(N)}(\Gamma)^c}}_{\delta^{(N)}(\Gamma)} \left (\mathbb{X}(\Gamma) \right )}{\mu^{\mathscr R^+ \cdot \sigma_{\delta^{(N)}(\Gamma)^c}}_{\delta^{(N)}(\Gamma)} \left(\sigma': \forall z\in\delta^{(N)}(\Gamma), \, \, \Psi_z^{\xi}(\sigma')=1 \right)}
\end{equation*}
and let 
\[
\| W(\Gamma; \cdot )\|_{\infty} = \sup_{\sigma_{\delta^{(N)}(\Gamma)^c} \sim \Gamma} W(\Gamma; \sigma_{\delta^{(N)}(\Gamma)^c}).
\]
The function $W(\Gamma; \sigma_{\delta^{(N)}(\Gamma)^c})$ is the ratio of the (not normalized) exponential weights of configurations $(\sigma_{\delta^{(N)}(\Gamma),\delta^{(N)}(\Gamma^c)})$ which are compatible with the contour $\Gamma$ and the compatible configurations which are reflected on each components of $R^- \cap [c(\Gamma)\backslash \delta^{(N)}(\Gamma)]$.

Notions for $-$-contours are
defined similarly with the provisos for $+$ and $-$ reversed.

\begin{lemma}
\label{L:4.1}
There exist $\xi_0, \eps_0, \beta_0, C>0$ such that for $\eps \in (0, \eps_0)$, $\beta \in (\beta_0, \infty)$ and $\xi \in (0,\xi_0)$ and a clean contour $\Gamma$ we have that
\[
\| W(\Gamma; \cdot )\|_{\infty} \leq e^{- c(\eps) \beta |\Gamma|}
\]
where $c(\eps) = C \xi^2 \eps^2 |\log(\eps)|^{1-4s}$.
\end{lemma}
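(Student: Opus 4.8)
The plan is to unfold $W(\Gamma;\cdot)$ into a ratio of constrained partition functions and to compare the numerator and denominator through the surgery map $S^{+}_{\Gamma}$ built in Section~\ref{S:surgery}, using Lemma~\ref{lem:Peir} for the energetic gain. Write $\sigma_{ext}:=\sigma_{\delta^{(N)}(\Gamma)^{c}}$, assume $\Gamma$ is a $+$-contour (the $-$ case being symmetric), and recall that $Z^{\tau}_{\delta^{(N)}(\Gamma)}\,\mu^{\tau}_{\delta^{(N)}(\Gamma)}(A)=\int_{A}e^{-\beta\mathcal H_{\delta^{(N)}(\Gamma)}(\cdot|\tau)}\,d\sigma'$. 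Hence, for $\sigma_{ext}\sim\Gamma$,
\[
W(\Gamma;\sigma_{ext})=\frac{\displaystyle\int_{\{\sigma':\,(\sigma',\sigma_{ext})\in\mathbb X(\Gamma)\}}e^{-\beta\mathcal H_{\delta^{(N)}(\Gamma)}(\sigma'|\sigma_{ext})}\,d\sigma'}{\displaystyle\int_{\{\sigma':\,\Psi^{\xi}_{z}(\sigma',\mathscr R^{+}\sigma_{ext})=1\ \forall z\in\delta^{(N)}(\Gamma)\}}e^{-\beta\mathcal H_{\delta^{(N)}(\Gamma)}(\sigma'|\mathscr R^{+}\sigma_{ext})}\,d\sigma'}\;=:\;\frac{\mathcal N}{\mathcal D}.
\]

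The restriction of $S^{+}_{\Gamma}$ to $\delta^{(N)}(\Gamma)$ is the comparison map. (A clean contour is in particular regular, cf.\ Definitions~\ref{def:nice} and~\ref{def:clean}, so Lemmas~\ref{lem:5.1}--\ref{lem:Peir} apply.) For $(\sigma',\sigma_{ext})\in\mathbb X(\Gamma)$ three facts hold by construction. (i) Outside $\delta^{(N)}(\Gamma)$, $S^{+}_{\Gamma}(\sigma',\sigma_{ext})$ differs from $(\sigma',\sigma_{ext})$ only by the global reflections producing $\sigma^{\mathfrak C,*}$, so there its restriction equals $\mathscr R^{+}\sigma_{ext}$; these reflections leave $\mathcal H$ invariant (both $e_{2}\cdot\sigma$ and intra-component gradients are preserved), so the Hamiltonian difference in Lemma~\ref{lem:Peir} localizes to $\delta^{(N)}(\Gamma)$ and reads
\[
\mathcal H_{\delta^{(N)}(\Gamma)}(\sigma'|\sigma_{ext})\;\ge\;\mathcal H_{\delta^{(N)}(\Gamma)}\!\big(S^{+}_{\Gamma}(\sigma',\sigma_{ext})\,\big|\,\mathscr R^{+}\sigma_{ext}\big)+c_{1}(\eps)\,|\Gamma|,\qquad c_{1}(\eps):=c\,\xi^{2}\eps^{2}|\log(\eps)|^{1-4s}.
\]
(ii) On $\delta^{(N)}(\Gamma)$, $S^{+}_{\Gamma}(\sigma',\sigma_{ext})$ satisfies $\Psi^{\xi}\equiv1$: on $\tilde\Gamma$ the reference $\bar\sigma^{\mathfrak C}$ lies within $|\log(\eps)|^{-1/2+\eta}$ of $e_{1}$, so $\psi^{0}=\psi^{1,\xi}=1$ there by (C2)--(C4); on $\delta^{(N)}(\Gamma)\setminus\tilde\Gamma$ one has $\psi_{z}(\sigma')=\pm1$ (as $\sigma'\in\mathbb X(\Gamma)$), and the $-1\!\to\!+1$ reflection producing $\sigma^{\mathfrak C,*}$ promotes every label to $+1$. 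Thus $S^{+}_{\Gamma}$ maps the domain of integration of $\mathcal N$ into that of $\mathcal D$, with the correct boundary data, while $\mathcal N$'s integrand drops by $e^{-\beta c_{1}(\eps)|\Gamma|}$.

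The remaining, and genuinely delicate, issue is that $S^{+}_{\Gamma}$ is not injective: it collapses the set $\mathcal K\subset\delta^{(N)}(\Gamma)$ built from the dirty boxes, from the relaxation regions $\mathcal R=\bigcup_{i}R_{i}$ of Modification~3 (where the output is the maximizer $\nu^{(i)}$, determined by the data off $\mathcal R$), and from the inner layers $\mathcal L_{mid}$ of Modification~4 (forced to $\pm e_{1}$). On $\mathcal F:=\delta^{(N)}(\Gamma)\setminus\mathcal K$ the map is a composition of reflections, of the near-identity change of variables $\phi_{x}=\theta_{x}-\cos(\theta_{x})g^{\lambda,D}_{x,Q}$ (Lemma~\ref{lem:cov}) with per-site Jacobian $1+O(|\log(\eps)|^{-1/2})$ by (C2), and of the fixed interpolation profiles $\tau>0$; it is therefore a diffeomorphism of the $\mathcal F$-coordinates onto its image with Jacobian in $[e^{-C|\Gamma|},e^{C|\Gamma|}]$. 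Conditioning on the $\mathcal F$-coordinates and splitting $\mathcal H=\mathcal H_{\mathcal K}(\cdot|\sigma_{\partial\mathcal K})+\mathcal H_{\mathcal F}(\cdot)$, both $\mathcal N$ and $\mathcal D$ become an integral over the $\mathcal F$-coordinates of $e^{-\beta\mathcal H_{\mathcal F}}$ times a constrained partition function over $\mathcal K$. In the relaxed variables of Lemma~\ref{lem:mod3} these $\mathcal K$-partition functions are Gaussian-type, with Hessian comparable to a discrete Schrödinger operator $-\Delta^{D}+O(\eps^{2}|\log(\eps)|)$; their constraint sets agree up to the reflection built into $S^{+}_{\Gamma}$ and their boundary data agree to within $O(|\log(\eps)|^{-1/2})$ per site, whence their ratio equals $e^{C_{2}(\eps)|\Gamma|}$ with $C_{2}(\eps)=o(|\log(\eps)|^{\kappa})$ for every $\kappa>0$. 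What keeps $C_{2}(\eps)$ sub-polynomial is that the $\beta$-dependent Gaussian normalizations $(2\pi/\beta)^{|\mathcal K|/2}(\det\mathrm{Hess})^{-1/2}$ coincide to leading order and cancel: since $\beta\eps^{2}$ is a power of $|\log(\eps)|$ close to $|\log(\eps)|^{-1}$, $L\sim\eps^{-1}|\log(\eps)|^{-1/2+s}$ and $m_{x}\sim\eps^{2}|\log(\eps)|$, the quantity $\beta$ times any Hessian eigenvalue is a power of $|\log(\eps)|$ rather than of $\eps^{-1}$, and the only residual loss comes from comparing the volume of the Dirichlet-energy ball defining ``good'' with the Gaussian width $\beta^{-1/2}$. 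Altogether $\mathcal N\le e^{-\beta c_{1}(\eps)|\Gamma|+C_{2}(\eps)|\Gamma|}\mathcal D$.

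Finally, choose $\beta_{0}(\eps)$ with $\beta_{0}(\eps)\,c_{1}(\eps)\ge 2C_{2}(\eps)$; since $C_{2}(\eps)$ is sub-polynomial in $|\log(\eps)|$, one may take $\beta_{0}(\eps)$ of order $\eps^{-2}|\log(\eps)|^{-1+4s+o(1)}$, which is $\le\eps^{-2}|\log(\eps)|^{-1+\alpha}$ for $\eps$ small whenever $\alpha>4s$ — consistent with the Remark after Theorem~\ref{T:Main2}, as $s<1/32$. Then for $\eps<\eps_{0}$, $\xi<\xi_{0}$ and $\beta>\beta_{0}(\eps)$ we obtain $\|W(\Gamma;\cdot)\|_{\infty}\le e^{-\frac12 c_{1}(\eps)\beta|\Gamma|}=e^{-c(\eps)\beta|\Gamma|}$ with $c(\eps)=C\xi^{2}\eps^{2}|\log(\eps)|^{1-4s}$, which is the assertion. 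I expect the hardest point to be the prefactor cancellation of the third paragraph: it requires two-sided estimates on the constrained partition functions over the relaxation regions $R_{i}$, showing that both $\mathbb X(\Gamma)$ and $\{\Psi^{\xi}\equiv1\}$ confine $\sigma$ (in the relaxed variables) to a neighbourhood of $\nu^{(i)}$ of comparable size in the numerator and denominator, so that the ratio is indeed free of any power of $\beta$.
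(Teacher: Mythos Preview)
Your approach takes a considerably harder route than the paper, and the third paragraph contains a real gap.

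The paper does not push $S^{+}_{\Gamma}$ through as a change of variables at all. Instead, for each $\sigma\in\mathbb X(\Gamma)$ it simply lower-bounds the denominator $\mathcal D$ by the integral over the tiny ball
\[
F_{\sigma}=\bigl\{\sigma'\in\mathcal S_{\delta^{(N)}(\Gamma)}:\ \|\sigma'_{x}-S^{+}_{\Gamma,x}(\sigma)\|_{2}\le\eps^{3}\ \forall x\bigr\},
\]
which lies in $\{\Psi^{\xi}\equiv1\}$. On $F_{\sigma}$ the Hamiltonian differs from $\mathcal H\bigl(S^{+}_{\Gamma}(\sigma)\big|\mathscr R^{+}\sigma_{ext}\bigr)$ by $O(\eps^{3}|\Gamma|)$ (using (C5) to control $\sum|\alpha_{x}|$). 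Combined with Lemma~\ref{lem:Peir} this gives
\[
W(\Gamma;\sigma_{ext})\le e^{-\beta q(\eps)|\Gamma|}\int d\nu(\sigma)\,\frac{e^{-\beta\mathcal H(S^{+}_{\Gamma}(\sigma))}}{\nu(F_{\sigma})\,e^{-\beta\mathcal H(S^{+}_{\Gamma}(\sigma))-C\beta\eps^{3}|\Gamma|}}\le e^{[-\beta q(\eps)/2+e(\eps)]|\Gamma|},
\]
where $e(\eps)=O(|\log\eps|)$ is the entropy price of localising to $F_{\sigma}$. This is beaten by $\beta q(\eps)$ once $\beta\gtrsim\eps^{-2}|\log\eps|^{4s}$. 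No injectivity, no Jacobians, no partition-function comparison.

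Your argument has two concrete problems. First, your description of the collapsed set $\mathcal K$ is incomplete: by \eqref{def:Smod}, on all of $\tilde\Gamma$ --- which contains $sp(\Gamma)$ and hence the bulk of $\delta^{(N)}(\Gamma)$ --- the map $S^{+}_{\Gamma}$ outputs the \emph{fixed} reference $\bar\sigma^{\mathfrak C}$, independent of $\sigma'$. Thus $\tilde\Gamma\subset\mathcal K$, and the numerator's $\mathcal K$-integral runs over configurations with $\Psi^{\xi}\equiv0$ on $sp(\Gamma)$; your ``maximiser plus Gaussian fluctuations'' picture does not apply there. Second, even granting your setup, the assertion that the two constrained $\mathcal K$-partition functions differ only by $e^{C_{2}(\eps)|\Gamma|}$ with $C_{2}(\eps)$ sub-polynomial in $|\log\eps|$ is not proved --- you flag it yourself as the hard step. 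Making it rigorous would amount to a two-sided free-energy comparison for the model on $\tilde\Gamma$ under two different boundary/constraint data, which is essentially the problem you are trying to solve. The paper's ball trick sidesteps both issues entirely.
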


\begin{proof}
Assume that $\Gamma$ is a $+$-contour. Let us fix $\sigma_{\delta^{(N)}(\Gamma)^c}\sim \Gamma$ and let $\sigma \in \mathbb{X}(\Gamma)$ agree with $\sigma_{\delta^{(N)}(\Gamma)^c}$ on $\delta^{(N)}(\Gamma)^c$, so $\sigma= (\sigma_{\delta^{(N)}(\Gamma)}, \sigma_{\delta^{(N)}(\Gamma)^c})$. Let $S^+_{\Gamma}(\sigma)$ be defined as in \eqref{def:Smod}.  Then
\[
S^+_{\Gamma}(\sigma)|_{\delta^{(N)}(\Gamma)^c} \equiv \mathscr R^+ \cdot (\sigma_{\delta^{(N)}(\Gamma)^c}).
\]
We consider the event
\[
F_{\sigma} = \left \{ \sigma' \in \mathcal{S}_{\delta^{(N)}(\Gamma) }: \forall x \in \delta^{(N)}(\Gamma), \, \, \| \sigma_x' -S^+_{\Gamma, x}(\sigma) \| _2 \leq \eps^{3} \right \}.
\]
Then
\[
\mu^{ \mathscr R^+ \cdot (\sigma_{\delta^{(N)}(\Gamma)^c})}_{\delta^{(N)}(\Gamma)} \left(\sigma': \forall z\in\delta^{(N)}(\Gamma), \, \, \Psi_z^{\xi}(\sigma' )=1 \right) \geq \mu^{ \mathscr R^+ \cdot (\sigma_{\delta^{(N)}(\Gamma)^c})}_{\delta^{(N)}(\Gamma)} \left(F_{\sigma} \right).
\]

Let $\nu_{\delta^{(N)}(\Gamma)}$ denote the uniform distribution on $\mathcal S_{\delta^{(N)}(\Gamma)}$.
By Lemma \ref{lem:Peir} we can write
\[
W(\Gamma, \sigma_{\delta^{(N)}(\Gamma)^c}) \leq e^{-\beta q(\eps)|\Gamma|} \int\textrm{d}\nu_{\delta^{(N)}(\Gamma)}(\sigma)\frac{e^{-\beta \mathcal{H} \left(S^+_{\Gamma}(\sigma)|_{\delta^{(N)}(\Gamma)} |  \mathscr R^+ \cdot (\sigma_{\delta^{(N)}(\Gamma)^c}) \right )}}{Z^{ \mathscr R^+ \cdot (\sigma_{\delta^{(N)}(\Gamma)^c})}_{\delta^{(N)}(\Gamma)}\mu^{ \mathscr R^+ \cdot (\sigma_{\delta^{(N)}(\Gamma)^c})}_{\delta^{(N)}(\Gamma)} \left(F_{\sigma} \right)}
\]
where
\[
q(\eps) = C_1 \xi^2 \eps^2 |\log(\eps)|^{1-4s}.
\]
Now, from (C5) in Definition \ref{def:nice} we have
\[
\eps \sum_{x\in \delta(\Gamma)} |\alpha_x| \lesssim \eps |\log(\eps)| |\Gamma|
\]
so that on $F_{\sigma}$,
\[
\left|-\mathcal{H} \left(S^+_{\Gamma}(\sigma)|_{\delta^{(N)}(\Gamma)} |  \mathscr R^+ \cdot (\sigma_{\delta^{(N)}(\Gamma)^c}) \right ) + \mathcal{H} \left(\sigma' |  \mathscr R^+ \cdot(\sigma_{\delta^{(N)}(\Gamma)^c}) \right ) \right| \lesssim \eps^3|\Gamma|.
\]
It follows that for $\eps$ sufficiently small, 
\[
W(\Gamma, \sigma_{\delta^{(N)}(\Gamma)^c}) \leq e^{[-\beta q(\eps)/2+e(\eps)]|\Gamma}|
\]
where $e(\eps)=C_2 \eps |\log(\eps)|$ represents the entropy cost in localizing to $F_{\sigma}$.
\end{proof}
With the previous Lemma \ref{L:4.1} in place we now implement a Peierls argument.
 Define the event
\[
\Gamma^* = \{ \omega: \Gamma \text{ is regular, } \textrm{cl}(\Gamma) \not\subset \mathbb{D}_{\omega} \}
\]
where $\mathbb{D}_{\omega}$ was defined in \eqref{eq:dirtyregion}, and let
\[
\mathbb{X}(\Gamma_1,...,\Gamma_{k}) = \bigcap_{i=1}^{k} \mathbb{X}(\Gamma_i) 
\]
be the set of all spin configurations compatible with the system of contours $(\Gamma_1,...,\Gamma_k)$. 

\begin{lemma}[Peierls estimate]\label{lem:Peierls}
Fix $\xi>0$.  There exists $\eps_0>0$ so that for $0<\eps < \eps_0$ we can find $\beta(\eps)>0$ such that for all $\beta > \beta(\eps)$, on the event $ \cap_{i=1}^m \Gamma_i^*$,
\[
\mu^{e_1}_{\Lambda_N} (\mathbb{X}(\Gamma_1,...,\Gamma_m, \Gamma_{m+1},...,\Gamma_{m+n})) \leq e^{- \beta c(\eps) \sum_{i=1}^{m} |\Gamma_i|}
\]
where $c(\eps) = C \xi^2 \eps^2 |\log(\eps)|^{1-4s}$, $s\in (0,1/32)$.
\end{lemma}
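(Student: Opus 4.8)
The plan is to run the standard Peierls peeling argument, using Lemma~\ref{L:4.1} as the per-contour input and simply discarding the possibly dirty contours $\Gamma_{m+1},\dots,\Gamma_{m+n}$. Since
\[
\mathbb{X}(\Gamma_1,\dots,\Gamma_{m+n})=\bigcap_{i=1}^{m+n}\mathbb{X}(\Gamma_i)\ \subseteq\ \bigcap_{i=1}^{m}\mathbb{X}(\Gamma_i)=\mathbb{X}(\Gamma_1,\dots,\Gamma_m),
\]
it suffices to prove, on $\bigcap_{i=1}^m\Gamma_i^*$, that $\mu^{e_1}_{\Lambda_N}(\mathbb{X}(\Gamma_1,\dots,\Gamma_m))\le e^{-\beta c(\eps)\sum_{i=1}^m|\Gamma_i|}$. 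We argue by induction on $m$, the case $m=0$ being vacuous. Note that $\Gamma_i^*$ is a statement about $\omega$ and the fixed set $\textrm{sp}(\Gamma_i)$ only, so it is exactly the hypothesis under which Lemma~\ref{L:4.1} applies to $\Gamma_i$, and it is unaffected by removing other contours.

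For the inductive step, by compatibility the closed hulls $\textrm{cl}(\Gamma_i)$, $1\le i\le m$, are pairwise nested or disjoint, so after relabelling we may take $\Gamma:=\Gamma_m$ innermost, i.e.\ $\textrm{sp}(\Gamma_j)\not\subset\textrm{Int}(\Gamma_m)$ for $j<m$; assume $\Gamma_m$ is a $+$-contour (the other case is identical). Put $\sigma_{\textrm{ext}}:=\sigma_{\delta^{(N)}(\Gamma_m)^c}$. Because $\Gamma_m$ is innermost and compatible contour supports are well separated on scale $L\gg\ell$, the events $\mathbb{X}(\Gamma_1),\dots,\mathbb{X}(\Gamma_{m-1})$ and $\{\sigma_{\textrm{ext}}\sim\Gamma_m\}$ are $\sigma_{\textrm{ext}}$-measurable; let $\mathcal A$ be their intersection. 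Conditioning on $\sigma_{\textrm{ext}}$ and using $\mathbb{X}(\Gamma_1,\dots,\Gamma_m)\subseteq\{\sigma_{\textrm{ext}}\in\mathcal A\}\cap\mathbb{X}(\Gamma_m)$ (the latter read off the extended configuration as in Section~\ref{S:Peierls}),
\[
\mu^{e_1}_{\Lambda_N}\big(\mathbb{X}(\Gamma_1,\dots,\Gamma_m)\big)\le\int_{\mathcal A}\mu^{\sigma_{\textrm{ext}}}_{\delta^{(N)}(\Gamma_m)}\big(\mathbb{X}(\Gamma_m)\big)\,\textd\mu^{e_1}_{\Lambda_N}(\sigma_{\textrm{ext}}).
\]

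Now apply Lemma~\ref{L:4.1}. For $\sigma_{\textrm{ext}}\sim\Gamma_m$ the definition of $W(\Gamma_m;\cdot)$ gives
\[
Z^{\sigma_{\textrm{ext}}}_{\delta^{(N)}(\Gamma_m)}\mu^{\sigma_{\textrm{ext}}}_{\delta^{(N)}(\Gamma_m)}\big(\mathbb{X}(\Gamma_m)\big)=W(\Gamma_m;\sigma_{\textrm{ext}})\,Z^{\mathscr R^+\sigma_{\textrm{ext}}}_{\delta^{(N)}(\Gamma_m)}\,\mu^{\mathscr R^+\sigma_{\textrm{ext}}}_{\delta^{(N)}(\Gamma_m)}\big(\Psi^{\xi}_z\equiv1\text{ on }\delta^{(N)}(\Gamma_m)\big).
\]
The exterior weight of $\sigma_{\textrm{ext}}$ — the part of $e^{-\beta\mathcal{H}_{\Lambda_N}(\cdot|e_1)}$ from edges disjoint from $\delta^{(N)}(\Gamma_m)$ and from $\eps\sum_{x\notin\delta^{(N)}(\Gamma_m)}\alpha_x\,e_2\cdot\sigma_x$ — is invariant under $\mathscr R^+$: reflection across the $e_2$-axis is an isometry of $\mathbb S^1$ fixing $e_2\cdot\sigma_x$, and, $\Gamma_m$ being innermost, $\mathscr R^+$ globally reflects whole $-$-phase components of $\textrm{Int}(\Gamma_m)\setminus\delta^{(N)}(\Gamma_m)$, across whose boundaries no edge energy changes. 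Hence $\mu^{e_1}_{\Lambda_N}(\sigma_{\textrm{ext}})\,Z^{\mathscr R^+\sigma_{\textrm{ext}}}_{\delta^{(N)}(\Gamma_m)}/Z^{\sigma_{\textrm{ext}}}_{\delta^{(N)}(\Gamma_m)}=\mu^{e_1}_{\Lambda_N}(\mathscr R^+\sigma_{\textrm{ext}})$, and with $\|W(\Gamma_m;\cdot)\|_\infty\le e^{-\beta c(\eps)|\Gamma_m|}$ we obtain
\[
\mu^{e_1}_{\Lambda_N}\big(\mathbb{X}(\Gamma_1,\dots,\Gamma_m)\big)\le e^{-\beta c(\eps)|\Gamma_m|}\int_{\mathcal A}\mu^{\mathscr R^+\sigma_{\textrm{ext}}}_{\delta^{(N)}(\Gamma_m)}\big(\Psi^{\xi}_z\equiv1\text{ on }\delta^{(N)}(\Gamma_m)\big)\,\textd\mu^{e_1}_{\Lambda_N}(\mathscr R^+\sigma_{\textrm{ext}}).
\]
Both $\mathscr R^+$ and the surgery hidden in $W$ modify configurations only on $\textrm{cl}(\Gamma_m)$, which by innermostness and compatibility is disjoint from the neighbourhoods determining $\mathbb{X}(\Gamma_1),\dots,\mathbb{X}(\Gamma_{m-1})$; thus every configuration counted by the last integral still has $\Gamma_1,\dots,\Gamma_{m-1}$ as contours, and the standard deterministic Pirogov--Sinai accounting — $\sigma_{\textrm{ext}}\mapsto\mathscr R^+\sigma_{\textrm{ext}}$ being, on the natural $\sigma_{\textrm{ext}}$-measurable partition of $\mathcal A$, a weight-preserving injection whose image, completed by an interior configuration with $\Psi^{\xi}\equiv1$ on $\delta^{(N)}(\Gamma_m)$, lies in $\mathbb{X}(\Gamma_1,\dots,\Gamma_{m-1})$ — bounds the integral by $\mu^{e_1}_{\Lambda_N}(\mathbb{X}(\Gamma_1,\dots,\Gamma_{m-1}))$. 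Since $\bigcap_{i=1}^{m-1}\Gamma_i^*\supseteq\bigcap_{i=1}^m\Gamma_i^*$, the induction hypothesis bounds this by $e^{-\beta c(\eps)\sum_{i=1}^{m-1}|\Gamma_i|}$, which completes the step.

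All the substance is imported through Lemma~\ref{L:4.1}, which packages the bulk energy gain of Lemma~\ref{lem:Peir} together with the $O(\eps|\log(\eps)|)$ entropy cost of localizing to the event $F_\sigma$; it is this balance that forces $\beta(\eps)$ of order at least $\eps^{-2}|\log(\eps)|^{-1}$ up to a subpolynomial correction. The only genuinely delicate point in the argument above, and the place I would expect to spend the most care, is the deterministic bookkeeping: checking that $\mathbb{X}(\Gamma_j)$ for $j<m$ and $\{\sigma_{\textrm{ext}}\sim\Gamma_m\}$ really localize outside $\delta^{(N)}(\Gamma_m)$ (which rests on the separation built into the notion of compatible contours and on $\ell\ll L$), and that $\mathscr R^+$ is exactly weight preserving and produces no multiplicity upon integrating over exterior configurations — the routine but fiddly core common to all contour arguments.
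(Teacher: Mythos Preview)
Your argument is correct and does follow the same Peierls peeling scheme, but with two organizational choices that differ from the paper's proof. First, you discard the dirty contours $\Gamma_{m+1},\dots,\Gamma_{m+n}$ at the outset via the inclusion $\mathbb{X}(\Gamma_1,\dots,\Gamma_{m+n})\subseteq\mathbb{X}(\Gamma_1,\dots,\Gamma_m)$; the paper instead carries them along through the induction and introduces the map $T_{m+1}$ to record how inner (dirty) contours get reflected when an outer clean contour is erased. Second, you peel an \emph{innermost} clean contour and condition on $\delta^{(N)}(\Gamma_m)^c$, whereas the paper peels one that is \emph{outermost} among the clean ones (choosing $\Gamma_{m+1}$ so that $\textrm{cl}(\Gamma_{m+1})\cap\bigcup_{i\le m}\textrm{sp}(\Gamma_i)=\varnothing$) and conditions on $\textrm{cl}(\Gamma_{m+1})^c$. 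Your choice has the pleasant consequence that no clean contour sits inside $\Gamma_m$, so none needs to be relabelled; the paper's choice has the advantage that the remaining clean contours live genuinely outside the conditioning region, which makes the measurability bookkeeping trivial.

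One point to tighten: your assertion that the events $\mathbb{X}(\Gamma_j)$, $j<m$, are $\sigma_{\textrm{ext}}$-measurable, and later that $\textrm{cl}(\Gamma_m)$ is disjoint from the neighbourhoods determining them, is not literally true. Compatibility only gives $\delta(\Gamma_j)\cap\textrm{sp}(\Gamma_m)=\varnothing$, so $\delta(\Gamma_j)$ may well meet $\delta(\Gamma_m)\subset\textrm{cl}(\Gamma_m)$. What is true, and what you actually need, is that on any overlap $\delta(\Gamma_j)\cap\delta(\Gamma_m)$ compatibility forces $\psi_z(\Gamma_j)=\psi_z(\Gamma_m)$, and since $\Gamma_m$ is innermost this overlap lies in $\delta_{\textrm{ext}}(\Gamma_m)$ where (for a $+$-contour) $\psi_z=+1$; hence after replacing the interior by a configuration with $\Psi^\xi\equiv 1$ on $\delta^{(N)}(\Gamma_m)$ and applying $\mathscr R^+$ (which does not touch $\textrm{Ext}(\Gamma_m)$), the full configuration still lies in $\mathbb{X}(\Gamma_j)$. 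Your concluding sentence already states this correctly; just be aware that the intermediate measurability claim is too strong as written and should be replaced by this compatibility argument.
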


\begin{proof}
We will prove by induction over $m$ that 
\[
\mu^{e_1}_{\Lambda_N} (\mathbb{X}(\Gamma_1,...,\Gamma_m, \Gamma_{m+1},...,\Gamma_{m+n})) \leq  \prod_{i=1}^m  \| W(\Gamma_i; \cdot )\|
\]
on the event $\cap_{i=1}^m \Gamma^*_i$.

For $m=0$ the product is empty. Assume that the statement holds for a fixed $m$.  We prove that the estimate holds when the first $m+1$ contours are clean.  Reordering the contours as needed, we may assume that 
\[
cl(\Gamma_{m+1}) \cap \bigcup_{i=1}^m \textrm{sp}(\Gamma_i) = \varnothing.
\]
and that, by symmetry of our argument, $\Gamma_{m+1}$ is a +-contour. Let $$\{\tilde{\Gamma}_1,...,\tilde{\Gamma}_r \} \subset \{ \Gamma_{m+2},...,\Gamma_{m+n+1}\}$$ denote the subset of contours such that  $\delta(\tilde{\Gamma}_i) \subset \textrm{Int}(\Gamma_{m+1})$.  We relabel the remaining ones by
\[
\{\Gamma'_1,...,\Gamma'_{n-r} \} = \{ \Gamma_{m+2},...,\Gamma_{m+n+1}\} \setminus \{\tilde{\Gamma}_1,...,\tilde{\Gamma}_r \}.
\]
In terms of this rearrangement
\[
\mathbb{X}(\Gamma_1,...,\Gamma_{m+1}, \Gamma_{m+2},..., \Gamma_{m+n+1}) = \mathbb{X}(\Gamma_1,...,\Gamma_{m}, \Gamma'_{1},..., \Gamma'_{n-r}) \cap \mathbb{X}(\Gamma_{m+1}, \tilde{\Gamma}_1,...,\tilde{\Gamma}_r).
\]
The DLR equations imply
\[
\begin{split}
& \mu_{\Lambda_N}^{e_1} (\mathbb{X}(\Gamma_1,...,\Gamma_{m+1}, \Gamma_{m+2},..., \Gamma_{m+n+1}) ) \\
& = \int \1_{\mathbb{X}(\Gamma_1,...,\Gamma_m, \Gamma'_1,...,\Gamma'_{n-r})} Z^{\sigma_{cl(\Gamma_{m+1})^c}}_{\sigma_{cl(\Gamma_{m+1})}}\mu^{\sigma_{cl(\Gamma_{m+1})^c}}_{cl(\Gamma_{m+1})}(\mathbb{X}(\Gamma_{m+1}, \tilde{\Gamma}_1,...,\tilde{\Gamma}_r)) \mu^{e_1}_{\Lambda_N}(d \sigma)
\end{split}
\]
For a given contour $\Gamma$ denote by $- \Gamma$ the pair $(sp(\Gamma), -\psi(\Gamma))$ and
\[
T_{j}(\Gamma) = \begin{cases}
\Gamma, & \text{ if } \delta_{ext}(\Gamma) \subset \delta^+_{in} (\Gamma_{m+1}) \\
- \Gamma & \text{ otherwise}.
\end{cases}
\]
Using the DLR equations we have that
\begin{multline}
Z^{\sigma_{cl(\Gamma_{m+1})^c}}_{\sigma_{cl(\Gamma_{m+1})}} \mu^{\sigma_{cl(\Gamma_{m+1})^c}}_{cl(\Gamma_{m+1})}(\mathbb{X}(\Gamma_{m+1}, \tilde{\Gamma}_1,...,\tilde{\Gamma}_r)) \\
\leq \| W(\Gamma_{m+1}; \cdot) \| Z^{\sigma_{cl(\Gamma_{m+1})^c}}_{\sigma_{cl(\Gamma_{m+1})}} \mu^{ \sigma_{cl(\Gamma_{m+1})^c}}_{cl(\Gamma_{m+1})}(\mathbb{X}( T_{m+1}(\tilde{\Gamma}_1),..., T_{m+1}(\tilde{\Gamma}_r))).\nonumber
\end{multline}
The induction hypothesis then finishes the proof.
\end{proof}

\begin{proof}[Proof of Main Theorem]
The proof of Theorem \ref{T:Main2} can be obtained from the following argument involving Lemma \ref{lem:Peierls} and Lemma \ref{lem:dirty}. Let $x\in \mathbb{Z}^2$ such that $Q_L(x) \cap \mathbb{D}_{\omega} =\varnothing$.  If $\Lambda_N \uparrow \Z^2$ in the van Hove sense then for $N$ sufficiently large $Q_L(x)\subset \Lambda_N$.   Consider the event
\[
E:=\{ \sigma \in \mathcal{S}_{\Lambda_N} : \Psi^{\xi}_{x} (\sigma) \neq 1\}.
\]
If $\sigma \in E$, then since $\sigma|_{\Lambda_N^c}\equiv e_1$, our coarse-graining scheme implies that there exists a largest contour ${\Gamma}$ such that $Q_L(x) \subset \textrm{cl}(\Gamma)$.  Since $Q_L(x) \cap \mathbb{D}_\omega =\varnothing$ and  $\mathbb D_{\omega}=\textrm{cl} (\mathbb D_{\omega})$, ${\Gamma}^*$ must occur. We decompose the event $E$ into disjoint subsets according to possible realizations of the support of the largest contour.  Let $F_{\Gamma}=\{\sigma:  \text{ $\Gamma$ is the largest contour surrounding $Q_L(x)$}\}$.  Then
\[
\mu^{e_1}_{\Lambda_N} \left (\Psi^{\xi}_{x} (\sigma) \neq 1\right ) \leq \sum_{{\substack{\varGamma \text{ connected $L$-measurable}: \\ Q_L(x) \subset \textrm{cl}(\varGamma)}}} \1_{\varGamma^*}(\omega)\sum_{\Gamma=(\varGamma,\psi({\Gamma}))} \mu_{\Lambda_N}^{e_1} (\Psi^{\xi}_{Q_L(x)} (\sigma) \neq 1, F_{\Gamma}).
\]
The inner sum is over the possible choices for $\psi({\Gamma})$ so that $sp(\Gamma)=\varGamma$.  The number of such choices is at most $C_1^{|\varGamma|/\ell^2}$.
Next, there are constants $a, C_2>0$ so that the number of $L$-measurable connected sets $\varGamma$ surrounding a block $Q_L$ and with $N^L_\varGamma =r$ is at most $C_2r^2 a^r$.  By  Lemma \ref{lem:Peierls} we then get that
\[
\mu^{e_1}_{\Lambda_N} \left ( \Psi^{\xi}_{x} (\sigma) \neq 1\right ) \leq \sum_{r=1}^{\infty} C_2 r^2 a^{r} C_1^{\frac{rL^2}{\ell^2}} e^{- c(\eps)\beta r} \lesssim e^{- \frac{1}{2} c(\eps) \beta}
\]
provided $c(\eps)\beta > 2|\log (\eps)|^{4s} \log (C_1)+2\log (a)$. Choosing $\beta$ large enough yields the claim.
\end{proof}

\section{Proofs}

\subsection{Defects}\label{S:defects}
\begin{proof}[Proof of Proposition \ref{prop:defect}]
To prove Proposition \ref{prop:defect}, we begin by showing that for any $\ell$-by-$2\ell$ rectangle $B$ and $\sigma$ so that $\sigma(B)\cdot e_1\geq 1-\xi$ and $\mathcal E_B(\sigma) \leq  \eps^2|\log (\eps)|^{1+\chi} \ell^2$, we can find a line segment $l_1$ with length $2\ell$ so that $\sigma_x\cdot e_1\geq 9/10$ for any $x\in l_1$.  To find $l_1$, we observe that if $\mathcal L$ denote the $\ell$ line segments of length $2\ell$ in $B$, then
\[
\sum_{l\in \mathcal L} \mathcal E_l(\sigma)\leq 2\eps^2|\log (\eps)|^{1+\chi} \ell^2.
\]
This implies 
$$|\{l\in \mathcal L: \mathcal E_l(\sigma)\leq 8\eps^2|\log (\eps)|^{1+2\chi} \ell\}|\geq \frac{3\ell}{4}.$$
Similarly if $\sigma(l)$ denotes the spin average along $l$, then since $\sigma(B)\cdot e_1\in  (1-\xi, 1]$
$$|\{l\in \mathcal L:\sigma(l)\cdot e_1\leq 1-5\xi\}|\leq \ell/4$$
so that we can find (many) $l\in \mathcal L$ so that $\sigma(l)\cdot e_1\geq 1-5\xi$, $\mathcal E_l(\sigma)\leq 8 \eps^2|\log (\eps)|^{1+2\chi} \ell$.  For $\xi, \eps$ sufficiently small, this implies $\sigma_x\cdot e_1\geq 9/10$ for all $x\in l$.
In a similar manner, we can find two lines, $l_2, l_3$, perpendicular to $l_1$, having  of length $\ell$, and located one each in the two disjoint sub-squares of $B$ and such that $\sigma_x\cdot e_1\geq 9/10$ for all $x\in l_i$ and $\mathcal E_{l_i}(\sigma)\leq 2 \eps^2|\log (\eps)|^{1+2\chi} \ell$ for $i=2,3$.

To finish the proof, given $R$, consider the set of $\ell$-measurable  squares $Q\subset R^c$ so that $\textrm{dist}(Q, R)\leq 2\ell$ and let $A$ denote their union.  Then each component of $A$ can be tiled by $\ell$-by-$2\ell$ rectangles $B$ which overlap in $\ell$-measurable squares $Q$.   By hypothesis, $\psi(\sigma)\neq 0$ on each such rectangle and has a constant sign on each component of $A$.   Applying the first paragraph to each component provides boundary circuits on  which $\sigma_x\cdot e_1\geq 9/10$ for $x$ in the circuit.
\end{proof}
\subsection{Proof of Lemma \ref{lem:mod3}}
\label{S:Tech}

\subsubsection{Change of variables and energy estimates}\label{S:aux}
Let us write $\sigma_x=(\cos(\theta_x), \sin(\theta_x))$, where  $\theta_x$ is defined modulo $2\pi$.  For any pair of vertices $x, y$ such that $e=\langle x,y\rangle$, let $|\nabla_e \theta|$ to be the minimal value of $|\theta_x-\theta_y|$ over angles which may represent $\sigma_x, \sigma_y$. Then 
\[ 
(\sigma_x-\sigma_y)^2 \asymp |\nabla_e \theta|^2. 
\]
In this section we consider the effect of the changes of variables
\begin{align*}
&g_x:= g^{\lambda, D}_{x, R}=  (-\Delta^{D}_R + \lambda) ^{-1}\eps \, \alpha_x ,
\quad \theta'_x = \theta_x - \cos(\theta_x)g_{x}.
\end{align*}
Note that the second transformation is well defined no matter how we choose the map $x \mapsto \theta_x$.  It is also non-singular as long as $|g_{x}| < 1$ (which will be enforced following the first lemma). The first lemma expresses the Hamiltonian of a spin configuration in a box in terms of it's Dirichlet energy using only the first transformation.  We will omit its proof since it is analogous to Proposition 8.1 in \cite{Craw_order}.
Recall that
\[
\mathcal E_R( \sigma|\tau)=\sum_{\langle x,y \rangle \cap R\neq \varnothing}\|\sigma_x-\sigma_y\|_2^2
\]
where $\sigma$ is set equal to $\tau$ on $\partial^oR$.

\begin{lemma}\label{lem:cov}
Fix $\ell\in \N$ and let $R\subset \Z^2$ be a bounded, $\ell$-measurable region such that for any $\ell$-measurable square intersecting $R$,  $\|g\|_{\infty}\leq 1$.  Then for any $\lambda > 0$ and boundary condition $\tau$ we have that
\begin{equation*}
\begin{split}
-\mathcal{H}_R(\sigma|\tau)  &=-\mathcal K_R(\theta'|\tau)  + \frac 12 \sum_{\langle x, y\rangle): x\in \partial ^i R, y\in \partial^o R} [g_x-g_y]e_2\cdot \tau_y + Err_R(\sigma),
\end{split}
\end{equation*}
where 
\[
-\mathcal K_R(\theta'|\tau):=\sum_{e \cap R \neq \varnothing} [\cos(\nabla_e \theta') -1] + \sum_{x \in R} \frac{1}{4}\cos^2(\theta'_x) m_x 
\]
with $m_x = \sum_{y\sim x} (g_{x} - g_{y})^2$ and where 
\[
 Err_R(\sigma)=\mathcal{O}\left(\|g\|_{\infty} \mathcal E_R( \sigma|\tau)+ \|g\|_{\infty}\left[ \mathcal E_{R}( g|0)+ \lambda |R|\right]\right).
\]
\end{lemma}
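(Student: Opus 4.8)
The identity is purely algebraic and I would follow the route of Proposition~8.1 of \cite{Craw_order}, organized in three steps: (i) pass to angle variables; (ii) remove the random field via the substitution $\eps\alpha=(-\Delta^D_R+\lambda)g$ and a summation by parts; (iii) complete the square edge by edge to produce $\mathcal K_R$, dumping all higher order terms into $Err_R$. Throughout, the hypothesis $\|g\|_\infty\le 1$ is what makes the map $\theta\mapsto\theta'$ non-singular and lets one replace $\|g\|_\infty^k$ by $\|g\|_\infty$ for $k\ge1$.

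For (i) and (ii): writing $\sigma_x=(\cos\theta_x,\sin\theta_x)$ and using the exact identity $\|\sigma_x-\sigma_y\|_2^2=2-2\cos(\nabla_e\theta)$, one has $-\tfrac12\mathcal E_R(\sigma|\tau)=\sum_{e\cap R\neq\varnothing}[\cos(\nabla_e\theta)-1]$ (the outer angle on a boundary edge being that of $\tau_y$). Since $g$ solves $(-\Delta^D_R+\lambda)g=\eps\alpha$ on $R$ with $g\equiv0$ on $\partial^oR$, I would substitute $\eps\alpha_x=((-\Delta^D_R+\lambda)g)_x$ into the field term $\eps\sum_{x\in R}\alpha_x(e_2\cdot\sigma_x)$ and sum by parts, obtaining $\sum_{e\cap R\neq\varnothing}\nabla_e g\,\nabla_e(e_2\cdot\sigma)$ plus $\lambda\sum_{x\in R}g_x(e_2\cdot\sigma_x)$ plus a boundary contribution $B$ forced by the vanishing of $g$ off $R$. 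The mass piece obeys $|\lambda\sum_{x\in R}g_x(e_2\cdot\sigma_x)|\le\lambda\|g\|_\infty|R|$ and goes straight into $Err_R$; the piece $B$, together with the analogous boundary edges produced in step~(iii), reconstitutes the explicit term $\tfrac12\sum_{\langle x,y\rangle:\,x\in\partial^iR,\,y\in\partial^oR}[g_x-g_y]\,e_2\cdot\tau_y$ of the statement.

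The heart of the matter is step~(iii): for each edge $e=\langle x,y\rangle$ I want to match $[\cos(\nabla_e\theta)-1]+\nabla_e g\,\nabla_e(e_2\cdot\sigma)$ with $[\cos(\nabla_e\theta')-1]$ plus the share of $\tfrac14\sum_x\cos^2(\theta'_x)m_x$ attributable to $e$ (which for an interior edge is $\tfrac14(\nabla_e g)^2[\cos^2\theta'_x+\cos^2\theta'_y]$, $e$ feeding both $m_x$ and $m_y$, and for a boundary edge is $\tfrac14(\nabla_e g)^2\cos^2\theta'_x$, consistently with $g\equiv0$ outside). The crucial point is that $\epsilon_e:=\nabla_e\theta-\nabla_e\theta'=\nabla_e(\cos\theta\cdot g)$ is a discrete gradient of the \emph{product} $\cos\theta\cdot g$, so $|\epsilon_e|\le2\|g\|_\infty$ and, by the discrete Leibniz rule and $|\cos\theta_x-\cos\theta_y|\le\|\sigma_x-\sigma_y\|_2$,
\[
\sum_{e\cap R\neq\varnothing}\epsilon_e^2=\mathcal E_R(\cos\theta\cdot g|0)\lesssim\mathcal E_R(g|0)+\|g\|_\infty^2\,\mathcal E_R(\sigma|\tau).
\]
Expanding $\cos(\nabla_e\theta'+\epsilon_e)$ and $e_2\cdot(\sigma_y-\sigma_x)=\sin\theta_y-\sin\theta_x$ about the primed variables and keeping leading terms produces exactly $[\cos(\nabla_e\theta')-1]$ together with a quadratic term $\tfrac12(\nabla_e g)^2$ times an averaged $\cos^2\theta'$; symmetrizing that average into $\tfrac12(\cos^2\theta'_x+\cos^2\theta'_y)$ assembles the sum into $\tfrac14\sum_x\cos^2(\theta'_x)m_x$ at the cost of a per-edge error $\lesssim|\nabla_e\theta'|\,(\nabla_e g)^2$. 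Every remaining term $\mathcal R_e$ is a polynomial in $\epsilon_e,\nabla_e g,\nabla_e\theta'$ of total degree $\ge3$ carrying a factor of $\epsilon_e$ or of $g$, whence $|\mathcal R_e|\lesssim\|g\|_\infty[(\nabla_e\theta')^2+(\nabla_e g)^2+\epsilon_e^2]$; summing and using the display together with $\sum_e(\nabla_e\theta')^2\lesssim\mathcal E_R(\sigma|\tau)+\mathcal E_R(g|0)$ gives $\sum_e|\mathcal R_e|\lesssim\|g\|_\infty[\mathcal E_R(\sigma|\tau)+\mathcal E_R(g|0)]$. Putting (i)--(iii) together yields the stated identity with $Err_R(\sigma)=\mathcal O(\|g\|_\infty\mathcal E_R(\sigma|\tau)+\|g\|_\infty[\mathcal E_R(g|0)+\lambda|R|])$.

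I expect the main obstacle to be exactly the bookkeeping in step~(iii): the trigonometric expansions must be carried out \emph{without} assuming the gradients $\nabla_e\theta$ are small (angles can jump across defects), so the cancellations producing $\cos(\nabla_e\theta')-1$ and the mass term have to be exact algebraic identities with every remainder honestly of the form $\|g\|_\infty\times(\text{local Dirichlet energy})$; the observation that $\epsilon_e$ is the gradient of a product, rather than a generic quantity of size $\|g\|_\infty$, is what makes $\sum_e\epsilon_e^2$ --- and hence the error sum --- controllable by $\mathcal E_R(g|0)$ and $\mathcal E_R(\sigma|\tau)$. One must also get the symmetrization of the per-edge quadratic coefficient and the coefficients of the boundary terms exactly right. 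Compared to \cite{Craw_order} the only genuinely new feature is the mass $\lambda$, which enters only through the innocuous contribution $\lambda\sum_{x\in R}g_x(e_2\cdot\sigma_x)$.
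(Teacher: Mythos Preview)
Your proposal is correct and follows essentially the same route as the paper: pass to angles, substitute $\eps\alpha=(-\Delta^D_R+\lambda)g$ and sum by parts, then expand edge by edge to produce $\cos(\nabla_e\theta')-1$ plus the mass term while keeping track of errors of size $\|g\|_\infty$ times local Dirichlet energies. The only organizational difference is that the paper centers the trigonometric expansions at the edge midpoint $\bar\theta_e=\tfrac12(\theta_x+\theta_y)$ rather than at the primed variables; this makes the cancellation of the cross term $\cos(\bar\theta_e)\nabla_e\theta\,\nabla_e g$ between the Dirichlet and field contributions particularly transparent, whereas in your organization the analogous cancellation (of $-\sin(\nabla_e\theta')\epsilon_e$ against the field term) requires one more line of algebra, but both bookkeeping schemes lead to the same error structure.
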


\begin{proof}
The original Hamiltonian can be written  as
\begin{equation}\label{eq_Ham}
-\mathcal{H}_R(\sigma|\tau) = \sum_{e\cap R\neq \varnothing} [\cos(\nabla_e \theta)-1]+ \eps \sum_{x\in R} \alpha_x\sin(\theta_x). \end{equation}

We begin by carefully rewriting the first term on the RHS.
For any edge $\langle x, y \rangle \subset R$ choose $\theta_x, \theta_y$ which achieve $(\nabla_e \theta)^2$ and
let $\theta'_x=\theta_x-g_x'$ where $g'_x=\cos(\theta_x)g_{x}$.
Let $\bar{\theta}_e=\frac 12(\theta_x+ \theta_y)$ so that
\[
\theta_x = \overline{\theta}_e + \frac{1}{2}\nabla_e \theta ; \, \, \qquad \theta_y = \overline{\theta}_e - \frac{1}{2}\nabla_e \theta.
\]
Then
\[
\begin{split}
\nabla_e g' &= \cos(\theta_x) g_x - \cos(\theta_y)g_y \\
& = \cos \left(\overline{\theta}_e + \frac{1}{2}\nabla_e \theta \right) g_x - \cos \left (\overline{\theta}_e - \frac{1}{2}\nabla_e \theta \right)g_y 
\end{split}
\]
so that by Taylor expansion around $\bar{\theta}_e$,
\begin{equation*}\label{eq2}
\begin{split}
\nabla_e g' & = \cos(\overline{\theta}_e) g_x  + \mathcal{O}(|\nabla_e \theta| |g_x|)  - \cos(\overline{\theta}_e) g_y +   \mathcal{O}(|\nabla_e \theta| |g_y|) \\
& = \cos(\overline{\theta}_e) \nabla_e g   + \mathcal{O}(|\nabla_e \theta| (|g_x|+|g_y|)).
\end{split}
\end{equation*}

Using the sum of angles formula for cosine, the identity
\[
\nabla_e \theta' = \nabla_e \theta - \nabla_e g',
\]
Taylor expansions of sine and cosine around $0$, and the sup norm bound on $g$ we get
\begin{align*}\label{eq1}
\cos(\nabla_e \theta')-1 &= \cos(\nabla_e \theta)\cos( \nabla_e g')-1 +  \sin( \nabla_e \theta)\sin( \nabla_e g') \\
\nonumber &= \cos(\nabla_e \theta)-1 -\frac 12  \cos(\bar{\theta}_e)^2(\nabla_e g)^2 +   \cos(\bar{\theta}_e) \nabla_e \theta \nabla_e g+ \mathcal{O}(\| g \|_{\infty} |\nabla_e \theta|^2+ |\nabla g|^3).
\end{align*}

Inverting this relation
\begin{equation*}\label{eq:ABC}
\cos(\nabla_e \theta)-1 = \cos(\nabla_e \theta')-1 +\frac 12  \cos(\bar{\theta}_e)^2(\nabla_e g)^2 -   \cos(\bar{\theta}_e) \nabla_e \theta \nabla_e g+ \mathcal{O}(\| g \|_{\infty} |\nabla_e \theta|^2+ |\nabla g|^3).
\end{equation*}

Now we insert this identity into $\mathcal H_R$, dividing the sum over edges into those interior to $R$ and those edges which have endpoints in $R$ and $R^c$. We get
\begin{align*}
\nonumber \sum_{e \cap R \neq \varnothing} \cos(\nabla_e \theta) 
& = \sum_{e \cap R\neq \varnothing}   \left ( \cos(\nabla_e \theta') - \nabla_e \theta \cos(\overline{\theta}_e) \nabla_e g  + \frac{1}{2} (\cos(\overline{\theta}_e) \nabla_e g  )^2 \right ) + E^{(1)}_R(\sigma)\nonumber
\end{align*}
where
\begin{align}
&E^{(1)}_R(\sigma) =\mathcal O(\|g\|_{\infty} \mathcal E_R( \sigma)+ \|\nabla g\|_{\infty} \mathcal E_{R}( g|0))).
\end{align}

Turning to the random field term in \eqref{eq_Ham}, we have
\begin{equation*}\label{eq:sbp}
\begin{split}
\eps \sum_{x\in R} \alpha_x \sin(\theta_x)& = \sum_{x\in R} (-\Delta^{D}_R +\lambda) \cdot g_x \sin(\theta_x). 
\end{split}
\end{equation*}
Summation by parts gives
\begin{align*}
 \nonumber \sum_{x\in R} (-\Delta^{\lambda, D}_R) \cdot g_x \sin(\theta_x) 
& =  \sum_{e \cap R \neq \varnothing} \nabla_e g \nabla_e \sin(\theta) -\frac 12 \sum_{\substack{\langle x, y \rangle: \\ x \in \partial^i R, y\in \partial^o R}}  [g_x-g_y] e_2 \cdot \tau_y. \\
\end{align*}
Using the estimate
\begin{equation*}
\begin{split}
\sin(\theta_x) - \sin(\theta_y) & = \sin \left(\overline{\theta}_e +\frac{1}{2} \nabla_e \theta \right) - \sin \left(\overline{\theta}_e  - \frac{1}{2} \nabla_e  \theta \right) \\
& = \cos(\overline{\theta}_e) \nabla_e \theta + \mathcal{O}((\nabla_e \theta)^2)
\end{split}
\end{equation*}
we have
\begin{equation*}
\sum_{e \cap R \neq \varnothing} \nabla_e g \nabla_e \sin(\theta) = \sum_{e \cap R \neq \varnothing}   \nabla_e g  \cos(\overline{\theta}_e) \nabla_e \theta + \mathcal{O}(\| \nabla g \|_{2, R} \| \nabla \theta \|^2_{2, R}).
\end{equation*}
By the  Cauchy-Schwartz inequality
\begin{equation*}
\left| \lambda  \sum_{x\in R} g_{x} \sin(\theta_x) \right | \leq   \mathcal{O}(\lambda  \| g \|_{2, R}\sqrt{|R|} ).
\end{equation*}

Combining terms we obtain
\[
-\mathcal{H}_R(\sigma|\tau)= \sum_{\substack{e \cap R \neq \varnothing}}   \left ( \cos(\nabla_e \theta') -1+\frac{1}{2} (\cos(\overline{\theta}_e) \nabla_e g  )^2 \right ) -\frac 12 \sum_{\substack{\langle x, y \rangle:\\ x \in \partial^i R, y\in \partial^o R}}  [g_x-g_y] e_2 \cdot \tau_y \\
+ E^{(2)}_R(\sigma)\nonumber.
\]
where $E^{(2)}_R(\sigma)=E^{(1)}_R(\sigma)+ \mathcal{O}(\lambda  \| g \|_{2, R}\sqrt{|R|} ).$
Since $\theta_x=\theta_x'+g'_x$ we can write
\begin{equation*}
\cos(\overline{\theta}_e)=\cos(\theta'_x) + \mathcal{O}(|g| + |\nabla_e \theta|).
\end{equation*}
Using \begin{equation*}
\begin{split}
  \cos^2(\overline{\theta}_e)  & = \frac{1}{2}(\cos^2(\theta'_x) + \cos^2(\theta'_y)) + \mathcal{O}(|g_x|+|g_y| + |\nabla_e \theta|) ,
\end{split}
\end{equation*}
we have
\begin{align}
\nonumber \frac{1}{2} (\cos(\overline{\theta}_e) \nabla_e g  )^2 
& = \frac{1}{4}(\cos^2(\theta'_x) + \cos^2(\theta'_y))(\nabla_e g )^2 + \mathcal{O}(|\nabla_e g|^2(|g_x|+|g_y| + |\nabla_e \theta|)) \\
\nonumber & = \frac{1}{4} \cos^2(\theta'_x)(\nabla_e g )^2 + \frac{1}{4}\cos^2(\theta'_y)(\nabla_e g )^2 +\mathcal{O}(|\nabla_e g|^2(|g_x|+|g_y|+ |\nabla_e \theta|)).
\end{align}
Hence we arrive at 
\[
-\mathcal{H}_R(\sigma|\tau)= \sum_{\substack{e \cap R \neq \varnothing}}    ( \cos(\nabla_e \theta') -1)+\sum_{x\in  R} \frac{1}{4} (\cos(\theta'_x)m_x)^2 -\frac 12 \sum_{\substack{(x, y):x \in \partial^i R, y\in \partial^o R}}  [g_x-g_y] e_2 \cdot \tau_y
+ Err_R(\sigma)\nonumber
\]
where $m_x = \sum_{y\sim x} (g_{x} - g_{y})^2$.  For reference we note that upon collecting the errors and keeping only the dominant contributions
\[
 Err_R(\sigma)= \mathcal{O}\left(\|g\|_{\infty} \mathcal E_R( \sigma|\tau)+ \|g\|_{\infty} \left[ \mathcal E_{R}( g|0)+ \lambda|R| \right]\right).
\]
\end{proof}
\begin{lemma}(Uniqueness of the Maximers)\label{lem:unique}
Consider optimizers of $-\mathcal{K}_{R}(\cdot|\tau)$ with  boundary condition $\tau$ satisfying $\| \tau \|_{\infty, \partial^o R} \leq \pi/5$. There is a unique maximizer    $(\nu_x)_{x\in R}$ and it satisfies $\| \nu \|_{\infty, R}\leq \| \tau \|_{\infty, R}$.
\end{lemma}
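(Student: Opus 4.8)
The plan is to recast the problem as a minimization, establish uniqueness on a small box by convexity, pin down the range of the minimizer by a discrete maximum principle, and finally check that the box--minimizer is the global one. Using $\cos t-1=-2\sin^2(t/2)$ and $\cos^2 t=1-\sin^2 t$, one has $-\mathcal{K}_{R}(\phi|\tau)=\tfrac14\sum_{x\in R}m_x-G_{R}(\phi|\tau)$, where
\[
G_{R}(\phi|\tau):=2\sum_{e\cap R\neq\varnothing}\sin^2(\tfrac12\nabla_e\phi)+\tfrac14\sum_{x\in R}m_x\sin^2(\phi_x)\ \geq\ 0
\]
and $\phi$ is set equal to $\tau$ on $\partial^o R$. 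Thus maximizing $-\mathcal{K}_{R}(\cdot|\tau)$ over $\mathcal{S}_{R}$ amounts to minimizing $G_{R}(\cdot|\tau)$ over the torus $(\R/2\pi\Z)^{R}$, and a minimizer exists by compactness.

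\textbf{Step 1 (strict convexity on a box).} Put $a:=\|\tau\|_{\infty,\partial^o R}\leq\pi/5$ and $B:=\{\phi:\ \|\phi\|_{\infty,R}\leq\pi/5\}$. On $B$ every edge has $|\nabla_e\phi|\leq 2\pi/5<\pi/2$, a range on which $t\mapsto\tfrac{1-\cos t}{2}$ has positive second derivative, while $t\mapsto\sin^2 t$ has positive second derivative on $[-\pi/5,\pi/5]\subset(-\pi/4,\pi/4)$; hence $G_{R}(\cdot|\tau)$ is convex on $B$. It is strictly convex because each connected component of $R$ is adjacent to $\partial^o R$ (as in all our applications, where $R\cup\partial^o R$ is connected), so $\phi\mapsto(\nabla_e\phi)_{e\cap R\neq\varnothing}$ is injective on the affine slice of configurations agreeing with $\tau$ on $\partial^o R$. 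Therefore $G_{R}(\cdot|\tau)$ has a unique minimizer $\nu$ over $B$.

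\textbf{Step 2 (maximum principle: $\|\nu\|_{\infty,R}\leq a$).} By the symmetry $\phi\mapsto-\phi$ (which replaces $\tau$ by $-\tau$, of the same norm) it suffices to bound $M:=\max_{x\in R}\nu_x$ from above. Suppose $M>a$ and let $A$ be a connected component of the level set $\{x\in R:\ \nu_x=M\}$. Since $A$ is a component of a level set, every neighbour $y\notin A$ of $A$ has $\nu_y<M$; and since boundary values lie in $[-a,a]$ with $a<M$, at least one edge leaves $A$ and every such $\nu_y$ lies in $[-\pi/5,M)$. The first--order optimality condition at each $x\in A$ gives $\partial_{\phi_x}G_{R}(\nu|\tau)\leq 0$, where $\partial_{\phi_x}G_{R}=\sum_{y\sim x}\sin(\nu_x-\nu_y)+\tfrac14 m_x\sin(2\nu_x)$. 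Summing over $x\in A$: the edges internal to $A$ cancel; the edges leaving $A$ contribute $\sum\sin(M-\nu_y)>0$, as $M-\nu_y\in(0,2\pi/5)\subset(0,\pi)$; and the mass terms contribute $\tfrac14\sin(2M)\sum_{x\in A}m_x\geq 0$, as $2M\in(0,\pi)$. Hence $\sum_{x\in A}\partial_{\phi_x}G_{R}(\nu|\tau)>0$, contradicting $\leq 0$. So $\|\nu\|_{\infty,R}\leq a$; in particular $\nu$ lies in the interior of $B$ and solves $\partial_{\phi_x}G_{R}(\nu|\tau)=0$ for all $x\in R$.

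\textbf{Step 3 (the box--minimizer is global) --- the main obstacle.} It remains to prove $G_{R}(\phi|\tau)\geq G_{R}(\nu|\tau)$ for all $\phi\in(\R/2\pi\Z)^{R}$, with equality only at $\nu$; then the (compactly attained) global minimizer lies in $B$, hence equals $\nu$ by Step 1, and translating back, $\nu$ is the unique maximizer of $-\mathcal{K}_{R}(\cdot|\tau)$ with $\|\nu\|_{\infty,R}\leq\|\tau\|_{\infty,\partial^o R}$. I expect this comparison to be the delicate point. The naive remedy --- projecting $\phi$ pointwise onto the arc $[-\pi/5,\pi/5]$ --- fails, since on $\mathbb{S}^1$ this projection is not distance non-increasing across edges straddling the antipode $\pi$, and indeed $-\mathcal{K}_{R}(\cdot|\tau)$ genuinely has spurious local maxima near $\phi\equiv\pi$, where $\tfrac14 m_x\cos^2(\phi_x)$ is maximal. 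Instead one must argue by a quantitative energy estimate: any ``defect'' region of $\phi$ on which $\phi_x$ is far from both $0$ and $\pi$, and any near-$\pi$ region, is separated from $\partial^o R$ by a domain wall across which $\phi$ sweeps geodesic length at least $\pi-a$, whereas $\nu$ needs to sweep only at most $2a$; since $2a\leq 2\pi/5<\pi-a$, the Dirichlet price of such a wall strictly exceeds the mass it can save, so replacing $\phi$ on these regions by the corresponding configuration in $B$ strictly lowers $G_{R}$. Equivalently, the crude bound $G_{R}(\nu|\tau)\leq G_{R}(\psi|\tau)$ for the trivial in-box extension $\psi$ first forces a global minimizer $\widehat\nu$ to satisfy $\|\widehat\nu\|_{\infty,R}\leq\pi/2$, after which the antipodal ambiguity disappears and the argument of Step 2 applies verbatim, improving this to $\|\widehat\nu\|_{\infty,R}\leq a$, so $\widehat\nu\in B$ and $\widehat\nu=\nu$.
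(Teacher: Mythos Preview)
Your Steps 1 and 2 are correct, and Step 2 in fact delivers the sharp bound $\|\nu\|_{\infty,R}\le a=\|\tau\|_{\infty,\partial^o R}$ that the lemma asserts. The gap is Step 3, and neither of your sketched arguments closes it. The domain-wall heuristic is not a proof: the Dirichlet cost of a wall can be spread over arbitrarily many edges and need not dominate the mass savings without a quantitative isoperimetric input you have not supplied. The second sketch is simply an assertion: there is no reason why $G_R(\widehat\nu)\le G_R(\psi)$ for some trivial extension $\psi$ should force $\|\widehat\nu\|_\infty\le\pi/2$; a configuration with a large interior island near $\pi$ can have small $G_R$ if the transition is gradual and the masses $m_x$ are small there.

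The device you are missing is a one-line reflection, which is how the paper handles the global step. Let $\widehat\nu$ be any global minimizer of $G_R$ and set $A=\{x\in R:\cos\widehat\nu_x<0\}$. Replace $\widehat\nu_x$ by $\pi-\widehat\nu_x$ on $A$ and call the result $\widehat\nu'$. The mass terms are unchanged since $\sin^2(\pi-\widehat\nu_x)=\sin^2\widehat\nu_x$. For an edge $\langle x,y\rangle$ with $x\in A$, $y\notin A$ (including boundary edges, where $\cos\tau_y\ge\cos(\pi/5)>0$), a direct computation gives
\[
2\sin^2\!\Big(\tfrac{(\pi-\widehat\nu_x)-\widehat\nu_y}{2}\Big)-2\sin^2\!\Big(\tfrac{\widehat\nu_x-\widehat\nu_y}{2}\Big)
=\cos(\widehat\nu_x+\widehat\nu_y)+\cos(\widehat\nu_x-\widehat\nu_y)
=2\cos\widehat\nu_x\cos\widehat\nu_y\le 0,
\]
while edges with both endpoints in $A$ or both in $A^c$ are unchanged. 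Since $R\cup\partial^o R$ is connected and $\partial^o R\subset A^c$, if $A\neq\varnothing$ there is at least one edge from $A$ to $\partial^o R$, on which the inequality is strict; this contradicts minimality. Hence $\|\widehat\nu\|_{\infty,R}\le\pi/2$.

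Once you have this, your own Step 2 applies to the \emph{global} minimizer rather than the box minimizer: the first-order conditions now hold with equality, and for neighbours $y\notin A$ one has $M-\widehat\nu_y\in(0,\pi]$ with $\sin(M-\widehat\nu_y)\ge 0$; since at least one edge leaving the level set of $M$ reaches $\partial^o R$ (where $\widehat\nu_y=\tau_y\in[-a,a]$, so $M-\widehat\nu_y\in(0,\pi)$ strictly), the sum is strictly positive and you get $\|\widehat\nu\|_{\infty,R}\le a\le\pi/5$. Then $\widehat\nu\in B$ and your Step 1 gives $\widehat\nu=\nu$, finishing uniqueness. This reflection is exactly the paper's first move; the paper then uses a monotone flow in place of your discrete maximum principle, but the two arguments are equivalent in spirit.
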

\begin{proof}
Let $(\nu_x)_{x\in R}$ maximize $-\mathcal{K}_{R}$, then it is necessarily the case that $\nu_x \in [-\frac{\pi}{2}, \frac{\pi}{2}]$ since $-\mathcal{K}_{R}(\cdot|\tau)$ is reduced by reflecting spins into the right halfplane of $\mathbb R^2$. 

Now let 
\[
X(\theta) : = \begin{cases} - \left(\theta - \frac{\pi}{5} \right) & \text{ if } \theta > \frac{\pi}{5} \\
- \left(\theta + \frac{\pi}{5} \right) & \text{ if } \theta <- \frac{\pi}{5} \\
0 & \text{ otherwise }
\end{cases}
\]
and consider the flow of angles $\partial_t \theta_x(t) = X(\theta_x(t))$. Take any initial configuration $\theta_x(0) \in [-\frac{\pi}{2}, \frac{\pi}{2}]$, then $-\mathcal{K}_R(\theta(t)|\tau)$ will be strictly decreasing unless the condition $|\theta_x(t)| \leq  \frac{\pi}{5}, \forall x\in R$ is satisfied.

This implies $|\nu_x|\leq \pi/5$ throughout $R$. Now note that $-\mathcal{K}$ is convex if $\nu_x \in [-\frac{\pi}{5}, \frac{\pi}{5}]$ for all $x\in R$.  Hence there is a unique maximizer subject to the given boundary condition.
\end{proof}

\subsubsection{Ground states of auxiliary elliptic PDE}\label{S:Groundstates}
   We analyze the ground states of 
\[
- \mathcal{K}_{R}(\theta|\tau) = \sum_{\substack{e=\langle x,y \rangle, \\ e\cap R\neq \varnothing }} (\cos(\theta_x-\theta_y)-1) + \frac{1}{4}\sum_{x\in R} m_x \cos^2(\theta_x),
\]
where $m_x=\sum_{y \sim x}(g_y-g_x)^2$.
We exploit the fact that stationary points of $\mathcal{K}_{R}(\cdot|\tau)$ are also solutions to discrete elliptic PDE's with random mass, where the elliptic operator $-L_C$ defining the PDE  is given by 
\begin{equation*}
-L_C\cdot f=\sum_{y\sim x} C_{xy} [f_x-f_y]
+V\cdot f_x=V_x f_x
\end{equation*}
where
\[
C_{xy}=\frac{\sin( \nu_x-\nu_y)}{\nu_x-\nu_y},\quad V_x=\frac {\sin(\nu_x)\cos(\nu_x) m_x}{2 \nu_x}.
\]
As a result of Lemma \ref{lem:unique}, $C_{xy}$ are uniformly elliptic and  $V_x\geq c m_x$ for some universal constant $c>0$.
Let us recall the event the event $\mathcal{A}_Q $:
\begin{multline}
\mathcal{A}_{Q} : = \biggl \{\omega: \forall x\in Q\text{ s.t. }\textrm{dist}(x, \partial^o B) \geq \eps^{-3/4}  \text{ and for }   r\in [\eps^{-1/2}, \eps^{-3/4}],\\ \; \frac{1}{r^2} \sum_{\| y-x\|_2 \leq r} m_{y}(\omega) \geq A \eps^2 |\log(\eps)|\biggr \}.\nonumber
\end{multline}

\begin{lemma}[Bulk behaviour]\label{lem:bulk}
Let  $Q\in \mathcal Q^*_{L/16}$ satisfy the conditions  of Definition \ref{def:nice}. Given $\phi$ defined on $Q\cup \partial^o Q$, let $R$ denote the maximal subset of $Q\cup \partial^o Q$ so that $|\phi_x|\leq \pi/5$ for $x \in \partial^o R$ (assuming such a set exists). For $c_0>0$ let 
$$\mathfrak T_R=\{x\in \partial^o R: |\phi_x|\geq c_0|\log(\eps)|^{-1/2}\}.$$ 
Let $\nu$ maximize $-\mathcal{K}_{R}(\nu|\phi)$, $-L_C$ denote the associated operator on $R$ and let $\psi$ solve the equation $-L_C\cdot \psi=0$ in R. Then $\psi=\nu$ and there is $\delta_0$ so that if  $\delta\in (0, \delta_0)$ then for all $\eps$ small enough and for all $x\in R$ satisfying $\textrm{dist}(x, \mathfrak T_R) \geq \eps^{-1}|\log (\eps)|^{-1/2+\delta}$,
\beq
\label{E:bound}
|\nu_x| \leq 2c_0 |\log (\eps)|^{-1/2}.
\eeq
If  $x\in R$ in addition satisfies $\textrm{dist}(x, \partial^o R) \geq \eps^{-1}|\log (\eps)|^{-1/2+\delta}$,
\begin{equation}
\label{E:bulk}
|\nu_x| \lesssim e^{-c|\log (\eps)|^{\delta}}.
\end{equation}
\end{lemma}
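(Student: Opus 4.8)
The plan is to reduce both assertions to a single estimate on the survival probability of a killed random walk, and then to prove that estimate by a coarse-grained excursion argument at the scale $\rho_0\asymp\eps^{-3/4}$ on which $\mathcal A_Q$ gives information. First I would write the Euler--Lagrange equation of $-\mathcal K_R(\cdot|\phi)$: differentiating in $\theta_x$ and substituting $\sin(\nu_x-\nu_y)=C_{xy}(\nu_x-\nu_y)$ and $\sin(\nu_x)\cos(\nu_x)=\tfrac{2V_x}{m_x}\nu_x$ gives precisely $-L_C\cdot\nu=0$ in $R$ with $\nu=\phi$ on $\partial^oR$. By Lemma \ref{lem:unique}, $\|\nu\|_{\infty,R}\le\pi/5$, so $C_{xy}=\sin(\nu_x-\nu_y)/(\nu_x-\nu_y)\in[c_*,1]$ is uniformly elliptic and $V_x\ge c\,m_x\ge 0$; hence the homogeneous Dirichlet problem for $-L_C$ on the connected set $R\cup\partial^oR$ has only the zero solution (test against $u$ and use $C_{xy}>0$, $V\ge 0$), so $\psi=\nu$. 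This yields the Feynman--Kac representation $\nu_x=\sum_{y\in\partial^oR}\pi_x(y)\phi_y$, where $\pi_x(y)$ is the subprobability weight that the conductance walk with jump probabilities $C_{xy}/(C_x+V_x)$ (with $C_x:=\sum_{y\sim x}C_{xy}$), killed at each step with probability $p_x=V_x/(C_x+V_x)$, first reaches $\partial^oR$ at $y$ without being killed; here $c''m_x\le p_x\le C\eps^2|\log\eps|^2$, the last bound by (C3). Splitting $\partial^oR=\mathfrak T_R\cup(\partial^oR\setminus\mathfrak T_R)$, using $|\phi|\le\pi/5$ on $\partial^oR$ and $\sum_y\pi_x(y)\le 1$, the first assertion follows once $\pi_x(\mathfrak T_R)\lesssim|\log\eps|^{-1/2}$ and the second once $\sum_y\pi_x(y)\lesssim e^{-c|\log\eps|^{\delta}}$; since $\mathfrak T_R$ and $\partial^oR$ both lie at distance $\ge D:=\eps^{-1}|\log\eps|^{-1/2+\delta}$ from $x$, both reduce to: with probability $\lesssim e^{-c|\log\eps|^{\delta}}$ the killed walk started at $x$ survives until the first time $T_D$ that its displacement from $x$ exceeds $D$.

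To prove this, fix an interior base point $z$ and estimate the killing $Z=\sum_{s<\tau}p_{X_s}$ accumulated over one excursion out of $B(z,\rho_0)$ (exit time $\tau$). Its mean is $\sum_yG_{B(z,\rho_0)}(z,y)p_y\gtrsim\sum_{|y-z|\le\rho_0/2}m_y$, because the uniformly elliptic Green's function satisfies $G_{B(z,\rho_0)}(z,y)\gtrsim 1$ for such $y$; by $\mathcal A_{Q'}$ for a half-shifted box $Q'\in\mathcal Q^*_{L/16}$ with $z$ deep in its interior this is $\gtrsim A\eps^2\rho_0^2|\log\eps|=:\mu_0\asymp\eps^{1/2}|\log\eps|$. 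A second-moment bound $\E_z[Z^2]\lesssim\mu_1\E_z[Z]$, with $\mu_1\lesssim\|p\|_\infty\rho_0^2\lesssim\eps^{1/2}|\log\eps|^2\to 0$ (split $Z^2$ into two time orderings and bound the inner remaining-killing sum by $\sup_w\E_w[\sum_{t<\tau_{B(w,2\rho_0)}}p_{X_t}]$ via the Markov property), gives $\E_z[e^{-Z}]\le 1-\tfrac12\E_z[Z]\le e^{-\mu_0/2}$ for $\eps$ small. Setting $S_0=0$, $S_{k+1}=\inf\{s>S_k:|X_s-X_{S_k}|>\rho_0\}$, the strong Markov property and $\prod_s(1-p_{X_s})\le e^{-\sum_sp_{X_s}}$ give $\P_x(\text{not killed before }S_{N'})\le\E_x[e^{-\sum_{s<S_{N'}}p_{X_s}}]\le e^{-N'\mu_0/2}$ for every $N'$, provided the base points $X_{S_0},\dots,X_{S_{N'-1}}$ remain in the region where $\mathcal A_Q$ is in force; on the event $T_D>S_{N'}$ these lie in $B(x,D)$, which is well inside $Q$ since $D\ll L/16$ (using $\delta<s$).

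Finally, $\P_x(\text{survive until }T_D)\le e^{-N'\mu_0/2}+\P_x(T_D\le S_{N'})$, and $\P_x(T_D\le S_{N'})\le\P_x(\tau_{B(x,D)}\le S_{N'})$ is controlled by combining $\E[S_{N'}]\lesssim N'\rho_0^2$ with a sub-exponential concentration $\P(S_{N'}>2\E[S_{N'}])\le e^{-cN'}$ and the standard exit-time lower bound $\P_x(\tau_{B(x,D)}\le\theta D^2)\lesssim e^{-c/\theta}$ for uniformly elliptic walks, giving $\lesssim e^{-cD^2/(N'\rho_0^2)}+e^{-cN'}$. Choosing $N'\asymp\eps^{-1/2}|\log\eps|^{-1+\delta}$ equalizes the two main exponents, $N'\mu_0\asymp|\log\eps|^{\delta}$ and $D^2/(N'\rho_0^2)\asymp|\log\eps|^{\delta}$, yielding $\P_x(\text{survive until }T_D)\lesssim e^{-c|\log\eps|^{\delta}}$ for $\eps$ small and $\delta$ below a threshold $\delta_0<s$. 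Substituting into the two reductions above proves \eqref{E:bound} (for $\eps$ small, $Ce^{-c|\log\eps|^{\delta}}\le c_0|\log\eps|^{-1/2}$, so $|\nu_x|\le 2c_0|\log\eps|^{-1/2}$) and \eqref{E:bulk}.

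The step I expect to be the main obstacle is the excursion argument of the second and third paragraphs: $\mathcal A_Q$ only controls $m$ at scales up to $\rho_0\asymp\eps^{-3/4}$, far below the mesoscopic scale $D\asymp\eps^{-1}|\log\eps|^{-1/2+\delta}$ over which confinement is needed, so the diffusivity of the walk — it must make $\gtrsim(D/\rho_0)^2$ excursions to displace by $D$ — has to be quantified and married to the individually tiny per-excursion killing without losing the gain; the uniformly elliptic Green's function and exit-time estimates, while standard, also need to be invoked carefully. A secondary point is the bookkeeping keeping all excursion base points in the interior region; this is automatic when $x$ is at distance $\ge 2D$ from $\partial^oQ$, and in the complementary case the second assertion is vacuous while the first follows by the same argument run for the walk killed on exiting $Q$.
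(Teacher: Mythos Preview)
Your argument is correct, and it reaches the same conclusion as the paper's proof, but by a genuinely different route.

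The paper also uses the Feynman--Kac representation $\nu_x=\E_x[e^{-\int_0^{T_R}V_{X_t}\,dt}\phi_{X_{T_R}}]$, but works in \emph{continuous time} and discretises \emph{time} rather than space: it partitions $[0,l^2]$ into intervals of length $\asymp l^{2a}$ with $a\in(1/2,3/4)$, writes $\int_0^{T_R}V_{X_t}\,dt$ as a sum of conditional means plus a bounded-increment martingale, applies Azuma to the martingale part, and then uses Cauchy--Schwarz to separate the two factors. The conditional means are lower bounded using heat-kernel estimates and $\mathcal A_Q$, which reduces the problem to a lower bound on the occupation time $|\{t\le T_R:X_t\in R_2\}|$ before exit. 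Your approach instead works in discrete time, decomposes the trajectory into \emph{spatial} excursions of radius $\rho_0\asymp\eps^{-3/4}$ (the top scale on which $\mathcal A_Q$ gives information), and replaces the Azuma/Cauchy--Schwarz step by an elementary second-moment bound on the per-excursion killing, $\E_z[e^{-Z}]\le e^{-\mu_0/2}$; the diffusive count $(D/\rho_0)^2$ of excursions needed to displace by $D$ then plays the role of the occupation-time lower bound. The numerics match: your choice $N'\asymp\eps^{-1/2}|\log\eps|^{-1+\delta}$ equalises $N'\mu_0$ and $D^2/(N'\rho_0^2)$ at $|\log\eps|^{\delta}$, exactly the exponent the paper obtains.

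What each approach buys: your argument is more elementary---no continuous-time machinery, no Azuma---and the excursion picture makes transparent why the scale $\eps^{-3/4}$ enters. The paper's time-slicing is a bit slicker once set up, and its treatment of the boundary (the buffer regions $R_1,R_2$ and the bound on jumps from $R_2$ to $R_1^c$ within one time step) is more explicit than your last paragraph; you should flesh out that step, in particular the observation that cleanness of $Q$ (Definition~\ref{def:nice}, (C1) for all $Q'\in\mathcal Q^*_{L/16}$ intersecting $Q$) guarantees that every base point $X_{S_k}\in R$ lies deep enough in \emph{some} half-shifted $Q'$ for $\mathcal A_{Q'}$ to apply, irrespective of its distance to $\partial^oQ$.
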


\begin{proof}
Let us abbreviate $l:=L/16$.  Recall the conductances $(C_e)_{e\in E(R)}$ defined by the optimizer $\nu$.   The main property we will use is that they are uniformly bounded from above and from below on $R$ as follows from Lemma \ref{lem:unique}. Consider the continuous time walk $(X_t)_{t\geq 0}$ starting at $x\in \mathbb{Z}^2$ with  natural filtration $(\mathcal{F}_t)_{t\geq 0}$. We will use the notation $\mathbb{E}_x (\cdot)$ resp. $\mathbb{Q}_x(\cdot)$ for the expected value resp. probability with respect to the random walk $(X_t)_{t\geq 0}$. 

Let $T_{R}$ denote the exit time from $R$ and $\psi$  the solution of $-L_C \cdot \psi=0$ in $R$.  
To bound $\psi_x$ we will use the Feynman-Ka\'c representation w.r.t. $(X_t)_{t\geq 0}$:
\begin{equation}
\label{eq_m}
\psi_x = \mathbb{E}_x \left ( e^{-\int_0^{T_{R}} V_{X_t} dt} \phi_{X_{T_{R}}}\right ).
\end{equation}
Given $a\in (\frac 12, \frac 34)$, let us consider a sequence of times
\[
0=s_0< s_1 < s_2 < ...< s_{N+1} = l^{2}
\]
such that, except for the final time interval,  $s_{i+1}-s_i= 2^k$ where $k=\lfloor 2a \log_2( l )  \rfloor$. Letting $t_i:=s_i\wedge T_R$,
we define the martingale increment $\mathcal{I}_i$ by 
\begin{equation*}
\mathcal{I}_i = \int_{t_i}^{t_{i+1}} V_{X_t} dt - \mathbb{E}_x \left (\int_{t_i}^{t_{i+1}} V_{X_t} dt | \mathcal{F}_{t_i}\right ).
\end{equation*}
Using positivity and the strong Markov property the exponent in \eqref{eq_m} is bounded  by
\begin{equation*}\label{eq_lb1}
\begin{split}
\int_{0}^{T_R} V_{X_t} dt & \geq \sum_{i=0}^{N} \mathcal{I}_i +\sum_{i=0}^N \mathbb{E}_{X_{t_i}} \left (\int_{0}^{t_{i+1}-t_i} V_{X_t} dt \right ).
\end{split}
\end{equation*}
The Cauchy-Schwartz inequality implies
\begin{equation}\label{eq_2bd}
\mathbb{E}_x  \left ( e^{- \int_0^{T_{R}} V_{X_t} dt}\phi_{X_{T_{R}}}\right )^2 \leq   \mathbb{E}_x \left ( e^{- 2 \sum_{i=0}^{N} \mathcal{I}_i} \right ) \mathbb{E}_x \left ( e^{-2  \sum_{i=0}^N \mathbb{E}_{X_{t_i}} \left (\int_{0}^{t_{i+1}-t_i} V_{X_t} dt \right )} \phi_{X_{T_{R}}}^2\right ).
\end{equation}

To bound the first term on the RHS of \eqref{eq_2bd} we use Azuma's inequality.
Since $Q$ is clean we have
\[
\|\mathcal{I}_i \|_{\infty} \leq \eps^2 |\log(\eps)|^2(s_{i+1}-s_i).
\]
Since $\eps^2 |\log (\eps)|^2 l^{2a}\leq 1,$
Azuma's inequality gives
\begin{equation}\label{1st}
\begin{split}
\mathbb{E}_x \left ( e^{- 2 \sum_{i=1}^{N+1} \mathcal{I}_i} \right ) &\leq e^{c \, (N+1)[\eps^2 |\log(\eps)|^2(s_{i+1}-s_i)]^2} \\
& \leq C,
\end{split}
\end{equation}
where the second inequality comes from the fact that $(N+1)l^{2a}\leq 2 l^2$ and $l^{2(1+a)}\eps^4 |\log(\eps)|^4\leq 1$.

Let $p_t(x,y)$ denote the transition kernel for the conductance model stopped upon hitting the boundary of $R$. 
Standard estimates for elliptic random walks give us that for $z$ such that $B_{l^{a}}(z)\subseteq R$, $t\in \left(\frac{[s_{i+1}-s_i]}{2}, [s_{i+1}-s_i] \right)$ and $l$ sufficiently large, 
\beq
\label{E:hk}
\sum_{y} p_t(z, y)m_{y}\geq C l^{-2a}\sum_{|y-z|< l^{a}} m_{ y}
\eeq
for some constant $C>0$ depending on the ellipticity contrast only.
Let
\[
I(z):=  \mathbb{E}_z \left ( \int_0^{s_{i+1}-s_i} m_{X_t} dt \cdot \1_{\{ s_{i+1}-s_i < T_{R}\}} \right ).
\]
and recall that the event $\mathcal{A}_Q$ occurs.
Letting $$R_1=\{z\in R: \textrm{dist}(z, \partial ^o R)> \eps^{-7/8}\}$$ then for $z\in R_1$ the heat kernel estimates above imply
\begin{equation*}
\begin{split}
I(z)&  \geq c\eps^2|\log (\eps)| (s_{i+1}-s_i)-\eps^2|\log (\eps)|^2 (s_{i+1}-s_i)\mathbb Q_z\left ({s_{i+1}-s_i \geq T_{R}} \right ).
\end{split}
\end{equation*}
Using \eqref{E:hk} again, there is $\delta'>0$ so that
\[
\mathbb Q_z\left ({s_{i+1}-s_i \geq T_{R}} \right )\leq Ce^{-c\eps^{-\delta'}}.
\]
Thus for some slightly smaller constant $c'>0$
 \[
 \mathbb{E}_{X_{t_i}} \left (\int_{0}^{t_{i+1}-t_i} V_{X_t} dt \right )\geq c I(X_{t_i})\geq c'\eps^2|\log (\eps)| (s_{i+1}-s_i) \1_{R_1}(X_{t_i}).
 \]
 Now let  $R_2= \{z\in R:  \textrm{dist}(z, \partial ^o R)> 2 \eps^{-7/8}\}$.
Taking $\delta'$ smaller if necessary
\[
\mathbb Q_x(\exists s, t\leq T_R: |s-t|\leq l^{a},\: X_t\in R_2, X_s\in R_1^{c})\leq C e^{-c\eps^{-\delta'}}.
\]
Therefore
\[
 \mathbb{E}_x \left ( e^{-2 \sum_{i=0}^N \mathbb{E}_{X_{t_i}} \left (\int_{0}^{t_{i+1}-t_i} V_{X_t} dt \right )}\phi_{X_{T_{R}}}^2\right )\lesssim  \mathbb{E}_x \left ( e^{-c\eps^2|\log (\eps)| |\{t\leq T_R: X_t\in R_2\}|} \phi_{X_{T_{R}}}^2\right)+ C e^{-c\eps^{-\delta'}}.
\]
Finally, if $\textrm{dist}(x, \partial^o R) \geq \eps^{-1}|\log (\eps)|^{-1/2+\delta}$ then it follows from ellipticity that
\[
\mathbb Q_x(|\{t\leq T_R: X_t\in R_2\}|< \eps^{-2}|\log (\eps)|^{-1+3\delta/2})\lesssim e^{-c|\log (\eps)|^{\delta}}
\]
and so 
\beq
\label{E:CS2}
 \mathbb{E}_x \left ( e^{-2 \sum_{i=0}^N \mathbb{E}_{X_{t_i}} \left (\int_{0}^{t_{i+1}-t_i} V_{X_t} dt \right )}\phi_{X_{T_{R}}}^2\right )\lesssim e^{-c|\log (\eps)|^{\delta}}.
\eeq

Combining \eqref{eq_2bd}, \eqref{1st}, and \eqref{E:CS2} justifies \eqref{E:bulk}.

To justify \eqref{E:bound}, suppose $\textrm{dist}(x, \mathfrak T_R) \geq \eps^{-1}|\log (\eps)|^{-1/2+\delta}$ and let $$T_0=\inf\{ t: \textrm{dist}(x, \mathfrak T_R)\leq \eps^{-3/4}\}.$$
Then
\begin{multline}
 \mathbb{E}_x \left ( e^{-c\eps^2|\log (\eps)| |\{t\leq T_R: X_t\in R_2\}|} \phi_{X_{T_{R}}}^2\right)\leq \\
 c_0^2 |\log (\eps)|^{-1}  + C \mathbb{E}_x \left ( e^{-c\eps^2|\log (\eps)| |\{t\leq T_R: X_t\in R_2\}|} \1_{\{T_0<T_R\}}\right).\nonumber
\end{multline}
 
For any $\delta>0$ small enough, and for all $\eps$ sufficiently small depending on $\delta$,
\[
\mathbb Q_x( |\{t\leq T_R: X_t\in R_2\}|< l^{-2} |\log (l) |^{\delta/2-2s}; T_0< T_R)\lesssim |\log (\eps)|^{-2}.
\]
Since $l=L/16$ and $L=\eps^{-1} |\log (\eps)|^{-1/2+s}$ it follows from this estimate that 
\[
\mathbb{E}_x \left ( e^{-c\eps^2|\log \eps| |\{t\leq T_R: X_t\in R_2\}|} 1_{\{T_0<T_R\}}\right) \lesssim |\log (\eps)|^{-2} +e^{-c|\log (\eps)|^{\delta/2}}
\]
and the claimed estimate on $\nu_x$, \eqref{E:bound}, follows.
\end{proof}

{\begin{proof}[Proof of Lemma \ref{lem:mod3}]
Recall the notation introduced just prior to the statement of Lemma \ref{lem:mod3}.  We proceed to justify the lemma by induction. 
If $i=1$, we set $\bar{\sigma}^{(0)}={\sigma}^{(2)}$.  Applying Lemma \ref{lem:cov} with respect to $R_1$ and $\bar{\sigma}^{(0)}$, optimizing $-\mathcal K_{R_1}$, and then inverting the change of variables, we produce $\bar{\sigma}^{(1)}$ so that $\bar{\sigma}^{(1)}_x=\bar{\sigma}^{(0)}_x$ on $B_1^c$, 
$\mathcal E_{Q_1}(\bar{\sigma}^{(1)}|\bar{\sigma}^{(0)})\leq 2[\mathcal E_{Q_1}(\bar{\sigma}^{(0)}|\bar{\sigma}^{(0)})+\mathcal E_{Q_1}(g^{\lambda, D}_{Q_1}|0)]$,  
\begin{equation*}
 -\mathcal{H}_{\Lambda_N}(\bar{\sigma}^{(1)}|e_1) \geq - \mathcal{H}_{\Lambda_N}(\bar{\sigma}^{(0)}|e_1)-  C_0 \eps^2 |\log(\eps)|^{5/8}|Q_1|
\end{equation*}
and the estimates of Lemma \ref{lem:bulk} are valid.

Passing to the inductive step, suppose we constructed $\bar{\sigma}^{(i)}$ satisfying the conditions of the Lemma and consider $Q_{i+1}$.  If $Q_{i+1}$ is disjoint from $\cup_{j\leq i} Q_j$ then the construction proceeds as  in the base case. Otherwise it may happen that for some $x \in Q_{i+1}$,  $\bar{\sigma}^{(i)}_x\neq \bar{\sigma}^{(0)}_x$.  

Using Lemma \ref{lem:unique},
no matter how $x\in Q_{i+1}$ was previously updated, it remains true that $$\bar{\sigma}^{(i)}_x\cdot e_1\geq 9/10-12\max_{j\leq i} \|g^{\lambda, D}_{Q_j}\|_{\infty}\geq \cos(\pi/6)$$ provided $\eps$ is small enough.  This means that the condition $|\phi^{(i+1)}_x|\leq \frac{\pi}{5}$ remains enforced on $\partial^o R_{i+1}$ and we can apply Lemma \ref{lem:bulk} in $R_{i+1}$.  By construction we have
\[
- \mathcal{K}_{R_{i+1}}( \nu^{(i+1)}|\phi^{(i+1)}) \geq - \mathcal{K}_{R_{i+1}}( \phi^{(i+1)}|\phi^{(i+1)})
\]
which implies 
\[
\mathcal E_{Q_{i+1}}( \bar{\phi}^{(i+1)}|\phi^{(i+1)}) \leq  \mathcal E_{Q_{i+1}}( \phi^{(i+1)}|\phi^{(i+1)})+\mathcal E_{Q_{i+1}}(g^{\lambda, D}_{Q_{i+1}}|0) .
\]
From the form of the change of variables and Lemma \ref{lem:cov}, we thus have
 \begin{equation*}
 -\mathcal{H}_{\Lambda_N}(\bar \sigma^{(i+1)}|e_1) \geq - \mathcal{H}_{\Lambda_N}(\bar \sigma^{(i)}|e_1)-  C_0 \eps^2 |\log(\eps)|^{5/8}|Q_{i+1}|
\end{equation*}
and 
\[
\mathcal E_{Q_{i+1}}( \bar{\sigma}^{(i+1)}|\bar{\sigma}^{(i)}) \leq 2 [\mathcal E_{Q_{i}}( \bar{\sigma}^{(i)}| \bar{\sigma}^{(i)})+ \mathcal E_{Q_{i}}(  g^{\lambda, D}_{Q_{i}}|0)].
\]

Finally, let us denote $r(\eps)= \eps^{-1} |\log(\eps)|^{-1/2+\delta}$ for $\delta$ sufficiently small but fixed throughout.  We want to check that 
$$
 |e_2\cdot \sigma^{(i+1)}_x| \leq 32 |\log (\eps)|^{-1/2} \text{ if } \textrm{dist}(x,  \partial^o \mathcal R_{i+1})\geq 6 r(\eps).$$ If $\textrm{dist}(x,  \partial^o \mathcal R_{i+1})\geq 6 r(\eps)$, then by construction the set $\{j \leq i+1: x\in R_j, \textrm{dist}(x,  \partial^o R_{j})\geq 6 r(\eps) \}$ is nonempty. If $j_0$ is the largest such index, it follows that from Lemma \ref{lem:bulk}, Equation \eqref{E:bulk}, that $|\sigma^{(j_0)}_z\cdot e_2|\leq |\log (\eps)|^{-1/2}$ for $z\in B_{5r(\eps)}(x)$.  But then, if $n(j)$ denote the number of updates to some  $z\in B_{2r(\eps)}(x)$ in $[j_0, j)$,  due to the assumed bound $\|g^{\lambda, D}_{Q_j}\|_{\infty}\leq |\log (\eps)|^{-1/2}$ for all $j$,  Equation \eqref{E:bound} from Lemma \ref{lem:bulk} implies
$|\sigma^{(j)}_z\cdot e_2|\leq 2^{n(j)+1} |\log(\eps)|^{-1/2}$ for $z\in B_{2r(\eps)}(x)$.  Since the number of updates after $j_0$ to any such vertex is at most $4$, so the claim follows from.
\end{proof}

\section{Estimates on the randomness}\label{S:Rand}

\subsection{Estimates on the supremum of the random field}\label{S:RandSub}

In the following let us fix a sidelength $l=2^k$ for some $k$ and let $Q_{l} = \{1,..., l \}^2$.  In this section we obtain estimates on the fields $g^{\lambda, D}_{x,Q_{l}}$ and $g^{\lambda, N}_{x,Q_{l}}$. Recall that
\begin{equation}\label{eq:fields}
 g^{\lambda, D}_{x,Q_{l}} = \eps (-\Delta^D_{Q_{l}} + \lambda)^{-1} \alpha_x \text{ resp. }  g^{\lambda, N}_{x,Q_{l}} = \eps (-\Delta^N_{Q_{l}} + \lambda)^{-1} \alpha_x. 
\end{equation}
 Following the construction in Section 9.5 from \cite{Glimm}, equation 9.5.11 and Proposition 9.5.3, we have, with $\Lambda_{l}^*= \{\frac{\pi}{l+1}, ..., \pi(1-\frac{1}{l+1}) \}^2$ \begin{equation}\label{eq:gD}
\eps^{-1} g^{\lambda, D}_{x, Q_{l}} = \frac{1}{l}  \sum_{k \in \Lambda_{l}^*} \frac{1}{\zeta_k + \lambda} e^D_k(x) \widehat{\alpha}(k),
\end{equation}
where 
\begin{equation}\label{eq:evD}
e^D_k(x) = 2\prod_{i=1}^2 \sin(k_i x_i) , \, \, \, \, \, 
\zeta_k = 4 \sum_{i=1}^2 \sin^2\left( \frac{ k_i}{2} \right).
\end{equation}
Similarly with $\tilde{\Lambda}_l^*= \{0, ..., \pi(1-\frac{1}{l}) \}^2$ we can write 
\begin{equation}\label{eq:gN}
\eps^{-1} g^{\lambda, N}_{x, Q_{l}} = \frac{1}{l} \sum_{k \in \tilde{\Lambda}_l^*} \frac{1}{\zeta_k + \lambda} e^N_k(x) \widehat{\alpha}(k)
\end{equation}
where $e^N_k(x) = 2\prod_{i=1}^2 \cos(k_i x_i)$, see also \cite{Gott}, Equations (2.4) and (2.6).
Define the momentum space annuli for $s=0,..., \lceil \log_2( l+1) \rceil$ by
\[
A_s = \left \{ k\in \Lambda_l^*: \frac{\pi}{2^{1+s}} \leq \|k\|_2 \leq \frac{\pi}{2^s}\right \}.
\]
We want to control the field $g^{\lambda,D}_{x,Q_{l}}$ at different scales, therefore rewrite \eqref{eq:fields} as
\begin{equation*}\label{g_split}
\begin{split}
\eps^{-1} g^{\lambda, D}_{x , Q_{l}} & =  \sum_{s=0}^{\lceil \log_2(l+1) \rceil} \left ( \frac{1}{l} \sum_{k\in A_s} \frac{1}{\zeta_{k } + \lambda} e^D_{k}\left( x  \right) \widehat{\alpha}\left(k  \right) \right )   \\
& =  \sum_{s=0}^{\lceil \log_2(l+1) \rceil}  \overline{g}^{\lambda, D}_{x,A_s}.
\end{split}
\end{equation*}
Using $\tilde{\Lambda}_l^*$ in place of $\Lambda_l^*$ in the definition of $A_s$, a similar decomposition gives
\begin{equation*}\label{g_split_N}
\begin{split}
\eps^{-1} g^{\lambda, N}_{x , Q_{l}} & =  \sum_{s=0}^{\lceil \log_2(l+1) \rceil} \left (l^{-1} \sum_{k\in { A}_s} \frac{1}{\zeta_{k } + \lambda} e^N_{k}\left( x  \right) \widehat{\alpha}\left(k  \right) \right )   \\
& =  \sum_{s=0}^{\lceil \log_2(l+1) \rceil}  \overline{g}^{\lambda, N}_{x, {A}_s}.
\end{split}
\end{equation*}
where in ${A}_{\lceil \log_2(l+1) \rceil}$ we include $k=0$.
We  split the respective RHS's into parts with low frequency modes and high frequency modes. The cut-off will be chosen at some scale $S$ which we specify later.

\begin{lemma}\label{lem:var}
Let the $g^i$-field, for $i\in \{N, D\}$ be decomposed as
\begin{equation*}\label{eq:split}
  \sum_{s=0}^{\lceil \log_2(l+1) \rceil}  \overline{g}^{\lambda, i}_{x,A_s} 
  \end{equation*}
Then we have that
\begin{enumerate}
\item for each $s=0,...,  \lceil \log_2(l+1) \rceil$ the variance $\sigma^{2}_{A_s}$ is given by
\[
\sigma^{2}_{A_s}:=Var(\overline{g}^{\lambda, i}_{x,A_s})  \simeq \mathcal{O} \left ( \frac{2^{-2s}}{(\lambda +2^{-2s})^2}\right ).
\]

\item The variance $\sigma^{2}_{xy, s}$ between  field values at two points $x,y\in Q_l$
\[
 \sigma^{2}_{xy, s} \simeq  \mathcal{O} \left( \frac{2^{-4s}}{(\lambda + 2^{-2s})^2}\| x-y\|^2_2\right).\]

\end{enumerate}
\end{lemma}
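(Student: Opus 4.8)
The plan is to reduce both statements to one elementary deterministic spectral estimate; this lemma is only the second-moment warm-up for the supremum bounds later in the section, so there is no real obstacle beyond some bookkeeping near the finest scale $2^s\sim l$. Starting from the Fourier representations \eqref{eq:gD}, \eqref{eq:gN}, for $i\in\{D,N\}$ we have
\[
\overline{g}^{\lambda, i}_{x,A_s}=\frac{1}{l}\sum_{k\in A_s}\frac{e^i_k(x)}{\zeta_k+\lambda}\,\widehat{\alpha}(k).
\]
The first step is to note that the coefficients $\widehat{\alpha}(k)$, with $k$ ranging over $\Lambda_l^*$ (resp. $\tilde\Lambda_l^*$), form a jointly Gaussian, pairwise uncorrelated family with $\mathrm{Var}(\widehat{\alpha}(k))\asymp 1$ uniformly in $k$: the map $\alpha\mapsto(\widehat{\alpha}(k))_k$ is, up to a fixed normalisation, the orthogonal change of basis into the eigenbasis of $-\Delta^i_{Q_l}$, and $l^{-2}\sum_{y\in Q_l}(e^i_k(y))^2\asymp 1$ by the standard orthogonality relations for discrete sines and cosines (cf. \cite{Glimm,Gott}). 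Hence all cross terms vanish and, for every fixed $x$,
\[
\sigma^2_{A_s}\;\asymp\;\frac{1}{l^2}\sum_{k\in A_s}\frac{(e^i_k(x))^2}{(\zeta_k+\lambda)^2},
\qquad
\sigma^2_{xy,s}\;\asymp\;\frac{1}{l^2}\sum_{k\in A_s}\frac{(e^i_k(x)-e^i_k(y))^2}{(\zeta_k+\lambda)^2}.
\]

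Next I would control the two ingredients on the annulus $A_s$. Since $\|k\|_2\asymp 2^{-s}$ there, and $\tfrac{k_i^2}{\pi^2}\le\sin^2(k_i/2)\le\tfrac{k_i^2}{4}$ for $k_i\in[0,\pi]$, the eigenvalue satisfies $\zeta_k=4\sum_i\sin^2(k_i/2)\asymp\|k\|_2^2\asymp 2^{-2s}$, so $\zeta_k+\lambda\asymp\lambda+2^{-2s}$ uniformly over $A_s$ (for the Neumann annulus that contains $k=0$, the corresponding $2^{-2s}$ is of the order of the squared grid spacing, and $\zeta_0+\lambda=\lambda\asymp\lambda+2^{-2s}$ for that $s$). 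In the first sum I use the crude bound $(e^i_k(x))^2\le 4$; in the second, the Lipschitz estimate $|e^i_k(x)-e^i_k(y)|\lesssim\|k\|_\infty\,\|x-y\|_1\lesssim 2^{-s}\|x-y\|_2$, which follows by telescoping the product $e^i_k=2\prod_i(\sin\text{ or }\cos)(k_ix_i)$ together with $|\sin(t)-\sin(u)|,\,|\cos(t)-\cos(u)|\le|t-u|$, so that $(e^i_k(x)-e^i_k(y))^2\lesssim 2^{-2s}\|x-y\|_2^2$.

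Finally, $A_s$ is the set of points of the grid $\tfrac{\pi}{l+1}\Z^2$ (resp. $\tfrac{\pi}{l}\Z^2$) lying in a planar annulus of area $\asymp 2^{-2s}$, whence $|A_s|\lesssim 2^{-2s}l^2$. Substituting these three estimates into the displays above gives
\[
\sigma^2_{A_s}\lesssim\frac{|A_s|}{l^2(\lambda+2^{-2s})^2}\lesssim\frac{2^{-2s}}{(\lambda+2^{-2s})^2},
\qquad
\sigma^2_{xy,s}\lesssim\frac{2^{-2s}\|x-y\|_2^2\,|A_s|}{l^2(\lambda+2^{-2s})^2}\lesssim\frac{2^{-4s}}{(\lambda+2^{-2s})^2}\|x-y\|_2^2,
\]
which are (1) and (2). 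The only point that requires attention is the lattice count when $2^s$ is comparable to $l$, i.e. for the few outermost annuli, where the annulus is no wider than the grid spacing: there $|A_s|=\mathcal O(1)$ and likewise $2^{-2s}l^2=\mathcal O(1)$, so the stated order-of-magnitude bounds still hold. One should not expect them to be two-sided uniformly in $x$, since under Dirichlet (resp. Neumann) boundary conditions $e^i_k(x)$ degenerates near (resp. is concentrated near) $\partial Q_l$ at low frequencies; a matching lower bound, when needed downstream (e.g. for (C4)), is obtained either for $x$ at distance $\gtrsim 2^s$ from $\partial Q_l$, or, for global quantities, after summing over all $x$ using the exact identity $\sum_{x\in Q_l}(e^i_k(x))^2\asymp l^2$.
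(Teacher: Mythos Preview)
Your proof is correct and follows essentially the same approach as the paper's own proof: both use orthogonality of the $\widehat{\alpha}(k)$'s to reduce to a deterministic sum over $A_s$, the eigenvalue comparison $\zeta_k\asymp\|k\|_2^2\asymp 2^{-2s}$ on the annulus, and for part (2) a Lipschitz/mean-value bound $|e^i_k(x)-e^i_k(y)|\lesssim\|k\|\,\|x-y\|_2$. The only cosmetic difference is that the paper passes to a radial integral via Euler--Maclaurin to evaluate $l^{-2}\sum_{k\in A_s}(\zeta_k+\lambda)^{-2}$ and $l^{-2}\sum_{k\in A_s}\|k\|^2(\zeta_k+\lambda)^{-2}$, whereas you bound the summand uniformly on $A_s$ and use the lattice count $|A_s|\lesssim 2^{-2s}l^2$ directly; the two are equivalent here and yours is slightly more elementary.
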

\begin{proof}
These are simple computations using that the $g$-field is composed by a sum of independent Gaussian random variables. 
First of all note that the eigenvalues $
\zeta_k $ can be bounded from below by 
\[
\zeta_k\geq  \frac{4}{\pi^2}  \|k\|^2
\]
see also Equation 9.5.24 from \cite{Glimm}.
We can then estimate the variance $\sigma^{2}_{A_s}$ of $\overline{g}^{\lambda, i}_{x,A_s}$ for a fixed point $x$:
\begin{equation*} \label{sigma_As}
\begin{split}
Var(\overline{g}^{\lambda, i}_{x,A_s}) & =\frac{1}{ l^2 } \sum_{k\in A_s} \left(e^i_{k}\left ( x \right)\right)^2 \frac{1}{(\zeta_{k}+\lambda)^2} \lesssim \frac{1}{ l^2 } \sum_{k\in A_s} \frac{1}{(\| k \|^2  + \lambda)^2}  \\
& \sim \int_{2^{-(1+s)}}^{2^{-s}} \frac{r}{(r^2+\lambda)^2} dr = \mathcal{O} \left ( \frac{2^{-2s}}{(\lambda +2^{-2s})^2}\right )
 \end{split}
\end{equation*}
where in the second line we used the Euler-Maclaurin formula, Theorem 1 from \cite{Apo}.

To prove statement (2), observe that
for two points $x,y\in A_s$:
\[
\begin{split}
\sigma^{2}_{xy, s} = Var\left (\overline{g}^{\lambda, i}_{x, A_s} - \overline{g}^{\lambda, i}_{y, A_s}  \right ) 
& = \frac{1}{ l^2 } \sum_{k\in A_s} \frac{1}{(\zeta_{k } +\lambda )^2} \left ( e^i_{k } \left ( x\right ) -   e^i_{k}\left ( y\right )\right )^2 \\
& \lesssim \frac{1}{ l^2 } \sum_{k\in A_s} \frac{1}{(\zeta_{k } +\lambda )^2} \| \nabla e^i_{k}(\xi) \|^2 \|x-y\|^2_2
\end{split}
\]
for some $\xi$ on the line segment connecting $x$ to $y$.  Note that
\[
\| \nabla e^i_{k}(\xi) \|^2 \leq 4 \| k\|^2.
\]
Hence we can further bound the RHS of the previous estimate by
\[
Var\left (\overline{g}^{\lambda, i}_{x, A_s} - \overline{g}^{\lambda, i}_{y, A_s} \right ) \lesssim \frac{1}{ l^2 } \sum_{k\in A_s} \frac{\| k\|^2}{(\|k \|^2 +\lambda )^2}  \|x-y\|^2_2.
\]
Using Euler-Maclaurin again we can estimate analogously to before
\[
\begin{split}
  \sum_{k\in A_s} \frac{\| k\|^2}{(\|k \|^2  +\lambda )^2} &\lesssim   \int_{2^{-(1+s)}}^{2^{-s}} \frac{r^3}{(r^2 +\lambda)^2} dr = \mathcal{O} \left( \frac{ 2^{-4s}}{(\lambda + 2^{-2s})^2}\right).
\end{split}
\]
\end{proof}

From now on, we fix $\eps>0$ and let $z \in (0,1/2)$. We choose 
\begin{equation}\label{def:lg}
l\in [\eps^{-1} |\log (\eps)|^{-1/2-z}, \eps^{-1} |\log (\eps)|^{-1/2+z}].
\end{equation}
Recall that $\lambda=\eps^2 |\log (\eps)|^{1+\eta_{\lambda}}$, $\eta_{\lambda}\in (2s,1/8)$ and $s\in(0,1/32)$.
The next proposition will provide a probabilistic upper bound for the supremum of the field. We will split this probability into three terms and show how to bound each term. Let us define
\begin{equation}
\overline{\zeta}^2_2 :=\int_{[0,\pi]^2} \frac{1}{(\|k \|^2 +\lambda)^2} d^2 k
\end{equation}
and note that $\overline{\zeta}^2_2$ is of order $\mathcal{O}(\lambda^{-1})$.

\begin{proposition}\label{prop:ub_supg}
  Let  $M>0$. Then there exists  $c>0$ so that with $\lambda=\eps^2 |\log (\eps)|^{1+\eta_{\lambda}}$, with $\eta_{\lambda}\in (2s,1/8), s\in(0,1/32)$ and $l$ defined in \eqref{def:lg} and for all $\eps$ small enough
\begin{equation*}\label{prop:tail}
\begin{split}
\mathbb{P} \left (\| g^{\lambda, i}_{Q_{l}} \|_{\infty}  \geq M \eps \overline{\zeta}_2 \right ) & \lesssim |\log (\eps)|^7 e^{-c \frac{M^2}{\log \log(1/\eps)}}.
\end{split}
\end{equation*}
\end{proposition}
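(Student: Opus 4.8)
The plan is to control $\|g^{\lambda,i}_{Q_l}\|_\infty$ via a chaining/union-bound argument performed separately on the low- and high-frequency parts of the dyadic decomposition from Lemma~\ref{lem:var}. Fix a cutoff scale $S$ (to be chosen of order $\log_2(1/\lambda^{1/2}) \sim \log_2(\eps^{-1}|\log(\eps)|^{-(1+\eta_\lambda)/2})$, i.e. the scale at which the mass $\lambda$ begins to dominate $2^{-2s}$ in the denominators). Write
\[
\eps^{-1}g^{\lambda,i}_{x,Q_l} = \underbrace{\sum_{s\le S}\overline g^{\lambda,i}_{x,A_s}}_{=:g^{\mathrm{low}}_x} + \underbrace{\sum_{s>S}\overline g^{\lambda,i}_{x,A_s}}_{=:g^{\mathrm{high}}_x},
\]
and bound $\P(\|g^{\mathrm{low}}\|_\infty \ge \tfrac M2 \overline\zeta_2) + \P(\|g^{\mathrm{high}}\|_\infty \ge \tfrac M2 \overline\zeta_2)$. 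The point of splitting at $S$ is that for $s>S$ the variance $\sigma^2_{A_s}\sim 2^{-2s}/(\lambda+2^{-2s})^2 \sim 2^{-2s}/\lambda^2$ is geometrically decaying in $s$, so that summing a crude union bound over the at most $|A_s|\lesssim 4^{-s}l^2$ lattice sites in each block gives a convergent series; whereas for $s\le S$ the increments $\sigma^2_{xy,s}$ from Lemma~\ref{lem:var}(2) are Lipschitz-type and amenable to a Dudley/Fernique chaining estimate on the $2^{-s}l$-by-$2^{-s}l$ effective grid.

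The execution would run as follows. First, for the high-frequency part: for each $s>S$, $\overline g^{\lambda,i}_{x,A_s}$ is a centered Gaussian with variance $\lesssim 2^{-2s}\lambda^{-2}$, so a Gaussian tail bound plus a union bound over $x\in Q_l$ gives $\P(\sup_x |\overline g^{\lambda,i}_{x,A_s}| \ge t_s) \lesssim l^2 e^{-c t_s^2 \lambda^2 2^{2s}}$; choosing $t_s = c' \overline\zeta_2 2^{-(s-S)/2}$ (so that $\sum_{s>S} t_s \lesssim \overline\zeta_2$ and using $\overline\zeta_2^2 \sim \lambda^{-1}$, $\lambda^2 2^{2S}\sim \lambda$) makes each exponent $\gtrsim M^2 2^{(s-S)} \lambda^{-1}\cdot(\cdots)$; here the factor $l^2 \sim \eps^{-2}|\log(\eps)|^{-1\pm 2z}$ is beaten by $e^{-cM^2/\log\log(1/\eps)}$ only after one rescales $M$ appropriately — this bookkeeping is where the $\log\log$ and the polynomial-in-$|\log(\eps)|$ prefactor in the statement come from. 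Second, for the low-frequency part: apply Dudley's entropy bound. From Lemma~\ref{lem:var}(2), on the block $A_s$ the canonical metric satisfies $d(x,y)^2 \lesssim 2^{-4s}\lambda^{-2}\wedge(2^{-2s})^{-1})\|x-y\|_2^2$ — more precisely $\sigma^2_{xy,s}\lesssim (2^{-4s}/(\lambda+2^{-2s})^2)\|x-y\|^2$ — so the sub-Gaussian process $x\mapsto \overline g^{\lambda,i}_{x,A_s}$ on $Q_l$ has entropy integral $\lesssim \sigma_{A_s}\sqrt{\log(l\cdot \mathrm{diam})}$, i.e. $\lesssim \sigma_{A_s}\sqrt{\log(1/\eps)}$. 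Summing over the $O(\log(1/\eps))$ low blocks, $\E\|g^{\mathrm{low}}\|_\infty \lesssim \sqrt{\log(1/\eps)}\sum_{s\le S}\sigma_{A_s}$; the key arithmetic is that $\sum_{s\le S}\sigma_{A_s} \sim \sum_{s\le S} 2^{s}\lesssim 2^{S}\sim \lambda^{-1/2}\sim \overline\zeta_2\cdot(\text{const})$ — wait, one must check $\sum 2^s \cdot$ vs $\overline\zeta_2$: in fact $\sigma_{A_s}\sim 2^{-2s}/(\lambda+2^{-2s})^2$, which for $s\le S$ (where $2^{-2s}\ge\lambda$) is $\sim 2^{2s}$, so $\sum_{s\le S}\sigma_{A_s}\sim 2^{2S}\sim 1/\lambda\sim\overline\zeta_2^2$, and after a square root $\sum_s\sigma_{A_s}^{1/2}$-type bound is the relevant one — the correct normalization must be traced carefully. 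Having bounded the expectation by $C\overline\zeta_2\sqrt{\log\log(1/\eps)}$ (the extra $\sqrt{\log\log}$ arising because the chaining integral over the $O(\log(1/\eps))$ scales, each contributing a comparable amount after normalization, telescopes with a logarithmic-in-the-number-of-scales loss), the Gaussian concentration inequality for the supremum of a Gaussian process then upgrades this to the tail bound: $\P(\|g^{\mathrm{low}}\|_\infty \ge \E\|g^{\mathrm{low}}\|_\infty + u) \le e^{-u^2/(2\sup_x \sigma^2_x)}$ with $\sup_x\sigma_x^2 \lesssim \overline\zeta_2^2\log(1/\eps)$ or $\lesssim \overline\zeta_2^2\log\log(1/\eps)$ after the correct accounting, yielding the claimed $e^{-cM^2/\log\log(1/\eps)}$.

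The main obstacle is the precise tracking of the various logarithmic factors so that they assemble into exactly $|\log(\eps)|^7 e^{-cM^2/\log\log(1/\eps)}$: one must reconcile (i) the $\log(1/\eps)$ from the number of dyadic scales, (ii) the $\log(1/\eps)$ from the entropy/union bound over $l^2\sim\eps^{-2}$ lattice sites, (iii) the $\log\log(1/\eps)$ which should be the \emph{only} logarithm surviving in the exponent denominator, and (iv) the polynomial $|\log(\eps)|^7$ prefactor absorbing the rest. The natural way to get a $\log\log$ rather than a $\log$ in the denominator is to observe that the variances $\sigma^2_{A_s}$ for $s$ near $S$ are all comparable (of order $\lambda^{-1}\sim\overline\zeta_2^2$), so the relevant effective number of independent-ish scales contributing at the top level is $O(\log(1/\eps))$ but the worst single-scale fluctuation only needs $M^2$ divided by the \emph{logarithm of the number of scales}, i.e. $\log\log(1/\eps)$, after optimizing the allocation $M = \sum_s t_s$ with $t_s$ chosen proportional to $\sigma_{A_s}\sqrt{\log(\#\text{scales})}$ in the critical band. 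Getting this allocation sharp, and checking that the non-critical bands (both very low $s$ and $s>S$) contribute only lower-order corrections swallowed by the $|\log(\eps)|^7$, is the delicate part; everything else is a routine combination of Gaussian tail bounds, Dudley's inequality, and the Borell–TIS concentration inequality.
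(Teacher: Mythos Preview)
Your chaining-plus-concentration strategy differs from the paper's, which uses only elementary Gaussian tail bounds together with a spatial-regularity reduction. The idea can be made to work, but the treatment of the scales $s>S$ has a genuine gap. The problem is the phrase ``crude union bound over the at most $|A_s|\lesssim 4^{-s}l^2$ lattice sites.'' The supremum runs over all $l^2\sim\eps^{-2}$ vertices of $Q_l$, not over $|A_s|$ of them. A union bound over all of $Q_l$ fails for $s$ just past the mass scale: there $\sigma^2_{A_s}$ is still comparable to $\overline\zeta_2^2$, so the Gaussian exponent is only of order $M^2$ and cannot absorb the prefactor $l^2\sim\eps^{-2}$ unless $M^2\gtrsim|\log\eps|$, far worse than the target. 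If instead you intend one representative point per block of side $\sim 2^s$, you still owe a bound on the within-block oscillation of $\overline g^{\lambda,i}_{A_s}$, which you do not supply. The geometric decay of $\sigma^2_{A_s}$ in $s$ does not by itself fix this: it only helps for $s$ well past $S$, not at $s\approx S$ where the trouble sits.

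This missing step is exactly what the paper isolates. The paper places the cutoff at $S=\lfloor\log_2(l/|\log\eps|^2)\rfloor$, so that only $O(\log\log(1/\eps))$ scales lie above it; by pigeonhole one such scale must carry at least $M\overline\zeta_2/O(\log\log(1/\eps))$, and for that scale $Q_l$ is partitioned into $\lesssim|\log\eps|^6$ blocks of side $\sim 2^s/|\log\eps|$, with a union bound over block centres and the increment variance $\sigma^2_{xy,s}$ from Lemma~\ref{lem:var}(2) controlling the within-block remainder. For $s<S$ a direct union bound over all of $Q_l$ succeeds because there $\overline\zeta_2^2/\sigma^2_{A_s}\gtrsim|\log\eps|^{4-\eta_\lambda-2z}$. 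No chaining or Borell--TIS is invoked anywhere.

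As an aside, your approach can be rescued more cleanly by dropping the split altogether and applying Dudley plus Borell--TIS directly to the full field: summing Lemma~\ref{lem:var}(2) over all $s$ gives canonical metric $d(x,y)\lesssim\sqrt{|\log\eps|}\,\|x-y\|_2$, while $\sup_x\mathrm{Var}\lesssim\overline\zeta_2^2$ by Lemma~\ref{lem:var}(1); since $l\sqrt{|\log\eps|}/\overline\zeta_2$ is only a power of $|\log\eps|$, the Dudley integral is $\lesssim\overline\zeta_2\sqrt{\log\log(1/\eps)}$, and Borell--TIS then yields $\exp\bigl(-c'(M-C\sqrt{\log\log(1/\eps)})_+^2\bigr)$, which implies the stated bound. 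Your write-up also contains arithmetic slips (e.g.\ conflating $\sigma_{A_s}$ with $\sigma^2_{A_s}$ when summing over $s\leq S$), but these are secondary to the structural gap above.
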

As a corollary we obtain:
\begin{corollary}\label{cor:bound_L2}
Fix $z\in (0,1/2)$, $\delta \in (1,2)$, $M>0$ and let $S = \lfloor \log_2 \left( l /(\log_2(\eps))^{(1+\delta)/2} \right) \rfloor$. Then there exists $c>0$ such that 
for all $\eps>0$ small enough \begin{equation*}
\begin{split}
\mathbb{P} \left ( \|g^{\lambda, i}_{Q_{l}} \|_{2}  \geq \frac{M \eps l}{\sqrt{\lambda}} \right ) & \leq |\log (\eps)|^7 e^{-c \frac{M^2}{\log \log(1/\eps)^2}}.
\end{split}
\end{equation*}
\end{corollary}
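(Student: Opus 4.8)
The plan is to obtain the $\ell^2$ bound as an immediate corollary of the $\ell^\infty$ bound already established in Proposition \ref{prop:ub_supg}, so that no new probabilistic input is required. The starting point is the deterministic inequality $\|g^{\lambda, i}_{Q_l}\|_2 \le l\,\|g^{\lambda, i}_{Q_l}\|_{\infty}$, which holds simply because $Q_l$ has $l^2$ sites. Consequently
\[
\left\{ \|g^{\lambda, i}_{Q_l}\|_2 \ge \tfrac{M\eps l}{\sqrt\lambda}\right\} \subseteq \left\{ \|g^{\lambda, i}_{Q_l}\|_{\infty} \ge \tfrac{M\eps}{\sqrt\lambda}\right\},
\]
and the whole matter reduces to expressing the threshold $\eps/\sqrt\lambda$ in terms of the natural scale $\eps\,\overline{\zeta}_2$ that appears in Proposition \ref{prop:ub_supg}.

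To this end I would first record the elementary two-sided estimate $c_1\lambda^{-1} \le \overline{\zeta}_2^2 \le c_2\lambda^{-1}$. The upper bound is the $\mathcal{O}(\lambda^{-1})$ statement already noted just after the definition of $\overline{\zeta}_2$; the lower bound follows either by restricting the defining integral to the quarter-disc $\{\|k\|_2\le\sqrt\lambda\}\cap[0,\pi]^2$, on which $(\|k\|_2^2+\lambda)^{-2}\ge(2\lambda)^{-2}$, or equivalently by summing the annulus variances $\sigma^2_{A_s}$ of Lemma \ref{lem:var}(1) over the band $2^{-2s}\asymp\lambda$. In particular $\eps/\sqrt\lambda \ge c_2^{-1/2}\,\eps\,\overline{\zeta}_2$, so the right-hand event above is contained in $\{\|g^{\lambda, i}_{Q_l}\|_{\infty}\ge c_2^{-1/2} M\,\eps\,\overline{\zeta}_2\}$. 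Applying Proposition \ref{prop:ub_supg} with $M$ replaced by $c_2^{-1/2}M$ then gives
\[
\mathbb{P}\!\left(\|g^{\lambda, i}_{Q_l}\|_2 \ge \tfrac{M\eps l}{\sqrt\lambda}\right) \lesssim |\log(\eps)|^7 \, e^{-\,(c/c_2)\,M^2/\log\log(1/\eps)},
\]
which, for $\eps$ small enough that $\log\log(1/\eps)\ge 1$, is stronger than, and hence implies, the asserted bound with its $(\log\log(1/\eps))^2$ in the exponent. The weaker exponent in the statement is all that is needed downstream; one could also reach it by a band-by-band argument, splitting the Fourier modes at the scale $2^{-S}$ dictated by the choice of $S$ and $\delta$ and applying Proposition \ref{prop:ub_supg} (or a $\chi^2$-tail bound) to each band, but this refinement is not necessary here.

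The main obstacle is not a new idea but uniformity bookkeeping: Proposition \ref{prop:ub_supg} carries the genuine content — control of the maximum of the massive field simultaneously over all dyadic scales in the window \eqref{def:lg} — and the corollary merely adds the volume factor $l^2$ together with the identification $\overline{\zeta}_2\asymp\lambda^{-1/2}$. Since $\overline{\zeta}_2$ does not depend on $l$ at all and depends on $\lambda$ only through the fixed power law $\lambda=\eps^2|\log(\eps)|^{1+\eta_\lambda}$ (for which $\lambda\overline{\zeta}_2^{\,2}$ has a finite, strictly positive limit as $\lambda\to0$), the constants $c_1,c_2$ can be chosen uniformly over $l$ in \eqref{def:lg}, over $z\in(0,1/2)$, and over $\eta_\lambda\in(2s,1/8)$, so that the final constant $c$ depends on none of these parameters.
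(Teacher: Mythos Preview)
Your proof is correct and is precisely the intended derivation: the paper presents the statement as an immediate corollary of Proposition~\ref{prop:ub_supg} without giving a separate proof, and the route you take---the trivial bound $\|g\|_2\le l\,\|g\|_\infty$ together with the identification $\overline{\zeta}_2\asymp\lambda^{-1/2}$---is exactly what ``as a corollary we obtain'' signals. Your observation that the parameter $S$ in the statement plays no role in this argument, and that one in fact recovers the sharper exponent with a single power of $\log\log(1/\eps)$, is also correct.
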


\begin{proof}[Proof of Proposition \ref{prop:ub_supg}]
Let  $S = \lfloor \log_2 \left( l /\log(\eps)^2 \right) \rfloor$. Splitting the scales using this particular choice, we have 
\begin{equation}\label{eq:low+high}
\mathbb{P} \left ( \|\eps^{-1} g^{\lambda, i}_{Q_{l}} \|_{\infty} \geq  M \overline{\zeta}_2 \right ) \leq \mathbb{P} \left ( \sum_{s=0}^{S-1} \| \overline{g}^{\lambda, i}_{A_s} \|_{\infty} \geq M \overline{\zeta}_2/2\right ) + \mathbb{P} \left ( \sum_{s=S}^{ \lceil \log_2(l+1) \rceil } \| \overline{g}^{\lambda, i}_{A_s} \|_{\infty} \geq   M \overline{\zeta}_2/2 \right ).
\end{equation}

For the first term on the RHS, using a simple union bound, Gaussian tail estimates and our previous variance bounds
\[
\mathbb{P} \left ( \sum_{s=0}^{S-1} \| \overline{g}^{\lambda, i}_{A_s} \|_{\infty} \geq M \overline{\zeta}_2/2\right ) \leq \sum_{x\in Q_l} \sum_{s=0}^{S-1} \mathbb{P} \left( \left | \overline{g}^{\lambda, D}_{x,A_s} \right | \geq  \frac{M \overline{\zeta}_2}{2|S|} \right)  \lesssim   \frac{ l^2|S| \sigma_{A_S}}{M \overline{\zeta}_2}   e^{-  \frac{M^2\overline{\zeta}^2_2 }{8 \sigma^{2}_{A_S}|S|^2 }}= \mathcal O(e^{- c M^2 |\log_2(\eps)|^{2}}).\]
since, for $s\leq S$,  $\frac{\overline{\zeta}^2_2 }{ \sigma^{2}_{A_s} }\gtrsim |\log (\eps)|^{4}$.

For the second term in \eqref{eq:low+high} we cannot simply apply a union bound.  On the other hand the cardinality of $\{s: S\leq s\leq \lceil \log_2(l+1) \rceil\}$ is at most $4 \log_2\log (1/\eps))$.  Thus, in order that the event in the second term be satisfied, for at least one $s$ the event $\{ \| \overline{g}^{\lambda, i}_{A_s} \|_{\infty} \geq   \frac{M \overline{\zeta}_2}{4\log_2 \log (1/\eps)}\}$ must be satisfied. 
To estimate the probability of this last event, the essential point is that $\overline{g}^{\lambda, i}_{A_s}$ varies slowly over length scales of order  at most $2^s$.  Let us partition the box $Q_{l}$ into dyadic squares of sidelength $l_*=O(2^s/|\log_2(\eps)|) $. With  $N_*:=l^2\log_2(\eps)^2 /2^{2s}$, So to control the sup norm of $\overline{g}^{\lambda, i}_{A_s}$, we should only need to control one vertex per partition set   we have $\mathcal O(N_*)$ boxes $B^j_{l_*}$ of size $l_* \times l_*$ in such a partition where $j=1,...,N_*$.  

Let $M_0>0$
and, for each $j$, let $x_j$ be a fixed point of box $B^j_{l_*}$.  Define the event $B_s$ as
\[
B_s =\{\omega: \exists j\in \{1,..., N_*\}: |\overline{g}^{\lambda, D}_{x_j, A_s} (\omega)| > M_0 \overline{\zeta}_2 /4\}.
\]
Then
\[
\begin{split}
\mathbb{P} \left( \left \| \overline{g}^{\lambda, i}_{A_s} \right \|_{\infty} \geq  M_0 \overline{\zeta}_2/2 \right) & = \mathbb{P} \left( \left\{\left \| \overline{g}^{\lambda, i}_{A_s} \right \| \geq  M_0 \overline{\zeta}_2/2 \right\} \cap B_s\right) + \mathbb{P} \left( \left \{\left \| \overline{g}^{\lambda, i}_{A_s} \right \|_{\infty} \geq  M_0 \overline{\zeta}_2/2 \right \} \cap B_s^c\right)\\
& \leq  \mathbb{P} (B_s) + \mathbb{P} \left(  \left \{ \left \| \overline{g}^{\lambda, i}_{A_s} \right \| \geq  M_0 \overline{\zeta}_2 /2  \right \} \cap B_s^c\right).
\end{split}
\]
To bound the first term on the second line, we employ a union bound which is now efficient since for $s\geq S$, $N_*\lesssim |\log (\eps)|^{6}$.  We have
\[
\mathbb{P} (B_s) \leq \sum_{j=1}^{N_*}\mathbb{P}(|\overline{g}^{\lambda, D}_{x_j, A_s} (\omega)| > M_0 \overline{\zeta}_2 /4)\lesssim  (\log (\eps))^6 e^{-c \frac{M_0^2 \overline{\zeta}_2^2 }{\sigma_s^2}}
\]
where $\sigma_s^2= \frac{2^{-2s}}{(\lambda +2^{-2s})^2}$ and $c>0$ is a universal constant.  Since $s\geq S$ we have
\[
 \frac{\overline{\zeta}_2^2 }{\sigma_s^2}\simeq 2+2^{2s}\lambda+2^{-2s}\lambda^{-1}\geq 2
\]

To bound the second term on the RHS let $\sigma^{2}_{max} = \sup_{x,y\in B^j_{l_*}, s} \{\sigma^{2}_{xy, s} \}$ and note that
\[
\frac{ \overline{\zeta}^2_2}{ \sigma^{2}_{max}}\gtrsim (\log (\eps))^2
\]
by our choice of $S$ and the previous lemma.  We thus have
\[
\begin{split}
\mathbb{P} \left(\left \{ \left \| \overline{g}^{\lambda, i}_{x,A_s} \right \|_{\infty}  \geq   M_0 \overline{\zeta}_2 /2 \right \} \cap B_s^c\right)& \leq \sum_{j=1}^{N_*}
 \mathbb{P} \left(\left \{\exists y \in B^j_{l_*}:  | \overline{g}^{\lambda, D}_{y,A_s}  |  \geq M_0 \overline{\zeta}_2 /2 \right \} \cap \left \{|\overline{g}^{\lambda,D}_{x_j, A_s}| < M_0 \overline{\zeta}_2/4 \right \}  \right)\\
&  \lesssim \frac{N_* \sigma_{max} |B^j_{l_*}|^2}{M_0 \overline{\zeta}_2} e^{-  \frac{M_0^2 \overline{\zeta}^2_2}{32 \sigma^{2}_{max}}} \lesssim e^{-c M_0^2 (\log (\eps))^2}
 \end{split}
\]
since  $\frac{N_* \sigma_{max} |B^j_{l_*}|^2}{\overline{\zeta}_2} \lesssim \eps^{-5}$ for $\eps$ sufficiently small.
The lemma  follows by combining the bounds and observing that the bound on $\mathbb{P} (B_s)$ represents the main contribution.  Taking $M_0=\frac{M}{4\log_2 (\log (1/\eps)})$,
\[
\sum_{s\geq S} \mathbb{P} (B_s)\leq |\log (\eps)|^7 e^{-c \frac{M^2}{\log_2( \log(1/\eps))^2}}.
\]
\end{proof}

\begin{lemma}\label{lem:DirichletDiff}
Let the fields $(g^{\lambda,D}_{x,Q_{l}})_{x\in Q_{l}}$ and   $(g^{\lambda,N}_{x,Q_{l}})_{x\in Q_{l}}$ be defined as in \eqref{eq:fields}. Set
\[
\Delta \mathcal{E}^{D,N}(g):=\mathcal{E}(g^{\lambda, D}_{Q_{l}}) - \mathcal{E}(g^{\lambda, N}_{Q_{l}}).
\]
Then for $\rho>0$
\[
\mathbb{P}\left(\left | \Delta \mathcal{E}^{D,N}(g) -\mathbb{E}(\Delta \mathcal{E}^{D,N}(g))\right | \geq \rho \right) \lesssim \rho^{-2} |\log(\eps)|^{-(2+2s+\eta_{\lambda})},
\]
where $|\mathbb{E}(\Delta \mathcal{E}^{D,N}(g))|= \mathcal{O}\left(\frac{\eps^2}{\sqrt{\lambda} l} \right).$
\end{lemma}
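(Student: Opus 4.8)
The plan is to exploit that both energies are quadratic forms in the single Gaussian vector $\alpha=(\alpha_x)_{x\in Q_l}$, to reduce the mean to a boundary computation via a resolvent identity together with Euler--Maclaurin, and to bound the variance by the Hilbert--Schmidt norm of the associated difference operator, which is supported in a thin collar of $\partial Q_l$. Throughout write $R_i=(-\Delta^i_{Q_l}+\lambda)^{-1}$, $i\in\{D,N\}$, and recall that $-\Delta^D_{Q_l}=-\Delta^N_{Q_l}+B$, where $B$ is the nonnegative diagonal operator supported on $\partial^i Q_l$ with $B_{xx}=n_x$, the number of exterior neighbours of $x$, and that the Dirichlet energy is (a fixed multiple of) the quadratic form of $-\Delta^N_{Q_l}$, $\mathcal E(f)=\langle(-\Delta^N_{Q_l})f,f\rangle$. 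Since $g^{\lambda,i}_{Q_l}=\eps R_i\alpha$, the resolvent identity $R_N-R_D=R_DBR_N$ gives $\phi:=g^{\lambda,N}_{Q_l}-g^{\lambda,D}_{Q_l}=R_DB\,g^{\lambda,N}_{Q_l}$, and by the Yukawa (mass $\sqrt\lambda$) decay of $R_D$ the field $\phi$ is concentrated within distance $O(\lambda^{-1/2}\log(1/\eps))$ of $\partial Q_l$. Expanding the (symmetric bilinear) energy form,
\[
\Delta\mathcal E^{D,N}(g)=\mathcal E(g^{\lambda,N}_{Q_l}-\phi)-\mathcal E(g^{\lambda,N}_{Q_l})=-2\langle(-\Delta^N_{Q_l})g^{\lambda,N}_{Q_l},\phi\rangle+\mathcal E(\phi)=\eps^2\langle\alpha,M\alpha\rangle
\]
for a symmetric matrix $M$, every term of which factors through $B$, so $\operatorname{rank}(M)\lesssim|\partial^i Q_l|\lesssim l$. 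Hence $\mathbb E[\Delta\mathcal E^{D,N}(g)]=\eps^2\operatorname{tr}M$ and $\mathrm{Var}(\Delta\mathcal E^{D,N}(g))=2\eps^4\|M\|_{\mathrm{HS}}^2$, and by Chebyshev's inequality it suffices to prove $|\operatorname{tr}M|\lesssim(\sqrt\lambda\,l)^{-1}$ and $\eps^4\|M\|_{\mathrm{HS}}^2\lesssim|\log(\eps)|^{-(2+2s+\eta_\lambda)}$.

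For the mean I would sum by parts exactly as in the proof of Lemma~\ref{lem:5.1} (the substitution $\eps\alpha=(-\Delta^i_{Q_l}+\lambda)g^{\lambda,i}_{Q_l}$) and use the spectral representations \eqref{eq:gD}--\eqref{eq:gN} to obtain
\[
\mathbb E[\Delta\mathcal E^{D,N}(g)]=\eps^2\Big(\sum_{k\in\Lambda_l^*}\tfrac{\zeta_k}{(\zeta_k+\lambda)^2}-\sum_{k\in\tilde\Lambda_l^*}\tfrac{\zeta_k}{(\zeta_k+\lambda)^2}\Big)-\sum_{x\in\partial^i Q_l}n_x\,\mathbb E\big[(g^{\lambda,D}_{x,Q_l})^2\big].
\]
Each of the two terms on the right is genuinely of surface order — far larger than $\eps^2/(\sqrt\lambda\,l)$ — so the smallness of $\operatorname{tr}M$ is a near-cancellation. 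I would evaluate the difference of the two spectral sums by the Euler--Maclaurin formula (Theorem~1 of \cite{Apo}, as already used for Lemma~\ref{lem:var}): the leading volume contributions agree, and the surface corrections are explicit one-dimensional integrals of $\zeta(\zeta+\lambda)^{-2}$-type functions along $\partial[0,\pi]^2$. One then computes $\mathbb E[(g^{\lambda,D}_{x,Q_l})^2]$ for $x\in\partial^i Q_l$ from the reflection (image-charge) representation of $R_D$ and matches it against those surface integrals; keeping the $O(1/l)$-order Euler--Maclaurin remainders, the residue is $\mathcal O(\eps^2/(\sqrt\lambda\,l))$, as claimed.

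For the variance I would bound $\|M\|_{\mathrm{HS}}^2=\operatorname{tr}(M^2)$ by combining the rank bound $\operatorname{rank}(M)\lesssim l$ with operator estimates for the factors of $M$: the elementary bound $\|(-\Delta^N_{Q_l})^aR_N^b\|_{\mathrm{op}}\lesssim\lambda^{a-b}$, and the sharper fact, again from the reflection representation, that $R_D$ applied to functions supported within $O(1)$ of $\partial Q_l$ has norm $O(\lambda^{-1/2})$ rather than the global $O(\lambda^{-1})$. Carrying this $\lambda^{-1/2}$ gain through every boundary contact appearing in $M$, and invoking the per-scale variance estimates of Lemma~\ref{lem:var} (equivalently Corollary~\ref{cor:bound_L2}) to control the surviving sums, yields $\eps^4\operatorname{tr}(M^2)\lesssim|\log(\eps)|^{-(2+2s+\eta_\lambda)}$; the sign of the exponent is guaranteed by $s\in(0,1/32)$ and $\eta_\lambda\in(2s,1/8)$. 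Chebyshev's inequality then gives $\mathbb P(|\Delta\mathcal E^{D,N}(g)-\mathbb E\,\Delta\mathcal E^{D,N}(g)|\ge\rho)\le\rho^{-2}\mathrm{Var}(\Delta\mathcal E^{D,N}(g))\lesssim\rho^{-2}|\log(\eps)|^{-(2+2s+\eta_\lambda)}$.

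The hard part will be the near-cancellation in the mean: both the difference of the spectral traces and the near-boundary Dirichlet variance sum are of surface order, far exceeding $\eps^2/(\sqrt\lambda\,l)$, so obtaining the claimed $\mathcal O(\eps^2/(\sqrt\lambda\,l))$ requires matching the Euler--Maclaurin surface corrections against the Dirichlet Green's function asymptotics to sufficient accuracy. A secondary difficulty is that the variance estimate cannot be reached from the global bound $\|R_i\|_{\mathrm{op}}\le\lambda^{-1}$ alone — one must extract the $\lambda^{-1/2}$ improvement coming from Dirichlet suppression near $\partial Q_l$ at each boundary contact in $M$, which is precisely what prevents the (otherwise adequate) rank reduction from producing a loss of a power of $\eps$.
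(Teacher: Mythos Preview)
Your approach is considerably more elaborate than the paper's and misidentifies where the difficulty lies. Two observations.

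\emph{On the mean.} The paper does not introduce any boundary term, nor does it appeal to Euler--Maclaurin surface corrections or image-charge expansions. Working with the Dirichlet quadratic form (so that $\mathcal E(g^{\lambda,D})=\sum_k \eps^2\zeta_k(\zeta_k+\lambda)^{-2}|\widehat\alpha(k)|^2$), the expected difference is simply
\[
\eps^2\Big(\sum_{k\in\Lambda_l^*}-\sum_{k\in\tilde\Lambda_l^*}\Big)\frac{\zeta_k}{(\zeta_k+\lambda)^2}.
\]
The paper observes that $\tfrac{l+1}{l}\Lambda_l^*-v=\tilde\Lambda_l^*$ with $v=(\pi/l,\pi/l)$, rewrites this as a single sum of pointwise differences $\zeta_{\frac{l}{l+1}(k+v)}(\zeta_{\frac{l}{l+1}(k+v)}+\lambda)^{-2}-\zeta_k(\zeta_k+\lambda)^{-2}$, and bounds each summand by a Lipschitz estimate $\lesssim (\|k\|/l+\|v\|^2)/[(\zeta_k+\lambda)(\zeta_{\frac{l}{l+1}(k+v)}+\lambda)]$. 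Summing gives $\mathcal O(\eps^2/(\sqrt\lambda\,l))$ directly. There is no ``near-cancellation of surface order terms'' to engineer; the two sums are Riemann sums for the same integral at mesh $\pi/l$ versus $\pi/(l+1)$, and the bound is just a discrete derivative estimate. Your extra boundary term $\sum_{x\in\partial^iQ_l}n_x\,\mathbb E[(g^{\lambda,D}_x)^2]$ arises only because you take $\mathcal E$ to be the Neumann form for both fields; with the Dirichlet-form convention the paper uses (consistent with its spectral identity), it is absent.

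\emph{On the variance.} You work hard to exploit cancellation in $M$ via rank bounds and $\lambda^{-1/2}$ Dirichlet suppression at the boundary. None of this is needed: the paper simply bounds $\mathrm{Var}(\mathcal E(g^{\lambda,D}))$ and $\mathrm{Var}(\mathcal E(g^{\lambda,N}))$ \emph{individually} via Cauchy--Schwarz, and each is already $\lesssim |\log\eps|^{-(2+2s+\eta_\lambda)}$. Indeed, since the $\widehat\alpha(k)$ are i.i.d.\ standard Gaussians,
\[
\mathrm{Var}\big(\mathcal E(g^{\lambda,I})\big)=2\eps^4\sum_k\frac{\zeta_k^2}{(\zeta_k+\lambda)^4}\sim \eps^4 l^2\int_0^\pi\frac{r^5}{(r^2+\lambda)^4}\,dr\sim\frac{\eps^4 l^2}{\lambda},
\]
and plugging in $l\sim\eps^{-1}|\log\eps|^{-1/2-s}$, $\lambda=\eps^2|\log\eps|^{1+\eta_\lambda}$ gives exactly $|\log\eps|^{-(2+2s+\eta_\lambda)}$. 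So your ``secondary difficulty'' does not exist: the global bound on $R_i$ already suffices once you work spectrally rather than through $M$.

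In short, your route via the resolvent identity and boundary-localized $M$ could in principle be pushed through, but both the mean and the variance admit short direct arguments in the spectral representation that avoid all the matching and operator-norm gymnastics you anticipate. What you flag as ``the hard part'' is in fact a two-line Lipschitz bound.
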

\begin{proof}
We have
\begin{align*}
\E[ \mathcal{E}(g^{\lambda, D}_{Q_{l}}) - \mathcal{E}(g^{\lambda, N}_{Q_{l}})  ] &= \sum_{k\in \Lambda^*_l}  \frac{\eps^2\zeta_{k}}{(\zeta_{k} +\lambda)^2} -\sum_{k\in \tilde{\Lambda}_l^*}  \frac{\eps^2\zeta_{k}}{(\zeta_{k} +\lambda)^2} \\
\end{align*}
Let $v=(\pi/l, \pi/l)$ and  observe that $\frac{(l+1)}{l} \Lambda^*_l-v=\tilde{\Lambda}_l^*$. The difference above can thus be rewritten
\[
 \sum_{k\in \tilde{\Lambda}^*_l}  \frac{\eps^2\zeta_{\frac{l}{l+1}(k+v)}}{(\zeta_{\frac{l}{l+1}(k+v)} +\lambda)^2} - \frac{\eps^2\zeta_{k}}{(\zeta_{k} +\lambda)^2} 
 \]
Now
\[
\left| \frac{\zeta_{\frac{l(k+v)}{l+1}}}{(\zeta_{\frac{l(k+v)}{l+1}} +\lambda)^2} - \frac{\zeta_{k}}{(\zeta_{k} +\lambda)^2} \right|\lesssim \frac{| \zeta_{\frac{l}{l+1}(k+v)}-\zeta_{k}|}{ (\zeta_{k} +\lambda)(\zeta_{\frac{l}{l+1}(k+v)} +\lambda)}\lesssim  \frac{\frac{\|k\|}{l}+\|v\|^2 }{ (\zeta_{k} +\lambda)(\zeta_{\frac{l}{l+1}(k+v)} +\lambda)}.
\]
Thus
\[
|\E[ \mathcal{E}(g^{\lambda, D}_{Q_{l}}) - \mathcal{E}(g^{\lambda, N}_{Q_{l}})  ] |\lesssim {\mathcal{O}\left(\frac{\eps^2}{\sqrt{\lambda} l} \right)} 
\]
using that
\[
\int_0^{l} \frac{r^2}{(r^2+\lambda l^2)^2} dr =  {\mathcal{O}\left(\frac{1}{\sqrt{\lambda} l} \right)}. 
\]

To bound the variance, observe that even though the $\alpha_k$'s are correlated, by using the Cauchy-Schwartz inequality it suffices to bound the individual variances.  With $I\in \{D, N\}$, 
\[
\begin{split}
& \mathbb{E}\left ( \left[  \mathcal{E}(g^{\lambda, I}) -\mathbb{E}( \mathcal{E}(g^{\lambda, I}))  \right ]^2 \right )\\
& \lesssim 2 \sum_{\substack{e=\langle x^{(1)},y^{(1)}\rangle, \\ e'=\langle x^{(2)},y^{(2)}\rangle, \\ e,e' \cap Q_{l} \neq \varnothing}}  \sum_{{k \in \atop \{1,...,l \}^2 }} \frac{\eps^4 }{l^4(\zeta_{k\pi/l +1} + \lambda)^4} \prod_{i \in \{1,2\}}  \left( e^I_k \left(\frac{x^{(i)}\pi}{l+1} \right) - e^I_k \left(\frac{y^{(i)} \pi}{l +1} \right)\right )^2 . 
\end{split}
\]
The RHS is bounded by
\[
\begin{split}
 \sum_{\substack{e=\langle x^{(1)},y^{(1)}\rangle, \\ e'=\langle x^{(2)},y^{(2)}\rangle, \\ e,e' \cap Q_{l} \neq \varnothing}}  \sum_{{k \in \atop \{1,...,l \}^2 }} \frac{\eps^4 \| k\|^4 \pi^4 \| \nabla e^D \|_{\infty}^4}{(\|k\|^2 + \lambda (l+1)^2)^4} \prod_{i \in \{1,2\}}  \| x^{(i)} - y^{(i)}\|_2^2 \lesssim \mathcal{O}(|\log(\eps)|^{-(2+2s+\eta_{\lambda})})
\end{split}
\]
using that
\[
\int_0^{l} \frac{r^5}{(r^2+\lambda l^2)^4} dr = \mathcal{O}\left(\frac{1}{\lambda l^2} \right). 
\]
\end{proof}

The final local task we have is to bound $\mathbb P(\mathcal A_Q^c)$.  Recall
that we defined 
\begin{multline}
\mathcal{A}_{Q_{L_0}} : = \biggl \{\omega: \forall x\in Q_{L_0}: \textrm{dist}(x, \partial^o Q_{L_0}) \geq L_0^{3/4} \text{ and for }   r\in[ L_0^{1/2},L_0^{3/4}]\\ \; \frac{1}{r^2 \eps^2} \sum_{\| y-x\|_2 \leq r} m_{y}(\omega) \geq A  |\log(r )|\biggr \}.\nonumber
\end{multline}
\begin{proposition}
There is $A>0$ so that for any $L_0\in [\ell, L]$ and any square $Q_{L_0}$ with side-length $L_0$
\[
\mathbb P \left(\mathcal A_{Q_{L_0}}^c \right)\lesssim e^{-c \log^2 (\eps)}.
\]
\end{proposition}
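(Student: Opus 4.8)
The strategy is to reduce to a single ball by a union bound and then to estimate the lower tail of the Dirichlet energy of $g:=g^{\lambda,D}_{Q}$ on that ball, where $Q=Q_{L_0}$. Writing $\mathcal E_{B_\rho(x)}(g)=\sum_{\langle y,z\rangle:\,y,z\in B_\rho(x)}(g_y-g_z)^2$, one has $\sum_{\|y-x\|_2\le r}m_y\ge\mathcal E_{B_r(x)}(g)$ (each edge inside $B_r(x)$ is counted at least once), and $B_r(x)\subset Q$ for every $x$ with $\mathrm{dist}(x,\partial^oQ)\ge L_0^{3/4}$. Since $r\mapsto\mathcal E_{B_r(x)}(g)$ is nondecreasing and $|\log r|\asymp|\log(\eps)|$ for $r\in[L_0^{1/2},L_0^{3/4}]$, it suffices to find $A,c>0$ with
\[
\P\big(\mathcal E_{B_r(x)}(g)<A\eps^2r^2|\log(\eps)|\big)\le e^{-c\log^2(\eps)}
\]
for each fixed interior $x$ (at most $L_0^2$ of them) and each fixed dyadic $r\in[L_0^{1/2},L_0^{3/4}]$ (at most $\log_2L_0$ of them); the union‑bound prefactor $L_0^2\log_2L_0=e^{O(\log(1/\eps))}$ is then absorbed into $e^{-c'\log^2(\eps)}$.

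The essential observation is that one cannot prove the displayed bound by applying a chaos concentration inequality directly to $\mathcal E_{B_r(x)}(g)$: because $r\le L_0^{3/4}\ll\lambda^{-1/2}$, the gradient field of $g$ is effectively log‑correlated over the whole of $B_r(x)$, so $\mathcal E_{B_r(x)}(g)$ has effective rank $O(1)$ and only a polynomially small lower tail. Instead, fix the scale $\rho_*:=r/|\log(\eps)|^2$ and, using the eigenfunction/dyadic–annulus decomposition of $g$ from Section~\ref{S:RandSub} (cf.\ \eqref{eq:gD}) together with Lemma~\ref{lem:var}, split $g=g_h+g_l$, where $g_h$ keeps the modes $e^D_k$ with $\|k\|_2\ge1/\rho_*$ and $g_l$ the rest. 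Two facts drive the argument. First, since $x$ is far from $\partial Q$ and all modes in $g_h$ have wavelength $\le\rho_*<L_0^{3/4}\le\mathrm{dist}(x,\partial^o Q)$, an Euler–Maclaurin computation as in Lemma~\ref{lem:var} gives $\E[(\nabla_eg_h)^2]\gtrsim\eps^2\log(\rho_*)\gtrsim\eps^2|\log(\eps)|$ for edges $e$ deep in $Q$ — the $|\log(\eps)|$ being the number of dyadic frequency scales between the lattice scale and $\rho_*$ — hence $\E[\mathcal E_{B_r(x)}(g_h)]\ge2A\eps^2r^2|\log(\eps)|$. Second, the covariance of $\nabla g_h$ decays on the scale $\rho_*$, so $\mathrm{Var}\big(\mathcal E_{B_r(x)}(g_h)\big)=2\sum_{e,e'\subset B_r}[\mathrm{Cov}(\nabla_eg_h,\nabla_{e'}g_h)]^2\lesssim\eps^4r^2\rho_*^2=\eps^4r^4|\log(\eps)|^{-4}$. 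Representing $\mathcal E_{B_r(x)}(g_h)\overset{d}{=}\sum_i\mu_iZ_i^2$ with $\mu_i\ge0$ the eigenvalues of $\mathrm{Cov}(\nabla g_h|_{B_r})$ and $Z_i$ i.i.d.\ standard normal, so that $\sum_i\mu_i=\E[\mathcal E_{B_r(x)}(g_h)]$ and $\sum_i\mu_i^2=\tfrac12\mathrm{Var}(\mathcal E_{B_r(x)}(g_h))$, the Laurent–Massart lower‑deviation inequality yields $\P\big(\mathcal E_{B_r(x)}(g_h)\le\tfrac12\E[\mathcal E_{B_r(x)}(g_h)]\big)\le\exp\!\big(-c(\sum_i\mu_i)^2/\sum_i\mu_i^2\big)\le e^{-c|\log(\eps)|^6}$.

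It then remains to prevent the low‑frequency remainder and the cross term from destroying this gain. Decompose $\mathcal E_{B_r(x)}(g)=\mathcal E_{B_r(x)}(g_h)+\mathcal E_{B_r(x)}(g_l)+2\,\mathrm{cr}$ with $\mathrm{cr}=\sum_{e\subset B_r}(\nabla_eg_h)(\nabla_eg_l)$. Conditionally on $g_l$, $\mathrm{cr}$ is centered Gaussian with variance at most $\|\mathrm{Cov}(\nabla g_h|_{B_r})\|_{op}\,\mathcal E_{B_r(x)}(g_l)\lesssim\eps^2\rho_*^2\,\mathcal E_{B_r(x)}(g_l)$, the operator norm being the largest spectral weight of a mode with $\|k\|_2\ge1/\rho_*$, namely $\sim\eps^2\rho_*^2$. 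A Gaussian tail bound — combined with the Laurent–Massart upper tail for the positive form $\mathcal E_{B_r(x)}(g_l)$ (mean $\asymp\eps^2r^2|\log(\eps)|$, operator norm $\lesssim\eps^2r^2|\log(\eps)|$), splitting on whether $\mathcal E_{B_r(x)}(g_l)$ exceeds $\eps^2r^2|\log(\eps)|^3$, and an AM–GM step to make the estimate uniform in $g_l$ on the good part — gives $\P\big(2|\mathrm{cr}|>\mathcal E_{B_r(x)}(g_l)+A\eps^2r^2|\log(\eps)|\big)\le e^{-c\log^2(\eps)}$. Off the union of these two rare events, $\mathcal E_{B_r(x)}(g)\ge\tfrac12\E[\mathcal E_{B_r(x)}(g_h)]+\mathcal E_{B_r(x)}(g_l)-\big(\mathcal E_{B_r(x)}(g_l)+A\eps^2r^2|\log(\eps)|\big)\ge A\eps^2r^2|\log(\eps)|$, and the union bound over $x$ and $r$ finishes the proof.

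The main obstacle is exactly the non‑concentration noted above: the local Dirichlet energy of the near‑minimizing field fluctuates on the scale of its own mean, so the proof must extract a high‑frequency slice $g_h$ at a scale $\rho_*$ chosen polylogarithmically below $r$ — so that $g_h$ still carries the full $|\log(\eps)|$ worth of energy but now has $\gtrsim|\log(\eps)|^6$ effective degrees of freedom — and then control the complementary slice $g_l$ and the cross term, which enter only after the favourable factor $\rho_*^2/r^2=|\log(\eps)|^{-4}$. Balancing the resulting polylogarithmic exponents ($|\log(\eps)|^6$ from the lower tail of $g_h$, the $\log^2(\eps)$‑type bounds from the cross term, and the $e^{O(\log(1/\eps))}$ union‑bound prefactor) against the target $e^{-c\log^2(\eps)}$ is what dictates the exponent in the choice of $\rho_*$.
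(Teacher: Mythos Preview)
Your approach is correct but takes a genuinely different route from the paper. You work directly with the big-box field $g=g^{\lambda,D}_{Q_{L_0}}$ restricted to $B_r(x)$, split it \emph{spectrally} at scale $\rho_*=r/|\log\eps|^2$, apply Laurent--Massart to the high-frequency piece, and control the cross term by Gaussian conditioning plus AM--GM. The paper instead uses a \emph{spatial} decomposition: for each sub-square $Q$ of sidelength $r$ it writes $g^{\lambda,D}_{Q_{L_0}}|_Q=g^{\lambda,D}_Q+g^{(1)}$, where $g^{\lambda,D}_Q$ is the massive Dirichlet field on $Q$ itself and $g^{(1)}$ solves the homogeneous equation with the inherited boundary data. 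Because $g^{\lambda,D}_Q$ vanishes on $\partial^o Q$ while $g^{(1)}$ is (essentially) harmonic, the cross term in the Dirichlet energy drops out by integration by parts, giving $\sum_{y\in Q}m_y\ge\mathcal E_Q(g^{\lambda,D}_Q|0)$ \emph{deterministically}. One is then left with a lower-tail bound for $\|\nabla g^{\lambda,D}_Q\|_2^2$, which in the $Q$-eigenbasis is a sum of $\sim r^2$ independent weighted $\chi^2_1$ variables and yields $e^{-cr}$ per box---much stronger than the polylogarithmic exponent your method produces, and with no cross-term or low-frequency work required. Your route, on the other hand, stays with the object that actually appears in $m_x$ throughout and would adapt to situations lacking a clean domain-monotonicity identity. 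One small arithmetic slip: with $\E[\mathcal E_{B_r}(g_h)]\ge 2A\eps^2r^2|\log\eps|$ and $2|\mathrm{cr}|\le\mathcal E_{B_r}(g_l)+A\eps^2r^2|\log\eps|$ your final chain gives $\ge 0$, not $\ge A\eps^2r^2|\log\eps|$; replace $2A$ by $4A$ (or bound the cross term by $A/2$ instead of $A$) to close.
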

\begin{proof}
This proof appeared as Lemma 11.1.5 in \cite{Craw_order}.  We present it here for completeness and to clean up some inefficiencies.
Let us define 
\begin{align*}
&\mathcal C _{Q, r}= \{ Q \subset Q_{L_0}: Q \text{ is a square of sidelength $r$}\},\\
&F=\cap_{r \geq L_0^a} \cap _{Q \in \mathcal C _{Q, r}} \{ \omega: \eps^{-2}\|\nabla g^{\lambda, D}_Q\|_2^2 \geq A r^2 \log(r)\}.
\end{align*}
Then by Lemma 11.1.3 of \cite{Craw_order} for any $a>0$ and choosing $A$ sufficiently small
\[
\mathbb P(F^c) \lesssim \exp(-c \log ^{2} (L_0)).
\]
for all $\eps$ small enough.

We prove that $F \subset \mathcal A_r(Q_{L_0})$ for appropriate choices of $r, A>0$. 
Let $Q\in  \cup_{r \geq L_0^a}  \mathcal C _{Q, r}$ and fix $\omega \in F$.  For $x \in Q$, we may express the field $g^{\lambda, D}_{Q_{L_0}, x}$ via
\[
g^{\lambda, D}_{Q_{L_0}, x}=g^{\lambda, D}_{Q, x}+ g^{(1)}_x  
\]
where $g^{(1)}$ is satisfies the Laplace equation $-\Delta^D g^{(1)}\equiv 0$ on $Q$ subject to the boundary condition $g^{(1)}_x=g^{\lambda, D}_{Q_{L_0}, x}$ for  $x \in \partial^o Q$.  

Notice that
\[
\sum_{e \cap Q \neq \varnothing} [\nabla_e(g^{\lambda, D}_{Q, x}+ g^{(1)}_x  )]^2 = \mathcal E_{Q}(g^{\lambda, D}_{Q, x}|0)+ 
\sum_{e \cap Q_{L_0} \neq \varnothing} [\nabla_e g^{(1)}_x  ]^2
\]
since the cross term vanishes.  This is because $g^{(1)}$ is harmonic in $Q$ and $g^{\lambda, D}_{Q}$ vanishes on $ \partial^o Q$.
Hence
\[
\eps^{-2} \sum_{x \in Q} m^2_x \geq \mathcal E_{Q\cup   \partial^o Q_{L_0}}(g^{\lambda, D}_{Q_{L_0}})\geq A  r^2 \log(r)
\]
because we restricted attention to $F$.  Thus $F\subset \mathcal A_{Q_{L_0}}$ whenever $L_0^a  \leq r $.

To finish the proof, we provide an upper bound on $\mathbb{P}(F^c)$.  Using a union bound,  it is clearly sufficient to prove there is $c>0$ and $A>0$ sufficiently small so that for all $r>0$, if $Q$ is a square of sidelength $r$
\[
\mathbb{P}( \eps^{-2}\|\nabla g^{\lambda, D}_Q\|_2^2 \leq A r^2 \log(r))\leq e^{-c r}.
\]
This bound is a standard application of large deviations for squares of Gaussian variables, the proof is omitted.
\end{proof}

\subsection{Estimates on bad regions}\label{S:dirty}

\begin{proposition}\label{prop:dirty}
Fix $\eta, \chi \in (0,1/16)$ as in Definition \ref{def:regularY}. Let $\mathcal{Y}$ be a $L_0$-measurable and bounded region. Then for $\eps$ small enough, 
\[
\mathbb{P}(\mathcal{Y} \text{ is bad }) \leq e^{- c|\log(\eps)|^{\delta} N^\mathcal{Y}_{L_0}}.
\]
where $\delta = \min\{\eta, \chi\}$
\end{proposition}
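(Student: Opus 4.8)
The plan is to union-bound over the finitely many ways $\mathcal{Y}$ can fail to be regular and estimate each. By Definition \ref{def:clean}, $\mathcal{Y}$ is bad exactly when $\delta(\mathcal{Y})$ is not controlled at one of the three scales $L_0\in\{\ell/2,\ell,L/16\}$, i.e. when one of the four conditions (R0)--(R3) of Definition \ref{def:regularY} is violated at one of these scales. Since the box counts $N^{\delta(\mathcal{Y})}_{L_0}$ for the three scales differ from $N^{\mathcal{Y}}_{L_0}$ and from one another only by bounded powers of $|\log(\eps)|$, it suffices to fix a scale $L_0$, write $N:=N^{\delta(\mathcal{Y})}_{L_0}$, and bound the probability of each of the four events by $e^{-c|\log(\eps)|^{\delta'} N}$ for a suitable $\delta'>0$.

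The first step is a reduction to independence. Each of the functions $\Xi_{L_0}(Q)$, $F^{\nabla}_{\lambda,L_0}(Q)$, $F_{\lambda,L_0}(Q)$, $F_{\infty,L_0}(Q)$ depends on $\omega$ only through the fields $g^{\lambda,i}_{Q'}$ and the variables $\alpha_x$ attached to the $\mathcal{O}(1)$ squares $Q'\in\mathcal{Q}^*_{L_0}$ meeting $Q$; hence these are finite-range-dependent functions of the box $Q$ (with range bounded in units of $L_0$). Consequently the boxes covering $\delta(\mathcal{Y})$ split into $C_0=\mathcal{O}(1)$ classes inside each of which the corresponding per-box random variables are independent, and it is enough to establish the required large-deviation bounds for sums of independent variables over a class of $\mathcal{O}(N)$ boxes, losing only a constant factor in the final union bound over classes.

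It remains to combine per-box tail estimates with a counting argument, and here (R0) and (R1)--(R3) are treated differently. For (R0): a box $Q$ is dirty only if one of (C1)--(C6) fails for some nearby $Q'\in\mathcal{Q}^*_{L_0}$. Conditions (C2), (C3) are controlled by Proposition \ref{prop:ub_supg} and its gradient analogue with $M\sim|\log(\eps)|^{\eta}$; (C5) by a Gaussian union bound over the $\mathcal{O}(L_0^2)$ sites of $Q'$; (C4) by concentration of the Gaussian quadratic form $\|\nabla g^{\lambda,i}_{Q'}\|_2^2$ about its mean $\asymp\eps^2|\log(\eps)||Q'|$; (C1) by the estimate $\mathbb{P}(\mathcal{A}_{Q'}^c)\lesssim e^{-c\log^2(\eps)}$ already proved; and (C6) by upgrading the second-moment bound of Lemma \ref{lem:DirichletDiff} to a Hanson--Wright concentration bound for the Gaussian quadratic form $\Delta\mathcal{E}^{D,N}(g)$. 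For $A,c$ small and $C$ large this yields $\mathbb{P}(Q\text{ dirty})\le e^{-c|\log(\eps)|^{\delta_0}}$ with $\delta_0>0$; since $e^{-c|\log(\eps)|^{\delta_0}}$ is far below the tolerated dirty-fraction $|\log(\eps)|^{-\rho}$, a Chernoff bound applied class-by-class gives $\mathbb{P}\big(\langle 1-\Xi_{L_0},1\rangle_{\delta(\mathcal{Y})}>|\log(\eps)|^{-\rho}N\big)\le e^{-c|\log(\eps)|^{\delta}N}$.

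For (R1)--(R3) the per-box quantities have only stretched-exponential upper tails, so I would decompose the range of the relevant $F\in\{F^{\nabla}_{\lambda,L_0},F_{\lambda,L_0},F_{\infty,L_0}\}$ dyadically above its threshold $t$ (namely $\eps^2|\log(\eps)|^{1+\chi}$, $\eps^2\lambda^{-1}|\log(\eps)|^{\chi}$, $|\log(\eps)|^{\chi}$ respectively). If $\langle F,\1_{F\ge t}\rangle_{\delta(\mathcal{Y})}$ exceeds the threshold in (R1) (or (R2), (R3)), then by pigeonhole some dyadic band $[2^jt,2^{j+1}t)$ contains at least $\gtrsim N/\big((j+1)^2 2^j|\log(\eps)|^{1+\chi-\zeta}\big)$ boxes; each box falls in that band with probability $\le|\log(\eps)|^7 e^{-c\,2^j|\log(\eps)|^{\chi}/(\log\log(1/\eps))^2}$ by Corollary \ref{cor:bound_L2} (its gradient analogue for $F^{\nabla}$, a direct Gaussian bound for $F_\infty$), so a Chernoff bound in each band and summation over $j$ give $e^{-c|\log(\eps)|^{\delta}N}$ again. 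The main obstacle is precisely this last bookkeeping: the exponents available from the tail bounds are only fixed powers ($\eta$, $\chi$, $\eta_\lambda$) of $|\log(\eps)|$ --- not of $1/\eps$, since the fields live on scale $\eps^{-1}$ and are weakly concentrated --- and one must verify that after balancing them against the hard-wired tolerances $|\log(\eps)|^{-\rho}$ and $|\log(\eps)|^{\zeta}$ of Definition \ref{def:regularY} the surviving exponent is still a genuine positive power $|\log(\eps)|^{\min\{\eta,\chi\}}$, which in particular forces the second-moment control in (C6) to be strengthened to a true concentration estimate.
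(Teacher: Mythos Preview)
Your approach is essentially the same as the paper's: union bound over the conditions (R0)--(R3) and the three scales, reduction to independent per-box random variables via the finite shifts $\eta\in\{(0,0),(0,L_0/2),(L_0/2,0),(L_0/2,L_0/2)\}$ (this is exactly your splitting into $\mathcal{O}(1)$ classes), then a per-box tail estimate combined with a counting argument. The paper only writes out (R1) and says the other cases are analogous; your treatment of (R0) via Chernoff for the Bernoulli variables $1-\Xi_{L_0}(Q)$ is what the paper leaves implicit.

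There is one substantive difference in implementation worth noting. For (R1) you invoke a ``gradient analogue of Corollary \ref{cor:bound_L2}'' for the per-box tail of $F^{\nabla}_{\lambda,L_0}$. That corollary goes through the sup-norm bound of Proposition \ref{prop:ub_supg} and therefore carries a $(\log\log(1/\eps))^{2}$ loss in the exponent. The paper instead derives the tail of $\|\nabla g^{\lambda}_{Q}\|_2^2$ \emph{directly} from the Fourier representation: since $\|\nabla g^{\lambda}_{Q}\|_2^2=L_0^{-2}\sum_{k}\zeta_k(\zeta_k+\lambda)^{-2}|\widehat{\alpha}(k)|^2$ is a diagonal quadratic form in the i.i.d.\ Gaussians $\widehat{\alpha}(k)$, a scale-by-scale large-deviation estimate on each annulus $A_s$ yields the cleaner bound $\mathbb{P}(\eps^{-2}\|\nabla g^{\lambda}_Q\|_2^2>c_1 m|\log L_0|L_0^2)\le Ce^{-c_2 m|\log L_0|^{1-\zeta/2}}$. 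Your dyadic pigeonhole on the range of $F^{\nabla}$ is then exactly the paper's device of introducing the integer $m_Q$ and summing over the possible assignments $Q\mapsto n_Q$; the two are equivalent. Either tail bound is strong enough to give a positive power $|\log(\eps)|^{\delta}$ in the final exponent, but the direct Fourier computation is both simpler and gives a better $\delta$ than routing through the sup-norm.
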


\begin{proof}
The region is bad if at least one of the conditions (R0)-(R3) from Definition \ref{def:regularY} is not satisfied. We consider only (R1), since the proofs for the other cases are similar. We will prove that
\begin{equation}\label{eq:claimregular}
\mathbb{P} \left (\langle F_{\lambda, L_0}(\nabla g), \1_{F_{\lambda, L_0}(\nabla g) \geq \eps^2 |\log(L_0)|^{1+\chi}}\rangle_\mathcal{Y}  > C \eps^2 |\log(\eps)|^{\zeta} L_0^2N^\mathcal{Y}_{L_0}  \right ) \leq e^{- c|\log(\eps)| N^\mathcal{Y}_{L_0}},
\end{equation}
where $c,C>0$ are some constants. For $\eta\in \{(0,0) (0, L_0/2), (L_0/2, 0), (L_0/2, L_0/2)\}$ let $\mathcal{Q}_{\mathcal{Y}, \eta}$ denote the collection of squares $Q$ such that $Q+\eta$ is an  $L_0$-measurable squares contained in $\mathcal{Y}$.
Let $m_Q$ denote the minimal integer greater or equal to $2$ such that
\[
\| \nabla g^{\lambda}_{Q}\|^2_2 <  m_Q \eps^2 |\log(\eps)|^{1+\chi} L_0^2.
\]
We will bound probabilities for the collection of events
\[
\left \{ \sum_{Q\in  \mathcal{Q}_{\mathcal{Y}, \eta}} m_Q \1_{m_Q \geq 2} \geq  |\log(\eps)|^{\zeta-\chi-1} N^\mathcal{Y}_{L_0}\right \}.
\]

First of all, we claim that for some $c_1,c_2,  C >0$ 
\[
\mathbb P(\eps^{-2}\| \nabla g^{\lambda}_{Q}\|^2_2 > c_1 m  |\log(L_0)| L_0^2)\leq C e^{-c_2 m \log L_0^{1-\zeta/2}}.
\]
Indeed 
\[
\eps^{-2}\| \nabla g^{\lambda}_{Q}\|_2^2 =\frac{1}{L_0^2}  \sum_{k \in \Lambda_{L_0}^*} \frac{\zeta_k}{[\zeta_k + \lambda]^2}|\widehat{\alpha}(k)|^2.
\]
Decomposing $\Lambda_{L_0}^*$ in terms of the annuli $A_s$, if $2^{-2s}\geq \lambda$ we have
\[
 \sum_{k \in A_s} \frac{\zeta_k}{[\zeta_k + \lambda]^2}|\widehat{\alpha}(k)|^2\sim 2^{2s}  \sum_{k \in A_s} |\widehat{\alpha}(k)|^2,
\]
where as if $2^{-2s}\leq \lambda$ we have
\[
 \sum_{k \in A_s} \frac{\zeta_k}{[\zeta_k + \lambda]^2}|\widehat{\alpha}(k)|^2\sim 2^{-2s}\lambda^{-2}  \sum_{k \in A_s} |\widehat{\alpha}(k)|^2.
\]
Since the $\hat{\alpha}_k$'s are i.i.d Gaussian variables, we have that for all $M>1$,
\[
\mathbb P ( \sum_{k \in A_s} |\widehat{\alpha}(k)|^2\geq M |A_s|)\leq Ce^{-cM|A_s|}.
\]
It follows that if we take $M_s=2m$ for $s\leq \lfloor \log_2 \left( L_0 /\log(L_0) \right) \rfloor$ and $M_s=m |\log (L_0)|^{1-\zeta/2}$ otherwise, we have
\[
\mathbb P ( \exists s: \sum_{k \in A_s} |\widehat{\alpha}(k)|^2\geq M_s |A_s|)\leq  Ce^{-cm |\log (L_0)|^{1-\zeta/2}}.
\]
The claim follows immediately from this using a union bound.

Now, fix a subset $\mathcal{A} \subset \mathcal{Q}_{\mathcal{Y}, \eta}$ a collection of integers $\{ n_Q; Q \in \mathcal{A}, n_Q \geq 2\} $, then by independence
\[
\mathbb{P}(\forall Q \in \mathcal{A}: m_Q = n_Q) \leq C^{|\mathcal{A}|} e^{-c_2 \sum_{Q \in \mathcal{A}} (n_Q-1) | \log (L_0)|^{1+\chi-\zeta/2}}.
\]
which leads to the bound
\[
\mathbb{P} \left (\sum_{Q \in \mathcal{Q}_{\mathcal{Y}, \eta}} m_Q \1_{m_Q \geq 2} \geq M N^\mathcal{Y}_{L_0}\right ) \leq C^{N^\mathcal{Y}_{L_0}} e^{-c M |\log(L_0)|^{1+\chi-\zeta/2} N^\mathcal{Y}_{L_0}}.
\]
The claim follows for (R1) by using a union bound with respect to\\ $\eta\in \{(0,0) (0, L_0/2), (L_0/2, 0), (L_0/2, L_0/2)\}$.
\end{proof}
We are now ready to verify the main probabilistic estimate stated in Section \ref{S:Prob}.
\begin{proof}[Proof of Lemma \ref{lem:dirty}]

Let $Q(x)$ denote the $L$-measurable box containing $x$ and let
\[
A(x) = \{\omega: \exists \, \mathcal{\mathcal{Y}} \text{ bad and } Q(x) \subset \textrm{cl}(\mathcal{\mathcal{Y}}) \}.
\]
For each $\mathcal{\mathcal{Y}}$ we want to use Proposition \ref{prop:dirty}. Using the lattice isoperimetric inequality there is for some constant $C>1$, the number of $\mathcal{\mathcal{Y}}$ such that $cl(\mathcal Y)$ contains $x$ and $N^L_\mathcal{Y}=r$ is at most $r^2 C^r$ . Then
\[
\mathbb{P}(Q(x) \subset \mathcal{\mathcal{Y}}, \mathcal{\mathcal{Y}} \text{ bad }) \lesssim \sum_{r=1}^{\infty} (2C)^r e^{- c |\log(\eps)|^{\delta} r}
\]
and using a discrete isoperimetric inequality we obtain
\begin{equation}\label{eq:dirty}
\mathbb{P}(A(x)) \lesssim \sum_{r=1}^{\infty} r^2 (2C)^r e^{- c |\log(\eps)|^{\delta} r} \lesssim e^{-c'|\log(\eps)|^{\delta}}
\end{equation}
for $\eps$ small enough.  By linearity of the expectation, 
\[
\mathbb{E}(|\mathbb{D}_{\Lambda}|) \lesssim |\Lambda| e^{-c|\log(\eps)|^{\delta}}
\]
for any finite square $\Lambda$ in $\mathbb Z^2$.

Let  $\textrm{dist}_L(x,y)$ denote the minimal number of blocks in an $L$-measurable block path from $Q(x)$ to $Q(y)$.  To estimate $Var(|\mathbb{D}_{\Lambda}|) $ we first estimate $|\mathbb{P}(A(x_1) \cap A(x_2)) - \mathbb{P}(A(x_1))  \mathbb{P}(A(x_2))|$ for $x_1$ and $x_2$ in disjoint $L$-measurable squares.
Note that if $\mathcal{\mathcal{Y}}_1$ and $\mathcal{\mathcal{Y}}_2$ be two regions such that 
\[
\delta_{2L}(\textrm{cl}(\mathcal{\mathcal{Y}}_1)) \cap \delta_{2L}(\textrm{cl}(\mathcal{\mathcal{Y}}_2)) = \varnothing.
\]
then the events $\{ \mathcal{\mathcal{Y}}_1 \text{ bad}\}$ and $\{ \mathcal{\mathcal{Y}}_2 \text{ bad}\}$ are independent.  Given $x_1, x_2$ such that $\textrm{dist}_L(x_1,x_2)\geq 6$ and let $B_1=\{y: |x_1-y|\leq |x_1-x_2|/3\},$   Independence implies
\begin{align*}
 |\mathbb{P}(A(x_1) \cap A(x_2)) - \mathbb{P}(A(x_1))  \mathbb{P}(A(x_2))|   
&\leq 4 \mathbb{P}(\exists  \mathcal{\mathcal{Y}}_1,  \text{ bad s.t. }Q(x_1) \subset \textrm{cl}(\mathcal{\mathcal{Y}}_1),  \mathcal{Y}_1\not \subset B_1) \\
& \lesssim L^2 e^{- c |\log(\eps)|^{\delta} \textrm{dist}_L(x_1,x_2)},
\end{align*}
where the second inequality follows by summing the tail in the union bound for $r\geq  \textrm{dist}_L(x_1,x_2)/3$ in  \eqref{eq:dirty}.
From this inequality it follows that
\[
Var(|\mathbb{D}_{\Lambda}|) \lesssim L^2 |\Lambda| e^{-c|\log(\eps)|^{\delta}}.
\]
Finally taking $\Lambda = \Lambda_N$ for $N=2^k$, Chebyshev's inequality implies
\[
\sum_{k=1}^{\infty} \mathbb{P} \left(|\mathbb{D}_{2^k}| > |\Lambda_{2^k}| e^{-c|\log(\eps)|^{\delta}} \right) < \infty
\]
By the Borel-Cantelli Lemma, for almost all $\omega$ we can find  $k_0(\omega) \in \N$
\[
|\mathbb{D}_{\Lambda_{2^k}}| \leq  |\Lambda_{2^k}| e^{-c|\log(\eps)|^{\delta}}
\]
Since $\mathbb{D}_{\Lambda_{N}}\subset \mathbb{D}_{\Lambda_{2^{k}}}$ and $1 \leq \frac{2^{2k}}{N^2}\leq 2$
for $k=\lceil \log_2 N\rceil$,  it holds that
\[
|\mathbb{D}_{\Lambda_{N}}| \leq 2 |\Lambda_{N}| e^{-c|\log(\eps)|^{\delta}}
\]
for all $N\geq 2^{k_0}$.
\end{proof}

\end{document}